\documentclass[12pt, a4paper]{amsart}
\usepackage{bm}
\usepackage{mathrsfs}
\usepackage{amsmath,amssymb,amsthm,booktabs,comment,longtable}
\usepackage{enumerate}
\usepackage{graphicx}
\usepackage{bbm}
\usepackage{mathtools}
\usepackage{tikz-cd}
\usepackage[all]{xy}
\theoremstyle{definition}
\newtheorem{theorem}{Theorem}[section]

\newtheorem{corollary}[theorem]{Corollary}
\newtheorem{proposition}[theorem]{Proposition}
\newtheorem{definition}[theorem]{Definition}

\newtheorem{example}[theorem]{Example}
\newtheorem{remark}[theorem]{Remark}

\newtheorem{setting}[theorem]{Setting}

\newcommand{\defarrow}{\mathrel{\stackrel{\mathrm{def}}{\longleftrightarrow}}}
\newcommand{\Low}{\mathop{\mathrm{Low}}\nolimits}
\newcommand{\open}{\mathop{\mathrm{open}}\nolimits}

\newcommand{\Ad}{\mathop{\mathrm{Ad}}\nolimits}

\newcommand{\id}{\mathop{\mathrm{id}}\nolimits}

\newcommand{\Mor}{\mathop{\mathrm{Mor}}\nolimits}
\newcommand{\Ob}{\mathop{\mathrm{Ob}}\nolimits}

\newcommand{\Z}{\mathbb{Z}}

\newcommand{\R}{\mathbb{R}}

\newcommand{\Coarse}{\mathop{\mathbf{Coarse}}\nolimits}

\newcommand{\Rel}{\mathrm{Rel}}
\newcommand{\map}{\mathrm{Map}}
\newcommand{\TotRel}{\mathrm{TotRel}}
\newcommand{\CtrMap}{\mathrm{CtrMap}}
\newcommand{\CtrTotRel}{\mathrm{CtrTotRel}}

\newcommand{\CtrRel}{\mathrm{CtrRel}}

\newcommand{\close}{\mathrm{close}}

\newcommand{\Dom}{\mathrm{Dom}}

\newcommand{\Mono}{\mathrm{Mono}}
\newcommand{\AS}{\mathrm{ASymm}}
\newcommand{\PreOrd}{\mathbf{PreOrd}}
\newcommand{\ParOrd}{\mathbf{Par}}
\newcommand{\hPreOrd}{\mathbf{hPreOrd}}
\newcommand{\CompJoinPreOrd}{\mathbf{CJ}\text{-}\mathbf{PreOrd}}
\newcommand{\CompJoinParOrd}{\mathbf{CJ}\text{-}\mathbf{Par}}
\newcommand{\hCompJoinPreOrd}{\mathbf{CJ}\text{-}\mathbf{hPreOrd}}
\newcommand{\FinJoinPreOrd}{\mathbf{FJ}\text{-}\mathbf{PreOrd}}
\newcommand{\FinJoinParOrd}{\mathbf{FJ}\text{-}\mathbf{Par}}
\newcommand{\hFinJoinPreOrd}{\mathbf{FJ}\text{-}\mathbf{hPreOrd}}
\newcommand{\FCCCat}{\mathbf{FCC}\text{-}\mathbf{Cat}}
\newcommand{\FCCThinCat}{\mathbf{FCC}\text{-}\mathbf{ThinCat}}
\newcommand{\FCChCat}{\mathbf{FCC}\text{-}\mathbf{hCat}}
\newcommand{\Set}{{\mathbf{Set}}}
\newcommand{\Cat}{{\mathbf{Cat}}}
\newcommand{\ThinCat}{{\mathbf{ThinCat}}}
\newcommand{\hCat}{{\mathbf{hCat}}}
\newcommand{\hThinCat}{{\mathbf{hThinCat}}}
\newcommand{\FCChThinCat}{{\mathbf{FCC}\text{-}\mathbf{hThinCat}}}

\newcommand{\CSlice}{\mathrm{Slice}^\mathcal{C}}
\newcommand{\MSlice}{\mathcal{M}\text{-}\mathrm{Slice}^\mathcal{C}}
\newcommand{\Sub}{\operatorname{Sub}}

\newcommand{\Upper}{\mathrm{Upper}}

\newcommand{\adperp}{\mathrel{\perp_{\mathrm{ad}}}}

\newcounter{mycounter}

\begin{document}

\title[Some categorical remarks on coarse subspaces]{Some categorical remarks on coarse subspaces of coarse spaces}
\author[M.~Miyaji, H.~Nagaya, K.~Ogawa, T.~Okuda]{Muneto Miyaji, Hiroaki Nagaya, Kento Ogawa, Takayuki Okuda}
\subjclass[2020]{
Primary 51F30, %Lipschitz and coarse geometry of metric spaces
Secondary 
18A25, %Functor categories, comma categories.
18A32. %Factorization systems, substructures, quotient structures, congruences, amalgams 
} 
\keywords{Coarse space, Slice category, Orthogonal factorization system}
\thanks{
The first and second authors are supported by JST SPRING, Grant Number JP-MJSP2132. 
The fourth author is supported by JSPS Grants-in-Aid for Scientific Research JP20K03589, JP20K14310, and JP22H0112.
This work was supported by the Research Institute for Mathematical Sciences,
an International Joint Usage/Research Center located in Kyoto University.
}

\address[M.~Miyaji]{%
	Graduate School of Advanced Science and Engineering, Hiroshima University, 
    1-3-1 Kagamiyama, Higashi-Hiroshima City, Hiroshima, 739-8526, Japan.
        }
\email{miyajimuneto@hiroshima-u.ac.jp}
\address[H.~Nagaya]{%
	Graduate School of Advanced Science and Engineering, Hiroshima University, 
    1-3-1 Kagamiyama, Higashi-Hiroshima City, Hiroshima, 739-8526, Japan.
        }
\email{nagayahiroaki@hiroshima-u.ac.jp}
\address[K.~Ogawa]{%
	Graduate School of Advanced Science and Engineering, Hiroshima University, 
    1-3-1 Kagamiyama, Higashi-Hiroshima City, Hiroshima, 739-8526, Japan.
        }
\email{knt-ogawa@hiroshima-u.ac.jp}
\address[T.~Okuda]{%
	Graduate School of Advanced Science and Engineering, Hiroshima University, 
    1-3-1 Kagamiyama, Higashi-Hiroshima City, Hiroshima, 739-8526, Japan.
        }
\email{okudatak@hiroshima-u.ac.jp}

\maketitle

\begin{abstract}
In this paper, we provide a categorical framework for understanding coarse
subspaces of coarse spaces.
First, we introduce the notion of a controlled total relation between coarse
spaces and show that the category whose morphisms are closeness classes of
controlled total relations is isomorphic to the conventional category of
coarse spaces defined using closeness classes of controlled maps.
Next, we show that the assignment associating to each coarse space the
finite-join partially ordered set of its coarse subspaces is functorial, and
prove that this partially ordered set is naturally isomorphic to the poset of
subobjects in the category of coarse spaces.
Furthermore, we formulate asymptotic disjointness between coarse subspaces
and show that mono-morphisms preserve this relation.
These results provide a categorical interpretation of the framework of
coarse subspaces introduced by 
Leitner--Vigolo [Lecture Notes in Math.~(2023)] 
and characterize coarse subspaces as objects
intrinsic to the category of coarse spaces.
They also provide a foundation for a coarse-geometric interpretation of the
properness criterion established by Kobayashi
[Math.~Ann.~(1989); J.~Lie Theory (1996)]
and Benoist
[Ann.~of Math.~(1996)]
(cf.~Nagaya--Ogawa--Okuda
[Proc.~Japan Acad.~Ser.~A (2025)]).
\end{abstract}

\section{Introduction}

Coarse geometry was developed as a theory for studying the large-scale geometric structure of spaces.
The viewpoint of regarding finitely generated groups as metric spaces equipped with word metrics and studying their large-scale geometry up to quasi-isometry has its origins in results such as the Švarc--Milnor lemma
\cite{Milnor1968Curvature, Svarc1955}.
Gromov \cite{Gromov1987,Gromov1993} developed this viewpoint into a central framework of geometric group theory and had a decisive influence on the development of large-scale geometry and coarse geometry through asymptotic invariants such as growth and asymptotic dimension.
Within this broader development of large-scale geometric perspectives,
Roe \cite{Roe1993,Roe1996IndexTheory,Roe2003LectureCoarse} developed
coarse geometry against the background of index theory on noncompact
manifolds and further established the notion of a coarse space as a
framework for describing large-scale geometry beyond the setting of
metric spaces.
Since then, coarse geometry of metric spaces and, more generally, coarse
spaces has developed in close connection with various areas, including
geometric group theory, operator algebras, and topology
(\cite{ArzhantsevaGuentnerSpakula2012, BellDranishnikov2008, HigsonRoe1995, HigsonRoeYu1993, Nowak2007, Nowak2015, NowakYu2023, Yu1998, Yu2000}).
Moreover, recent work by the authors
\cite{Nagaya2026,NagayaOgawaOkuda2025,OgawaOkuda2025Kob60} has revealed
that coarse geometry is also closely related to the properness of
topological group actions and to the theory of discontinuous groups on
homogeneous spaces with noncompact isotropy subgroups.

In this paper, we focus on coarse subspaces of coarse spaces.
For a coarse space $X$, the coarse structure naturally induces an equivalence relation on the power set $\mathcal{P}(X)$, and each equivalence class is called a \emph{coarse subspace} of $X$, following Leitner--Vigolo \cite{LeitnerVigolo2023}.
Mart\'{i}nez--Vigolo \cite{MartinezVigolo2026roe} developed a framework for constructing Roe-type operator algebras over general coarse spaces by means of coarse geometric modules.
In this framework, for coarse spaces $X$ and $Y$, the coarse support of a bounded operator between coarse geometric modules is formulated as a coarse subspace of $Y\times X$.
In particular, although the support of an operator cannot in general be defined naturally in a topological sense over an arbitrary coarse space, its coarse support can be defined as a coarse subspace of $Y\times X$.
This framework makes it possible to formulate operator-theoretic conditions such as controlled propagation and quasi-locality in terms of coarse subspaces of general coarse spaces.
Thus, their work demonstrates that coarse subspaces are not merely coarse analogues of subsets, but rather fundamental geometric objects for describing Roe-type operator algebras over general coarse spaces.

In this paper, we aim to develop a categorical framework for coarse subspaces of coarse spaces.
The category whose objects are coarse spaces and whose morphisms are closeness classes of controlled maps is commonly used as a setting for coarse geometry
(cf.~\cite{DikranjanZava2017,Zava2019CoarseCat}).
We introduce the notion of a controlled total relation between coarse spaces and construct, without using controlled maps, a realization of a category isomorphic to the one described above (Theorem \ref{theorem:CtrMapClo_isom_CtrTotRelClo}).
Hereafter, we refer to either of these isomorphic categories simply as the ``category of coarse spaces.''
We further show that the assignment associating with each coarse space the partially ordered set of its coarse subspaces is functorial (Theorem \ref{theorem:etaPc}) and that coarse subspaces are equivalent to subobjects in the category of coarse spaces (Theorem \ref{theorem:PcSub}).

We also discuss the property of coarse subspaces being ``asymptotically disjoint''
(cf.~\cite{Dranishnikov2001,Protasov2003}).
In particular, we show that mono-morphisms in the category of coarse spaces preserve the relation of being asymptotically disjoint (Theorem \ref{theorem:properAD}).
This result was announced in \cite{NagayaOgawaOkuda2025} and constitutes an important ingredient in providing a coarse-geometric interpretation of the properness criteria for group actions on homogeneous spaces established by Kobayashi
\cite{Kobayashi89,Kobayashi96} and Benoist \cite{Benoist96}.

The organization of this paper is as follows.
Section \ref{section:category} collects the categorical conventions and notation used throughout the paper.
After reviewing the basic notions concerning coarse spaces, controlled maps, coarse equivalences, and coarse subspaces in Section \ref{section:prelim_coarsespaces}, we introduce controlled total relations in Section \ref{section:realizationCtrTotRel} and use them to obtain a realization of the category of coarse spaces.
The resulting framework is then applied in Section \ref{section:Fct_coarsesubset} to establish the functoriality of the assignment that associates with each coarse space $X$ the partially ordered set of its coarse subspaces.
Section \ref{section:subspace_subobject} clarifies the relationship between this partially ordered set and the partially ordered set of subobjects of $X$ in the category of coarse spaces.
We conclude in Section \ref{section:asymptoticallydisjoint} by formulating asymptotic disjointness for coarse subspaces and proving its invariance under mono-morphisms.

\section{Preliminaries on category theory}\label{section:category}

In this section, we collect the categorical terminology and conventions used
throughout the paper.

\subsection{Notations on category theory}

Since the arguments in this paper involve certain set-theoretically large
constructions, we first fix the following setting.

\begin{setting}\label{setting:UVC}
Throughout this paper, we work in ZFC together with the additional axiom:
For every set $X$, there exists a universe $\mathcal U$ containing $X$.
We fix universes $\mathcal U$ and $\mathcal{V}$ satisfying
$\mathbb N\in\mathcal U\in\mathcal{V}$
(see \cite[Expos\'e I]{SGA41} for the definition of a universe).
\end{setting}

Throughout this paper, a category $\mathcal{C}$ is said to be
\emph{$\mathcal{V}$-small} if
$\Ob(\mathcal{C})\in\mathcal{V}$ and
$\Mor(\mathcal{C})\in\mathcal{V}$.
Here $\Ob(\mathcal{C})$ and $\Mor(\mathcal{C})$ denote the classes of all
objects and all morphisms of $\mathcal{C}$, respectively.

A category $\mathcal{C}$ is said to be \emph{locally $\mathcal{V}$-small} if
$\mathcal{C}(X,Y)\in\mathcal{V}$
for every pair of objects $X,Y\in\mathcal{C}$.
Here $\mathcal{C}(X,Y)$ denotes the set of all morphisms from $X$ to $Y$ in
$\mathcal{C}$.
Furthermore, a category $\mathcal{C}$ is said to be \emph{thin} if
$\#\mathcal{C}(X,Y)\le 1$
for every pair of objects $X,Y\in\mathcal{C}$.

For each pair $(X,Y)$ of objects of $\mathcal{C}$, we write an element $f$ of $\mathcal{C}(X,Y)$ as $f : X \rightarrow_{\mathcal{C}} Y$, or simply as $f : X \rightarrow Y$. We also occasionally write $s(f)$ and $t(f)$ for the source $X$ and the target $Y$ of $f$, respectively.

Let $\Cat$ denote the category whose objects are $\mathcal{V}$-small
categories and whose morphisms are functors.
We write $\ThinCat$ for the full subcategory of $\Cat$ consisting of thin
categories.

We consider the congruence on $\Cat$ [resp.~$\ThinCat$] induced by natural
isomorphisms and denote the corresponding quotient category by
$\hCat$ [resp.~$\hThinCat$].
The category $\hThinCat$ is naturally a full subcategory of $\hCat$.

We also write $\FCCCat$ [resp.~$\FCCThinCat$] for the subcategory of
$\Cat$ [resp.~$\ThinCat$] whose objects are finitely cocomplete
$\mathcal{V}$-small categories
[resp.~finitely cocomplete $\mathcal{V}$-small thin categories]
and whose morphisms preserve finite colimits.
The corresponding quotient categories by natural isomorphisms will be
denoted by $\FCChCat$ and $\FCChThinCat$, respectively.

All categories introduced above are locally $\mathcal{V}$-small.
The inclusions and quotient functors are summarized in the following diagram:
\[
\xymatrix@C=2em@R=2.4em{
&
\FCCCat \ar@{^{(}->}[rr]
         \ar@{->>}[dd]
&&
\Cat \ar@{->>}[dd]
\\
\FCCThinCat \ar@{^{(}->}[ur]
             \ar@{^{(}->}[rr]
             \ar@{->>}[dd]
&&
\ThinCat \ar@{^{(}->}[ur]
            \ar@{->>}[dd]
&
\\
&
\rule{0pt}{2.5ex}\FCChCat \ar@{^{(}->}[rr]
&&
\rule{0pt}{2.5ex}\hCat
\\
\rule{0pt}{2.5ex}\FCChThinCat \ar@{^{(}->}[ur]
                               \ar@{^{(}->}[rr]
&&
\rule{0pt}{2.5ex}\hThinCat \ar@{^{(}->}[ur]
&
}
\]

\subsection{Some categories of pre-ordered sets}\label{subsection:pre-order_category}

Several categories related to pre-ordered sets will appear throughout this
paper. In this subsection, we collect the necessary terminology.

Let $P$ be a $\mathcal{V}$-set (i.e.~$P \in \mathcal{V}$). A binary relation $\prec$ on $P$ is called a
\emph{pre-order} if it satisfies the following conditions:
\begin{enumerate}
    \item $x \prec x$ for each $x\in P$;
    \item $x \prec y$ and $y \prec z$ imply $x \prec z$ for each
    $x,y,z\in P$.
\end{enumerate}
A $\mathcal{V}$-set equipped with a pre-order, denoted by
$P=(P,\prec)$, is called a \emph{pre-ordered $\mathcal{V}$-set}, or simply a
\emph{pre-ordered set}.

Let $P=(P,\prec)$ and $Q=(Q,\prec)$ be pre-ordered sets.
A map $f:P\to Q$ is said to be \emph{order-preserving} if
$f(x)\prec f(y)$ whenever $x,y\in P$ satisfy $x\prec y$.
We write $\PreOrd(P,Q)$ for the set of all order-preserving maps from $P$
to $Q$.
We also write $\PreOrd$ for the locally $\mathcal{V}$-small category whose
objects are pre-ordered sets and whose morphisms are order-preserving maps.

\begin{definition}\label{definition:simonpre}
For a pre-ordered set $(P,\prec)$, we define an equivalence relation $\sim$
on $P$ by
\[
p_1\sim p_2
\defarrow
p_1\prec p_2 \text{ and } p_2\prec p_1.
\]
\end{definition}

For pre-ordered sets $P=(P,\prec)$ and $Q=(Q,\prec)$, we define an
equivalence relation $\sim$ on $\PreOrd(P,Q)$ by
\[
f\sim g
\defarrow
f(x)\sim g(x)\text{ for each }x\in P
\]
for $f,g\in\PreOrd(P,Q)$.    
This defines a congruence on the locally $\mathcal{V}$-small category
$\PreOrd$.
We denote the corresponding quotient category by $\hPreOrd$.
For each $f\in\PreOrd(P,Q)$, we write $[f]$ for the equivalence class
represented by $f$.
We say that $f \in \PreOrd(P,Q)$ is an equivalence if $[f]$ is an isomorphism in $\hPreOrd$.

Let us summarize the relationship between $\mathcal{V}$-small thin categories
and pre-ordered sets.

For a $\mathcal{V}$-small thin category $\mathcal{C}$, define a binary relation
$\prec$ on $\Ob(\mathcal{C})$ by
\[
x\prec y
\defarrow
\text{there exists a morphism from }x\text{ to }y\text{ in }\mathcal{C}.
\]
Then $(\Ob(\mathcal{C}),\prec)$ is a pre-ordered set. Henceforth, by a slight abuse of notation, we shall say, for example, that ``$\mathcal{C} = (\Ob(\mathcal{C}),\prec)$ is a pre-ordered set.''

Let $\mathcal{C}_1$ and $\mathcal{C}_2$ be $\mathcal{V}$-small thin categories,
and let $F:\mathcal{C}_1\to\mathcal{C}_2$ be a functor.
The restriction of $F$ to $\Ob(\mathcal{C}_1)$ gives an order-preserving map
between pre-ordered sets $\mathcal{C}_1$ and $\mathcal{C}_2$.

As is well known, the correspondences above 
give an equivalence of categories
$\hThinCat\to\hPreOrd$.
Accordingly, throughout this paper, we shall treat
$\mathcal{V}$-small thin categories and pre-ordered sets as equivalent
notions.

The well-known characterization that a functor is an equivalence if and only
if it is fully faithful and essentially surjective
(see, for example, \cite[Theorem 1.5.9]{Riehl2016category})
translates, under the above equivalence, into the following statement.

\begin{proposition}\label{proposition:preord_hisom}
Let $f\in\PreOrd(P,Q)$.
Then $[f]$ is an isomorphism in $\hPreOrd$ if and only if the following
conditions hold:
\begin{enumerate}
    \item For every $x,y\in P$, $f(x)\prec f(y)$ implies $x\prec y$.
    \item For every $q\in Q$, there exists $x\in P$ such that
    $f(x)\sim q$.
\end{enumerate}
\end{proposition}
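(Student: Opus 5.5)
We prove the proposition directly, by building an explicit inverse, rather than routing through the equivalence $\hThinCat\to\hPreOrd$. Under that equivalence, condition (1) together with order-preservation is exactly "fully faithful": hom-sets in a thin category have at most one element, so faithfulness is automatic. Condition (2) is "essentially surjective", since isomorphism of objects in a thin category means $\sim$. The direct argument avoids checking how the cited theorem transports across the equivalence.

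For the forward direction, suppose $[g]$ is inverse to $[f]$ in $\hPreOrd$, with $g\in\PreOrd(Q,P)$. Then $g\circ f\sim \id_P$ and $f\circ g\sim\id_Q$ pointwise.
\begin{itemize}
\item For (1): if $f(x)\prec f(y)$, then $g(f(x))\prec g(f(y))$ because $g$ is order-preserving. Since $x\sim g(f(x))$ and $g(f(y))\sim y$, transitivity gives $x\prec y$.
\item For (2): given $q\in Q$, take $x=g(q)$. Then $f(x)=f(g(q))\sim q$.
\end{itemize}

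For the converse, assume (1) and (2). Using the axiom of choice, pick for each $q\in Q$ an element $g(q)\in P$ with $f(g(q))\sim q$. We then check three things.
\begin{itemize}
\item \emph{$g$ is order-preserving.} If $q\prec q'$, then $f(g(q))\sim q\prec q'\sim f(g(q'))$, so $f(g(q))\prec f(g(q'))$. Condition (1) then yields $g(q)\prec g(q')$.
\item \emph{$f\circ g\sim\id_Q$.} This holds by construction.
\item \emph{$g\circ f\sim\id_P$.} For $x\in P$ we have $f(g(f(x)))\sim f(x)$, so both $f(g(f(x)))\prec f(x)$ and $f(x)\prec f(g(f(x)))$. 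Applying (1) to each gives $g(f(x))\sim x$.
\end{itemize}
Since $\sim$ is a congruence, these relations say that $[g][f]=[\id_P]$ and $[f][g]=[\id_Q]$. Hence $[f]$ is an isomorphism.

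None of these steps is a real obstacle. The only points needing care are the use of choice to define $g$ and the repeated use of transitivity of $\prec$ through $\sim$-equivalent elements. The computations are routine.
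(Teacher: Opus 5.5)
Your proof is correct, but it takes a different route from the paper. The paper gives no separate argument. It transports the standard theorem that a functor is an equivalence if and only if it is fully faithful and essentially surjective (Riehl, Theorem 1.5.9) across the equivalence $\hThinCat\to\hPreOrd$. Condition (1) is read as fullness, with faithfulness automatic, and condition (2) as essential surjectivity; this is exactly the dictionary in your first paragraph.

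Your direct argument is, in effect, the proof of that theorem specialized to thin categories. The quasi-inverse $g$ is chosen pointwise by the axiom of choice. Because every hom-set has at most one element, the naturality and coherence verifications collapse to the three order-theoretic checks you perform, and each of those checks is sound.

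The paper's route is shorter and matches its convention of identifying thin categories with pre-ordered sets. However, it tacitly relies on two facts: that isomorphisms in the quotient by natural isomorphisms are exactly equivalences of categories, and that natural isomorphisms between functors into a thin category amount to pointwise $\sim$. Your version avoids both translation steps, is fully self-contained, and makes explicit where choice enters.
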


We next summarize the relationship between partially ordered sets and
pre-ordered sets.

A pre-ordered set $(P,\prec)$ is said to be a \emph{partially ordered set}
if it satisfies the following condition:
\begin{itemize}
    \item $x\prec y$ and $y\prec x$ imply $x=y$ for each $x,y\in P$.
\end{itemize}
In this case, the relation $\prec$ is called a \emph{partial order}.
We write $\ParOrd$ for the full subcategory of $\PreOrd$ consisting of
partially ordered sets.

If $Q$ is a partially ordered set, then the equivalence relation $\sim$ on
$Q$ is trivial (i.e.~$x\sim y$ if and only if $x=y$).
Moreover, for every pre-ordered set $P$, the equivalence relation $\sim$ on
$\PreOrd(P,Q)$ is also trivial (i.e.~$f\sim g$ if and only if $f=g$).
Therefore, $\ParOrd$ may also be regarded as a full subcategory of
$\hPreOrd$.

For a pre-ordered set $P=(P,\prec)$, we write
$\AS(P):=P/\sim$
for the quotient set by the equivalence relation $\sim$ defined above, and call $\AS(P)$ the anti-symmetrization of $P$.
For each $x\in P$, let $[x]\in\AS(P)$ denote the $\sim$-equivalence class
represented by $x$.
The set $\AS(P)$ naturally inherits a partial order; namely,
\[
[x]\prec [y]
\defarrow
x\prec y.
\]
This is independent of the choice of representatives and defines a partial
order on $\AS(P)$.
The quotient map $\pi:P\to\AS(P)$ is order-preserving, and $[\pi]$ defines
an isomorphism in $\hPreOrd$.

Furthermore, for each $[f]\in\hPreOrd(P,Q)$, define
\[
\AS([f]):\AS(P)\to\AS(Q),\quad [x]\mapsto [f(x)].
\]
Then this construction defines a fully faithful functor
$\AS:\hPreOrd\to\ParOrd$, which is left adjoint to the inclusion functor
$\ParOrd\hookrightarrow\hPreOrd$.
In particular, $\ParOrd$ is a reflective subcategory of $\hPreOrd$
(see \cite[Definition 4.6.12]{Riehl2016category} for the terminology of
reflective subcategories).

In what follows, let $P=(P,\prec)$ be a pre-ordered set and $S$ a subset of
$P$. We define the set of upper bounds $\Upper(S)$ and the set of least
upper bounds $\bigvee S$ by
\begin{align*}
    \Upper(S)
    &:=
    \{x\in P\mid s\prec x\text{ for each }s\in S\},\\
    \bigvee S
    &:=
    \{x\in\Upper(S)\mid x\prec y\text{ for each }y\in\Upper(S)\}.
\end{align*}
Although the elements of $\bigvee S$ need not be unique, they determine a
single $\sim$-equivalence class in $P$.
In particular, if $P$ is a partially ordered set, then
$\#(\bigvee S)\le 1$.
Thus, by abuse of notation, we shall often regard $\bigvee S$ as an element
of $P$ whenever $\bigvee S\neq\emptyset$.

In this paper, a pre-ordered set $P$ is said to be \emph{complete join}
[resp.~\emph{finite join}] if every subset [resp.~every finite subset] of
$P$ has a non-empty set of least upper bounds.
When a pre-ordered set is regarded as a thin $\mathcal{V}$-small category,
the condition of being complete join [resp.~finite join] corresponds to the
condition of being cocomplete [resp.~finitely cocomplete].

Let $P$ and $Q$ be complete join [resp.~finite join] pre-ordered sets.
An order-preserving map $f:P\to Q$ is said to preserve arbitrary joins
[resp.~finite joins] if
\[
f(\bigvee S) \subset \bigvee f(S)
\]
for every subset [resp.~every finite subset] $S\subset P$.
We write $\CompJoinPreOrd$ [resp.~$\FinJoinPreOrd$] for the category whose
objects are complete join [resp.~finite join] pre-ordered sets and whose
morphisms are order-preserving maps preserving arbitrary joins
[resp.~finite joins].

These categories satisfy
\[
\CompJoinPreOrd \hookrightarrow
\FinJoinPreOrd \hookrightarrow
\PreOrd.
\]
The congruence $\sim$ on $\PreOrd$ defined above induces congruences on
$\CompJoinPreOrd$ and $\FinJoinPreOrd$.
We denote the corresponding quotient categories by
$\hCompJoinPreOrd$ and $\hFinJoinPreOrd$, respectively.
The inclusions induce
\[
\hCompJoinPreOrd \hookrightarrow
\hFinJoinPreOrd \hookrightarrow
\hPreOrd.
\]
We shall use the following elementary observation later.

\begin{proposition}\label{proposition:hpreisom_FJ}
Let $P$ and $Q$ be finite-join partially ordered sets, and let
$\Phi \in \PreOrd(P,Q)$. If $[\Phi] \in \hPreOrd(P,Q)$ is an isomorphism,
then $\Phi \in \FinJoinPreOrd(P,Q)$.
\end{proposition}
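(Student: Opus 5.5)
Since $P$ and $Q$ are partially ordered, the relation $\sim$ on each is equality. So $[\Phi]$ being an isomorphism in $\hPreOrd$ means there is an order-preserving $\Psi:Q\to P$ with $\Psi\circ\Phi=\id_P$ and $\Phi\circ\Psi=\id_Q$. Thus $\Phi$ is an order isomorphism (a bijection whose inverse is order-preserving). Equivalently, Proposition~\ref{proposition:preord_hisom} gives that $\Phi$ reflects the order and is surjective, and injectivity follows from antisymmetry. The plan is to show directly that an order isomorphism preserves finite joins. By the convention in the paper, this means: for every finite $S\subset P$ and every $x\in\bigvee S$, we have $\Phi(x)\in\bigvee\Phi(S)$.

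Fix a finite $S\subset P$ and $x\in\bigvee S$, which is nonempty since $P$ is finite join. First, $\Phi(x)\in\Upper(\Phi(S))$ because $\Phi$ is order-preserving and $s\prec x$ for all $s\in S$. Next, let $y\in\Upper(\Phi(S))$. Using surjectivity, write $y=\Phi(z)$. For each $s\in S$ we have $\Phi(s)\prec\Phi(z)$, and condition (1) of Proposition~\ref{proposition:preord_hisom} gives $s\prec z$. Hence $z\in\Upper(S)$, so $x\prec z$, and therefore $\Phi(x)\prec\Phi(z)=y$. This shows $\Phi(x)\in\bigvee\Phi(S)$, which is the required inclusion $\Phi(\bigvee S)\subset\bigvee\Phi(S)$.

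There is no real obstacle here. The only points needing care are the following. Essential surjectivity in Proposition~\ref{proposition:preord_hisom}(2) becomes genuine surjectivity because $\sim$ is equality in $Q$. The empty subset $S=\emptyset$ needs no separate treatment, since the argument above works uniformly and shows that $\Phi$ sends the bottom element to the bottom element. Note also that the finite-join hypothesis on $Q$ is not actually used; only the nonemptiness of $\bigvee S$ in $P$ is needed to make the statement non-vacuous.
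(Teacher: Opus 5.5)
Your proof is correct. The paper gives no proof of this proposition; it is recorded only as an ``elementary observation.'' Your direct check is the argument being left to the reader: order-reflection plus surjectivity sends $\bigvee S$ into $\bigvee\Phi(S)$, i.e.\ an equivalence of thin categories preserves colimits.

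One point is worth noting. The paper later invokes this proposition in the proof of Proposition~\ref{proposition:Phi_tilde_isom}, for $\widetilde{\Phi_X}\colon\mathcal{P}(X)\to\mathcal{M}/X$. There both sides are only pre-ordered, so your step upgrading essential surjectivity to genuine surjectivity is not available. Your argument still works with one change:
\begin{itemize}
\item given $y\in\Upper(\Phi(S))$, choose $z$ with $\Phi(z)\sim y$;
\item then $\Phi(s)\prec y\prec\Phi(z)$ gives $s\prec z$ for every $s\in S$;
\item hence $x\prec z$, and so $\Phi(x)\prec\Phi(z)\prec y$.
\end{itemize}
So the statement holds for finite-join pre-ordered sets, which is the generality in which the paper actually uses it.
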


We write $\CompJoinParOrd$ [resp.~$\FinJoinParOrd$] for the full
subcategory of $\CompJoinPreOrd$ [resp.~$\FinJoinPreOrd$] consisting of
complete join [resp.~finite join] partially ordered sets.
These may naturally be regarded as reflective subcategories of
$\hCompJoinPreOrd$ and $\hFinJoinPreOrd$, respectively, and the left
adjoints to the inclusion functors are given by the restriction of the
functor $\AS$ defined above.
The relationships among these categories are summarized in the following
diagram:
\[
\xymatrix@C=5em@R=4em{
\hCompJoinPreOrd \ar@{^{(}->}[r]
&
\hFinJoinPreOrd \ar@{^{(}->}[r]
&
\hPreOrd
\\
\rule{0pt}{3ex}\CompJoinParOrd \ar@{^{(}->}[r]
                               \ar@{^{(}->}[u]
&
\rule{0pt}{3ex}\FinJoinParOrd \ar@{^{(}->}[r]
                              \ar@{^{(}->}[u]
&
\rule{0pt}{3ex}\ParOrd \ar@{^{(}->}[u]
}
\]

\subsection{Open Maps between Pre-ordered Sets}

We record the terminology concerning open maps between pre-ordered sets.

Let $P = (P,\leq)$ and $Q = (Q,\leq)$ be pre-ordered sets.

\begin{definition}\label{definition:open}
We use the following terminology:
\begin{itemize}
    \item A subset $L$ of $P$ is called \emph{lower} if, whenever $x \in L$ and $a \leq x$, one has $a \in L$.
    \item An order-preserving map $f : P \rightarrow Q$ is called \emph{open} if, for every lower subset $L$ of $P$, the image $f(L)$ is lower in $Q$.
\end{itemize}
\end{definition}

\begin{remark}
The term ``open'' for maps between pre-ordered sets originates from the Alexandrov topology associated with a pre-ordered set. In this paper, by the Alexandrov topology on a pre-ordered set, we mean the topology whose open sets are precisely the lower subsets. For the Alexandrov topology, see, for example, \cite{Johnstone1986stonespaces}.
\end{remark}

The following follows immediately from the definition:

\begin{proposition}
If $f : P \rightarrow Q$ is open, then $\AS([f]) : \AS(P) \rightarrow \AS(Q)$ is also open.
\end{proposition}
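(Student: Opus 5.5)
The plan is to unwind the definitions directly, using the quotient map $\pi_P : P \to \AS(P)$ (and likewise $\pi_Q$) to move lower subsets back and forth. The key link is that $\AS([f])\circ \pi_P = \pi_Q \circ f$, which holds by the definition $\AS([f])([x]) = [f(x)]$. Note first that $\AS([f])$ is order-preserving: this is part of the statement that $\AS$ is a functor into $\ParOrd$.

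The first step is to relate lower subsets of $\AS(P)$ to lower subsets of $P$. Let $M \subset \AS(P)$ be lower and put $L := \pi_P^{-1}(M) \subset P$. I claim $L$ is lower in $P$. If $x \in L$ and $a \leq x$, then $[a] \leq [x]$ in $\AS(P)$ by the definition of the induced order. Since $[x] \in M$ and $M$ is lower, $[a] \in M$, so $a \in L$. Because $\pi_P$ is surjective, we also have $\pi_P(L) = M$.

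Next I apply openness of $f$. By hypothesis $f(L)$ is lower in $Q$. Using the commutation relation,
\[
\AS([f])(M) = \AS([f])(\pi_P(L)) = \pi_Q(f(L)).
\]

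It therefore remains to check that $\pi_Q$ sends lower subsets of $Q$ to lower subsets of $\AS(Q)$. Let $N \subset Q$ be lower, and suppose $[b] \leq [y]$ with $y \in N$. Then $b \leq y$ by the definition of the order on $\AS(Q)$, which does not depend on the choice of representatives. Since $N$ is lower, $b \in N$, and hence $[b] \in \pi_Q(N)$. Applying this with $N = f(L)$ shows that $\AS([f])(M)$ is lower, which proves the proposition.

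I expect no real obstacle here. The only point needing care is that the relation $[x] \leq [y]$ is independent of representatives, so that "$[b] \leq [y]$ implies $b \leq y$" holds for arbitrary representatives $b$ and $y$; the paper has already recorded this fact.
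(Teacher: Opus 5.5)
Your proof is correct. It is the same direct unwinding of definitions that the paper has in mind when it says the statement follows immediately from the definition: you pull a lower set $M\subset\AS(P)$ back to the lower set $\pi_P^{-1}(M)$, apply openness of $f$, and push forward along $\pi_Q$, which preserves lower sets.
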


We write $\PreOrd_{\open}$ for the wide subcategory of $\PreOrd$ whose morphisms are open maps. Similarly, $\FinJoinParOrd_{\open}$ denotes the wide subcategory of $\FinJoinParOrd$ whose morphisms are open maps.

\begin{remark}
The property of being open for order-preserving maps between pre-ordered sets is not preserved under $\sim$ (see the preceding subsection for the notation). Thus, care is needed when defining, for example, ``open morphisms'' in $\hPreOrd$.
\end{remark}

For each subset $\Omega$ of $P$, put
\[
\Low(\Omega) := \{ x \in P \mid x \leq \omega \text{ for every } \omega \in \Omega \}.
\]

The following proposition will be used later:

\begin{proposition}\label{proposition:LowOmega}
If $f : P \rightarrow Q$ is order-preserving, then
\[
f(\Low(\Omega)) \subset \Low(f(\Omega)).
\]
Moreover, if $f$ is injective and open, and $\Omega \neq \emptyset$, then
\[
f(\Low(\Omega)) = \Low(f(\Omega)).
\]
\end{proposition}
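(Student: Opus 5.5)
The first inclusion is a direct unwinding of the definitions. Let $y \in f(\Low(\Omega))$, so $y = f(x)$ for some $x \in P$ with $x \leq \omega$ for every $\omega \in \Omega$. Since $f$ is order-preserving, $f(x) \leq f(\omega)$ for every $\omega \in \Omega$. Every element of $f(\Omega)$ has the form $f(\omega)$, so $y = f(x) \in \Low(f(\Omega))$. This argument does not need $\Omega \neq \emptyset$; when $\Omega = \emptyset$ the claim is simply $f(P) \subset Q$.

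For the reverse inclusion, assume $f$ is injective and open and $\Omega \neq \emptyset$. Take $y \in \Low(f(\Omega))$ and fix some $\omega_0 \in \Omega$. The principal down-set $L_{\omega} := \{a \in P \mid a \leq \omega\}$ is lower by transitivity. Since $f$ is open, $f(L_{\omega_0})$ is lower in $Q$. It contains $f(\omega_0)$, and $y \leq f(\omega_0)$, so $y \in f(L_{\omega_0})$. Hence $y = f(x)$ for some $x \leq \omega_0$; this is exactly where $\Omega \neq \emptyset$ is used.

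It remains to show $x \leq \omega$ for every $\omega \in \Omega$, and this is the step that needs care, because the preimage obtained from each $\omega$ may a priori differ. For arbitrary $\omega \in \Omega$, the same argument applied to $L_\omega$ gives $x_\omega \leq \omega$ with $f(x_\omega) = y = f(x)$. Injectivity of $f$ forces $x_\omega = x$, so $x \leq \omega$. Therefore $x \in \Low(\Omega)$ and $y = f(x) \in f(\Low(\Omega))$, which gives the equality.

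Injectivity is genuinely needed: in a pre-ordered set, $f(x) = f(x')$ does not force $x \sim x'$. This is why we use literal injectivity rather than only injectivity up to $\sim$.
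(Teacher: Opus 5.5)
Your proof is correct and is the paper's argument written out elementwise: the paper uses openness (with the first inclusion) to get $f(\Low(\{\omega\}))=\Low(\{f(\omega)\})$ for each $\omega\in\Omega$, and then uses injectivity together with $\Omega\neq\emptyset$ to pass $f$ through the intersection $\Low(\Omega)=\bigcap_{\omega\in\Omega}\Low(\{\omega\})$, which is exactly what your choice of $x_\omega$ and the identification $x_\omega=x$ do. One caveat on your closing aside: your argument only uses that $f(x)=f(x')$ implies $x\sim x'$ (then $x\leq x_\omega\leq\omega$), so injectivity up to $\sim$ would in fact suffice, contrary to what you say, though this does not affect your proof of the stated result.
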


\begin{proof}
The first assertion follows immediately from the definition. We prove the second assertion. Since $f$ is open, together with the first assertion, for each $\omega \in \Omega$, we have
\[
f(\Low(\{ \omega \})) = \Low(\{ f(\omega) \}).
\]
Moreover, since $f$ is injective and $\Omega \neq \emptyset$,
\begin{align*}
f(\Low(\Omega))
    &= f\left(\bigcap_{\omega \in \Omega} \Low(\{ \omega \})\right) \\
    &= \bigcap_{\omega \in \Omega} f(\Low(\{ \omega \})) \\
    &= \bigcap_{\omega \in \Omega} \Low(\{ f(\omega) \}) \\
    &= \Low(f(\Omega)).
\end{align*}
\end{proof}

\subsection{Slice categories and subobjects}

Let $\mathcal{C}$ be a $\mathcal{V}$-small category. For each object $X$ of $\mathcal{C}$, let $\mathcal{C}/X$ denote the slice category of $\mathcal{C}$ over $X$; that is, $\Ob(\mathcal{C}/X) := \{ h \in \Mor(\mathcal{C}) \mid t(h) = X \}$, and, for each $h_1,h_2 \in \Ob(\mathcal{C}/X)$,
\[
(\mathcal{C}/X)(h_1,h_2) := \{ \phi \in \mathcal{C}(s(h_1),s(h_2)) \mid h_2 \circ \phi = h_1 \}.
\]
Note that $\mathcal{C}/X$ is again a $\mathcal{V}$-small category. If $h_2$ is a mono-morphism in $\mathcal{C}$, then $\#(\mathcal{C}/X)(h_1,h_2) \leq 1$. In particular, the slice category of a category all of whose morphisms are mono-morphisms is thin.

Let $\Mono^\mathcal{C}$ be the wide subcategory of $\mathcal{C}$ consisting of all mono-morphisms in $\mathcal{C}$. For each object $X$, the slice category $\Mono^\mathcal{C}/X$ is thin. Regarding this thin category as a pre-ordered set in the sense of Section \ref{subsection:pre-order_category}, we write $\Sub(X) := \AS(\Mono^\mathcal{C}/X)$ for its anti-symmetrization, viewed as a partially ordered set. The elements of $\Sub(X)$ correspond to the isomorphism classes of objects in the slice category $\Mono^\mathcal{C}/X$. Such an element is called a subobject of $X$ in $\mathcal{C}$ (cf.~\cite[Definition 4.7.8]{Riehl2016category}).

\begin{example}\label{example:SetSub}
Let $\Set$ denote the category of $\mathcal{U}$-sets and maps. For each object $X$ of $\Set$, $\Sub(X)$ is naturally identified with $\mathcal{P}(X)$. This identification is given by
\[
\Sub(X) \rightarrow \mathcal{P}(X), ~ [h : Z \rightarrow X] \mapsto h(Z).
\]
\end{example}

\subsection{Orthogonal factorization systems}

In this subsection, we recall the notion of an orthogonal factorization system that will be used throughout the paper.
For basic terminology and results concerning orthogonal factorization systems,
we refer the reader to \cite{EchiLazaarAbdallahi2015OFS} and \cite[Section~2]{FreydKelly1972}.

Throughout this subsection, we fix a category $\mathcal{C}$.

\begin{definition}[Lifting property]
Let $e:A\to B$ and $m:X\to Y$ be morphisms in $\mathcal{C}$.
We say that $(e,m)$ has the \emph{lifting property} if, for every commutative diagram
\[
\xymatrix{
A \ar[r]^{u} \ar[d]_{e} & X \ar[d]^{m} \\
B \ar[r]_{v} & Y,
}
\]
there exists a unique morphism $d:B\to X$ such that the following diagram commutes:
\[
\xymatrix{
A \ar[r]^{u} \ar[d]_{e} & X \ar[d]^{m} \\
B \ar[r]_{v} \ar@{-->}[ur]^{d} & Y.
}
\]
In this case, we write $e\perp m$.
\end{definition}

\begin{definition}\label{definition:perpmor}
Let $\mathcal{S}\subset\Mor(\mathcal{C})$ be a class of morphisms.
We define classes of morphisms $\mathcal{S}^{\perp}$ and ${}^{\perp}\mathcal{S}$ by
\[
\begin{aligned}
\mathcal{S}^{\perp}
&=
\{
m\in\Mor(\mathcal{C})
\mid
e\perp m
\text{ for every }
e\in\mathcal{S}
\},
\\
{}^{\perp}\mathcal{S}
&=
\{
e\in\Mor(\mathcal{C})
\mid
e\perp m
\text{ for every }
m\in\mathcal{S}
\}.
\end{aligned}
\]
\end{definition}

The following proposition follows immediately from the definition.

\begin{proposition}\label{proposition:perpsubcategory}
In the setting of Definition \ref{definition:perpmor},
the classes $\mathcal{S}^{\perp}$ and ${}^{\perp}\mathcal{S}$ form wide
subcategories of $\mathcal{C}$.
\end{proposition}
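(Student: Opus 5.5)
We must show that $\mathcal{S}^{\perp}$ and ${}^{\perp}\mathcal{S}$ each contain every identity morphism of $\mathcal{C}$ and are closed under composition. Being wide subcategories then follows, since a wide subcategory is simply a class of morphisms containing all identities and closed under composition. We treat $\mathcal{S}^{\perp}$ in detail; the argument for ${}^{\perp}\mathcal{S}$ is the dual (or mirror-image) one, obtained by reading the squares with the roles of $e$ and $m$ swapped.

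\emph{Identities.} Let $\id_X$ be an identity and let $e:A\to B$ be arbitrary. Consider a commutative square with top edge $u:A\to X$, bottom edge $v:B\to X$, and $\id_X\circ u=v\circ e$, that is, $u=v\circ e$. A lift $d:B\to X$ must satisfy $\id_X\circ d=v$, so $d=v$ is forced, which gives uniqueness. For existence, $d=v$ also satisfies $d\circ e=v\circ e=u$. Hence $e\perp\id_X$ for every morphism $e$, so $\id_X\in\mathcal{S}^{\perp}$. The same computation with $e=\id_A$ (the lift is forced to be $d=u$) shows $\id_A\perp m$ for every $m$, so $\id_A\in{}^{\perp}\mathcal{S}$.

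\emph{Composition.} Let $m_1:X\to Y$ and $m_2:Y\to Z$ lie in $\mathcal{S}^{\perp}$, and let $e:A\to B$ be in $\mathcal{S}$. Suppose $u:A\to X$ and $v:B\to Z$ satisfy $m_2 m_1 u=v e$.

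First, the outer square with edges $m_1u$, $e$, $m_2$, $v$ commutes. Since $e\perp m_2$, there is a unique $d_2:B\to Y$ with $d_2e=m_1u$ and $m_2d_2=v$.

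Second, the square with edges $u$, $e$, $m_1$, $d_2$ commutes. Since $e\perp m_1$, there is a unique $d_1:B\to X$ with $d_1e=u$ and $m_1d_1=d_2$. Then $m_2m_1d_1=m_2d_2=v$, so $d_1$ is a lift for the original square, which proves existence.

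For uniqueness, let $d$ be any lift, so $de=u$ and $m_2m_1d=v$. Then $m_1d$ satisfies $(m_1d)e=m_1u$ and $m_2(m_1d)=v$, so the uniqueness in the first step forces $m_1d=d_2$. Now $d$ satisfies $de=u$ and $m_1d=d_2$, so the uniqueness in the second step forces $d=d_1$.

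The dual argument, for $e_1,e_2\in{}^{\perp}\mathcal{S}$ and $m\in\mathcal{S}$, lifts first against $e_1$ and then against $e_2$. Its uniqueness check follows the same pattern.

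The only step needing care is uniqueness in the composition case, where each of the two uniqueness statements has to be applied to the correct intermediate square; the remaining steps are routine diagram chasing.
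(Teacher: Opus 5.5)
Your proposal is correct and is essentially the paper's approach: the paper gives no written proof, saying only that the result follows immediately from the definition. Your argument is the routine verification behind that remark: identities admit forced unique lifts, and composites lift by two successive lifts, with uniqueness propagated through the intermediate square in the right order for both $\mathcal{S}^{\perp}$ and ${}^{\perp}\mathcal{S}$.
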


We now recall the definition of an orthogonal factorization system.

\begin{definition}[Orthogonal factorization system]\label{definition:OFS}
Let $\mathcal{E},\mathcal{M}\subset\Mor(\mathcal{C})$ be classes of
morphisms in $\mathcal{C}$.
The pair $(\mathcal{E},\mathcal{M})$ is called an
\emph{orthogonal factorization system} on $\mathcal{C}$ if the following
conditions are satisfied:
\begin{enumerate}
\item
Every morphism $f$ in $\mathcal{C}$ admits a factorization
\[
f=m\circ e
\]
with $e\in\mathcal{E}$ and $m\in\mathcal{M}$.
Such a pair $(e,m)$ is called an
$(\mathcal{E},\mathcal{M})$-factorization of $f$.

\item
\[
\mathcal{E}^{\perp}=\mathcal{M},
\qquad
{}^{\perp}\mathcal{M}=\mathcal{E}.
\]
\end{enumerate}

An orthogonal factorization system $(\mathcal{E},\mathcal{M})$ is said to
be \emph{proper} if every morphism in $\mathcal{E}$ is an epi-morphism and
every morphism in $\mathcal{M}$ is a mono-morphism.
\end{definition}

By Proposition \ref{proposition:perpsubcategory}, if $(\mathcal{E},\mathcal{M})$ is an orthogonal factorization system on $\mathcal{C}$, then both $\mathcal{E}$ and $\mathcal{M}$ form wide subcategories of $\mathcal{C}$.

Moreover, the $(\mathcal{E},\mathcal{M})$-factorization of each morphism $f$ is unique in the sense of the following proposition, which follows immediately from the lifting property:

\begin{proposition}\label{proposition:uniqueness_of_factrization}
For a morphism $h$ in $\mathcal{C}$, let $(e,m)$ and $(e',m')$ be $(\mathcal{E},\mathcal{M})$-factorizations of $h$.
Then there uniquely exists an isomorphism $d$ with $d \circ e = e'$ and $m' \circ d = m$.
\end{proposition}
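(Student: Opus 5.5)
The plan is to build $d$ from the lifting property applied to a suitably chosen square, and then get uniqueness and invertibility from the uniqueness part of the lifting property. Write $h=m\circ e=m'\circ e'$ with $e:A\to B$, $m:B\to Y$, $e':A\to B'$, $m':B'\to Y$, all in $\mathcal{C}$. Since $e\in\mathcal{E}={}^{\perp}\mathcal{M}$ and $m'\in\mathcal{M}$, we have $e\perp m'$. The commutative square is
\[
\xymatrix{
A \ar[r]^{e'} \ar[d]_{e} & B' \ar[d]^{m'} \\
B \ar[r]_{m} & Y.
}
\]
It commutes precisely because $m'\circ e'=h=m\circ e$. The lifting property then gives a unique $d:B\to B'$ with $d\circ e=e'$ and $m'\circ d=m$. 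This settles existence and uniqueness of such a morphism, without yet using that it is an isomorphism.

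For invertibility, I would apply the same argument with the roles of the factorizations exchanged, using $e'\perp m$. This yields a unique $d':B'\to B$ with $d'\circ e'=e$ and $m\circ d'=m'$. Then $d'\circ d:B\to B$ satisfies $(d'\circ d)\circ e=e$ and $m\circ(d'\circ d)=m$.

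Now consider the square with top edge $e$, left edge $e$, right edge $m$ and bottom edge $m$. It commutes, and $e\perp m$ because $m\in\mathcal{M}=\mathcal{E}^{\perp}$. Both $\id_B$ and $d'\circ d$ are diagonal fillers for it, so uniqueness of the lift forces $d'\circ d=\id_B$. Symmetrically, $d\circ d'=\id_{B'}$ by the lift in the square built from $e'$ and $m'$. Hence $d$ is an isomorphism.

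Finally, uniqueness of $d$ among isomorphisms with these properties is immediate, since $d$ was already unique among all morphisms satisfying $d\circ e=e'$ and $m'\circ d=m$. There is no real obstacle here. The only point needing care is to invoke the uniqueness clause of the lifting property, which is built into the definition of $\perp$, for the identity-comparison squares, rather than relying on mono or epi properties: those are available only when the system is proper.
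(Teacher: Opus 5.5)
Your proof is correct and is the standard argument the paper has in mind; the paper gives no written proof beyond saying the statement ``follows immediately from the lifting property.'' The lift in the square $m'\circ e'=m\circ e$ gives existence and uniqueness of $d$, and uniqueness of lifts in the squares $m\circ e=m\circ e$ and $m'\circ e'=m'\circ e'$ forces $d'\circ d=\id_B$ and $d\circ d'=\id_{B'}$, so $d$ is an isomorphism.
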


\begin{example}\label{Example:SetEpiMonoOFS}
For the category $\Set$ defined in Example \ref{example:SetSub}, 
let $\mathcal{E}$ [resp.~$\mathcal{M}$] be the class of all surjections (epi-morphisms) [resp.~injections (mono-morphisms)] in $\Set$.
Then $(\mathcal{E},\mathcal{M})$ gives a proper orthogonal factorization system on $\Set$.
\end{example}

\subsection{Slice categories and orthogonal factorization systems}\label{subsection:Mslice}

In this subsection, we collect terminology concerning slice categories and
several properties of slice categories associated with orthogonal
factorization systems that will be needed later.
Although the results presented in this subsection appear to be well known,
we were unable to find a convenient reference containing precisely the
statements required here. For this reason, proofs will be given in the
appendix (see Section \ref{section:app:proofMslice}).

Throughout this subsection, let $\mathcal{C}$ be a $\mathcal{V}$-small
category.
For each morphism $f:X\to Y$, define a functor
$f_\sharp:\mathcal{C}/X\to\mathcal{C}/Y$ by
\begin{align*}
\Ob(\mathcal{C}/X)
&\rightarrow
\Ob(\mathcal{C}/Y),
&
h
&\mapsto
f\circ h,
\\
\Mor(\mathcal{C}/X)
&\rightarrow
\Mor(\mathcal{C}/Y),
&
(\phi:h_1\rightarrow_{\mathcal{C}/X}h_2)
&\mapsto
(\phi:(f\circ h_1)\rightarrow_{\mathcal{C}/Y}(f\circ h_2)).
\end{align*}

The correspondences
$X\mapsto\mathcal{C}/X$ and 
$f\mapsto f_\sharp$
define a functor from $\mathcal{C}$ to $\Cat$.
In particular, the correspondences $X\mapsto\mathcal{C}/X$ and 
$f\mapsto[f_\sharp]$
define a functor from $\mathcal{C}$ to $\hCat$, which will be denoted by
$\CSlice$.

The following proposition summarizes basic cocompleteness properties of
slice categories.

\begin{proposition}\label{proposition:CXFCC}
Assume that $\mathcal{C}$ is finitely cocomplete.
Then, for each $X\in\Ob(\mathcal{C})$, the slice category
$\mathcal{C}/X$ is also finitely cocomplete.
Moreover, for each morphism
$f:X_1\rightarrow_{\mathcal{C}}X_2$,
the functor
$f_\sharp:\mathcal{C}/X_1\rightarrow\mathcal{C}/X_2$
preserves finite colimits.
In particular, the functor $\CSlice$ may be regarded as a functor from
$\mathcal{C}$ to $\FCChCat$.
\end{proposition}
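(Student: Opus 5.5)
The plan is to construct finite colimits in $\mathcal{C}/X$ directly from finite colimits in $\mathcal{C}$, and then check that $f_\sharp$ sends these chosen colimits to colimits.

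\textbf{Colimits in $\mathcal{C}/X$.} Let $D:J\to\mathcal{C}/X$ be a finite diagram, with $D(j)=h_j:A_j\to X$. The transition maps $\phi_u:A_j\to A_k$ for $u:j\to k$ satisfy $h_k\circ\phi_u=h_j$. Let $U:\mathcal{C}/X\to\mathcal{C}$ be the forgetful functor $h\mapsto s(h)$.
\begin{enumerate}
\item Since $\mathcal{C}$ is finitely cocomplete, take a colimit $(L,(\iota_j)_j)$ of $U\circ D$ in $\mathcal{C}$.
\item The family $(h_j)_j$ is a cocone over $U\circ D$ with vertex $X$. By the universal property there is a unique $h:L\to X$ with $h\circ\iota_j=h_j$ for all $j$.
\item Then $h\in\Ob(\mathcal{C}/X)$, and each $\iota_j$ is a morphism $h_j\to h$ in $\mathcal{C}/X$.
\end{enumerate}

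\textbf{Universality.} Let $(g:B\to X,(\psi_j)_j)$ be a cocone over $D$ in $\mathcal{C}/X$, so that $g\circ\psi_j=h_j$. Forgetting to $\mathcal{C}$ gives a cocone over $U\circ D$, hence a unique $\psi:L\to B$ with $\psi\circ\iota_j=\psi_j$. It remains to see that $\psi$ is a morphism $h\to g$ in $\mathcal{C}/X$:
\[
g\circ\psi\circ\iota_j=g\circ\psi_j=h_j=h\circ\iota_j\quad\text{for all } j .
\]
By the uniqueness part of the universal property of $L$, this gives $g\circ\psi=h$. Uniqueness of $\psi$ as a morphism in $\mathcal{C}/X$ follows from its uniqueness in $\mathcal{C}$.

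The empty diagram is included as a special case: it yields the initial object $0\to X$. So $\mathcal{C}/X$ is finitely cocomplete, and $U$ creates (in particular, preserves) finite colimits.

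\textbf{Preservation by $f_\sharp$.} Let $f:X_1\to X_2$. By construction $U_{X_2}\circ f_\sharp=U_{X_1}$ strictly. Apply $f_\sharp$ to the colimit cocone $(h,(\iota_j))$ above. This gives the cocone $(f\circ h,(\iota_j))$ over $f_\sharp\circ D$, whose underlying cocone in $\mathcal{C}$ is still the colimit $(L,(\iota_j))$ of $U_{X_2}\circ f_\sharp\circ D=U_{X_1}\circ D$. The previous two steps, applied over $X_2$, show that any cocone in $\mathcal{C}/X_2$ whose underlying cocone is a colimit in $\mathcal{C}$ is itself a colimit. Hence $f_\sharp$ maps the chosen colimit cocones to colimit cocones. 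Any other colimit of $D$ is isomorphic to the chosen one, and functors preserve isomorphisms, so $f_\sharp$ preserves all finite colimits.

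\textbf{Functoriality into $\FCChCat$.} We already know $\CSlice$ is a functor $\mathcal{C}\to\hCat$. Each $\mathcal{C}/X$ is an object of $\FCCCat$, and each $f_\sharp$ is a morphism there. Therefore $\CSlice$ factors through $\FCChCat$.

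I expect no real obstacle. The only point needing care is the uniqueness argument in the universality step, which shows that the factorization $\psi$ from $\mathcal{C}$ automatically commutes over $X$.
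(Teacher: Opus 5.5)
Your argument is correct and is essentially the paper's. The paper identifies $(\mathcal{C}/X)^I$ with $\mathcal{C}^I/\Delta_X$ and defines the slice colimit functor by $\lambda^X(\alpha):=\Theta_{D,X}(\alpha)$, which is exactly your induced map $h\colon L\to X$; its identity $f_\sharp\circ\lambda^X=\lambda^Y\circ(\Delta_f)_\sharp$ is your observation that the underlying colimit cocone stays the same while the structure map becomes $f\circ h$. One addition: the paper defines ``preserves $I$-colimits'' as $[F]\circ[\lambda^{\mathcal{C}}]=[\lambda^{\mathcal{D}}]\circ[F^I]$ in $\hCat$, and you should note that your argument gives this, even strictly, once the colimits in each slice are chosen from a single fixed choice of colimits in $\mathcal{C}$.
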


We next explain how an orthogonal factorization system on $\mathcal{C}$
induces a corresponding slice-category construction.

Throughout the remainder of this subsection, let
$(\mathcal{E},\mathcal{M})$
be an orthogonal factorization system on $\mathcal{C}$ (see Definition \ref{definition:OFS} for the definition of orthogonal factorization systems), and regard
$\mathcal{M}$ as a wide subcategory of $\mathcal{C}$.
For each object $X$ of $\mathcal{C}$ (and hence also of $\mathcal{M}$),
the notation $\mathcal{M}/X$ will always denote the slice category of
$\mathcal{M}$ over $X$.
Note that $\mathcal{M}/X$ is naturally a subcategory of $\mathcal{C}/X$.

\begin{proposition}\label{proposition:MXCXreflective}
The category $\mathcal{M}/X$ is a reflective subcategory of
$\mathcal{C}/X$.
Moreover, let
$\pi_X:\mathcal{C}/X\rightarrow_{\Cat}\mathcal{M}/X$
be a left adjoint to the inclusion functor, 
and we write 
$(e_h,m_h)$ for the $(\mathcal{E},\mathcal{M})$-factorization of each object $h$ of $\mathcal{C}/X$.
Then $\pi_X(h)$ and $m_h$ are isomorphic as objects of $\mathcal{M}/X$.
\end{proposition}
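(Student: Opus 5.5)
We will construct the reflector explicitly and check the adjunction by means of the lifting property. The functor $\pi_X:\mathcal{C}/X\to\mathcal{M}/X$ is defined as follows.

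On objects, for each $h:A\to X$ we fix, using condition (1) of Definition \ref{definition:OFS}, an $(\mathcal{E},\mathcal{M})$-factorization $h=m_h\circ e_h$ with $e_h:A\to I_h$ and $m_h:I_h\to X$. We then set $\pi_X(h):=m_h\in\Ob(\mathcal{M}/X)$. The component of the unit at $h$ is $e_h$, regarded as a morphism $h\to m_h$ in $\mathcal{C}/X$; this is legitimate because $m_h\circ e_h=h$.

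The core step is the universal property of $e_h$. Let $m:B\to X$ be an object of $\mathcal{M}/X$, so $m\in\mathcal{M}$, and let $\phi:h\to m$ be a morphism in $\mathcal{C}/X$, so $m\circ\phi=h$. We must show there is a unique morphism $\psi:m_h\to m$ in $\mathcal{M}/X$ with $\psi\circ e_h=\phi$. Consider the square with top edge $\phi:A\to B$, left edge $e_h$, right edge $m$ and bottom edge $m_h:I_h\to X$. It commutes, since $m\circ\phi=h=m_h\circ e_h$. Because $e_h\in\mathcal{E}$ and $m\in\mathcal{M}=\mathcal{E}^\perp$, we have $e_h\perp m$. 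Hence there is a unique $d:I_h\to B$ with $d\circ e_h=\phi$ and $m\circ d=m_h$.

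Two points then need checking, and I expect this to be the main subtlety.

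First, $d$ must be a morphism of $\mathcal{M}$, not merely of $\mathcal{C}$. This would follow from the cancellation property of $\mathcal{M}$: if $m\circ d\in\mathcal{M}$ and $m\in\mathcal{M}$, then $d\in\mathcal{M}$. I would prove this directly from $\mathcal{M}=\mathcal{E}^\perp$. Given a square from some $e\in\mathcal{E}$ to $d$, compose it with $m$ to obtain a square from $e$ to $m\circ d$. This square has a unique lift, and that lift also serves for the original square. Uniqueness of lifts for the original square follows from uniqueness for the composed square.

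Second, uniqueness of $\psi$ within $\mathcal{M}/X$ follows from the uniqueness clause of the lifting property. Any $\psi$ with $\psi\circ e_h=\phi$ and $m\circ\psi=m_h$ is a diagonal filler of the same square, so $\psi=d$.

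With the universal arrows $e_h$ in hand, the standard construction makes $h\mapsto m_h$ into a functor left adjoint to the inclusion. A morphism $\phi:h\to h'$ is sent to the unique $\mathcal{M}/X$-morphism $m_h\to m_{h'}$ induced by $e_{h'}\circ\phi$. Hence $\mathcal{M}/X$ is reflective in $\mathcal{C}/X$.

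Finally, left adjoints are unique up to natural isomorphism. So any left adjoint $\pi_X$ to the inclusion satisfies $\pi_X(h)\cong m_h$ in $\mathcal{M}/X$, which gives the second assertion. Independence of the chosen factorization is already guaranteed by Proposition \ref{proposition:uniqueness_of_factrization}.
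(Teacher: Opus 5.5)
Your proof is correct and is essentially the paper's. The appendix also fixes a choice of factorizations and sets $\pi_X(h)=m_h$, with morphisms given by unique lifts; it obtains the bijection $\mathcal{M}/X(m_h,m)\cong\mathcal{C}/X(h,m)$, $\psi\mapsto\psi\circ e_h$, from $e_h\perp m$, and concludes by uniqueness of left adjoints. Your explicit check that the lift lies in $\mathcal{M}$ is a point the paper leaves implicit, but to show that the lift of the composed square also fills the original one you must cancel $m$: do this via uniqueness of lifts for $e\perp m$ (both candidates fill the same square against $m$), since $m$ need not be a mono-morphism.
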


Using Proposition \ref{proposition:MXCXreflective}, we may associate to
every morphism in $\mathcal{C}$ a functor between the corresponding slice
categories of $\mathcal{M}$.

Let $X$ and $Y$ be objects of $\mathcal{C}$, and let
$\pi_X$ [resp.~$\pi_Y$] be a left adjoint to the inclusion functor $\iota_X : \mathcal{M}/X \rightarrow \mathcal{C}/X$ [resp.~$\iota_Y : \mathcal{M}/Y \rightarrow \mathcal{C}/Y$].
For each morphism
$f:X\rightarrow_{\mathcal{C}}Y$,
put
\[
f_\sharp^{\mathcal{M}}
:=
\pi_Y \circ f_\sharp \circ \iota_X.
\]
Note that $[\pi_Y]$ in $\hCat$ does not depend on the choice of $\pi_Y$, by the uniqueness of left adjoint (see \cite[Proposition 4.3.1]{Riehl2016category}), and neither does $[f_\sharp^{\mathcal{M}}]$.

\begin{proposition}\label{proposition:functfM}
The correspondences
$X\mapsto\mathcal{M}/X$
and
$f\mapsto[f_\sharp^{\mathcal{M}}]$
define a functor
\[
\MSlice:\mathcal{C}\to\hCat.
\]    
\end{proposition}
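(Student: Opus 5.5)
We have to show that $X\mapsto\mathcal{M}/X$ and $f\mapsto[f_\sharp^{\mathcal{M}}]$ respect identities and composition in $\hCat$. That is, we need natural isomorphisms
\[
(\id_X)_\sharp^{\mathcal{M}}\cong\id_{\mathcal{M}/X},
\qquad
(g\circ f)_\sharp^{\mathcal{M}}\cong g_\sharp^{\mathcal{M}}\circ f_\sharp^{\mathcal{M}}
\]
for $f:X\to Y$ and $g:Y\to Z$. We already know that $\CSlice$ is a functor, i.e.\ $(\id_X)_\sharp=\id$ and $(g\circ f)_\sharp=g_\sharp\circ f_\sharp$ strictly. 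Since $[f_\sharp^{\mathcal{M}}]$ is independent of the choice of reflectors, we fix, for each object $X$, a left adjoint $\pi_X\dashv\iota_X$. By Proposition \ref{proposition:MXCXreflective} we may take $\pi_X(h)=m_h$, with unit $e_h:h\to\iota_X m_h$ in $\mathcal{C}/X$.

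\textbf{Identities.} We have $(\id_X)_\sharp^{\mathcal{M}}=\pi_X\circ\iota_X$. Because $\mathcal{M}/X$ is reflective, the counit $\pi_X\iota_X\Rightarrow\id$ is a natural isomorphism; this is the standard fact that the counit is invertible iff the right adjoint is fully faithful, and $\iota_X$ is fully faithful here. Concretely, for $m\in\mathcal{M}$ the factorization $(\id,m)$ is an $(\mathcal{E},\mathcal{M})$-factorization, so $m_m\cong m$ by Proposition \ref{proposition:uniqueness_of_factrization}. Naturality comes from the universal property.

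\textbf{Composition.} It suffices to produce a natural isomorphism
\[
\pi_Z\, g_\sharp\, f_\sharp\,\iota_X \;\cong\; \pi_Z\, g_\sharp\,\iota_Y\,\pi_Y\, f_\sharp\,\iota_X .
\]
This reduces to the key lemma: for every $h\in\mathcal{C}/Y$, the unit $\eta_h:h\to\iota_Y\pi_Y h$ is sent by $\pi_Z g_\sharp$ to an isomorphism. We then apply this to $h=f_\sharp\iota_X(k)$, and naturality of $\eta$ gives naturality of the resulting isomorphism.

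To prove the key lemma, write $h=m_h\circ e_h$ with $e_h\in\mathcal{E}$. The morphism $\eta_h$ has underlying $\mathcal{C}$-morphism $e_h$. Now $g\circ h=(g\circ m_h)\circ e_h$. Factor $g\circ m_h=m'\circ e'$; then $g\circ h=m'\circ(e'\circ e_h)$, and $e'\circ e_h\in\mathcal{E}$ because $\mathcal{E}={}^{\perp}\mathcal{M}$ is closed under composition (Proposition \ref{proposition:perpsubcategory}). Hence $\pi_Z(g\circ h)$ and $\pi_Z(g\circ m_h)$ are both represented by $m'$. Moreover, the map $\pi_Z g_\sharp(\eta_h)$ is the comparison morphism in $\mathcal{M}/Z$, which equals the identity of $m'$ by uniqueness of lifts. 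So it is an isomorphism.

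The main obstacle is the bookkeeping in the composition step. One must check that the morphism obtained from the adjunction really is this identity; this follows from uniqueness in Proposition \ref{proposition:uniqueness_of_factrization} together with the diagonal-filler uniqueness in the lifting property. One must also confirm that everything is natural in $k$. Both checks are routine once $\pi$ is described via factorizations.
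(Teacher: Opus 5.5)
Your proof is correct and follows essentially the paper's route. Your key lemma, that $\pi_Z g_\sharp$ inverts the unit, is exactly the paper's Proposition~\ref{proposition:fM} ($[f^{\mathcal{M}}_\sharp]\circ[\pi_X]=[\pi_Y]\circ[f_\sharp]$), whose components $d_h$ are literally your $\pi_Z g_\sharp(e_h)$ (with $g$ in place of $f$); composition then follows by the same whiskering, and identities are handled either by your invertible counit or, as in the paper, by choosing the factorization section with $(e^\Theta(h),m^\Theta(h))=(\id_{s(h)},h)$ for $h\in\mathcal{M}$ so that $\pi_X\circ\iota_X$ is strictly the identity. The one small advantage of your version is that naturality comes for free from naturality of the unit, whereas the paper verifies it by a diagram chase; note only that with a fixed reflector each component is the comparison isomorphism of Proposition~\ref{proposition:uniqueness_of_factrization} rather than literally an identity, which is all you need.
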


Combining the previous results, we obtain the following cocompleteness 
properties of the functor $\MSlice$.

\begin{theorem}\label{theorem:fMFCC}
Assume that $\mathcal{C}$ is finitely cocomplete.
\begin{enumerate}
\item \label{item:fMFCC:fMsharpFCCpreserve}
For each object $X$, the category $\mathcal{M}/X$ is finitely cocomplete.
Moreover, for each morphism
$f:X\rightarrow_{\mathcal{C}}Y$,
the functor
$[f_\sharp^{\mathcal{M}}]:
\mathcal{M}/X \rightarrow_{\hCat}\mathcal{M}/Y$
preserves finite colimits.
In particular, $\MSlice$ may be regarded as a functor from
$\mathcal{C}$ to $\FCChCat$.

\item \label{item:fMFCC:Mono}
Assume further that every morphism in $\mathcal{M}$ is a mono-morphism in $\mathcal{C}$. 
Then, for each object $X$, the category $\mathcal{M}/X$ is a thin category.
Consequently, $\MSlice$ defines a functor
\[
\mathcal{C}
\longrightarrow
\FCChThinCat
\simeq
\hFinJoinPreOrd
\simeq
\FinJoinParOrd.
\]

Henceforth, we regard $\MSlice$ as a functor from $\mathcal{C}$ to
$\FinJoinParOrd$.
Fix a morphism
$f:X\rightarrow_{\mathcal{C}}Y$,
and consider the finite-join-preserving map
\[
\AS([f_\sharp^{\mathcal{M}}]):
\AS(\mathcal{M}/X)
\rightarrow
\AS(\mathcal{M}/Y).
\]
For each
$h:Z\rightarrow_{\mathcal{M}}X$,
let
$(e_{f\circ h},m_{f\circ h})$
be an $(\mathcal{E},\mathcal{M})$-factorization of
$f\circ h$.
Then
\[
\AS([f_\sharp^{\mathcal{M}}])([h])
=
[m_{f\circ h}].
\]
\end{enumerate}
\end{theorem}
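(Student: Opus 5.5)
For part (1), I would use Proposition \ref{proposition:MXCXreflective}: $\mathcal{M}/X$ is reflective in $\mathcal{C}/X$, with reflector $\pi_X$ and inclusion $\iota_X$. By Proposition \ref{proposition:CXFCC}, $\mathcal{C}/X$ is finitely cocomplete, and a reflective subcategory of a finitely cocomplete category is again finitely cocomplete. Concretely, for a finite diagram $D$ in $\mathcal{M}/X$, I would take the colimit of $\iota_X\circ D$ in $\mathcal{C}/X$ and apply $\pi_X$. Because $\pi_X\iota_X\cong\id$, the object $\pi_X(\operatorname{colim}\iota_X D)$ is a colimit of $D$.

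Next I would show that $f^{\mathcal{M}}_\sharp=\pi_Y\circ f_\sharp\circ\iota_X$ preserves finite colimits. The functor $\pi_Y$ preserves all colimits since it is a left adjoint, and $f_\sharp$ preserves finite colimits by Proposition \ref{proposition:CXFCC}. The functor $\iota_X$ need not preserve colimits, but the colimit of $D$ in $\mathcal{M}/X$ is $\pi_X(\operatorname{colim}\iota_XD)$, so it suffices to show that $\pi_Y f_\sharp$ inverts the unit $\eta:c\to\iota_X\pi_X c$ for each object $c$ of $\mathcal{C}/X$. To see this, write $c=h:Z\to X$. By Proposition \ref{proposition:MXCXreflective}, the unit is, up to isomorphism, $e_h:h\to m_h$ with $e_h\in\mathcal{E}$. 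Then $f_\sharp(e_h)$ is the morphism $e_h: f\circ h\to f\circ m_h$ in $\mathcal{C}/Y$. Applying $\pi_Y$ factors $f\circ m_h$ as $m'\circ e'$, so $f\circ h=m'\circ(e'\circ e_h)$ with $e'\circ e_h\in\mathcal{E}$, since $\mathcal{E}$ is closed under composition by Proposition \ref{proposition:perpsubcategory}. By the uniqueness in Proposition \ref{proposition:uniqueness_of_factrization}, $\pi_Y(f_\sharp e_h)$ is therefore an isomorphism. I expect this argument (that $\pi_Y f_\sharp$ inverts $\mathcal{E}$-units) to be the main obstacle. Once it holds, $\pi_Yf_\sharp\iota_X$ applied to the colimit of $D$ is isomorphic to $\pi_Yf_\sharp(\operatorname{colim}\iota_XD)$, which is a colimit of $\pi_Yf_\sharp\iota_XD$. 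Since this is well defined up to natural isomorphism, $\MSlice$ lands in $\FCChCat$.

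For part (2), if every $m\in\mathcal{M}$ is mono, the observation on slices over mono-morphisms shows $\#(\mathcal{M}/X)(h_1,h_2)\le 1$, so $\mathcal{M}/X$ is thin. Composing with the equivalences $\FCChThinCat\simeq\hFinJoinPreOrd$ (thin finitely cocomplete categories are finite-join pre-orders) and $\AS$ gives a functor to $\FinJoinParOrd$. For the formula, $\AS([f_\sharp^{\mathcal{M}}])([h])=[\pi_Y(f\circ h)]$, and Proposition \ref{proposition:MXCXreflective} identifies $\pi_Y(f\circ h)$ with $m_{f\circ h}$ up to isomorphism in $\mathcal{M}/Y$. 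This gives equality in $\AS(\mathcal{M}/Y)$, independently of the choice of $\pi_Y$.
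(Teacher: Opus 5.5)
Your proposal is correct and follows the paper's proof. Finite cocompleteness comes from the reflection $\pi_X\dashv\iota_X$ (Propositions \ref{proposition:CXFCC}, \ref{proposition:MXCXreflective}, \ref{proposition:FCC_reflect}). Colimit preservation comes from the interchange $[f^{\mathcal{M}}_\sharp]\circ[\pi_X]=[\pi_Y]\circ[f_\sharp]$, obtained from uniqueness of $(\mathcal{E},\mathcal{M})$-factorizations. Part (2) follows from thinness plus Proposition \ref{proposition:MXCXreflective}.

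Your key lemma, that $\pi_Y f_\sharp$ inverts the units $e_h$, is exactly the paper's Proposition \ref{proposition:fM}: its component $d_h$ is $\pi_Y(f_\sharp e_h)$. Your formulation has the small advantage that naturality comes for free, whereas the paper proves it by a diagram chase.
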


\begin{example}
Consider the setting of Example \ref{Example:SetEpiMonoOFS}. Note that $\mathcal{M} = \Mono^{\Set}$. For each object $X$ of $\Set$, one has $\AS(\mathcal{M}/X) = \Sub(X) \simeq \mathcal{P}(X)$. For each map $f : X \rightarrow Y$, the map induced by $f^\mathcal{M}_\sharp$ corresponds, under this identification, to the direct image map
\[
f^\mathcal{P} : \mathcal{P}(X) \rightarrow \mathcal{P}(Y), ~ S \mapsto f(S).
\]
Consequently, the functor $\MSlice : \Set \rightarrow \FinJoinParOrd$ is naturally isomorphic to the functor given by $X \mapsto \mathcal{P}(X)$ and $f \mapsto f^{\mathcal{P}}$. Moreover, the latter functor may be regarded as a functor from $\Set$ to $\CompJoinParOrd_{\open}$.
\end{example}

\section{Preliminaries for coarse spaces}\label{section:prelim_coarsespaces}

The notion of a coarse space was introduced by J.~Roe
\cite{Roe1993,Roe2003LectureCoarse}.
In this section, we formulate coarse spaces in terms of the category of
relations.
To this end, we first collect some terminology concerning relations and then
review the category of coarse spaces, one of the fundamental categories in
coarse geometry.

Throughout this section, the term ``set'' will always mean a
$\mathcal{U}$-set (i.e.~an element of $\mathcal{U}$; see Setting
\ref{setting:UVC}).

\subsection{Category of relations between sets}\label{section:categoryofrel}

In this subsection, we collect some terminology concerning the category of sets and relations.

For sets $X$ and $Y$, a subset $R$ of $Y\times X$ is called a relation from $X$ to $Y$, and we write
$R:X\rightarrow_{\Rel}Y$.
The set of all relations from $X$ to $Y$ is denoted by
$\Rel(X,Y):=\mathcal{P}(Y\times X)$.
When $X=Y$, we simply write
$\Rel(X):=\Rel(X,X)=\mathcal{P}(X\times X)$.

We define the composition, identity relation, dagger, and inclusion of relations as follows.

\begin{description}
\item[Composition]
For relations
$R_1:X\rightarrow_{\Rel}Y$ and
$R_2:Y\rightarrow_{\Rel}Z$,
their composite
$R_2\circ R_1:X\rightarrow_{\Rel}Z$
is defined by
\[
R_2\circ R_1
:=
\{
(z,x)\in Z\times X
\mid
(z,y)\in R_2
\text{ and }
(y,x)\in R_1
\text{ for some }
y\in Y
\}.
\]

\item[Identity relation]
For a set $X$, we define
\[
1_X^{\Rel}
:=
\{
(x,x)\in X\times X
\mid
x\in X
\}.
\]

\item[Dagger]
For a relation
$R:X\rightarrow_{\Rel}Y$,
its \emph{dagger}
$R^\dagger:Y\rightarrow_{\Rel}X$
is defined by
\[
R^\dagger
:=
\{
(x,y)\in X\times Y
\mid
(y,x)\in R
\}.
\]

\item[Inclusion]
For $R_1,R_2\in\Rel(X,Y)$,
we write
$R_1\subset R_2$
if $(y,x)\in R_1$ implies $(y,x)\in R_2$.
\end{description}

We denote the resulting category of sets and relations by $\Set_{\Rel}$.

\begin{proposition}
Let
$R,R_1,R_2:X\rightarrow_{\Rel}Y$,
$S,S_1,S_2:Y\rightarrow_{\Rel}Z$,
and let
$\{R_\lambda\}_{\lambda\in\Lambda}\subset\Rel(X,Y)$ and
$\{S_\lambda\}_{\lambda\in\Lambda}\subset\Rel(Y,Z)$
be families of relations.
Then the following assertions hold:
\begin{enumerate}
\item
If $R_1\subset R_2$, then
$S\circ R_1\subset S\circ R_2$.

\item
If $S_1\subset S_2$, then
$S_1\circ R\subset S_2\circ R$.

\item
\[
\Bigl(
\bigcup_{\lambda\in\Lambda}
S_\lambda
\Bigr)
\circ R
=
\bigcup_{\lambda\in\Lambda}
(S_\lambda\circ R).
\]

\item
\[
S\circ
\Bigl(
\bigcup_{\lambda\in\Lambda}
R_\lambda
\Bigr)
=
\bigcup_{\lambda\in\Lambda}
(S\circ R_\lambda).
\]

\item
\[
(R^\dagger)^\dagger=R.
\]

\item
\[
(S\circ R)^\dagger
=
R^\dagger\circ S^\dagger.
\]

\item
\[
\Bigl(
\bigcup_{\lambda\in\Lambda}
R_\lambda
\Bigr)^\dagger
=
\bigcup_{\lambda\in\Lambda}
R_\lambda^\dagger.
\]
\end{enumerate}
\end{proposition}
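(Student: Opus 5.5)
All seven assertions follow by unwinding the definitions of composition, dagger and inclusion as statements about membership of pairs. In each case I fix an arbitrary pair in the relevant product set and verify the equivalence, or the implication for inclusions. No earlier result of the paper is needed; the statement is purely set-theoretic.

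For (1), take $(z,x)\in S\circ R_1$. There is then $y\in Y$ with $(z,y)\in S$ and $(y,x)\in R_1$. Since $R_1\subset R_2$, we have $(y,x)\in R_2$, so the same $y$ witnesses $(z,x)\in S\circ R_2$. Assertion (2) is symmetric, applying the inclusion $S_1\subset S_2$ to the first factor instead.

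For (3), $(z,x)\in(\bigcup_\lambda S_\lambda)\circ R$ means there exists $y$ with $(y,x)\in R$ and some $\lambda$ with $(z,y)\in S_\lambda$. Interchanging the two existential quantifiers ($\exists y\,\exists\lambda \iff \exists\lambda\,\exists y$) gives exactly $(z,x)\in\bigcup_\lambda(S_\lambda\circ R)$. Assertion (4) is the same argument with the union in the other factor.

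Assertions (5)–(7) concern the dagger. For (5), swapping coordinates twice is the identity. For (6), $(x,z)\in(S\circ R)^\dagger$ iff $(z,x)\in S\circ R$. This holds iff there is $y$ with $(z,y)\in S$ and $(y,x)\in R$, that is, with $(y,z)\in S^\dagger$ and $(x,y)\in R^\dagger$. That is precisely the condition $(x,z)\in R^\dagger\circ S^\dagger$, noting that the order of composition reverses. For (7), $(x,y)\in(\bigcup R_\lambda)^\dagger$ iff $(y,x)\in R_\lambda$ for some $\lambda$ iff $(x,y)\in\bigcup R_\lambda^\dagger$.

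There is no genuine obstacle here. The only point needing care is bookkeeping the coordinate convention: a relation $X\to Y$ is a subset of $Y\times X$, so the composite $S\circ R$ consists of pairs $(z,x)$. This convention must be tracked correctly in (6), where the order of composition reverses. The case of an empty index set $\Lambda$ in (3), (4) and (7) is also covered, since both sides are then empty.
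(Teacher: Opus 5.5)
Your proposal is correct: each assertion follows by elementwise verification, and you have the coordinate convention right ($S\circ R\subset Z\times X$, with the order reversing in (6)). The paper gives no proof of this proposition and treats it as immediate from the definitions of composition, dagger and inclusion, so your direct verification is exactly the intended argument.
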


For a relation $R:X\rightarrow_{\Rel}Y$ and a subset $S\subset X$, we define
the image of $S$ under $R$ by
\[
R(S)
:=
\{
y\in Y
\mid
(y,s)\in R
\text{ for some }
s\in S
\}.
\]
This defines an arbitrary-union-preserving map
\[
R^{\mathcal{P}}
:
\mathcal{P}(X)
\rightarrow
\mathcal{P}(Y),
\quad
S\mapsto R(S).
\]

\begin{proposition}\label{proposition:eta^P_functorial}
The correspondence
\[
X\mapsto (\mathcal{P}(X), \subset),
\qquad
R\mapsto R^{\mathcal{P}}
\]
defines a fully faithful functor
\[
\Set_{\Rel}
\rightarrow
\CompJoinParOrd.
\]
Moreover, for every family
$\{R_\lambda\}_{\lambda\in\Lambda}\subset\Rel(X,Y)$,
one has
\[
\Bigl(
\bigcup_{\lambda\in\Lambda}
R_\lambda
\Bigr)(S)
=
\bigcup_{\lambda\in\Lambda}
(R_\lambda(S)).
\]
\end{proposition}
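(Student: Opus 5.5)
We need to prove Proposition \ref{proposition:eta^P_functorial}: the assignment $X\mapsto(\mathcal{P}(X),\subset)$, $R\mapsto R^{\mathcal{P}}$ is a fully faithful functor $\Set_{\Rel}\to\CompJoinParOrd$, and images commute with unions of relations. The plan has four steps, all at the level of elements.

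\textbf{Step 1: the union formula.} We prove this first, since it is a direct unfolding of definitions. We have $y\in(\bigcup_\lambda R_\lambda)(S)$ if and only if there is $s\in S$ with $(y,s)\in R_\lambda$ for some $\lambda$. This is the same as saying $y\in R_\lambda(S)$ for some $\lambda$.

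\textbf{Step 2: $R^{\mathcal{P}}$ is a morphism of $\CompJoinParOrd$.} $(\mathcal{P}(X),\subset)$ is a partially ordered set in which every family has join given by union, so it is complete join. For a family $\{S_\mu\}$ of subsets of $X$, the element-chase $y\in R(\bigcup_\mu S_\mu)\iff \exists\mu,\ \exists s\in S_\mu,\ (y,s)\in R$ gives $R(\bigcup_\mu S_\mu)=\bigcup_\mu R(S_\mu)$. Hence $R^{\mathcal{P}}$ preserves arbitrary joins. Order preservation follows by applying this to a two-element family.

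\textbf{Step 3: functoriality.} First, $1_X^{\Rel}(S)=S$. Second, $(R_2\circ R_1)(S)=R_2(R_1(S))$: both sides consist of those $z$ for which some $y$ and some $s\in S$ satisfy $(z,y)\in R_2$ and $(y,s)\in R_1$.

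\textbf{Step 4: full faithfulness.} This is the only step with content. The key observation is that every $S\subset X$ is the union of its singletons. So a union-preserving map $\phi:\mathcal{P}(X)\to\mathcal{P}(Y)$ is determined by its values $\phi(\{x\})$; in particular this gives $\phi(\emptyset)=\emptyset$, because $\emptyset$ is the empty join.

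\emph{Faithfulness.} Since $R=\{(y,x)\mid y\in R(\{x\})\}$, the relation $R$ is recovered from $R^{\mathcal{P}}$.

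\emph{Fullness.} Given a morphism $\phi$ preserving arbitrary joins, set
\[
R:=\{(y,x)\in Y\times X\mid y\in\phi(\{x\})\}.
\]
Then $R(\{x\})=\phi(\{x\})$ for every $x$. Both $R^{\mathcal{P}}$ and $\phi$ preserve unions, so
\[
R(S)=\bigcup_{x\in S}\phi(\{x\})=\phi(S).
\]

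One subtlety needs care. In the paper's definition, join preservation for pre-orders is the inclusion $f(\bigvee S)\subset\bigvee f(S)$. In a partial order, joins are unique elements, so this inclusion becomes an equality. We need this equality when we use that $\phi$ sends the join of the singletons $\{x\}$, $x\in S$, to the union of the $\phi(\{x\})$.
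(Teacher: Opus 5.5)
Your proof is correct and complete. The paper states this proposition without proof, treating it as elementary, and your element-wise verification (union formula, join preservation, functoriality, and full faithfulness by recovering a union-preserving map from its values on singletons) is exactly the routine argument it leaves to the reader. You also handled the one point that needed checking: in a partially ordered set the paper's inclusion $f(\bigvee S)\subset\bigvee f(S)$ becomes an equality of single elements, which in particular gives $\phi(\emptyset)=\emptyset$ from the empty join.
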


The proposition below will be applied in a later section:

\begin{proposition}\label{proposition:U1U2R}
Let $U_1,U_2 \subset X$ and $R \in \Rel(X)$.
Then the following holds:
\[
U_1 \cap R(U_2) \subset R(R^\dagger (U_1) \cap U_2) 
\]    
\end{proposition}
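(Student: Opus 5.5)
The plan is a direct element chase, using only the definitions of image, dagger and composition from Section \ref{section:categoryofrel}. We take an arbitrary element $y \in U_1 \cap R(U_2)$ and show that it lies in $R(R^\dagger(U_1) \cap U_2)$.

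First, unpack the membership. Since $y \in R(U_2)$, the definition of the image $R(S)$ gives some $u \in U_2$ with $(y,u) \in R$. Since also $y \in U_1$, the same pair witnesses something about the dagger. We have $(u,y) \in R^\dagger$ by the definition of $R^\dagger$, and $y \in U_1$, so $u \in R^\dagger(U_1)$. Together with $u \in U_2$, this gives
\[
u \in R^\dagger(U_1) \cap U_2 .
\]

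Second, conclude using the same witness. Because $(y,u) \in R$ and $u \in R^\dagger(U_1) \cap U_2$, the definition of the image gives $y \in R(R^\dagger(U_1) \cap U_2)$. This proves the inclusion.

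There is no real obstacle here. The only point needing care is to use the same witness $u$ in both directions, once read through $R^\dagger$ and once through $R$, and to keep the convention that $R \subset X \times X$ is written with pairs of the form $(\text{target}, \text{source})$.
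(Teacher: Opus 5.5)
Your proof is correct and follows the paper's argument essentially verbatim: pick a witness $u \in U_2$ with $(y,u) \in R$, read the same pair through $R^\dagger$ to get $u \in R^\dagger(U_1) \cap U_2$, and conclude $y \in R(R^\dagger(U_1) \cap U_2)$. Nothing is missing.
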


\begin{proof}[Proof of Proposition \ref{proposition:U1U2R}]
Take any $x \in U_1 \cap R(U_2)$. Our goal is to show that $x \in R(R^\dagger(U_1) \cap U_2)$. Choose $u_2 \in U_2$ such that $(x,u_2) \in R$. Then $(u_2,x) \in R^\dagger$ and $x \in U_1$, so $u_2 \in R^\dagger(U_1)$. Hence $x \in R(R^\dagger(U_1) \cap U_2)$.
\end{proof}

The usual category of sets and maps will be denoted by $\Set_{\map}$.
It is a wide subcategory of $\Set_{\Rel}$ via the identification
\[
f
=
\{
(f(x),x)\in Y\times X
\mid
x\in X
\}
\in\Rel(X,Y)
\]
for each map $f:X\to Y$.
We note that $\id_X=1_X^{\Rel}$ for each set $X$.

For a map $f:X\to Y$, regarded as relations
\[
f\in\Set_{\Rel}(X,Y),
\qquad
f^\dagger\in\Set_{\Rel}(Y,X),
\]
the maps
\[
f^{\mathcal{P}}
:
\mathcal{P}(X)\to\mathcal{P}(Y),
\quad
S\mapsto f(S),
\]
and
\[
(f^\dagger)^{\mathcal{P}}
:
\mathcal{P}(Y)\to\mathcal{P}(X),
\quad
T\mapsto f^\dagger(T),
\]
are defined.
These coincide with the direct image map and the inverse image map along
$f$, respectively.

We also introduce the following notation.

\begin{definition}
For a relation
$R:X\rightarrow_{\Rel}Y$,
we define
\[
\Ad_R
:
\Rel(X)
\rightarrow
\Rel(Y),
\quad
E
\mapsto
R\circ E\circ R^\dagger.
\]
Then the correspondence
\[
X\mapsto\Rel(X),
\qquad
R\mapsto\Ad_R
\]
defines a functor
\[
\Set_{\Rel}
\rightarrow
\CompJoinParOrd.
\]
Moreover, for $E,E_1,E_2\in\Rel(X)$,
\begin{enumerate}
\item
$R\subset R'$ implies
$\Ad_R(E)\subset\Ad_{R'}(E)$.

\item
$(\Ad_R(E))^\dagger=\Ad_R(E^\dagger)$.

\item
\[
\Ad_R(E_1)\circ\Ad_R(E_2)
=
\Ad_R\bigl(E_1\circ(R^\dagger\circ R)\circ E_2\bigr).
\]
\end{enumerate}
\end{definition}

For a map $f:X\to Y$, we note that
\begin{align*}
\Ad_f(E)
&=
\{
(f(x_1),f(x_2))
\in
Y\times Y
\mid
(x_1,x_2)\in E
\},
\\
\Ad_{f^\dagger}(F)
&=
\{
(x_1,x_2)
\in
X\times X
\mid
(f(x_1),f(x_2))
\in F
\}.
\end{align*}

\subsection{Coarse spaces, controlled maps and the category of coarse spaces}

In this subsection, we recall the notions of coarse spaces, controlled maps,
and the category of coarse spaces.

We begin with the definition of a coarse structure.

\begin{definition}[cf.~Roe \cite{Roe2003LectureCoarse}]
Let $X$ be a set.
A subset
$\mathcal{E}\subset\Rel(X)=\mathcal{P}(X\times X)$
is called a coarse structure on $X$ if the following conditions hold:
\begin{enumerate}
\item
$1_X^{\Rel}\in\mathcal{E}$.

\item
$\mathcal{E}$ is closed under composition.

\item
$\mathcal{E}$ is closed under the dagger.

\item
$\mathcal{E}$ is closed under finite unions.

\item
$\mathcal{E}$ is lower in $\Rel(X)$, i.e.,
$E_0\subset E\in\mathcal{E}$ implies $E_0\in\mathcal{E}$.
\end{enumerate}

A pair $(X,\mathcal{E})$ consisting of a set $X$ and a coarse structure
$\mathcal{E}$ on $X$ is called a coarse space.
\end{definition}

Metric spaces provide a fundamental source of coarse spaces.

\begin{example}\label{example:metric_coarse}
Let $(X,d)$ be a metric space.
For each $r\ge0$, we put
\[
E_r^d
:=
\{
(x_1,x_2)\in X\times X
\mid
d(x_1,x_2)\le r
\}.
\]

Then
\[
\mathcal{E}_X^d
:=
\{
E\in\Rel(X)
\mid
\text{there exists }r\ge0
\text{ such that }
E\subset E_r^d
\}
\]
forms a coarse structure on $X$.
\end{example}

Another important class of examples arises from group actions.

\begin{example}\label{example:groupaction_coarse}
Let $G$ be a locally compact Hausdorff group, and let 
$\rho:G\times X\to X$ 
be a continuous action on a locally compact Hausdorff space $X$.

For each compact subset $C\subset G$, we put
\[
E_C^\rho
:=
\{
(x_1,x_2)\in X\times X
\mid
\text{there exists }c\in C
\text{ such that }
\rho(c,x_2)=x_1
\}.
\]

Then
\[
\mathcal{E}_X^\rho
:=
\{
E\in\Rel(X)
\mid
\text{there exists a compact subset }
C\subset G
\text{ such that }
E\subset E_C^\rho
\}
\]
forms a coarse structure on $X$.

We shall discuss several properties of the coarse structure $\mathcal{E}^{\mathrm{LR}}_G$ on $G$, defined as follows, in later sections. Consider the $(G \times G)$-action $\tau$ on $G$ given by left and right multiplication. Then $\mathcal{E}^{\mathrm{LR}}_G := \mathcal{E}^{\tau}_G$ is a coarse structure on $G$.
\end{example}

The following proposition describes the generation of coarse structures.

\begin{proposition}
Let $X$ be a set.
\begin{enumerate}
\item
For an arbitrary family
$\{\mathcal{E}_{\lambda}\}_{\lambda\in\Lambda}$
of coarse structures on $X$, the intersection
$\bigcap_{\lambda\in\Lambda}\mathcal{E}_\lambda$
in $\Rel(X)$ is again a coarse structure on $X$.

\item
For each subset $S\subset\Rel(X)$, there exists a unique smallest coarse
structure $\langle S\rangle$ on $X$ containing $S$.
\end{enumerate}

Such a coarse structure $\langle S\rangle$ is called the coarse structure
on $X$ generated by $S$.
\end{proposition}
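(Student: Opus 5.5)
The statement has two parts: (1) intersections of coarse structures are coarse structures, and (2) the existence and uniqueness of a smallest coarse structure $\langle S\rangle$ containing a given $S\subset\Rel(X)$. The plan is to prove (1) by checking the axioms directly, and to obtain (2) from (1) by intersecting.

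For (1), put $\mathcal{E}:=\bigcap_{\lambda\in\Lambda}\mathcal{E}_\lambda$. We check each of the five axioms of a coarse structure, using that every $\mathcal{E}_\lambda$ satisfies it.
\begin{enumerate}
\item Since $1_X^{\Rel}\in\mathcal{E}_\lambda$ for all $\lambda$, we get $1_X^{\Rel}\in\mathcal{E}$.
\item If $E_1,E_2\in\mathcal{E}$, then $E_2\circ E_1\in\mathcal{E}_\lambda$ for every $\lambda$, so $E_2\circ E_1\in\mathcal{E}$.
\item Closure under $\dagger$ follows in the same way.
\item Closure under finite unions follows in the same way.
\item If $E_0\subset E\in\mathcal{E}$, then $E\in\mathcal{E}_\lambda$ for each $\lambda$, so $E_0\in\mathcal{E}_\lambda$ by lowerness, and hence $E_0\in\mathcal{E}$.
\end{enumerate}

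The only point needing care is $\Lambda=\emptyset$. There we adopt the convention that the empty intersection inside $\Rel(X)$ is $\Rel(X)$ itself. This is the maximal coarse structure, since all five axioms hold trivially for the full power set $\mathcal{P}(X\times X)$.

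For (2), let $\Lambda$ be the set of all coarse structures on $X$ containing $S$. This is a subset of $\mathcal{P}(\Rel(X))$, so no size issue arises. It is non-empty, since $\Rel(X)\in\Lambda$ by the observation above. Define $\langle S\rangle:=\bigcap_{\mathcal{F}\in\Lambda}\mathcal{F}$. By (1) this is a coarse structure, and it contains $S$ because every member of $\Lambda$ does. It is contained in every coarse structure containing $S$, so it is the smallest such. Uniqueness is automatic: two smallest elements of a poset under inclusion contain each other, hence coincide.

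There is no real obstacle. The only subtle step is making sure the family in (2) is non-empty, which is handled by exhibiting $\Rel(X)$.
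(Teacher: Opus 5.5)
Your proof is correct. It is the standard argument: check the five axioms termwise for the intersection, read the empty intersection as $\Rel(X)$ (which is itself a coarse structure), and define $\langle S\rangle$ as the intersection of the non-empty family of all coarse structures containing $S$. The paper states this proposition without proof and implicitly relies on exactly this reasoning, so there is nothing to compare or correct.
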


We next introduce controlled maps between coarse spaces.

\begin{definition}[cf.~\cite{BunkeEngel2020}]\label{definition:ctrmap}
Let
$X=(X,\mathcal{E}_X)$
and
$Y=(Y,\mathcal{E}_Y)$
be coarse spaces.
A map
$f:X\to Y$
is said to be \emph{controlled} (or \emph{bornologous}) if
$\Ad_f(\mathcal{E}_X)
\subset
\mathcal{E}_Y$.
\end{definition}

Controlled maps form the morphisms of a category.

\begin{theorem}
Coarse spaces and controlled maps form a category with respect to the usual
composition and identity maps.
In this paper, this category will be denoted by
$\Coarse_{\CtrMap}$.
\end{theorem}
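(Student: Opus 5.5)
The statement has three parts: coarse spaces with controlled maps form a category $\Coarse_{\CtrMap}$ under the usual composition and identity maps. Since $\Set_{\map}$ is already a category, it suffices to show that identity maps are controlled and that a composite of controlled maps is controlled. Associativity and the unit laws are then inherited from $\Set_{\map}$, regarded as a wide subcategory of $\Set_{\Rel}$. We therefore only need to check that the class of controlled maps is closed under these two operations.

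\emph{Identities.} For a coarse space $(X,\mathcal{E}_X)$ we have $\id_X = 1_X^{\Rel}$. For every $E\in\Rel(X)$,
\[
\Ad_{\id_X}(E)=1_X^{\Rel}\circ E\circ (1_X^{\Rel})^\dagger = E,
\]
because $(1_X^{\Rel})^\dagger=1_X^{\Rel}$ and $1_X^{\Rel}$ is the identity for composition of relations. Hence $\Ad_{\id_X}(\mathcal{E}_X)=\mathcal{E}_X$, so $\id_X$ is controlled.

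\emph{Composites.} Let $f:X\to Y$ and $g:Y\to Z$ be controlled. By the functoriality of $R\mapsto \Ad_R$ stated for $\Set_{\Rel}$, or directly from $(g\circ f)^\dagger=f^\dagger\circ g^\dagger$ and associativity of relational composition, we get
\[
\Ad_{g\circ f}=\Ad_g\circ \Ad_f .
\]
The explicit formula $\Ad_f(E)=\{(f(x_1),f(x_2))\mid (x_1,x_2)\in E\}$ confirms this directly for maps. Therefore
\[
\Ad_{g\circ f}(\mathcal{E}_X)=\Ad_g(\Ad_f(\mathcal{E}_X))\subset \Ad_g(\mathcal{E}_Y)\subset \mathcal{E}_Z,
\]
and $g\circ f$ is controlled.

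Nothing here is a real obstacle. The only care needed is the bookkeeping in the identity $\Ad_{g\circ f}=\Ad_g\circ\Ad_f$, that is, $(g\circ f)\circ E\circ (g\circ f)^\dagger = g\circ(f\circ E\circ f^\dagger)\circ g^\dagger$. This follows from the dagger rule $(S\circ R)^\dagger=R^\dagger\circ S^\dagger$ in the proposition on relations.
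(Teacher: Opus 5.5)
Your proof is correct and is essentially the paper's own argument. The paper states this theorem without proof, but its proof of the analogous result for controlled relations in Section~\ref{section:realizationCtrTotRel} relies on exactly the two facts you verify, $\Ad_{\eta\circ\xi}=\Ad_\eta\circ\Ad_\xi$ and $\Ad_{1_X^{\Rel}}=\id$; your check is that same verification restricted to maps.
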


For later use, we introduce the notation
\[
\CtrMap(X,Y)
:=
\{
f:X\to Y
\mid
f \text{ is controlled}
\}.
\]

We now introduce the notion of closeness between controlled maps.

\begin{definition}\label{definition:close_ctrmap}
Let
$f_1,f_2\in\CtrMap(X,Y)$.
We say that $f_1$ and $f_2$ are \emph{close}, and write
$f_1\sim f_2$, if
\[
\{
(f_1(x),f_2(x))
\in Y\times Y
\mid
x\in X
\}
\in
\mathcal{E}_Y.
\]
\end{definition}

The following theorem shows that closeness is compatible with the
categorical structure of controlled maps.

\begin{theorem}[cf.~\cite{DikranjanZavaCatCoarseGp2020}]
The relation ``close'' defines a congruence on the category
$\Coarse_{\CtrMap}$.
\end{theorem}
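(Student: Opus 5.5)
To show that closeness is a congruence on $\Coarse_{\CtrMap}$, I must check two things: that $\sim$ is an equivalence relation on each hom-set $\CtrMap(X,Y)$, and that it is compatible with composition on both sides. Throughout, for maps $f_1,f_2:X\to Y$ I write
\[
\Delta(f_1,f_2) := \{(f_1(x),f_2(x)) \mid x\in X\} = f_1\circ f_2^\dagger \cap (\text{pairs indexed by } x),
\]
or more simply the image of the diagonal $1_X^{\Rel}$ under the product map $f_1\times f_2$. The cleanest relational identity I will use is $\Delta(f_1,f_2)\subset f_1\circ f_2^\dagger$, together with the direct description of $\Delta(f_1,f_2)$ itself.

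\textbf{Equivalence relation.} Reflexivity holds because $\Delta(f,f)\subset 1_Y^{\Rel}\in\mathcal{E}_Y$ and $\mathcal{E}_Y$ is lower. Symmetry holds because $\Delta(f_2,f_1)=\Delta(f_1,f_2)^\dagger$ and $\mathcal{E}_Y$ is closed under dagger. For transitivity, note that $\Delta(f_1,f_3)\subset \Delta(f_1,f_2)\circ\Delta(f_2,f_3)$, since $(f_1(x),f_3(x))$ passes through $f_2(x)$. Closure of $\mathcal{E}_Y$ under composition and lowerness then gives the claim.

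\textbf{Compatibility with composition.} Let $f_1\sim f_2$ in $\CtrMap(X,Y)$.
\begin{itemize}
\item Precomposition: for $g\in\CtrMap(W,X)$, we have $\Delta(f_1\circ g,f_2\circ g)\subset\Delta(f_1,f_2)$, since the index set $g(W)$ is contained in $X$. Lowerness then gives $f_1\circ g\sim f_2\circ g$. This step does not need $g$ to be controlled.
\item Postcomposition: for $h\in\CtrMap(Y,Z)$, we have $\Delta(h\circ f_1,h\circ f_2)=\Ad_h(\Delta(f_1,f_2))$, using the explicit formula for $\Ad_f$ given above. This set lies in $\mathcal{E}_Z$ because $h$ is controlled.
\end{itemize}

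Combining the two, if $f_1\sim f_2$ and $h_1\sim h_2$, then $h_1 f_1\sim h_1 f_2\sim h_2 f_2$ by transitivity. This is exactly the congruence property.

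None of these steps is a real obstacle. The only point requiring care is stating the transitivity inclusion and the $\Ad_h$ identity correctly; each follows directly from the definitions of relational composition and $\Ad$.
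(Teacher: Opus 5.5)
Your proof is correct. The paper does not actually prove this statement; it is quoted from Dikranjan--Zava. It can, however, be recovered from the paper's later machinery by a different route:
\begin{itemize}
\item Theorem~\ref{theorem:CtrTot_cong} makes $\sim$ a congruence on the larger category $\Coarse_{\CtrTotRel}$.
\item The proposition following Theorem~\ref{theorem:CtrMapClo_isom_CtrTotRelClo} shows that closeness of controlled maps agrees with closeness as relations. It does this via \eqref{item:close_on_totalrel:composite12}$\Leftrightarrow$\eqref{item:close_on_totalrel:witness-relation} in Proposition~\ref{proposition:close_on_totalrel}.
\item $\Coarse_{\CtrMap}$ is a wide subcategory of $\Coarse_{\CtrTotRel}$ (Proposition~\ref{proposition:controlled_rel_map}), so the congruence restricts to it.
\end{itemize}

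That route differs from yours in two ways. First, the equivalence-relation axioms come for free, since $\sim$ is the symmetric part of the pre-order $\prec$. Second, two-sided compatibility is obtained in one step from $(\xi_1\circ\eta_1)\circ(\xi_2\circ\eta_2)^\dagger\subset\Ad_{\xi_1}(\eta_1\circ\eta_2^\dagger)\circ(\xi_1\circ\xi_2^\dagger)$, rather than from your one-sided steps plus transitivity. In your notation this inclusion reads $\Delta(h_1 f_1,h_2 f_2)\subset\Ad_{h_1}(\Delta(f_1,f_2))\circ\Delta(h_1,h_2)$.

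The price is that, for general total relations, postcomposition only gives the inclusion $\xi\circ R\subset\Ad_\xi(R)\circ\xi$ of Proposition~\ref{proposition:total_adjoint}, which needs totality. For maps you have the exact identity $\Delta(h f_1,h f_2)=\Ad_h(\Delta(f_1,f_2))$ and a trivial inclusion for precomposition. So your argument is shorter and self-contained for this statement. The paper's route buys the congruence on controlled total relations, which it needs anyway to realize the category of coarse spaces by relations.

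A cosmetic point: the expression $f_1\circ f_2^\dagger\cap(\text{pairs indexed by }x)$ is not meaningful as written. In fact $\Delta(f_1,f_2)=f_1\circ f_2^\dagger$ exactly, though your proof never uses this.
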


Hence one may form the corresponding quotient category.

\begin{definition}[cf.~\cite{LeitnerVigolo2023}]
The quotient category of
$\Coarse_{\CtrMap}$
by the congruence ``close'' is called the
\emph{category of coarse spaces} and is denoted by
\[
\Coarse_{\CtrMap}/\close.
\]
\end{definition}

For a controlled map
$f:X\to Y$,
its equivalence class in
$\Coarse_{\CtrMap}/\close$
will be denoted by
$[f]_{\CtrMap}$,
or simply by $[f]$.

Using the category of coarse spaces, we can now formulate the notion of
coarse equivalence.

\begin{definition}\label{definition:coarse_eq}
Let $X$ and $Y$ be coarse spaces.
A controlled map
$f:X\to Y$
is called a \emph{coarse equivalence} if
$[f]_{\CtrMap}$
is an isomorphism in the category of coarse spaces
$\Coarse_{\CtrMap}/\close$.
\end{definition}

% \begin{remark}
% It is worth comparing Definition~\ref{definition:coarse_eq} with the
% classical definition of coarse equivalence due to Roe.
% In Roe's book \cite{Roe2003LectureCoarse}, the primary objects of study are
% not controlled maps (called \emph{bornologous maps} there) but the stronger
% notion of \emph{coarse maps}, namely controlled maps satisfying an
% additional properness condition.
% Roe considers the quotient category of coarse maps modulo closeness, and an
% isomorphism in that category is called a coarse equivalence.

% On the other hand, every coarse equivalence in the sense of
% Definition~\ref{definition:coarse_eq} is automatically proper
% (see, for example, \cite[Theorem 5.2]{DikranjanZava2017} or
% \cite[Definition 1.2(ix)]{DikranjanZavaCatCoarseGp2020}).
% Consequently, Roe's notion of coarse equivalence coincides with the one
% adopted in this paper.
% \end{remark}

\begin{example}
A quasi-Lipschitz map between metric spaces is a controlled map with respect to the coarse structures introduced in Example \ref{example:metric_coarse}. Moreover, a quasi-isometry between metric spaces is a coarse equivalence. In particular, if the metric spaces are quasi-geodesic, then being coarsely equivalent is equivalent to being quasi-isometric (cf.~Roe \cite[Lemma 1.10]{Roe2003LectureCoarse}).
\end{example}

\begin{example}\label{example:Cartan}
Let $G$ be a linear reductive Lie group, and consider the coarse structure $\mathcal{E}^{\mathrm{LR}}_G$ on $G$ defined in 
Example \ref{example:groupaction_coarse}.
Fix a maximal compact subgroup $K$ of $G$. Then $M := G/K$ admits a $G$-invariant Riemannian metric $g$, and $(M,g)$ is a Riemannian symmetric space of non-compact type (cf.~\cite{HelgasonDiff}). Let $d$ denote the distance function on $M$ induced by $g$. We define $\Omega := K \backslash M = K \backslash G/K$, equipped with the distance $d_\Omega$ between $K$-orbits induced by $d$.

Consider the canonical map
\[
\mu : G \rightarrow \Omega = K \backslash G/K, \qquad g \mapsto KgK.
\]
We note that this map is essentially the same as the Cartan projection. The map $\mu$ defines a coarse equivalence from $(G,\mathcal{E}^{\mathrm{LR}}_G)$ to $(\Omega,\mathcal{E}^{d_\Omega})$ (see \cite[Example 5.5]{NagayaOgawaOkuda2025};  the detailed proof will be reported elsewhere).
\end{example}

\subsection{Some properties of the category of coarse spaces}

We conclude this section by recalling several categorical properties of the
category of coarse spaces $\Coarse_{\CtrMap}/\close$ that will be used later.

The existence of limits and colimits in the category of coarse spaces has
been studied by several authors. In particular, the following results are
known.

\begin{theorem}[Leitner--Vigolo \cite{LeitnerVigolo2023}]\label{theorem:Coarsecocomp}
The category of coarse spaces is finitely cocomplete.
\end{theorem}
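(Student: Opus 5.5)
We must show that $\Coarse_{\CtrMap}/\close$ has an initial object and pushouts; together these give all finite colimits. (Coequalizers can be built from pushouts and finite coproducts, so it is enough to also exhibit binary coproducts.)

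\emph{Initial object and coproducts.} The initial object is the empty coarse space $\emptyset$. For every $Y$ there is exactly one map $\emptyset\to Y$, so $\Coarse_{\CtrMap}/\close(\emptyset,Y)$ is a singleton. For coarse spaces $X_1,X_2$, take the disjoint union $X_1\sqcup X_2$ with coarse structure $\langle \mathcal{E}_{X_1}\cup\mathcal{E}_{X_2}\rangle$. An entourage of this structure is a subset of $E_1\sqcup E_2$ with $E_i\in\mathcal{E}_{X_i}$. Given controlled maps $f_i\colon X_i\to Y$, the map $f_1\sqcup f_2$ is controlled, since $\Ad_{f_1\sqcup f_2}(E_1\sqcup E_2)=\Ad_{f_1}(E_1)\cup\Ad_{f_2}(E_2)$. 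The cotuple is well defined and unique up to closeness: closeness of $f_1\sqcup f_2$ and $g_1\sqcup g_2$ is witnessed by the union of the closeness witnesses on each summand.

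\emph{Pushouts, the main step.} Given classes $[f_1]\colon Z\to X_1$ and $[f_2]\colon Z\to X_2$, we first choose representatives $f_1,f_2$. Let $P=X_1\sqcup X_2$ with coarse structure
\[
\mathcal{E}_P:=\langle \mathcal{E}_{X_1}\cup\mathcal{E}_{X_2}\cup\{D\}\rangle,
\qquad
D:=\{(f_1(z),f_2(z))\mid z\in Z\}.
\]
No set-level quotient is taken; the gluing happens only at the level of the coarse structure. The inclusions $\iota_i$ are controlled. Moreover $\iota_1f_1\sim\iota_2f_2$, because the witness of their closeness is exactly $D\in\mathcal{E}_P$.

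Now let $g_i\colon X_i\to Y$ be controlled with $g_1f_1\sim g_2f_2$. Put $g=g_1\sqcup g_2$. Since $\Ad_g$ is a map $\Rel(P)\to\Rel(Y)$ preserving composition up to inclusion, together with daggers and unions, the set $\{E\mid \Ad_g(E)\in\mathcal{E}_Y\}$ is a coarse structure. It contains the generators: $\Ad_g(D)$ is the closeness witness of $g_1f_1\sim g_2f_2$. Hence it contains $\mathcal{E}_P$, so $g$ is controlled. This closure argument is the key verification. The subtle point is composition: one needs $\Ad_g(E\circ F)\subset \Ad_g(E)\circ\Ad_g(F)$, which holds because $g$ is a map.

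Finally, uniqueness up to closeness follows as in the coproduct case. Independence from the choice of representatives $f_i'\sim f_i$ holds because $D'$ lies in the structure generated by $D$ together with the closeness witnesses, so $\mathcal{E}_P$ does not change.
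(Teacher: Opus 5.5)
The paper does not prove this statement; it is quoted from Leitner--Vigolo and used as a black box. There is therefore no internal argument to compare yours against.

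Your proof is correct and self-contained.
\begin{itemize}
\item Having an initial object and pushouts suffices for finite cocompleteness, and the empty coarse space is initial.
\item Realizing the pushout on $X_1\sqcup X_2$ with $\mathcal{E}_P=\langle\mathcal{E}_{X_1}\cup\mathcal{E}_{X_2}\cup\{D\}\rangle$ works because the cocone condition $g_1f_1\sim g_2f_2$ says exactly that $\Ad_g(D)\in\mathcal{E}_Y$.
\item Uniqueness modulo closeness comes from taking the union of the two closeness witnesses on the summands.
\end{itemize}

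Your closure step is precisely Propositions \ref{proposition:invCstr} and \ref{proposition:generator_controlled} of the paper, applied to $\eta=g_1\sqcup g_2$. It also uses $\Ad_g(1_P^{\Rel})\subset 1_Y^{\Rel}$, which you rely on implicitly.

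Those propositions are stated for total relations, so your construction also works verbatim in $\Coarse_{\CtrTotRel}/\close$, which is where the paper actually invokes the theorem. For representatives $\xi_i\in\CtrTotRel(Z,X_i)$ and $\eta_i\in\CtrTotRel(X_i,Y)$, take $D:=\xi_1\circ\xi_2^\dagger$ and $\eta:=\eta_1\cup\eta_2$. Then $\Ad_\eta(D)=(\eta_1\circ\xi_1)\circ(\eta_2\circ\xi_2)^\dagger$, and by Proposition \ref{proposition:close_on_totalrel} this lies in $\mathcal{E}_Y$ exactly when the cocone commutes.

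Two points are cosmetic.
\begin{itemize}
\item Binary coproducts already arise as pushouts over $\emptyset$, so your separate coproduct construction and the parenthetical about coequalizers are redundant.
\item Checking that $\mathcal{E}_P$ does not depend on the chosen representatives is unnecessary. The universal property refers only to the classes $[f_i]$ and determines $P$ up to isomorphism anyway.
\end{itemize}

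An equivalent route is coproducts plus coequalizers. The coequalizer of $[f],[g]\colon X\rightrightarrows Y$ is $\id_Y$ from $Y$ to $Y$ equipped with $\langle\mathcal{E}_Y\cup\{\{(f(x),g(x))\mid x\in X\}\}\rangle$. Your pushout is exactly this coequalizer for $\iota_1f_1,\iota_2f_2\colon Z\rightrightarrows X_1\sqcup X_2$, so the two routes are interchangeable. Both rest on the same observation: colimits can be formed by enlarging the coarse structure without passing to a set-level quotient.
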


\begin{theorem}[Zava \cite{Zava2019CoarseCat}]
The category of coarse spaces admits arbitrary products.
On the other hand, it does not necessarily admit equalizers.
\end{theorem}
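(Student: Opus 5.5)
The statement has two parts, and I would handle them separately: a direct construction of products, and an explicit counterexample for equalizers.

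\emph{Products.} Let $\{X_\lambda\}_{\lambda\in\Lambda}$ be a family of coarse spaces indexed by a $\mathcal{U}$-set. Let $X:=\prod_\lambda X_\lambda$ be the set-theoretic product with projections $p_\lambda$. Put
\[
\mathcal{E}_X:=\{E\in\Rel(X)\mid \Ad_{p_\lambda}(E)\in\mathcal{E}_{X_\lambda}\text{ for every }\lambda\in\Lambda\}.
\]
First I check that this is a coarse structure. The identity, dagger, finite-union and lower axioms are immediate from the corresponding properties of $\Ad_{p_\lambda}$ listed in Section \ref{section:categoryofrel}. For composition, $p_\lambda^\dagger\circ p_\lambda\supset 1_X^{\Rel}$ gives
\[
\Ad_{p_\lambda}(E_1\circ E_2)\subset \Ad_{p_\lambda}(E_1)\circ\Ad_{p_\lambda}(E_2).
\]
By construction every $p_\lambda$ is controlled.

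Now let controlled maps $f_\lambda:Z\to X_\lambda$ be given, representing classes $[f_\lambda]$; choosing representatives uses the axiom of choice. Define $f:=(f_\lambda)_\lambda$. Since $\Ad_{p_\lambda}\circ\Ad_f=\Ad_{f_\lambda}$, the map $f$ is controlled, and $p_\lambda\circ f=f_\lambda$. It remains to show that $[f]$ is independent of the representatives and unique up to closeness. Suppose $g:Z\to X$ satisfies $p_\lambda\circ g\sim f_\lambda$ for all $\lambda$. Then
\[
\Ad_{p_\lambda}\bigl(\{(g(z),f(z))\mid z\in Z\}\bigr)=\{(p_\lambda g(z),f_\lambda(z))\mid z\in Z\}\in\mathcal{E}_{X_\lambda}
\]
for every $\lambda$, so $g\sim f$ by the definition of $\mathcal{E}_X$. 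The same computation shows that tuples of close maps are close. Hence $(X,[p_\lambda])$ is a product in $\Coarse_{\CtrMap}/\close$.

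\emph{Failure of equalizers.} The plan is to build a parallel pair whose "coincidence locus" is an increasing chain of subspaces with no largest element up to closeness.

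Let $X=Y=\mathbb{N}\times\mathbb{N}$, thought of as countably many copies $\{k\}\times\mathbb{N}$ of $\mathbb{N}$. Give it the coarse structure generated by the sets
\[
E_r:=\{((k,m),(k,n))\mid |m-n|\le r\}.
\]
Thus entourages have uniformly bounded size and never join distinct components. Take $f=\id$ and $g(k,n)=(k,n+k)$. The map $g$ is an isometry on each component, hence controlled. The key quantity is the displacement: on component $k$, the distance from $f(x)$ to $g(x)$ is exactly $k$.

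Suppose $[e]:W\to X$ were an equalizer, so $f\circ e\sim g\circ e$. Then the set of pairs $(e(w),g(e(w)))$ lies in some $E_r$, which forces $e(W)\subset A_r:=\{0,\dots,r\}\times\mathbb{N}$. Now consider the inclusion $i:A_{r+1}\to X$. Since $f$ and $g$ differ by at most $r+1$ on $A_{r+1}$, we have $f\circ i\sim g\circ i$. By the universal property, $i\sim e\circ h$ for some controlled $h$. Closeness never crosses components, so every point of component $r+1$ would have to be close to a point of $e(W)$ in the same component. But $e(W)$ misses component $r+1$ entirely, which is a contradiction.

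The main obstacle I expect is making precise that "close" cannot cross components. Concretely, I have to verify that every entourage in the generated structure is contained in some $E_r$. The point is that the sets $E_r$ are already closed under composition, dagger and finite unions, so the generated structure consists exactly of their subsets.
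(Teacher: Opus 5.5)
Your proof is correct. The paper gives no argument for this statement; it is quoted from Zava. So your proposal is a self-contained substitute rather than a variant of something in the text.

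\emph{Products.} This is the standard construction. Your $\mathcal{E}_X$ is the intersection over $\lambda$ of the pulled-back structures $(\Ad_{p_\lambda})^{-1}(\mathcal{E}_{X_\lambda})$. Each of these is a coarse structure by Proposition \ref{proposition:invCstr}, since $p_\lambda$ is total and $\Ad_{p_\lambda}(1_X^{\Rel})=p_\lambda\circ p_\lambda^\dagger\subset 1_{X_\lambda}^{\Rel}$. Citing this is slightly more accurate than saying the identity axiom is immediate from formal properties of $\Ad$, because that step actually uses single-valuedness of $p_\lambda$.

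\emph{Equalizers.} Your counterexample is sound. The generated structure is exactly the set of subsets of the $E_r$, since $E_r\circ E_s\subset E_{r+s}$, $E_r^\dagger=E_r$, $E_r\cup E_s=E_{\max(r,s)}$ and $E_0=1^{\Rel}$. A controlled $k:V\to X$ satisfies $f\circ k\sim g\circ k$ if and only if $k(V)\subset A_r$ for some $r$. The universal property then fails at $A_{r+1}$, and this also covers the degenerate case $W=\emptyset$, where no map $A_{r+1}\to W$ exists at all.

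Your contradiction can also be read through the paper's own machinery. An equalizer is a mono-morphism, so by Theorem \ref{theorem:PcSub} it would correspond to a largest element of $\{[S]\in\mathcal{P}_c(X)\mid f\circ\iota_S\sim g\circ\iota_S\}$. In your example this set is the down-closure of the strictly increasing chain $[A_0]\prec[A_1]\prec\cdots$ in $\mathcal{P}_c(X)$, which has no maximum. That reading explains why the example works: there is no largest coarse subspace on which $f$ and $g$ agree up to closeness. Your direct argument avoids this machinery and works only with representatives, which is all the statement requires.
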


We next recall characterizations of epi-morphisms and mono-morphisms in the
category of coarse spaces.

\begin{theorem}[Dikranjan--Zava {\cite{DikranjanZava2017}}]\label{theorem:CtrMap_epimono}
Let $f:X\rightarrow Y$ be a controlled map.
\begin{enumerate}
\item
The morphism $[f]_{\CtrMap}$ is an epi-morphism in the category of coarse
spaces if and only if there exists $E\in\mathcal{E}_Y$ such that
$E(f(X))=Y$.

\item
The morphism $[f]_{\CtrMap}$ is a mono-morphism in the category of coarse
spaces if and only if
$\Ad_{f^\dagger}(\mathcal{E}_Y)\subset\mathcal{E}_X$.
\end{enumerate}
\end{theorem}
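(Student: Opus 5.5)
For (1), the ``if'' direction is a direct computation. Fix $E\in\mathcal{E}_Y$ with $E(f(X))=Y$, and take controlled maps $g_1,g_2:Y\to Z$ with $g_1\circ f\sim g_2\circ f$. Put $D:=\{(g_1f(x),g_2f(x))\mid x\in X\}\in\mathcal{E}_Z$. For each $y\in Y$ choose $x$ with $(y,f(x))\in E$. Then $(g_1(y),g_2(y))$ lies in
\[
\Ad_{g_2}(E)\circ D^\dagger\circ\ldots
\]
more precisely in $\Ad_{g_1}(E)\circ D\circ\Ad_{g_2}(E^\dagger)$, after adjusting the order of composition to the paper's conventions. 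This composite belongs to $\mathcal{E}_Z$ because $g_1,g_2$ are controlled and $\mathcal{E}_Z$ is closed under composition and dagger. Hence $g_1\sim g_2$.

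For the ``only if'' direction I would build a test space. Let $Z:=Y\times\{0,1\}$, and let $\iota_i(y)=(y,i)$. Equip $Z$ with the coarse structure $\mathcal{E}_Z$ generated by the sets $\Ad_{\iota_i}(E)$ for $E\in\mathcal{E}_Y$, together with $\Delta:=\{((y,0),(y,1))\mid y\in f(X)\}$. Both $\iota_0$ and $\iota_1$ are controlled, and $\iota_0\circ f\sim\iota_1\circ f$ is witnessed by $\Delta$. If $[f]$ is epi, then $\iota_0\sim\iota_1$, so
\[
T:=\{((y,0),(y,1))\mid y\in Y\}\in\mathcal{E}_Z.
\]
The crux is an explicit description of $\mathcal{E}_Z$: every entourage is contained in a finite composite of sets $\Ad_{\iota_i}(E)$, $\Delta$ and $\Delta^\dagger$. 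Since $\mathcal{E}_Y$ is closed under unions and composition, any pair going from copy $1$ to copy $0$ must pass through $\Delta$, and so lies in $\Ad_{\iota_0}(E_1)\circ\Delta\circ\Ad_{\iota_1}(E_2)$ up to a union of such terms. I would prove this by checking that the collection of subsets of such finite unions is already a coarse structure. Applying it to $T$ shows that each $y$ is $E_1$-close to some point of $f(X)$, that is, $E_1(f(X))=Y$ after replacing $E_1$ by $E_1\cup E_1^\dagger$. This description of the generated structure is the main obstacle, and I would handle it by verifying closure under composition and dagger directly.

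For (2), the ``if'' direction: if $f\circ g_1\sim f\circ g_2$ via $F\in\mathcal{E}_Y$, then $\{(g_1z,g_2z)\}\subset\Ad_{f^\dagger}(F)\in\mathcal{E}_X$, so $g_1\sim g_2$. For the ``only if'' direction, fix $F\in\mathcal{E}_Y$ and let $Z:=\Ad_{f^\dagger}(F)\subset X\times X$ with projections $p_1,p_2$. Give $Z$ the structure $\Ad_{p_1^\dagger}(\mathcal{E}_X)\cap\Ad_{p_2^\dagger}(\mathcal{E}_X)$, which is a coarse structure making $p_1$ and $p_2$ controlled. Then $f\circ p_1\sim f\circ p_2$, since all the pairs $(f(x_1),f(x_2))$ lie in $F$. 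Monicity gives $p_1\sim p_2$, that is, $Z=\Ad_{f^\dagger}(F)\in\mathcal{E}_X$.
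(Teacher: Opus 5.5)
Your argument is correct, up to a notational slip noted at the end. The paper itself gives no proof of this theorem. It quotes the result from Dikranjan--Zava and uses it as a black box: Theorems~\ref{thm:epi-characterization} and~\ref{thm:mono-characterization} reduce the total-relation case to it via a controlled map $f\subset\eta$. So the comparison is between a citation and your self-contained test-object proof.

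\textbf{Epimorphisms.} Your space $Z=Y\times\{0,1\}$ is exactly the cokernel pair of $[f]$, i.e.\ the pushout of $[f]$ with itself, with coprojections $[\iota_0],[\iota_1]$. Part (1) is then the standard fact that $[f]$ is epi iff these coprojections agree, made concrete by describing $\langle S\rangle$ as the lower closure of finite unions of finite composites of letters from $S\cup S^\dagger\cup\{1_Z^{\Rel}\}$. You do not need the full shape of the cross entourages. Suppose a word $G_1\circ\cdots\circ G_n$ contains $((y,0),(y,1))$. Then the first letter not of the form $\Ad_{\iota_0}(E)$ or $1_Z^{\Rel}$ must be $\Delta$. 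Hence $y\in E'(f(X))$, where $E'\in\mathcal{E}_Y$ is the composite of the preceding $E$'s. Taking the finite union over the words covering $T$ gives a uniform $E_1$.

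\textbf{Monomorphisms.} Using one test object $\Ad_{f^\dagger}(F)$ per entourage $F$ means you never need a kernel pair. This matters, because the category lacks general finite limits.

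\textbf{What each route buys.} The citation buys brevity. Your route buys an argument written entirely in the paper's relational language, independent of the ballean literature.

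\textbf{Repairs.} The coarse structure on $Z$ in (2) should be $(\Ad_{p_1})^{-1}(\mathcal{E}_X)\cap(\Ad_{p_2})^{-1}(\mathcal{E}_X)$. This is a coarse structure by Proposition~\ref{proposition:invCstr} and the fact that intersections of coarse structures are coarse structures. The image sets $\Ad_{p_i^\dagger}(\mathcal{E}_X)$ that you wrote are not closed under subsets; for instance, they usually do not contain $1_Z^{\Rel}$. Their lower closures are exactly these preimages, so your intent is clear and only the notation needs fixing.

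Also delete the stray expression in the ``if'' part of (1). The witness $\Ad_{g_1}(E)\circ D\circ\Ad_{g_2}(E^\dagger)$ already matches the paper's composition convention. Likewise, replacing $E_1$ by $E_1\cup E_1^\dagger$ at the end of (1) is unnecessary.
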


Combining this characterization with the following theorem yields a useful
criterion for recognizing coarse equivalences.

\begin{theorem}[Dikranjan--Zava {\cite{DikranjanZava2017}}]\label{theorem:coarse_balanced}
The category of coarse spaces is balanced; that is, a morphism is an
isomorphism if and only if it is both an epi-morphism and a mono-morphism.
\end{theorem}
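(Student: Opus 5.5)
The easy direction holds in any category: an isomorphism $[f]$ with inverse $[g]$ is both epic and monic, since we can cancel by composing with $[g]$. For the converse I will use Theorem \ref{theorem:CtrMap_epimono} to turn the two hypotheses into concrete conditions. Then I will build an explicit inverse by the axiom of choice.

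Let $[f]_{\CtrMap}$ be both an epi-morphism and a mono-morphism, with $f:X\to Y$ controlled. By Theorem \ref{theorem:CtrMap_epimono}:
\begin{itemize}
\item there is $E\in\mathcal{E}_Y$ with $E(f(X))=Y$;
\item $\Ad_{f^\dagger}(\mathcal{E}_Y)\subset\mathcal{E}_X$. Concretely, whenever a relation $D\subset X\times X$ satisfies $\Ad_f(D)\in\mathcal{E}_Y$, we have $D\subset\Ad_{f^\dagger}(\Ad_f(D))\in\mathcal{E}_X$, so $D\in\mathcal{E}_X$ because $\mathcal{E}_X$ is lower.
\end{itemize}
For each $y\in Y$ choose $g(y)\in X$ with $(y,f(g(y)))\in E$; this is possible since $E(f(X))=Y$.

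I then check three things.
\begin{enumerate}
\item \emph{$f\circ g\sim\id_Y$.} The set $\{(f(g(y)),y)\mid y\in Y\}$ is contained in $E^\dagger\in\mathcal{E}_Y$.
\item \emph{$g$ is controlled.} Take $F\in\mathcal{E}_Y$ and $(y_1,y_2)\in F$. Then $(f(g(y_1)),f(g(y_2)))$ lies in $E^\dagger\circ F\circ E$, which belongs to $\mathcal{E}_Y$. Hence $\Ad_f(\Ad_g(F))\in\mathcal{E}_Y$, and the reformulated mono condition gives $\Ad_g(F)\in\mathcal{E}_X$.
\item \emph{$g\circ f\sim\id_X$.} Let $D:=\{(g(f(x)),x)\mid x\in X\}$. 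Then $\Ad_f(D)=\{(f(g(f(x))),f(x))\}$ is contained in $E^\dagger\in\mathcal{E}_Y$. By the mono condition again, $D\in\mathcal{E}_X$.
\end{enumerate}
Therefore $[g]$ is a two-sided inverse of $[f]$, and $[f]$ is an isomorphism.

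The only delicate point is the orientation bookkeeping with daggers. That is, checking that the relations written down really are $E\circ F\circ E^\dagger$ (or its dagger) and not some mismatched composite. Closure of $\mathcal{E}_Y$ under dagger and composition makes either orientation harmless. The essential idea is that the mono characterization lets us test membership in $\mathcal{E}_X$ after pushing forward along $f$, and the epi characterization supplies a coarse section $g$.
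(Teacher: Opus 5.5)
Your proof is correct. The paper gives no proof of this statement: it quotes it from Dikranjan--Zava and uses it as an input, for instance in the equivalence of conditions in Theorem~\ref{theorem:CtrTotRelClo_isom}.

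Your argument is the standard one behind that citation. Starting from the characterization in Theorem~\ref{theorem:CtrMap_epimono}, a coarse embedding with coarsely dense image gets a coarse inverse by choosing nearby preimages. Your reformulation of the mono condition is right: $D\subset\Ad_{f^\dagger}(\Ad_f(D))$ holds because $f$ is total. All three verifications check out. The only point to fix is the order of the steps. Closeness in Definition~\ref{definition:close_ctrmap} is defined only between controlled maps, so step~2 (that $g$ is controlled) must come before steps~1 and~3.

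The paper's relational framework gives a variant of your argument in which the inverse is canonical rather than chosen, and which does not use balancedness. Suppose $[\eta]$ is epi and mono.
\begin{enumerate}
\item The proof of Theorem~\ref{thm:epi-characterization} produces $\xi=(F\cup 1_Y^{\Rel})\circ\eta\in[\eta]$ with $\xi^\dagger$ total.
\item Theorem~\ref{thm:mono-characterization}, applied to $[\xi]=[\eta]$, shows that $\xi^\dagger$ is controlled.
\item The final assertion of Theorem~\ref{theorem:CtrTotRelClo_isom} then shows that $[\xi^\dagger]$ is the inverse. Its proof uses only the inclusions $1_X^{\Rel}\subset\xi^\dagger\circ\xi$ and $1_Y^{\Rel}\subset\xi\circ\xi^\dagger$ together with Proposition~\ref{proposition:close_on_totalrel}, not balancedness.
\end{enumerate}

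Your $g$ is exactly a map chosen inside the relation $(E\circ f)^\dagger\subset\bigl((E\cup 1_Y^{\Rel})\circ f\bigr)^\dagger$. This is the same move as Proposition~\ref{proposition:CtrMapCtrTotRel_surj}, which selects a map inside a controlled total relation. Both routes rest on Theorem~\ref{theorem:CtrMap_epimono}. The relational one replaces your choice of $g$ and your three checks by two inclusions. Yours stays entirely within controlled maps.
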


Finally, we recall a recent result of Tang showing that the category of
coarse spaces admits a natural orthogonal factorization system.

\begin{theorem}[Tang {\cite[Theorem 1.8]{Tang2024categorical}}]\label{theorem:EM_coarse}
Let $\mathcal{E}$ [resp.~$\mathcal{M}$] be the class of epi-morphisms
[resp.~mono-morphisms] in the category of coarse spaces.
Then $(\mathcal{E},\mathcal{M})$ forms an orthogonal factorization system on
the category of coarse spaces.
\end{theorem}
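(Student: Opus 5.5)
The statement to prove is Tang's theorem (Theorem \ref{theorem:EM_coarse}): epi-morphisms and mono-morphisms form an orthogonal factorization system on $\Coarse_{\CtrMap}/\close$. The plan is to rely throughout on the criteria of Theorem \ref{theorem:CtrMap_epimono}.

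\textbf{Step 1 (factorization).} Given a controlled map $f:X\to Y$, put $Z:=f(X)\subset Y$ with the restricted coarse structure $\mathcal{E}_Y\cap\Rel(Z)$. Factor $f=\iota\circ f'$, where $f':X\to Z$ is the corestriction and $\iota:Z\hookrightarrow Y$ is the inclusion.
\begin{itemize}
\item Both maps are controlled.
\item $f'$ is surjective, so $1_Z^{\Rel}(f'(X))=Z$, and hence $[f']$ is epi by Theorem \ref{theorem:CtrMap_epimono}(1).
\item $\Ad_{\iota^\dagger}(\mathcal{E}_Y)=\mathcal{E}_Y\cap\Rel(Z)$, so $[\iota]$ is mono by Theorem \ref{theorem:CtrMap_epimono}(2).
\end{itemize}
This gives the required factorization.

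\textbf{Step 2 (lifting property).} We show $e\perp m$ for every epi $e=[p]:A\to B$ and mono $m=[q]:X\to Y$. Suppose $[q]\circ[u]=[v]\circ[p]$. Uniqueness of a diagonal is immediate: if $d,d'$ satisfy $[q][d]=[v]=[q][d']$, then $[d]=[d']$ because $[q]$ is mono.

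For existence we construct $d:B\to X$ by hand. Choose $E\in\mathcal{E}_B$, which we may take symmetric and containing the diagonal, with $E(p(A))=B$. For each $b\in B$ pick $a_b\in A$ with $(b,p(a_b))\in E$, and set $d(b):=u(a_b)$.

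The core estimate is the following. Let $F:=\{(q(u(a)),v(p(a)))\mid a\in A\}\in\mathcal{E}_Y$. For $(b,b')\in E'\in\mathcal{E}_B$ we get
\[
(q(d(b)),\,q(d(b')))\in F\circ \Ad_v(E\circ E'\circ E)\circ F^\dagger\in\mathcal{E}_Y.
\]
Since $\Ad_{q^\dagger}(\mathcal{E}_Y)\subset\mathcal{E}_X$, this gives $\Ad_d(E')\in\mathcal{E}_X$, so $d$ is controlled. The same computation gives both commutativity relations:
\begin{itemize}
\item $q\circ d\sim v$, using $(v(b),v(p(a_b)))\in\Ad_v(E)$;
\item $d\circ p\sim u$, using the mono criterion again on the pairs $(q(u(a_{p(a)})),\,q(u(a)))$.
\end{itemize}
The main obstacle is exactly this bookkeeping with entourages; the key insight is that the mono condition lets one pull control back from $Y$ to $X$.

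\textbf{Step 3 (the orthogonality classes).} Step 2 gives $\mathcal{E}\subset{}^\perp\mathcal{M}$ and $\mathcal{M}\subset\mathcal{E}^\perp$. For the reverse inclusions we use the standard retract argument together with Theorem \ref{theorem:coarse_balanced}.
\begin{itemize}
\item Let $g\in\mathcal{E}^\perp$, and factor $g=m\circ e$ by Step 1. Apply $e\perp g$ to the square with top $1$, left $e$, right $g$, bottom $m$. This yields $s$ with $s\circ e=1$ and $g\circ s=m$.
\item From $s\circ e=1$, $e$ is mono; since $e$ is also epi, it is an isomorphism by balancedness.
\item Then $g=m\circ e$ is a composite of a mono with an isomorphism, so $g$ is mono.
\item The argument for $^\perp\mathcal{M}\subset\mathcal{E}$ is dual.
\end{itemize}
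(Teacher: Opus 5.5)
\textbf{Verdict: correct, and a different route.} Your proof is correct. The paper does not prove this statement itself: it imports it from Tang \cite{Tang2024categorical}. The closest material is Appendix~\ref{section:EpiMono_factorization}, which, taking Tang's theorem for granted, gives explicit epi--mono factorizations and diagonal fillers in $\Coarse_{\CtrTotRel}/\close$. Its constructions differ from yours in three places.

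\textbf{Factorization.} Proposition~\ref{proposition:epimonofucttame} keeps the underlying set $X$ and enlarges its coarse structure to $\mathcal{E}_X^\eta=\langle\Ad_{\eta^\dagger}(\mathcal{E}_Y)\rangle$. It then factors $[\eta]$ as $[\id_X]$ followed by $[\eta]$, a coimage-type factorization. You instead pass to the image $f(X)$ with the subspace structure. That image-type factorization is exactly the one the paper uses later in the proof of Proposition~\ref{proposition:PhigivesNatIsom}.

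\textbf{Filler.} Theorem~\ref{theorem:explicit_lift_epi_mono} takes the relation $d=\iota^\dagger\circ E\circ\eta\circ F$, which goes forward into $Y$ and back along $\iota^\dagger$. There, controlledness is automatic because every factor is controlled (Theorem~\ref{thm:mono-characterization}). The real work is showing that $d$ is total, which uses the epi hypothesis through the totality of $\pi^\dagger\circ F$. No choice is needed, and the second triangle is deduced from $[\pi]$ being epi. You instead pull back through the epi by choosing $a_b$ and push forward along $u$. This yields an honest map, but you then have to obtain control by pushing into $Y$ and pulling back with the criterion $\Ad_{q^\dagger}(\mathcal{E}_Y)\subset\mathcal{E}_X$. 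That is precisely your entourage estimate.

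\textbf{Orthogonality classes.} Your Step~3, the retract argument giving $\mathcal{E}^\perp\subset\mathcal{M}$ and ${}^\perp\mathcal{M}\subset\mathcal{E}$, has no counterpart in the paper. It is what upgrades ``factorizations plus unique fillers'' to an orthogonal factorization system in the sense of Definition~\ref{definition:OFS}. So your argument is a self-contained proof from the Dikranjan--Zava criteria, which the paper never supplies.

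A small simplification: balancedness is not needed in Step~3. If $s\circ e=1$ and $e$ is epi, then $e\circ s\circ e=e$ gives $e\circ s=1$ directly.
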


\subsection{Coarse subspaces}\label{subsection:coarsesubset}

We begin by introducing the notion of a \emph{coarse subspace} of a coarse space.

% \begin{definition}[cf.~Leitner--Vigolo \cite{LeitnerVigolo2023}]\label{definition:similar}
% Let $X=(X,\mathcal{E}_X)$ be a coarse space. Define a pre-order on the power set $\mathcal{P}(X)$ by
% \[
% A \prec B \defarrow \text{there exists } E\in\mathcal{E}_X \text{ such that } A\subset E(B).
% \]
% In this paper, we write $(\mathcal{P}_c(X),\prec)$ for the anti-symmetrization of the pre-order $(\mathcal{P}(X),\prec)$ (see Section \ref{subsection:pre-order_category} for the definition of anti-symmetrization). The elements of $\mathcal{P}_c(X)$ are called \emph{coarse subspaces} of $X$.
% \end{definition}

\begin{definition}[cf.~Leitner--Vigolo \cite{LeitnerVigolo2023}]\label{definition:similar}
    Let $X=(X,\mathcal{E}_X)$ be a coarse space. Define a pre-order on the power set $\mathcal{P}(X)$ by
    \[
    A \prec B
    \defarrow
    \text{there exists } E\in\mathcal{E}_X \text{ such that } A\subset E(B).
    \]
    Recall that, for a pre-order, the relation $\sim$ was defined in Definition~\ref{definition:simonpre} by
    \[
    A\sim B
    \defarrow
    A\prec B \text{ and } B\prec A.
    \]
    Further, the anti-symmetrization of a pre-order is defined by using this relation in Section \ref{subsection:pre-order_category}.
    In this paper, we define $(\mathcal{P}_c(X),\prec)$ to be the anti-symmetrization of the pre-order $(\mathcal{P}(X),\prec)$, and the elements of $\mathcal{P}_c(X)$ are called \emph{coarse subspaces} of $X$.
\end{definition}

\begin{remark}
In the literature, a coarse space obtained by equipping a subset of $X$ with the induced coarse structure is sometimes also called a coarse subspace (see for example, \cite[Definition 2.6]{ChenWang2004Ideal}). The compatibility of this terminology with the above definition will be justified by the discussion in Section \ref{section:subspace_subobject}.
\end{remark}

\begin{remark}
Kalantari--Honari \cite{KalantariHonari2016} call the equivalence relation on $\mathcal{P}(X)$ induced by the pre-order $\prec$ a \emph{resemblance}. When $S_1\sim S_2$, they say that $S_1$ and $S_2$ are \emph{alike}.
\end{remark}

\begin{example}
Let $X$ be a metric space equipped with the coarse structure of Example \ref{example:metric_coarse}. For subsets $S_1,S_2\in\mathcal{P}(X)$, the relation $S_1\sim S_2$ holds if and only if the Hausdorff distance between $S_1$ and $S_2$ is finite.
\end{example}

\begin{example}
Let $G$ be a locally compact Hausdorff group, and let $\mathcal{E}^{\mathrm{LR}}_G$ be the coarse structure defined as in Example \ref{example:groupaction_coarse}. Then, for subsets $S_1,S_2\subset G$, the relation $S_1\sim S_2$ in the sense of Definition \ref{definition:similar} holds if and only if $S_1\sim S_2$ in the sense of Kobayashi \cite{Kobayashi96}.
\end{example}

We also record the following set-theoretic observation.

\begin{proposition}
As declared at the beginning of this section, every set $X$ is assumed to satisfy $X\in\mathcal{U}$ (see Setting \ref{setting:UVC}). Under this convention, for any coarse space $X=(X,\mathcal{E})$, both $\mathcal{P}(X)$ and $\mathcal{P}_c(X)$ belong to $\mathcal{U}$. In particular, $\mathcal{P}(X),\mathcal{P}_c(X)\in\mathcal{V}$.
\end{proposition}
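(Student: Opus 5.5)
The statement combines two claims: for any coarse space $X=(X,\mathcal{E})$, the sets $\mathcal{P}(X)$ and $\mathcal{P}_c(X)$ lie in $\mathcal{U}$, and hence in $\mathcal{V}$. The plan is to reduce everything to the closure axioms of a Grothendieck universe (see \cite[Expos\'e I]{SGA41}). The axioms needed are:
\begin{itemize}
\item if $x\in\mathcal{U}$ then $\mathcal{P}(x)\in\mathcal{U}$;
\item if $x\in\mathcal{U}$ and $y\in x$ then $y\in\mathcal{U}$ (transitivity);
\item $\mathcal{U}$ is closed under unions indexed by its elements.
\end{itemize}
A standard consequence we will use is that any subset of an element of $\mathcal{U}$ is itself in $\mathcal{U}$, since it is an element of the power set, which lies in $\mathcal{U}$.

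The first step handles $\mathcal{P}(X)$. By the convention of this section, $X\in\mathcal{U}$, so $\mathcal{P}(X)\in\mathcal{U}$ directly by the power set axiom.

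The second step handles $\mathcal{P}_c(X)$. By Definition \ref{definition:similar}, this is the quotient $\mathcal{P}(X)/\sim$. Each equivalence class is a subset of $\mathcal{P}(X)$, hence an element of $\mathcal{P}(\mathcal{P}(X))$. So $\mathcal{P}_c(X)\subset\mathcal{P}(\mathcal{P}(X))$. Applying the power set axiom twice gives $\mathcal{P}(\mathcal{P}(X))\in\mathcal{U}$, and the subset consequence then gives $\mathcal{P}_c(X)\in\mathcal{U}$. This is the only point that needs any care: the quotient must be realized concretely as a set of equivalence classes, and one has to note that subsets inherit membership in $\mathcal{U}$.

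The final step is $\mathcal{P}(X),\mathcal{P}_c(X)\in\mathcal{V}$. By Setting \ref{setting:UVC}, $\mathcal{U}\in\mathcal{V}$, and $\mathcal{V}$ is transitive, so every element of $\mathcal{U}$ is an element of $\mathcal{V}$. Applying this to the two sets from the previous steps finishes the argument. The coarse structure $\mathcal{E}$ enters only through the definition of the relation $\sim$, and it has no bearing on the size estimates.
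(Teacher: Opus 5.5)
Your proposal is correct and follows the paper's proof: you note $\mathcal{P}(X)\in\mathcal{U}$ by the power-set axiom, and $\mathcal{P}_c(X)\in\mathcal{P}(\mathcal{P}(\mathcal{P}(X)))\in\mathcal{U}$, so transitivity gives $\mathcal{P}_c(X)\in\mathcal{U}$. You then pass to $\mathcal{V}$ using $\mathcal{U}\in\mathcal{V}$ and the transitivity of the universe $\mathcal{V}$, exactly as the paper does.
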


\begin{proof}
Since $X\in\mathcal{U}$ and $\mathcal{U}$ is a universe, we have $\mathcal{P}(X),\mathcal{P}(\mathcal{P}(\mathcal{P}(X)))\in\mathcal{U}$. By definition, $\mathcal{P}_c(X)$ is an element of $\mathcal{P}(\mathcal{P}(\mathcal{P}(X)))$, and hence $\mathcal{P}_c(X)\in\mathcal{U}$. The final assertion follows from the facts that $\mathcal{U}\in\mathcal{V}$ and that $\mathcal{V}$ is also a universe.
\end{proof}

\section{A realization of the category of coarse spaces based on controlled total relations}\label{section:realizationCtrTotRel}

In this section, we introduce the notion of a \emph{controlled total relation},
which may be regarded as a generalization of a controlled map.
We then show that the category of coarse spaces admits a realization as a
quotient category of a category of controlled total relations.

The ideas presented in this section are essentially contained in
Mart\'{i}nez--Vigolo \cite{MartinezVigolo2026roe}.
Our treatment differs in that we reformulate those ideas from a more
explicitly categorical viewpoint.

As in the previous section, the term ``set'' will always mean a
$\mathcal{U}$-set (i.e.~an element of $\mathcal{U}$; see Setting
\ref{setting:UVC}).

\subsection{Total relations}

We begin by recalling the notion of a total relation (cf.~\cite{BerghammerFurusawaGuttmannHofner2020Rel}) between sets.

\begin{definition}
Let $X$ and $Y$ be sets.
\begin{enumerate}
    \item For each relation $\eta \in \Rel(X,Y)$, we define $\Dom(\eta) \subset X$ by 
    \[
    \Dom(\eta) := \{ x \in X \mid \text{ there exists } y \in Y \text{ such that }(y,x) \in \eta \}.
    \]
    \item A relation $\eta\in\Rel(X,Y)$ is said to be \emph{total} if $\Dom(\eta) = X$.
\end{enumerate}
\end{definition}

The following properties are immediate from the definition.

\begin{proposition}\label{proposition:total_basic}
\begin{enumerate}
\item
Totality is an upper condition on $\Rel(X,Y)$.
That is, for $\xi,\eta\in\Rel(X,Y)$ with $\xi\subset\eta$,
if $\xi$ is total, then $\eta$ is total.

\item \label{item:total_basic:totalleftrem}
Let $\eta\in\Rel(X,Y)$ and $R\in\Rel(Y,Z)$.
If $R\circ\eta$ is total, then $\eta$ is total.

\item
Total relations form a wide subcategory of $\Set_{\Rel}$, which will be
denoted by $\Set_{\TotRel}$.

\item
$\Set_{\map}$ is a wide subcategory of $\Set_{\TotRel}$.
\end{enumerate}
\end{proposition}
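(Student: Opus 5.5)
The plan is to verify the four items directly from the definitions of $\Dom$, composition, and the identity relation, in the order stated. Items (1) and (2) are pointwise statements about domains, and items (3) and (4) are closure properties that follow from them.

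For (1), take $x\in X$. Since $\xi$ is total, there is $y\in Y$ with $(y,x)\in\xi\subset\eta$. Hence $x\in\Dom(\eta)$, so $\Dom(\eta)=X$.

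For (2), take $x\in X$. Totality of $R\circ\eta$ gives $z\in Z$ with $(z,x)\in R\circ\eta$. By the definition of composition there is $y\in Y$ with $(z,y)\in R$ and $(y,x)\in\eta$. In particular $x\in\Dom(\eta)$.

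For (3), two facts are needed.
\begin{itemize}
\item The identity $1_X^{\Rel}$ is total, since $(x,x)\in 1_X^{\Rel}$ for every $x\in X$.
\item Composites of total relations are total. Let $\eta:X\rightarrow_{\Rel}Y$ and $\xi:Y\rightarrow_{\Rel}Z$ be total. For $x\in X$, choose $y$ with $(y,x)\in\eta$, then $z$ with $(z,y)\in\xi$. Then $(z,x)\in\xi\circ\eta$.
\end{itemize}
Together with the fact that every object of $\Set_{\Rel}$ is kept, these show that total relations form a wide subcategory $\Set_{\TotRel}$.

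For (4), recall that $\Set_{\map}$ is already a wide subcategory of $\Set_{\Rel}$ via graphs. So it suffices to note that the graph $\{(f(x),x)\mid x\in X\}$ of a map $f:X\to Y$ is total, which is immediate since every $x$ appears as a second coordinate. Because composition and identities in $\Set_{\map}$ and $\Set_{\TotRel}$ are both inherited from $\Set_{\Rel}$, $\Set_{\map}$ is a wide subcategory of $\Set_{\TotRel}$.

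No step presents a genuine obstacle. The only point requiring care is item (2), where one must note that totality transfers to the first factor $\eta$ and not to $R$.
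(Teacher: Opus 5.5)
Your proposal is correct and takes the same approach as the paper, which gives no written proof and simply calls these properties immediate from the definitions; your direct verification via $\Dom$, the composition convention, and the identity relation is exactly that check. Nothing further is needed.
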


We also record several equivalent characterizations of totality.

\begin{proposition}\label{proposition:total_condition}
For a relation $\eta\in\Rel(X,Y)$, the following conditions are equivalent:
\begin{enumerate}
\item\label{item:total_condition:total}
$\eta$ is total.

\item\label{item:total_condition:non-empty}
$\eta(S)\neq\emptyset$ for each non-empty subset $S$ of $X$.

\item\label{item:total_condition:alg}
\[
1_X^{\Rel}
\subset
\Ad_{\eta^\dagger}(1_Y^{\Rel})
(=\eta^\dagger\circ\eta).
\]
\end{enumerate}
\end{proposition}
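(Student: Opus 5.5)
I will prove the three equivalences by unwinding the definitions of $\Dom$, of the image $\eta(S)$, and of the composite $\eta^\dagger\circ\eta$. First I will record the identity $\Ad_{\eta^\dagger}(1_Y^{\Rel})=\eta^\dagger\circ 1_Y^{\Rel}\circ(\eta^\dagger)^\dagger=\eta^\dagger\circ\eta$. This uses $(\eta^\dagger)^\dagger=\eta$ and the fact that $1_Y^{\Rel}$ is the identity for composition in $\Set_{\Rel}$. It justifies the parenthetical expression in condition (\ref{item:total_condition:alg}). Next I will write out the composite explicitly:
\[
\eta^\dagger\circ\eta=\{(x',x)\in X\times X \mid (y,x)\in\eta \text{ and } (y,x')\in\eta \text{ for some } y\in Y\}.
\]

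For (\ref{item:total_condition:total}) $\Rightarrow$ (\ref{item:total_condition:non-empty}), let $S\neq\emptyset$ and pick $s\in S$. Totality gives $y\in Y$ with $(y,s)\in\eta$, so $y\in\eta(S)$ and hence $\eta(S)\neq\emptyset$. For (\ref{item:total_condition:non-empty}) $\Rightarrow$ (\ref{item:total_condition:total}), apply the hypothesis to the singleton $S=\{x\}$ for each $x\in X$. Since $\eta(\{x\})\neq\emptyset$, there is some $y$ with $(y,x)\in\eta$, so $x\in\Dom(\eta)$.

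For (\ref{item:total_condition:total}) $\Rightarrow$ (\ref{item:total_condition:alg}), take $(x,x)\in 1_X^{\Rel}$ and choose $y$ with $(y,x)\in\eta$. Then $(x,y)\in\eta^\dagger$, so $(x,x)\in\eta^\dagger\circ\eta$. Conversely, suppose $(x,x)\in\eta^\dagger\circ\eta$. The explicit description above yields some $y$ with $(y,x)\in\eta$, so $x\in\Dom(\eta)$.

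Every step is a direct definition chase, so I do not expect a genuine obstacle. The only point needing care is the order convention for relations: $R\in\Rel(X,Y)$ is a subset of $Y\times X$, and composition is read right to left. I will check this convention when expanding $\eta^\dagger\circ\eta$ so that the pairs land in $X\times X$ with the correct orientation.
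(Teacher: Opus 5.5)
Your proposal is correct and follows essentially the same argument as the paper: (1)$\Leftrightarrow$(2) is immediate (your singleton argument supplies the detail the paper omits), and (1)$\Leftrightarrow$(3) comes from writing $\Ad_{\eta^\dagger}(1_Y^{\Rel})=\eta^\dagger\circ\eta$ explicitly. That set consists of the pairs $(x_1,x_2)$ for which some $y$ satisfies $(y,x_1),(y,x_2)\in\eta$, and your orientation check of this description is right.
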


\begin{proof}
The equivalence
\eqref{item:total_condition:total}
$\Leftrightarrow$
\eqref{item:total_condition:non-empty}
is immediate.

We prove
\eqref{item:total_condition:total}
$\Leftrightarrow$
\eqref{item:total_condition:alg}.
Observe that
\begin{align*}
\Ad_{\eta^\dagger}(1_Y^\Rel)
&=
\eta^\dagger\circ1_Y^\Rel\circ\eta
\\
&=
\eta^\dagger\circ\eta
\\
&=
\{
(x_1,x_2)\in X\times X
\mid
\text{there exists }y\in Y
\text{ such that }
(x_1,y)\in\eta^\dagger,
\ (y,x_2)\in\eta
\}
\\
&=
\{
(x_1,x_2)\in X\times X
\mid
\text{there exists }y\in Y
\text{ such that }
(y,x_1),(y,x_2)\in\eta
\}.
\end{align*}
Therefore,
$1_X^\Rel\subset\Ad_{\eta^\dagger}(1_Y^\Rel)$
if and only if, for each $x\in X$, there exists $y\in Y$ such that
$(y,x)\in\eta$.
This proves
\eqref{item:total_condition:total}
$\Leftrightarrow$
\eqref{item:total_condition:alg}.
\end{proof}

The following three propositions will be applied later.

\begin{proposition}\label{proposition:SDometatotal}
For each $\eta \in \Rel(X,Y)$ and $S \subset \Dom(\eta)$, 
$S \subset (\eta^\dagger \circ \eta)(S)$.    
\end{proposition}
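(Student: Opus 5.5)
The statement is elementwise, so I will prove it by direct element-chasing from the definitions of $\Dom$, composition and dagger in Section~\ref{section:categoryofrel}. No earlier results beyond these definitions are needed.

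Fix $s \in S$. Since $S \subset \Dom(\eta)$, there is some $y \in Y$ with $(y,s) \in \eta$. By the definition of the dagger, this gives $(s,y) \in \eta^\dagger$. By the definition of composition, $(s,s) \in \eta^\dagger \circ \eta$, with $y$ as the intermediate element. The definition of the image $R(S)$ then shows $s \in (\eta^\dagger\circ\eta)(S)$, taking $s$ itself as the witness in $S$.

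Alternatively, one could adapt the computation of $\eta^\dagger\circ\eta$ in the proof of Proposition~\ref{proposition:total_condition}. That computation shows that the diagonal pairs $(x,x)$ with $x \in \Dom(\eta)$ lie in $\eta^\dagger\circ\eta$, so $1^{\Rel}_{\Dom(\eta)} \subset \eta^\dagger\circ\eta$, and applying it to $S$ gives the claim. There is no genuine obstacle here. The only point needing care is the order convention: $\eta \subset Y\times X$, so the pair is written $(y,s)$ and not $(s,y)$.
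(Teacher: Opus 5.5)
Your element chase is correct. The witness $y$ with $(y,s)\in\eta$ gives $(s,s)\in\eta^\dagger\circ\eta$, and $s$ itself is the witness in $S$, so $s\in(\eta^\dagger\circ\eta)(S)$.

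The paper takes a more roundabout, point-free route. It lets $\iota:\Dom(\eta)\to X$ be the inclusion and observes that $\eta\circ\iota$ is total. It then invokes Proposition~\ref{proposition:total_condition} to get $1^{\Rel}_{\Dom(\eta)}\subset(\eta\circ\iota)^\dagger\circ(\eta\circ\iota)$, and computes
\[
S\subset\bigl((\eta\circ\iota)^\dagger\circ(\eta\circ\iota)\bigr)(S)=\iota^\dagger\bigl((\eta^\dagger\circ\eta)(S)\bigr)=(\eta^\dagger\circ\eta)(S)\cap\Dom(\eta)\subset(\eta^\dagger\circ\eta)(S).
\]

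The paper's version reuses the algebraic characterization of totality it has already established and stays inside the relational calculus. Yours is shorter and depends on no earlier result; your one-line chase is exactly the content hidden inside Proposition~\ref{proposition:total_condition}.

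Your second variant is essentially the paper's idea with the restriction step collapsed. Strictly, $1^{\Rel}_{\Dom(\eta)}$ is a relation on $\Dom(\eta)$, not on $X$. The paper's inclusion $\iota$ and the pullback $\iota^\dagger$ are the bookkeeping that makes ``$1^{\Rel}_{\Dom(\eta)}\subset\eta^\dagger\circ\eta$'' literal. Reading it directly as a subset of $X\times X$, as you do, is harmless.
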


\begin{proposition}\label{proposition:total_adjoint}
Let $\eta\in\Rel(X,Y)$ and $E \in \Rel(X)$.
Then for each subset $S$ of $\Dom(\eta) \subset X$, 
\[
(\eta \circ E)(S) \subset ((\Ad_\eta E)\circ \eta)(S)
\]
holds.
In particular, if $\eta$ is total, then 
\[
\eta\circ E \subset (\Ad_\eta E)\circ \eta
\]
in $\Rel(X,Y)$ holds.
\end{proposition}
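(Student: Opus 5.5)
We argue elementwise, directly from the definitions of composition, image and $\Ad_\eta$; nothing beyond unwinding these is needed. Fix $S \subset \Dom(\eta)$ and take $y \in (\eta\circ E)(S)$. Then there are $s \in S$ and $x \in X$ with $(x,s) \in E$ and $(y,x) \in \eta$. Our goal is to produce some $s' \in S$ and $y' \in Y$ with $(y',s') \in \eta$ and $(y,y') \in \Ad_\eta E = \eta\circ E\circ\eta^\dagger$. This gives $y \in ((\Ad_\eta E)\circ\eta)(S)$.

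The key step is where the hypothesis $S\subset\Dom(\eta)$ enters. Since $s \in \Dom(\eta)$, we choose $y' \in Y$ with $(y',s)\in\eta$, and we take $s' := s$. Then $(s,y') \in \eta^\dagger$, $(x,s)\in E$ and $(y,x)\in\eta$. Composing these gives $(y,y') \in \eta\circ E\circ\eta^\dagger = \Ad_\eta(E)$. Together with $(y',s)\in\eta$, this yields $(y,s) \in (\Ad_\eta E)\circ\eta$, and hence $y \in ((\Ad_\eta E)\circ\eta)(S)$.

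For the second assertion, assume $\eta$ is total, so $\Dom(\eta)=X$. Apply the first part to singletons $S=\{s\}$ for each $s\in X$. A relation $R\in\Rel(X,Y)$ is determined by its values on singletons, since $(y,s)\in R$ if and only if $y\in R(\{s\})$. Therefore the pointwise inclusions $(\eta\circ E)(\{s\})\subset((\Ad_\eta E)\circ\eta)(\{s\})$ for all $s\in X$ give the inclusion $\eta\circ E\subset(\Ad_\eta E)\circ\eta$ of relations.

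The only point needing care is choosing the witness $y'$ from the domain condition. This is exactly where totality, or $S\subset\Dom(\eta)$, is used, and without it the inclusion can fail.
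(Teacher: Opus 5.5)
Your argument is correct and is essentially the paper's proof unwound elementwise. The paper cites Proposition~\ref{proposition:SDometatotal} to get $S \subset (\eta^\dagger\circ\eta)(S)$ for $S\subset\Dom(\eta)$, then uses monotonicity of images and $\eta\circ E\circ\eta^\dagger\circ\eta = (\Ad_\eta E)\circ\eta$; your choice of a witness $y'$ with $(y',s)\in\eta$ is exactly that inclusion at a single point, and your singleton argument correctly supplies the ``in particular'' step, which the paper leaves implicit.
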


\begin{proposition}\label{proposition:iotaxi_Omega}
Let $X$ and $Z$ be sets, and let $\Omega$ be a subset of $X$.
Let $\iota_\Omega:\Omega\to X$ be the inclusion map, regarded as an element of
$\Rel(\Omega,X)$.
For $\xi\in\Rel(Z,X)$, put
$\xi_\Omega:=\iota_\Omega^\dagger\circ\xi\in\Rel(Z,\Omega)$.
Then 
\[
\xi_\Omega=(\Omega\times Z)\cap\xi
\]
holds.
In particular, for every subset $Z_0\subset Z$, one has
$\xi_{\Omega}(Z_0)=\xi(Z_0)\cap \Omega$.
Moreover, if $\xi(Z)\subset \Omega$, then
\[
\xi=\iota_\Omega \circ\xi_\Omega.
\]
In this case, if $\xi$ is total, then $\xi_\Omega$ is also total.
\end{proposition}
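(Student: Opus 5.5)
The statement is Proposition~\ref{proposition:iotaxi_Omega}, and I would check each assertion by unwinding the definition of composition in $\Set_{\Rel}$. Throughout, the inclusion $\iota_\Omega$ is regarded as the relation $\{(\omega,\omega)\in X\times\Omega\mid\omega\in\Omega\}$. Its dagger is therefore $\iota_\Omega^\dagger=\{(\omega,\omega)\in\Omega\times X\mid \omega\in\Omega\}$.

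\textbf{Step 1: the formula for $\xi_\Omega$.} A pair $(\omega,z)\in\Omega\times Z$ lies in $\iota_\Omega^\dagger\circ\xi$ exactly when there is some $x\in X$ with $(\omega,x)\in\iota_\Omega^\dagger$ and $(x,z)\in\xi$. The first condition forces $x=\omega\in\Omega$, so the condition reduces to $(\omega,z)\in\xi$ with $\omega\in\Omega$. This gives $\xi_\Omega=(\Omega\times Z)\cap\xi$, where $\Omega\times Z$ is viewed inside $X\times Z$. Taking images of $Z_0$ on both sides then yields $\xi_\Omega(Z_0)=\xi(Z_0)\cap\Omega$ directly.

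\textbf{Step 2: the factorization when $\xi(Z)\subset\Omega$.} The composite $\iota_\Omega\circ\xi_\Omega$ consists of the pairs $(x,z)$ for which there is $\omega\in\Omega$ with $(x,\omega)\in\iota_\Omega$ and $(\omega,z)\in\xi_\Omega$. That is the same as $x=\omega\in\Omega$ and $(x,z)\in\xi$, so $\iota_\Omega\circ\xi_\Omega=\xi\cap(\Omega\times Z)$. The hypothesis $\xi(Z)\subset\Omega$ says every $(x,z)\in\xi$ already has $x\in\Omega$. Hence $\xi\cap(\Omega\times Z)=\xi$, and so $\xi=\iota_\Omega\circ\xi_\Omega$.

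\textbf{Step 3: totality.} If $\xi$ is total, then so is $\iota_\Omega\circ\xi_\Omega=\xi$. Proposition~\ref{proposition:total_basic}\eqref{item:total_basic:totalleftrem}, applied with $R=\iota_\Omega$, then shows that $\xi_\Omega$ is total. Alternatively, one can use Step 1 directly: $\Dom(\xi_\Omega)=\Dom(\xi\cap(\Omega\times Z))=\Dom(\xi)=Z$.

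None of these steps is a genuine obstacle. The only point needing care is the order convention: relations $X\to Y$ are subsets of $Y\times X$, so the intersection in the statement should be read as happening inside $X\times Z$.
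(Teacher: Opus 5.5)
Your proposal is correct and follows the paper's proof essentially step for step. Both unwind the composite with $\iota_\Omega^\dagger$ elementwise to get $\xi_\Omega=(\Omega\times Z)\cap\xi$, then check $\iota_\Omega\circ\xi_\Omega=\xi$ elementwise under $\xi(Z)\subset\Omega$, and finally deduce totality of $\xi_\Omega$ from Proposition~\ref{proposition:total_basic}\eqref{item:total_basic:totalleftrem}; your alternative domain computation for the last step is also valid under the standing hypothesis $\xi(Z)\subset\Omega$.
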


Let us give proofs of Propositions \ref{proposition:SDometatotal}, 
\ref{proposition:total_adjoint} 
and \ref{proposition:iotaxi_Omega}:

\begin{proof}[Proof of Proposition \ref{proposition:SDometatotal}]
Let $\iota : \Dom(\eta) \rightarrow X$ denote the inclusion map.
Then $\eta \circ \iota \in \Rel(\Dom(\eta),Y)$ is total.
Thus by Proposition \ref{proposition:total_condition}, we have \[
1^{\Rel}_{\Dom(\eta)} \subset (\eta \circ \iota)^\dagger \circ (\eta \circ \iota).
\]
Thus 
\begin{align*}
S 
    &= 1^{\Rel}_{\Dom(\eta)}(S) \\
    &\subset ((\eta \circ \iota)^\dagger \circ (\eta \circ \iota))(S) \\
    &= \iota^\dagger((\eta^\dagger \circ \eta)(S)) \\
    &= \iota^{-1}((\eta^\dagger \circ \eta)(S)) \\
    &= (\eta^\dagger \circ \eta)(S) \cap \Dom(\eta) \\
    &\subset (\eta^\dagger \circ \eta)(S).
\end{align*}
\end{proof}

\begin{proof}[Proof of Proposition \ref{proposition:total_adjoint}]
Take $S \subset \Dom(\eta)$.
Then by Proposition \ref{proposition:SDometatotal}, 
we have 
\begin{align*}
(\eta \circ E)(S)
    &\subset (\eta \circ E)((\eta^\dagger \circ \eta)S) \\
    &= (\Ad_{\eta}(E)) \circ \eta)(S).
\end{align*}
\end{proof}

\begin{proof}[Proof of Proposition \ref{proposition:iotaxi_Omega}]
First note that
\[
(x,\omega)\in\iota_\Omega
\iff
x\in \Omega \text{ and } x=\omega.
\]
Hence, for each $(x,z)\in X\times Z$,
\begin{align*}
(x&,z)\in\xi_\Omega \\
&\iff
(x,z)\in\iota_\Omega^\dagger\circ\xi \\
&\iff
x\in \Omega,\text{ and there exists }x'\in X
\text{ such that }(x,x')\in\iota_\Omega^\dagger
\text{ and }(x',z)\in\xi \\
&\iff
x\in \Omega,\text{ and }(x,z)\in\xi.
\end{align*}
This proves $\xi_\Omega=(\Omega\times Z)\cap\xi$.
The identity $\xi_\Omega(Z_0)=\xi(Z_0)\cap\Omega$ follows immediately.

Assume now that $\xi(Z)\subset\Omega$.
Then
\begin{align*}
(x,z)\in\iota_\Omega\circ\xi_\Omega
&\iff
x\in\Omega,\text{ and }(x,z)\in\xi_\Omega \\
&\iff
x\in\Omega,\text{ and }(x,z)\in\xi \\
&\iff
(x,z)\in\xi,
\end{align*}
where the last equivalence follows from $\xi(Z)\subset\Omega$.
Thus $\xi=\iota_\Omega\circ\xi_\Omega$.

Finally, suppose in addition that $\xi$ is total.
Since $\xi=\iota_\Omega\circ\xi_\Omega$ is total, Proposition
\ref{proposition:total_basic} \eqref{item:total_basic:totalleftrem} implies that $\xi_\Omega$ is total.
\end{proof}

\subsection{Controlled relations}

The category of coarse spaces and relations [resp.~total relations] will be
denoted by $\Coarse_{\Rel}$ [resp.~$\Coarse_{\TotRel}$].

\begin{definition}[cf.~{\cite[Definition 3.17]{MartinezVigolo2026roe}}]\label{definition:controlledRel}
Let $(X,\mathcal{E}_X)$ and $(Y,\mathcal{E}_Y)$ be coarse spaces.
In this paper, a relation $\eta\in\Rel(X,Y)$
(i.e.~a subset of $Y\times X$) is said to be \emph{controlled} if
$\Ad_\eta(\mathcal{E}_X)\subset\mathcal{E}_Y$.
\end{definition}

We write $\CtrRel(X,Y)$ [resp.~$\CtrTotRel(X,Y)$] for the set of all
controlled relations [resp.~controlled total relations] between the coarse
spaces $X$ and $Y$.

The following theorem holds.

\begin{theorem}
Coarse spaces and controlled relations form a wide subcategory of
$\Coarse_{\Rel}$, which will be denoted by $\Coarse_{\CtrRel}$.
Moreover, controlled total relations form a wide subcategory of
$\Coarse_{\CtrRel}$, which will be denoted by $\Coarse_{\CtrTotRel}$.
\end{theorem}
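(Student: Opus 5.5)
To prove the theorem, we verify the two axioms of a subcategory, identities and closure under composition, first for controlled relations and then for controlled total relations. The second half reduces to the first together with Proposition~\ref{proposition:total_basic}, which says that total relations are closed under composition and contain identities.

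\emph{Identities.} For a coarse space $(X,\mathcal{E}_X)$ and any $E\in\mathcal{E}_X$, we have
\[
\Ad_{1_X^{\Rel}}(E)=1_X^{\Rel}\circ E\circ 1_X^{\Rel}=E .
\]
Hence $\Ad_{1_X^{\Rel}}(\mathcal{E}_X)=\mathcal{E}_X$, so $1_X^{\Rel}$ is controlled. It is also total, so identities lie in both $\Coarse_{\CtrRel}$ and $\Coarse_{\CtrTotRel}$.

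\emph{Composition.} Let $\eta\in\CtrRel(X,Y)$ and $\xi\in\CtrRel(Y,Z)$. Using $(\xi\circ\eta)^\dagger=\eta^\dagger\circ\xi^\dagger$ and associativity of relational composition, we get
\[
\Ad_{\xi\circ\eta}(E)=\xi\circ\eta\circ E\circ\eta^\dagger\circ\xi^\dagger=\Ad_\xi(\Ad_\eta(E)).
\]
This is exactly the functoriality of $R\mapsto\Ad_R$ recorded earlier. If $E\in\mathcal{E}_X$, then $\Ad_\eta(E)\in\mathcal{E}_Y$ because $\eta$ is controlled, and then $\Ad_\xi(\Ad_\eta(E))\in\mathcal{E}_Z$ because $\xi$ is controlled. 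So $\xi\circ\eta$ is controlled.

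Since composition and identities are inherited from $\Coarse_{\Rel}$, associativity and unit laws hold automatically, and $\Coarse_{\CtrRel}$ is a wide subcategory. If $\eta$ and $\xi$ are in addition total, then $\xi\circ\eta$ is total by Proposition~\ref{proposition:total_basic}. Combined with the above, $\CtrTotRel$ is closed under composition and contains identities, giving the wide subcategory $\Coarse_{\CtrTotRel}$ of $\Coarse_{\CtrRel}$.

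There is no real obstacle. The only point needing care is the identity $(\xi\circ\eta)^\dagger=\eta^\dagger\circ\xi^\dagger$, which is part of the earlier proposition on relations.
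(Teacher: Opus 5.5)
Your proposal is correct and takes essentially the same route as the paper. Composites of controlled relations are controlled by the functoriality $\Ad_{\xi\circ\eta}=\Ad_\xi\circ\Ad_\eta$, $1_X^{\Rel}$ is controlled because $\Ad_{1_X^{\Rel}}$ is the identity, and totality of composites and identities comes from $\Set_{\TotRel}$ being a subcategory of $\Set_{\Rel}$ (Proposition~\ref{proposition:total_basic}).
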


\begin{proof}
It suffices to verify the following two assertions in $\Coarse_{\Rel}$:
\begin{enumerate}
\item
controlled relations and total relations are closed under composition;

\item
for each coarse space $X$, the relation $1_X^{\Rel}\in\Rel(X)$ is both
controlled and total.
\end{enumerate}
The closure of controlled relations under composition follows from the
functoriality of $\Ad$, namely
$\Ad_{\eta\circ\xi}=\Ad_\eta\circ\Ad_\xi$.
It is clear that $1_X^{\Rel}$ is a controlled relation from $X$ to $X$,
since $\Ad_{1_X^{\Rel}}$ is the identity map.
The closure of total relations under composition, as well as the totality of
$1_X^{\Rel}$, follows from the fact that $\Set_{\TotRel}$ is a subcategory
of $\Set_{\Rel}$.
\end{proof}

The following proposition follows immediately from the definitions.

\begin{proposition}\label{proposition:controlled_rel_map}
Let $f:X\to Y$ be a map, regarded as an element of $\Rel(X,Y)$.
Then $f$ is a controlled map if and only if $f$ is controlled in the sense of
Definition \ref{definition:controlledRel}.
In particular, $\Coarse_{\CtrMap}$ is a wide subcategory of
$\Coarse_{\CtrTotRel}$.
\end{proposition}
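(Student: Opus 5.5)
The statement has two parts: the equivalence of the two notions of controlledness for a map, and the resulting inclusion of categories. In both, the map $f:X\to Y$ is identified with its graph $\{(f(x),x)\mid x\in X\}\in\Rel(X,Y)$.

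For the equivalence, I would show that the two definitions are literally the same condition. Definition \ref{definition:ctrmap} requires $\Ad_f(\mathcal{E}_X)\subset\mathcal{E}_Y$, where $\Ad_f$ is the map $E\mapsto f\circ E\circ f^\dagger$ from the definition of $\Ad_R$, with $R=f$ read as a relation. Definition \ref{definition:controlledRel} requires the same inclusion $\Ad_\eta(\mathcal{E}_X)\subset\mathcal{E}_Y$ with $\eta=f$. So the two conditions coincide verbatim once $f$ is viewed as a relation.

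As a sanity check, one can compute directly that
\[
f\circ E\circ f^\dagger=\{(f(x_1),f(x_2))\mid (x_1,x_2)\in E\},
\]
which is the explicit formula for $\Ad_f(E)$ recorded after that definition. This agrees with the usual notion of a bornologous map.

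For the "in particular" part, I would proceed as follows.

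1. By Proposition \ref{proposition:total_basic}(4), every map is a total relation, so $\Set_{\map}$ is a wide subcategory of $\Set_{\TotRel}$.
2. By the equivalence just proved, every controlled map lies in $\CtrTotRel(X,Y)$.
3. The composition of maps agrees with the composition of their graphs as relations.
4. The identity map $\id_X$ equals $1_X^{\Rel}$, as noted in the paper.
5. Both categories have coarse spaces as objects.

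Together these show that $\Coarse_{\CtrMap}$ is a wide subcategory of $\Coarse_{\CtrTotRel}$.

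I expect no step to be a genuine obstacle. The only point requiring care is the bookkeeping of the graph convention: relations are subsets of $Y\times X$, so the pair is $(f(x),x)$, and the dagger must be handled consistently with this. Once that convention is fixed, the argument is purely definitional.
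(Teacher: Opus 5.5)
Your proposal is correct and matches the paper, which simply notes that the proposition follows immediately from the definitions. Both notions of controlledness are the same inclusion $\Ad_f(\mathcal{E}_X)\subset\mathcal{E}_Y$ with $f$ read as its graph, and the wide-subcategory claim follows because maps are total relations whose composition and identities agree with those of $\Set_{\Rel}$.
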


The following propositions will be used later.

\begin{proposition}\label{proposition:ent_cont}
For each coarse space $X=(X,\mathcal{E}_X)$, one has 
$\mathcal{E}_X
\subset
\CtrRel(X)$.
\end{proposition}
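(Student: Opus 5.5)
We need to show that for every $E\in\mathcal{E}_X$, regarded as a relation from $X$ to $X$, the inclusion $\Ad_E(\mathcal{E}_X)\subset\mathcal{E}_X$ holds; that is, $E\circ F\circ E^\dagger\in\mathcal{E}_X$ for every $F\in\mathcal{E}_X$. The plan is to verify this directly from the axioms of a coarse structure, with no appeal to the later categorical machinery.

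First, since $\mathcal{E}_X$ is closed under the dagger, $E\in\mathcal{E}_X$ gives $E^\dagger\in\mathcal{E}_X$. Next, closure of $\mathcal{E}_X$ under composition gives $F\circ E^\dagger\in\mathcal{E}_X$. Applying closure under composition once more yields $E\circ(F\circ E^\dagger)\in\mathcal{E}_X$. Associativity of composition in $\Set_{\Rel}$ identifies this element with $\Ad_E(F)$. Since $F$ was arbitrary, $\Ad_E(\mathcal{E}_X)\subset\mathcal{E}_X$, which is exactly the statement that $E$ is controlled in the sense of Definition \ref{definition:controlledRel}, i.e.\ $E\in\CtrRel(X)$.

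There is no real obstacle here. The only point to keep straight is the convention that $\CtrRel(X)$ means $\CtrRel(X,X)$, where both source and target carry the same coarse structure $\mathcal{E}_X$. The argument does not use totality and does not need $E$ to contain the diagonal, so no further care is required.
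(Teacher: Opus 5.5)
Your proposal is correct and is essentially the paper's argument: $\Ad_E(F)=E\circ F\circ E^\dagger\in\mathcal{E}_X$ for all $E,F\in\mathcal{E}_X$ by closure under the dagger and composition, so $\Ad_E(\mathcal{E}_X)\subset\mathcal{E}_X$. Your explicit step-by-step use of the axioms just spells out what the paper states in one line.
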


\begin{proposition}\label{proposition:invCstr}
Let $X$ be a set and let $Y=(Y,\mathcal{E}_Y)$ be a coarse space.
Fix a total relation $\eta\in\Rel(X,Y)$.
Suppose that
$\Ad_\eta(1_X^\Rel)
\in
\mathcal{E}_Y$.
Then
\[
(\Ad_\eta)^{-1}(\mathcal{E}_Y) := \{ E \in \Rel(X) \mid \Ad_{\eta}(E) \in \mathcal{E}_Y \} 
\subset
\Rel(X)
\]
is a coarse structure on $X$.
Moreover, $(\Ad_\eta)^{-1}(\mathcal{E}_Y)$ is the largest coarse structure
on $X$ for which $\eta$ is controlled.
\end{proposition}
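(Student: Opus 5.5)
We must show that $\mathcal{F}:=(\Ad_\eta)^{-1}(\mathcal{E}_Y)$ satisfies the five axioms of a coarse structure, and that it is the largest coarse structure for which $\eta$ is controlled. The first two facts to use are that $\Ad_\eta$ is a complete-join-preserving, hence order-preserving, map $\Rel(X)\to\Rel(Y)$, and that $\Ad_\eta(E)^\dagger=\Ad_\eta(E^\dagger)$.

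We verify the axioms in turn.
\begin{enumerate}
\item $1_X^{\Rel}\in\mathcal{F}$: this is exactly the hypothesis $\Ad_\eta(1_X^\Rel)\in\mathcal{E}_Y$.
\item Lowerness: if $E_0\subset E\in\mathcal{F}$, then $\Ad_\eta(E_0)\subset\Ad_\eta(E)\in\mathcal{E}_Y$. Since $\mathcal{E}_Y$ is lower, $E_0\in\mathcal{F}$.
\item Dagger: $\Ad_\eta(E^\dagger)=(\Ad_\eta E)^\dagger\in\mathcal{E}_Y$.
\item Finite unions: $\Ad_\eta(E_1\cup E_2)=\Ad_\eta(E_1)\cup\Ad_\eta(E_2)$, because composition of relations distributes over unions. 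This is in $\mathcal{E}_Y$.
\item Composition: this is the one step needing an idea. Use the identity
\[
\Ad_\eta(E_1)\circ\Ad_\eta(E_2)=\Ad_\eta\bigl(E_1\circ(\eta^\dagger\circ\eta)\circ E_2\bigr).
\]
Totality of $\eta$ gives $1_X^\Rel\subset\eta^\dagger\circ\eta$ by Proposition \ref{proposition:total_condition}. Hence $E_1\circ E_2\subset E_1\circ\eta^\dagger\circ\eta\circ E_2$, and by monotonicity
\[
\Ad_\eta(E_1\circ E_2)\subset \Ad_\eta(E_1)\circ\Ad_\eta(E_2)\in\mathcal{E}_Y .
\]
Lowerness of $\mathcal{E}_Y$ then gives $E_1\circ E_2\in\mathcal{F}$.
\end{enumerate}
Composition is the only place totality is genuinely used, and it is the main point to get right.

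For maximality, observe first that $\eta$ is controlled from $(X,\mathcal{F})$ to $Y$, since $\Ad_\eta(\mathcal{F})\subset\mathcal{E}_Y$ by definition of $\mathcal{F}$. Conversely, let $\mathcal{E}'$ be any coarse structure on $X$ for which $\eta$ is controlled. Then $\Ad_\eta(\mathcal{E}')\subset\mathcal{E}_Y$, which says precisely that $\mathcal{E}'\subset(\Ad_\eta)^{-1}(\mathcal{E}_Y)=\mathcal{F}$. So $\mathcal{F}$ is the largest such coarse structure.
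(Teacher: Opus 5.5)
Your proof is correct and follows essentially the same route as the paper: the identity axiom comes from the hypothesis, and lowerness, dagger and finite unions come from the corresponding properties of $\Ad_\eta$. Closure under composition uses totality in the form $1_X^{\Rel}\subset\eta^\dagger\circ\eta$, which gives $\Ad_\eta(E_1\circ E_2)\subset\Ad_\eta(E_1)\circ\Ad_\eta(E_2)$, and your maximality argument correctly fills in the step the paper leaves implicit.
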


\begin{proposition}\label{proposition:generator_controlled}
Let $X$ be a set and let $Y=(Y,\mathcal{E}_Y)$ be a coarse space.
Fix a subset $\mathcal{E}_0\subset\Rel(X)$ and a total relation
$\eta\in\Rel(X,Y)$.
Suppose that
$\Ad_\eta(1_X^\Rel)
\in
\mathcal{E}_Y$.
Then the following conditions are equivalent:
\begin{enumerate}
\item\label{proposition:generator_controlled_controlled}
The relation $\eta$ is controlled from
$(X,\langle\mathcal{E}_0\rangle)$
to
$(Y,\mathcal{E}_Y)$.
\item\label{proposition:generator_controlled_inclusion}
$\Ad_\eta(\mathcal{E}_0)
\subset
\mathcal{E}_Y$.
\end{enumerate}
\end{proposition}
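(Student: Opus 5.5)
The plan is to reduce everything to Proposition~\ref{proposition:invCstr}, whose hypotheses (that $\eta$ is total and $\Ad_\eta(1_X^\Rel)\in\mathcal{E}_Y$) are exactly the standing assumptions here. The implication \eqref{proposition:generator_controlled_controlled}$\Rightarrow$\eqref{proposition:generator_controlled_inclusion} is immediate. If $\eta$ is controlled from $(X,\langle\mathcal{E}_0\rangle)$, then $\Ad_\eta(\langle\mathcal{E}_0\rangle)\subset\mathcal{E}_Y$ by Definition~\ref{definition:controlledRel}. Since $\mathcal{E}_0\subset\langle\mathcal{E}_0\rangle$, we get $\Ad_\eta(\mathcal{E}_0)\subset\Ad_\eta(\langle\mathcal{E}_0\rangle)\subset\mathcal{E}_Y$.

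For \eqref{proposition:generator_controlled_inclusion}$\Rightarrow$\eqref{proposition:generator_controlled_controlled}, I would set $\mathcal{E}':=(\Ad_\eta)^{-1}(\mathcal{E}_Y)\subset\Rel(X)$. By Proposition~\ref{proposition:invCstr}, $\mathcal{E}'$ is a coarse structure on $X$. Hypothesis \eqref{proposition:generator_controlled_inclusion} says precisely that $\mathcal{E}_0\subset\mathcal{E}'$. Because $\langle\mathcal{E}_0\rangle$ is the smallest coarse structure containing $\mathcal{E}_0$, we obtain $\langle\mathcal{E}_0\rangle\subset\mathcal{E}'$. Unwinding the definition of $\mathcal{E}'$, this means $\Ad_\eta(E)\in\mathcal{E}_Y$ for every $E\in\langle\mathcal{E}_0\rangle$, which is the controlledness of $\eta$ from $(X,\langle\mathcal{E}_0\rangle)$ to $(Y,\mathcal{E}_Y)$.

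The only substantive step is Proposition~\ref{proposition:invCstr}, which I take as given. If it had to be checked, the main obstacle would be closure of $\mathcal{E}'$ under composition. For this I would use the identity $\Ad_\eta(E_1)\circ\Ad_\eta(E_2)=\Ad_\eta(E_1\circ(\eta^\dagger\circ\eta)\circ E_2)$. Totality gives $1_X^\Rel\subset\eta^\dagger\circ\eta$ by Proposition~\ref{proposition:total_condition}, and monotonicity of composition then yields $\Ad_\eta(E_1\circ E_2)\subset\Ad_\eta(E_1)\circ\Ad_\eta(E_2)\in\mathcal{E}_Y$. Lowerness of $\mathcal{E}_Y$ then gives $E_1\circ E_2\in\mathcal{E}'$.

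The remaining axioms for $\mathcal{E}'$ are routine:
\begin{itemize}
\item Identity: $1_X^\Rel\in\mathcal{E}'$ is exactly the hypothesis $\Ad_\eta(1_X^\Rel)\in\mathcal{E}_Y$.
\item Dagger: closure follows from $(\Ad_\eta E)^\dagger=\Ad_\eta(E^\dagger)$.
\item Finite unions: closure follows since $\Ad_\eta$ preserves unions.
\item Lowerness: follows from monotonicity of $\Ad_\eta$.
\end{itemize}
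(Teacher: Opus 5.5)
Your proposal is correct and follows the paper's proof essentially verbatim: (1)$\Rightarrow$(2) directly from the definition, and (2)$\Rightarrow$(1) by noting that $(\Ad_\eta)^{-1}(\mathcal{E}_Y)$ is a coarse structure by Proposition~\ref{proposition:invCstr}, contains $\mathcal{E}_0$, and hence contains $\langle\mathcal{E}_0\rangle$. Your sketch of why Proposition~\ref{proposition:invCstr} holds also matches the paper's argument, using totality to get $\Ad_\eta(E_1\circ E_2)\subset\Ad_\eta(E_1)\circ\Ad_\eta(E_2)$.
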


\begin{proof}[Proof of Proposition \ref{proposition:ent_cont}]
For each $E,F \in \mathcal{E}_{X}$, 
we have 
\[
\Ad_{E}(F) = E \circ F \circ E^\dagger \in \mathcal{E}_X.
\]
This proves $\Ad_{E}(\mathcal{E}_X) \subset \mathcal{E}_X$.
\end{proof}

\begin{proof}[Proof of Proposition \ref{proposition:invCstr}]
Put $\mathcal{E}_X := (\Ad_\eta)^{-1}(\mathcal{E}_Y)\subset\Rel(X)$.
It suffices to verify the following:
\begin{enumerate}
    \item $1^\Rel_X \in \mathcal{E}_X$,
    \item $\mathcal{E}_X$ is lower in $\Rel(X)$,
    \item $\mathcal{E}_X$ is closed under composition, finite unions and the dagger operation.
\end{enumerate}

The first assertion follows immediately from the assumption.

For the second assertion, let $E_0 \subset E$ in $\Rel(X)$ with $E \in \mathcal{E}_X$.
Since $\Ad_\eta(E_0) \subset \Ad_\eta(E)$ and $\Ad_\eta(E) \in \mathcal{E}_Y$, the downward closedness of $\mathcal{E}_Y$ implies $\Ad_\eta(E_0) \in \mathcal{E}_Y$.
Hence $E_0 \in \mathcal{E}_X$.

Since $\Ad_\eta$ preserves finite unions and the dagger operation, and since $\mathcal{E}_Y$ is closed under these operations, it follows that $\mathcal{E}_X$ is also closed under finite unions and the dagger operation.
It remains to prove closure under composition.
Let $E_1,E_2\in\mathcal{E}_X$.
Since $\eta$ is total, Proposition~\ref{proposition:total_condition} yields
$1^\Rel_X\subset\eta^\dagger\circ\eta$.
Therefore
\[
\Ad_\eta(E_1\circ E_2)
=
\eta\circ E_1\circ E_2\circ\eta^\dagger
\subset
\eta\circ E_1\circ\eta^\dagger\circ\eta\circ E_2\circ\eta^\dagger
=
\Ad_\eta(E_1)\circ\Ad_\eta(E_2).
\]
Since $\Ad_\eta(E_1),\Ad_\eta(E_2)\in\mathcal{E}_Y$ and $\mathcal{E}_Y$ is closed under composition and taking subrelations, it follows that $\Ad_\eta(E_1\circ E_2)\in\mathcal{E}_Y$.
Hence $E_1\circ E_2\in\mathcal{E}_X$. 
Therefore $\mathcal{E}_X$ is a coarse structure on $X$.
\end{proof}

\begin{proof}[Proof of Proposition \ref{proposition:generator_controlled}]
It directly follows from the definition of controlled maps that \eqref{proposition:generator_controlled_controlled} implies \eqref{proposition:generator_controlled_inclusion}.
Hence, let us prove the converse claim.
Suppose that $\Ad_{\eta}(\mathcal{E}_0) \subset \mathcal{E}_Y$. It is enough to show that $\langle \mathcal{E}_0 \rangle \subset (\Ad_\eta)^{-1}(\mathcal{E}_Y)$.
Since $\Ad_\eta(1^\Rel_X) \in \mathcal{E}_Y$, 
by Proposition \ref{proposition:invCstr},   
$(\Ad_\eta)^{-1}(\mathcal{E}_Y)$ is a coarse structure on $X$.
By the assumption $\Ad_{\eta}(\mathcal{E}_0) \subset \mathcal{E}_Y$, we have $\mathcal{E}_0 \subset (\Ad_\eta)^{-1}(\mathcal{E}_Y)$.
Thus by the definition of $\langle \mathcal{E}_0 \rangle$, 
we also have $\langle \mathcal{E}_0 \rangle \subset (\Ad_\eta)^{-1}(\mathcal{E}_Y)$.
\end{proof}

\subsection{Coarse pre-orders on relations}

We next introduce a natural pre-order on the set of relations between two
coarse spaces.

\begin{definition}\label{definition:close_ctrrel}
Let $X=(X,\mathcal{E}_X)$ and $Y=(Y,\mathcal{E}_Y)$ be coarse spaces.
For $\eta_1,\eta_2\in\Rel(X,Y)$, we write
$\eta_1\prec_{\mathcal{E}_X,\mathcal{E}_Y}\eta_2$,
or simply
$\eta_1\prec_{X,Y}\eta_2$ or $\eta_1\prec\eta_2$,
if there exist
$E\in\mathcal{E}_X$ and $F\in\mathcal{E}_Y$ such that
$\eta_1\subset F\circ\eta_2\circ E$.
Then $\prec$ defines a pre-order on $\Rel(X,Y)$, called the
\emph{coarse pre-order} on $\Rel(X,Y)$.

Furthermore, two relations
$\eta_1,\eta_2\in\Rel(X,Y)$
are said to be \emph{close}, and we write
$\eta_1\sim_{\mathcal{E}_X,\mathcal{E}_Y}\eta_2$,
or simply
$\eta_1\sim_{X,Y}\eta_2$ or $\eta_1\sim\eta_2$,
if
$\eta_1\prec\eta_2$
and
$\eta_2\prec\eta_1$.
\end{definition}

\begin{proposition}\label{proposition:CtrLowerprec}
$\CtrRel(X,Y)$ is lower in the pre-ordered set
$(\Rel(X,Y),\prec)$.
In particular, $\CtrRel(X,Y)$ is closed under the equivalence relation
$\sim$ on $\Rel(X,Y)$.
\end{proposition}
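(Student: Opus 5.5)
We have to show: if $\eta_1 \prec \eta_2$ and $\eta_2$ is controlled, then $\eta_1$ is controlled. The "in particular" clause then follows at once: if $\eta_1\sim\eta_2$ and $\eta_2\in\CtrRel(X,Y)$, then $\eta_1\prec\eta_2$, so $\eta_1\in\CtrRel(X,Y)$. The same argument applies with the roles of $\eta_1$ and $\eta_2$ swapped.

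For the main claim, fix $E\in\mathcal{E}_X$ and $F\in\mathcal{E}_Y$ with $\eta_1\subset F\circ\eta_2\circ E$, and take an arbitrary $E'\in\mathcal{E}_X$. We need $\Ad_{\eta_1}(E')\in\mathcal{E}_Y$. Since $\Ad$ is monotone in the relation (property (1) listed after the definition of $\Ad_R$), we get
\[
\Ad_{\eta_1}(E')\subset \Ad_{F\circ\eta_2\circ E}(E').
\]
By functoriality of $\Ad$ (that is, $\Ad_{R\circ S}=\Ad_R\circ\Ad_S$), the right-hand side equals
\[
\Ad_F\bigl(\Ad_{\eta_2}(\Ad_E(E'))\bigr).
\]

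We now evaluate this from the inside out, using Proposition \ref{proposition:ent_cont} twice:

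\begin{itemize}
\item Because $E$ is controlled as a relation on $X$, we have $\Ad_E(E')=E\circ E'\circ E^\dagger\in\mathcal{E}_X$.
\item Because $\eta_2$ is controlled, $\Ad_{\eta_2}(\Ad_E(E'))\in\mathcal{E}_Y$.
\item Because $F\in\mathcal{E}_Y$ is controlled as a relation on $Y$, applying $\Ad_F$ keeps us in $\mathcal{E}_Y$.
\end{itemize}

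So $\Ad_{\eta_1}(E')$ is contained in an element of $\mathcal{E}_Y$. Since $\mathcal{E}_Y$ is lower in $\Rel(Y)$, this gives $\Ad_{\eta_1}(E')\in\mathcal{E}_Y$, which is what we wanted.

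No step here is a genuine obstacle. The only points to check are the monotonicity and functoriality of $\Ad$, both of which are stated earlier in the paper.
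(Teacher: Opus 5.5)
Your proof is correct and follows the paper's argument essentially line for line. Both use monotonicity of $\Ad$ in the relation, then the factorization $\Ad_{F\circ\eta_2\circ E}=\Ad_F\circ\Ad_{\eta_2}\circ\Ad_E$, then the controlledness of $E$, $\eta_2$ and $F$ (via Proposition~\ref{proposition:ent_cont}), and finally the lowerness of $\mathcal{E}_Y$; the only addition on your side is that you spell out the immediate ``in particular'' clause.
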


\begin{proof}
Let $\eta_0 \prec \eta\in\CtrRel(X,Y)$.
Take $E \in \mathcal{E}_X$ and $F \in \mathcal{E}_Y$ with $\eta_0 \subset F \circ \eta \circ E$.
For each $G \in \mathcal{E}_X$, we have
\begin{align*}
 \Ad_{\eta_0}(G)
    &\subset \Ad_{F \circ \eta \circ E}(G) \\
    &= \Ad_{F}(\Ad_{\eta}(\Ad_{E}(G))).
\end{align*}
Since $\eta$, $E$, $F$ are all controlled (see also Proposition \ref{proposition:ent_cont}), 
we have $\Ad_{\eta_0}(G) \subset \Ad_{F}(\Ad_{\eta}(\Ad_{E}(G))) \in \mathcal{E}_Y$.
Since $\mathcal{E}_Y$ is lower in $(\Rel(Y),\subset)$,
it follows that
$\Ad_{\eta_0}(G)\in\mathcal{E}_Y$.
Therefore $\eta_0$ is controlled.
\end{proof}

\begin{remark}
One might expect that the equivalence relation $\sim$ on morphisms defines a
congruence on $\Coarse_{\Rel}$ or on $\Coarse_{\CtrRel}$.
However, this is not the case. 
Indeed, let $d$ be the standard metric on $\R$, and consider the coarse
spaces
\[
X=Y=Z=(\R,\mathcal{E}_{\R}^{d}).
\]
Define relations $\eta_1,\eta_2\in\Rel(X,Y)$ by
\begin{align*}
\eta_1 &:= \{(t,t)\mid t\in\R\},\\
\eta_2 &:= \{(t,t)\mid t\in\Z,\ t \text{ is even}\},
\end{align*}
and define $\xi\in\Rel(Y,Z)$ by
\[
\xi:=\{(s,s)\mid s\in\Z,\ s \text{ is odd}\}.
\]
These relations are controlled, and one has
$\eta_1\sim\eta_2$.
However,
\[
\eta_1\circ\xi
=
\{(k,k)\mid k\in\Z,\ k \text{ is odd}\},
\qquad
\eta_2\circ\xi
=
\emptyset.
\]
In particular,
$\eta_1\circ\xi\not\sim\eta_2\circ\xi$.
Thus $\sim$ fails to satisfy the congruence condition in both
$\Coarse_{\Rel}$ and $\Coarse_{\CtrRel}$.

In the next subsection, we show that this issue disappears after restricting
to the wide subcategory $\Coarse_{\CtrTotRel}$ of $\Coarse_{\CtrRel}$.
More precisely, $\sim$ becomes a congruence on
$\Coarse_{\CtrTotRel}$, and the resulting quotient category is equivalent to
the category of coarse spaces.

More generally, it is natural to ask whether controlled relations that are
not total can also be used to represent morphisms in the category of coarse
spaces. In this direction, Mart\'{i}nez--Vigolo
\cite{MartinezVigolo2026roe} consider morphisms arising from certain
``densely defined'' controlled relations. A more systematic understanding of
such constructions remains an interesting topic for future investigation.
\end{remark}

\subsection{The category of coarse spaces realized by controlled total relations}

In this subsection, we restrict our attention to the category
$\Coarse_{\CtrTotRel}$.

Our first goal is to show that the closeness relation introduced in
Definition \ref{definition:close_ctrrel} becomes a congruence on
$\Coarse_{\CtrTotRel}$.

\begin{theorem}\label{theorem:CtrTot_cong}
The closeness relation ``$\sim$'' defines a congruence on
$\Coarse_{\CtrTotRel}$.
\end{theorem}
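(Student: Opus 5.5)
Since $\sim$ is already an equivalence relation on each hom-set $\CtrTotRel(X,Y)$ (being the symmetrization of the pre-order $\prec$), it remains to check compatibility with composition. Because $\sim$ is the symmetric part of $\prec$, it suffices to show that composition is monotone for $\prec$ on both sides. Concretely: for controlled total relations $\eta_1,\eta_2\in\CtrTotRel(X,Y)$ with $\eta_1\prec\eta_2$, and $\xi\in\CtrTotRel(W,X)$, $\zeta\in\CtrTotRel(Y,Z)$, we want
\[
\zeta\circ\eta_1\prec\zeta\circ\eta_2,
\qquad
\eta_1\circ\xi\prec\eta_2\circ\xi .
\]
Applying both to $\eta_1\prec\eta_2$ and to $\eta_2\prec\eta_1$ then gives the congruence. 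We start from $E\in\mathcal{E}_X$ and $F\in\mathcal{E}_Y$ with $\eta_1\subset F\circ\eta_2\circ E$.

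\textbf{Post-composition.} Here we have $\zeta\circ\eta_1\subset\zeta\circ F\circ\eta_2\circ E$, and we need to move $F$ past $\zeta$. Since $\zeta$ is total, Proposition~\ref{proposition:total_adjoint} gives
\[
\zeta\circ F\subset(\Ad_\zeta F)\circ\zeta .
\]
Because $\zeta$ is controlled, $\Ad_\zeta F\in\mathcal{E}_Z$. Hence
\[
\zeta\circ\eta_1\subset(\Ad_\zeta F)\circ(\zeta\circ\eta_2)\circ E,
\]
which is exactly $\zeta\circ\eta_1\prec\zeta\circ\eta_2$. This direction uses only the totality and controlledness of $\zeta$.

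\textbf{Pre-composition.} This is the step I expect to be the main obstacle, and the remark before the theorem shows where totality is essential. We have $\eta_1\circ\xi\subset F\circ\eta_2\circ E\circ\xi$, and now $E$ must be moved to the right past $\xi$, i.e.\ we need some $E'\in\mathcal{E}_W$ with $E\circ\xi\subset\xi\circ E'$. This fails in general. The counterexample in the remark (with $\xi$ supported on odd integers and $\eta_2$ on even ones) has $\eta_2\circ E\circ\xi\neq\emptyset$ while $\eta_2\circ\xi=\emptyset$, so the extra factor on the $Y$ side cannot compensate either.

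My plan is to use totality of $\eta_2$ instead of any property of $\xi$ at this point. Totality gives $1_X^{\Rel}\subset\eta_2^\dagger\circ\eta_2$ (Proposition~\ref{proposition:total_condition}), so
\[
E\circ\xi\subset E\circ\eta_2^\dagger\circ\eta_2\circ\xi
\quad\Longrightarrow\quad
\eta_2\circ E\circ\xi\subset(\eta_2\circ E\circ\eta_2^\dagger)\circ\eta_2\circ\xi=(\Ad_{\eta_2}E)\circ\eta_2\circ\xi .
\]
This is Proposition~\ref{proposition:total_adjoint} applied to $\eta_2$. Therefore
\[
\eta_1\circ\xi\subset F\circ(\Ad_{\eta_2}E)\circ(\eta_2\circ\xi)\circ 1_W^{\Rel}.
\]
Since $\eta_2$ is controlled, $\Ad_{\eta_2}E\in\mathcal{E}_Y$, and $\mathcal{E}_Y$ is closed under composition, so $F\circ\Ad_{\eta_2}E\in\mathcal{E}_Y$. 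This gives $\eta_1\circ\xi\prec\eta_2\circ\xi$.

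So totality of the \emph{larger} relation absorbs the coarse error on the source side, and totality of the outer relation handles the target side. I will double-check that the remark's counterexample fails precisely because $\eta_2$ there is not total, which confirms this is the right hypothesis. Finally, compositions of controlled total relations are again controlled and total, so all relations involved stay in $\Coarse_{\CtrTotRel}$, and the theorem follows.
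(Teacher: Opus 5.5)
Your proof is correct, but it is organized differently from the paper's. The paper first proves Proposition~\ref{proposition:close_on_totalrel}: on $\CtrTotRel(X,Y)$ the pre-order $\prec$ is already symmetric and is equivalent to the single condition $\eta_1\circ\eta_2^\dagger\in\mathcal{E}_Y$. It then handles both variables at once in Proposition~\ref{proposition:composition_close_total}, via the estimate $(\xi_1\circ\eta_1)\circ(\xi_2\circ\eta_2)^\dagger\subset\Ad_{\xi_1}(\eta_1\circ\eta_2^\dagger)\circ(\xi_1\circ\xi_2^\dagger)$, and recovers $\prec$ through that criterion; this last step uses totality of $\xi_2\circ\eta_2$.

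You instead work directly with the definition of $\prec$, one variable at a time, and use Proposition~\ref{proposition:total_adjoint} twice:
\begin{itemize}
\item once for the outer relation $\zeta$, to move the target-side error $F$ past it;
\item once for the larger relation $\eta_2$, to turn the source-side error $E$ into $\Ad_{\eta_2}E$ on the target side.
\end{itemize}
The second use is exactly the paper's implication from $\eta_1\prec\eta_2$ to $\eta_1\subset E_Y\circ\eta_2$, but obtained directly rather than through the dagger criterion.

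What your route buys is economy and transparency about hypotheses. No totality is needed of the inner relation $\xi$, nor of $\eta_1,\eta_2$ in the post-composition step. This matches your diagnosis that the counterexample in the preceding remark fails only because $\eta_2$ is not total.

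What the paper's route buys is the stronger fact that $\prec$ and $\sim$ coincide on controlled total relations, together with the criterion $\eta_1\circ\eta_2^\dagger\in\mathcal{E}_Y$. Both are reused later, e.g.\ in Proposition~\ref{proposition:CtrMapCtrTotRel_surj} and Theorem~\ref{thm:epi-characterization}.
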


To prove this theorem, we first establish the following characterization of
closeness between controlled total relations.

\begin{proposition}\label{proposition:close_on_totalrel}
Let $X=(X,\mathcal{E}_X)$ and $Y=(Y,\mathcal{E}_Y)$ be coarse spaces.
Fix $\eta_1,\eta_2\in\CtrTotRel(X,Y)$.
Then the following conditions are equivalent:
\begin{enumerate}
\item\label{item:close_on_totalrel:equivalent}
$\eta_1\sim\eta_2$.

\item\label{item:close_on_totalrel:eta1-below-eta2}
$\eta_1\prec\eta_2$.

\item\label{item:close_on_totalrel:eta1-contained}
There exists $E_Y\in\mathcal{E}_Y$ such that
$\eta_1\subset E_Y\circ\eta_2$.

\item\label{item:close_on_totalrel:eta2-below-eta1}
$\eta_2\prec\eta_1$.

\item\label{item:close_on_totalrel:eta2-contained}
There exists $E_Y\in\mathcal{E}_Y$ such that
$\eta_2\subset E_Y\circ\eta_1$.

\item\label{item:close_on_totalrel:composite12}
$\eta_1\circ\eta_2^\dagger\in\mathcal{E}_Y$.

\item\label{item:close_on_totalrel:composite21}
$\eta_2\circ\eta_1^\dagger\in\mathcal{E}_Y$.

\item\label{item:close_on_totalrel:witness-relation}
The relation
\[
\{
(y_1,y_2)\in Y\times Y
\mid
\text{there exists }x\in X
\text{ such that }
(y_1,x)\in\eta_1
\text{ and }
(y_2,x)\in\eta_2
\}
\]
belongs to $\mathcal{E}_Y$.
\end{enumerate}
\end{proposition}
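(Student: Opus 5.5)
We will prove the equivalences by establishing the chain
\eqref{item:close_on_totalrel:eta1-contained}$\Rightarrow$\eqref{item:close_on_totalrel:eta1-below-eta2}$\Rightarrow$\eqref{item:close_on_totalrel:eta1-contained}, then \eqref{item:close_on_totalrel:eta1-contained}$\Leftrightarrow$\eqref{item:close_on_totalrel:composite12}. Symmetrically, the same arguments with the roles of $\eta_1,\eta_2$ exchanged give \eqref{item:close_on_totalrel:eta2-below-eta1}$\Leftrightarrow$\eqref{item:close_on_totalrel:eta2-contained}$\Leftrightarrow$\eqref{item:close_on_totalrel:composite21}. The remaining links are then immediate. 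First, \eqref{item:close_on_totalrel:composite12}$\Leftrightarrow$\eqref{item:close_on_totalrel:composite21}, since $(\eta_1\circ\eta_2^\dagger)^\dagger=\eta_2\circ\eta_1^\dagger$ and $\mathcal{E}_Y$ is closed under the dagger. Second, the relation in \eqref{item:close_on_totalrel:witness-relation} is literally $\eta_1\circ\eta_2^\dagger$, by unwinding the definition of composition exactly as in the proof of Proposition \ref{proposition:total_condition}. Finally, \eqref{item:close_on_totalrel:equivalent} is the conjunction of \eqref{item:close_on_totalrel:eta1-below-eta2} and \eqref{item:close_on_totalrel:eta2-below-eta1}, which are now equivalent to each other.

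The implication \eqref{item:close_on_totalrel:eta1-contained}$\Rightarrow$\eqref{item:close_on_totalrel:eta1-below-eta2} is trivial: take $E=1_X^\Rel$. For \eqref{item:close_on_totalrel:eta1-below-eta2}$\Rightarrow$\eqref{item:close_on_totalrel:eta1-contained}, suppose $\eta_1\subset F\circ\eta_2\circ E$ with $E\in\mathcal{E}_X$ and $F\in\mathcal{E}_Y$. Since $\eta_2$ is total, Proposition \ref{proposition:total_adjoint} gives $\eta_2\circ E\subset \Ad_{\eta_2}(E)\circ\eta_2$. Hence
\[
\eta_1\subset F\circ\Ad_{\eta_2}(E)\circ\eta_2 .
\]
Controlledness of $\eta_2$ gives $\Ad_{\eta_2}(E)\in\mathcal{E}_Y$, so $E_Y:=F\circ\Ad_{\eta_2}(E)\in\mathcal{E}_Y$. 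This step, moving the $X$-side entourage across $\eta_2$, is the only place where totality and controlledness of the target relation are essential, and I expect it to be the main point; Proposition \ref{proposition:total_adjoint} handles it directly.

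For \eqref{item:close_on_totalrel:eta1-contained}$\Rightarrow$\eqref{item:close_on_totalrel:composite12}, we compute
\[
\eta_1\circ\eta_2^\dagger\subset E_Y\circ\eta_2\circ\eta_2^\dagger=E_Y\circ\Ad_{\eta_2}(1_X^\Rel).
\]
Here $\Ad_{\eta_2}(1_X^\Rel)\in\mathcal{E}_Y$ because $\eta_2$ is controlled and $1_X^\Rel\in\mathcal{E}_X$. Lower-closedness and closure under composition of $\mathcal{E}_Y$ then finish this implication. For the converse \eqref{item:close_on_totalrel:composite12}$\Rightarrow$\eqref{item:close_on_totalrel:eta1-contained}, we use totality of $\eta_2$ in the form $1_X^\Rel\subset\eta_2^\dagger\circ\eta_2$ (Proposition \ref{proposition:total_condition}). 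This yields
\[
\eta_1=\eta_1\circ 1_X^\Rel\subset(\eta_1\circ\eta_2^\dagger)\circ\eta_2,
\]
so we may take $E_Y:=\eta_1\circ\eta_2^\dagger$. Totality of $\eta_1$ and controlledness of $\eta_1$ are used in the symmetric versions of these two implications, which completes the cycle.
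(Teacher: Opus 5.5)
Your proof is correct and essentially the paper's argument, just arranged differently. The paper runs the cycle \eqref{item:close_on_totalrel:eta1-contained}$\Rightarrow$\eqref{item:close_on_totalrel:eta1-below-eta2}$\Rightarrow$\eqref{item:close_on_totalrel:composite12}$\Rightarrow$\eqref{item:close_on_totalrel:eta1-contained}, and it gets \eqref{item:close_on_totalrel:eta1-below-eta2}$\Rightarrow$\eqref{item:close_on_totalrel:composite12} in one line from $\eta_1\circ\eta_2^\dagger\subset E_Y\circ\eta_2\circ E_X\circ\eta_2^\dagger=E_Y\circ\Ad_{\eta_2}(E_X)$. That absorbs the $X$-side entourage without totality or Proposition~\ref{proposition:total_adjoint}, whereas you prove \eqref{item:close_on_totalrel:eta1-below-eta2}$\Rightarrow$\eqref{item:close_on_totalrel:eta1-contained} via that proposition and then \eqref{item:close_on_totalrel:eta1-contained}$\Rightarrow$\eqref{item:close_on_totalrel:composite12} separately. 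One aside in your write-up is inaccurate, though it does not affect validity: moving $E$ across $\eta_2$ is not the only place where totality and controlledness matter. Your \eqref{item:close_on_totalrel:composite12}$\Rightarrow$\eqref{item:close_on_totalrel:eta1-contained} also uses totality, and \eqref{item:close_on_totalrel:eta1-contained}$\Rightarrow$\eqref{item:close_on_totalrel:composite12} uses controlledness.
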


\begin{proof}
The equivalence of
\eqref{item:close_on_totalrel:composite12}
and
\eqref{item:close_on_totalrel:composite21}
follows from the identity
$(\eta_1\circ\eta_2^\dagger)^\dagger
=
\eta_2\circ\eta_1^\dagger$, 
together with the fact that $\mathcal{E}_Y$ is closed under the dagger.
Moreover,
\eqref{item:close_on_totalrel:witness-relation}
is simply a reformulation of
\eqref{item:close_on_totalrel:composite12}
and
\eqref{item:close_on_totalrel:composite21}.

We show that
\eqref{item:close_on_totalrel:eta1-contained}
$\Rightarrow$
\eqref{item:close_on_totalrel:eta1-below-eta2}
$\Rightarrow$
\eqref{item:close_on_totalrel:composite12}
$\Rightarrow$
\eqref{item:close_on_totalrel:eta1-contained}.
The implication
\eqref{item:close_on_totalrel:eta1-contained}
$\Rightarrow$
\eqref{item:close_on_totalrel:eta1-below-eta2}
follows from the definition of $\eta_1\prec\eta_2$ and the fact that
$1_X^{\Rel}\in\mathcal{E}_X$.

Assume
\eqref{item:close_on_totalrel:eta1-below-eta2}.
Then there exist $E_X\in\mathcal{E}_X$ and $E_Y\in\mathcal{E}_Y$ such that
$\eta_1\subset E_Y\circ\eta_2\circ E_X$.
Since $\eta_2$ is controlled, $\Ad_{\eta_2}(E_X)\in\mathcal{E}_Y$.
Hence
\begin{align*}
\eta_1\circ\eta_2^\dagger
&\subset
E_Y\circ\eta_2\circ E_X\circ\eta_2^\dagger \\
&=
E_Y\circ\Ad_{\eta_2}(E_X)
\in\mathcal{E}_Y.
\end{align*}
Thus
\eqref{item:close_on_totalrel:composite12}
holds.

Assume
\eqref{item:close_on_totalrel:composite12}.
Put $E_Y:=\eta_1\circ\eta_2^\dagger\in\mathcal{E}_Y$.
Since $\eta_2$ is total, Proposition \ref{proposition:total_condition}
implies $1_X^{\Rel}\subset\eta_2^\dagger\circ\eta_2$.
Therefore
\begin{align*}
\eta_1
&=
\eta_1\circ1_X^{\Rel} \\
&\subset
\eta_1\circ\eta_2^\dagger\circ\eta_2 \\
&=
E_Y\circ\eta_2.
\end{align*}
This proves
\eqref{item:close_on_totalrel:eta1-contained}.

By the same argument with $\eta_1$ and $\eta_2$ interchanged,
\eqref{item:close_on_totalrel:eta2-contained},
\eqref{item:close_on_totalrel:eta2-below-eta1},
and
\eqref{item:close_on_totalrel:composite21}
are equivalent.
Hence
\eqref{item:close_on_totalrel:eta1-below-eta2}
--
\eqref{item:close_on_totalrel:witness-relation}
are all equivalent.

Finally,
\eqref{item:close_on_totalrel:equivalent}
means precisely that both
\eqref{item:close_on_totalrel:eta1-below-eta2}
and
\eqref{item:close_on_totalrel:eta2-below-eta1}
hold.
Since these two conditions are equivalent, this is equivalent to any one of
them.
Thus all conditions
\eqref{item:close_on_totalrel:equivalent}
--
\eqref{item:close_on_totalrel:witness-relation}
are equivalent.
\end{proof}

To prove Theorem \ref{theorem:CtrTot_cong}, it suffices to prove the following proposition.

\begin{proposition}\label{proposition:composition_close_total}
Let $\eta_1,\eta_2\in\CtrTotRel(X,Y)$ and
$\xi_1,\xi_2\in\CtrTotRel(Y,Z)$.
Suppose that $\eta_1\prec\eta_2$ and $\xi_1\prec\xi_2$.
Then
$\xi_1\circ\eta_1\prec\xi_2\circ\eta_2$.
\end{proposition}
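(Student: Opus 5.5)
By Proposition \ref{proposition:close_on_totalrel}, the hypotheses $\eta_1\prec\eta_2$ and $\xi_1\prec\xi_2$ have a one-sided form. Since all four relations are controlled and total, we may choose $F\in\mathcal{E}_Y$ and $G\in\mathcal{E}_Z$ with
\[
\eta_1\subset F\circ\eta_2,
\qquad
\xi_1\subset G\circ\xi_2 .
\]
The goal is then to find $H\in\mathcal{E}_Z$ with $\xi_1\circ\eta_1\subset H\circ\xi_2\circ\eta_2$. This inclusion gives $\xi_1\circ\eta_1\prec\xi_2\circ\eta_2$ directly, using $1_X^{\Rel}\in\mathcal{E}_X$.

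Using monotonicity of composition, we first obtain
\[
\xi_1\circ\eta_1\subset G\circ\xi_2\circ F\circ\eta_2 .
\]
The only real step is to move the entourage $F$ past $\xi_2$. For this we invoke Proposition \ref{proposition:total_adjoint} with $\eta=\xi_2$ and $E=F$. Since $\xi_2$ is total, it gives
\[
\xi_2\circ F\subset(\Ad_{\xi_2}F)\circ\xi_2 .
\]
Substituting, we get
\[
\xi_1\circ\eta_1\subset G\circ\Ad_{\xi_2}(F)\circ\xi_2\circ\eta_2 .
\]
Because $\xi_2$ is controlled, $\Ad_{\xi_2}(F)\in\mathcal{E}_Z$. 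Since $\mathcal{E}_Z$ is closed under composition, we may take $H:=G\circ\Ad_{\xi_2}(F)\in\mathcal{E}_Z$, which completes the argument.

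The main obstacle is this commutation of $F$ past $\xi_2$. It is exactly where totality of $\xi_2$ is needed: without it the inclusion $\xi_2\circ F\subset(\Ad_{\xi_2}F)\circ\xi_2$ fails, consistent with the counterexample in the preceding remark. Note that the hypotheses $\eta_1\prec\eta_2$ and $\xi_1\prec\xi_2$ with $\eta_2,\xi_2$ controlled and total suffice, and the same argument with indices swapped gives the symmetric statement. Applying the proposition in both directions then yields Theorem \ref{theorem:CtrTot_cong}.
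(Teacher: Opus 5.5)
Your proof is correct. It uses the same key inclusion as the paper, Proposition \ref{proposition:total_adjoint}, but applies it at a different point.

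The paper first passes to the criterion $\eta_1\circ\eta_2^\dagger\in\mathcal{E}_Y$, $\xi_1\circ\xi_2^\dagger\in\mathcal{E}_Z$. It then commutes $\eta_1\circ\eta_2^\dagger$ past $\xi_1$ to obtain
\[
(\xi_1\circ\eta_1)\circ(\xi_2\circ\eta_2)^\dagger\subset\Ad_{\xi_1}(\eta_1\circ\eta_2^\dagger)\circ(\xi_1\circ\xi_2^\dagger)\in\mathcal{E}_Z,
\]
and finally invokes Proposition \ref{proposition:close_on_totalrel} a second time, now for the composites. You instead work with the one-sided form $\eta_1\subset F\circ\eta_2$, $\xi_1\subset G\circ\xi_2$. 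You commute $F$ past $\xi_2$ and read off the explicit witness $H=G\circ\Ad_{\xi_2}(F)$ directly from the definition of $\prec$.

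The paper's computation is a single symmetric line. However, it needs $\xi_1$ to be total and controlled, and it needs $\xi_2\circ\eta_2$ to be total for the concluding use of Proposition \ref{proposition:close_on_totalrel}. Your route only uses that the dominating relations $\eta_2$ and $\xi_2$ are controlled and total. Inspecting the proof of Proposition \ref{proposition:close_on_totalrel}, the passage from $\eta_1\prec\eta_2$ to the one-sided form uses only these properties of $\eta_2$. So your argument proves the slightly stronger monotonicity statement you point out: $\xi_1\circ\eta_1\prec\xi_2\circ\eta_2$ holds for arbitrary relations $\eta_1\prec\eta_2$ and $\xi_1\prec\xi_2$, provided $\eta_2\in\CtrTotRel(X,Y)$ and $\xi_2\in\CtrTotRel(Y,Z)$.
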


\begin{proof}
By Proposition \ref{proposition:close_on_totalrel}, it is enough to show that
$(\xi_1\circ\eta_1)\circ(\xi_2\circ\eta_2)^\dagger\in\mathcal{E}_Z$.
Since $\eta_1\prec\eta_2$ and $\xi_1\prec\xi_2$, Proposition
\ref{proposition:close_on_totalrel} implies
$\eta_1\sim\eta_2$ and $\xi_1\sim\xi_2$.
Hence
$\eta_1\circ\eta_2^\dagger\in\mathcal{E}_Y$ and
$\xi_1\circ\xi_2^\dagger\in\mathcal{E}_Z$.
Moreover, since $\xi_1$ is controlled, we have
$\Ad_{\xi_1}(\eta_1\circ\eta_2^\dagger)\in\mathcal{E}_Z$.

Now Proposition \ref{proposition:total_adjoint}, applied to the total
relation $\xi_1$, gives
\[
\xi_1\circ(\eta_1\circ\eta_2^\dagger)
\subset
\Ad_{\xi_1}(\eta_1\circ\eta_2^\dagger)\circ\xi_1.
\]
Therefore
\begin{align*}
(\xi_1\circ\eta_1)\circ(\xi_2\circ\eta_2)^\dagger
&=
\xi_1\circ(\eta_1\circ\eta_2^\dagger)\circ\xi_2^\dagger \\
&\subset
\Ad_{\xi_1}(\eta_1\circ\eta_2^\dagger)
\circ
(\xi_1\circ\xi_2^\dagger).
\end{align*}
The right-hand side belongs to $\mathcal{E}_Z$, and $\mathcal{E}_Z$ is lower
in $\Rel(Z)$.
Thus
$(\xi_1\circ\eta_1)\circ(\xi_2\circ\eta_2)^\dagger\in\mathcal{E}_Z$.
This proves
$\xi_1\circ\eta_1\prec\xi_2\circ\eta_2$.
\end{proof}

By Theorem \ref{theorem:CtrTot_cong}, the quotient category of
$\Coarse_{\CtrTotRel}$ by the congruence ``$\sim$'' is well-defined.
We denote this category by
$\Coarse_{\CtrTotRel}/{\close}$.
For each
$\eta\in\CtrTotRel(X,Y)$,
its equivalence class in
$\Coarse_{\CtrTotRel}/{\close}$
will be denoted by
$[\eta]_{\CtrTotRel}$,
or simply by $[\eta]$ when no confusion can arise.
We say that a controlled total relation $\eta$ is a coarse equivalence if $[\eta]$ is an isomorphism in $\Coarse_{\CtrTotRel}/{\close}$.

The following theorem shows that
$\Coarse_{\CtrTotRel}/{\close}$
provides an alternative realization of the category of coarse spaces.

\begin{theorem}\label{theorem:CtrMapClo_isom_CtrTotRelClo}
The categories
$\Coarse_{\CtrMap}/{\close}$
and
$\Coarse_{\CtrTotRel}/{\close}$
are isomorphic via the functor defined by
\[
X\mapsto X,
\qquad
[f]_{\CtrMap}\mapsto [f]_{\CtrTotRel}.
\]
\end{theorem}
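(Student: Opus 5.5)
The functor is the identity on objects and sends $[f]_{\CtrMap}$ to $[f]_{\CtrTotRel}$. By Proposition \ref{proposition:controlled_rel_map}, every controlled map is a controlled total relation and $\Coarse_{\CtrMap}$ is a wide subcategory of $\Coarse_{\CtrTotRel}$, so composition and identities are preserved at the level of representatives. The plan has three steps: check that the functor is well defined, that it is faithful, and that it is full. An identity-on-objects functor that is fully faithful is then an isomorphism of categories.

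\textbf{Well-definedness and faithfulness.} For controlled maps $f_1,f_2:X\to Y$, I will show that $f_1\sim f_2$ in the sense of Definition \ref{definition:close_ctrmap} if and only if $f_1\sim f_2$ in the sense of Definition \ref{definition:close_ctrrel}. The tool is condition \eqref{item:close_on_totalrel:witness-relation} of Proposition \ref{proposition:close_on_totalrel}. For maps, the witness relation there is exactly $\{(f_1(x),f_2(x))\mid x\in X\}$. Hence the two notions of closeness coincide, which gives both directions at once.

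\textbf{Fullness.} Given $\eta\in\CtrTotRel(X,Y)$, totality lets me choose, by the axiom of choice, a map $f:X\to Y$ with $(f(x),x)\in\eta$ for every $x$, so $f\subset\eta$. Then $\Ad_f(E)\subset\Ad_\eta(E)\in\mathcal{E}_Y$ for each $E\in\mathcal{E}_X$, so $f$ is controlled; alternatively this follows from Proposition \ref{proposition:CtrLowerprec}, since $f\subset\eta$ implies $f\prec\eta$. It remains to show $f\sim\eta$. By Proposition \ref{proposition:close_on_totalrel} it suffices to check $f\circ\eta^\dagger\in\mathcal{E}_Y$. Since $f\subset\eta$, we have $f\circ\eta^\dagger\subset\eta\circ\eta^\dagger=\Ad_\eta(1_X^{\Rel})$, which lies in $\mathcal{E}_Y$ because $\eta$ is controlled. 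As $\mathcal{E}_Y$ is lower, the claim follows, and $[\eta]_{\CtrTotRel}=[f]_{\CtrTotRel}$.

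The only genuinely substantive step is fullness, and specifically the observation that controlledness of $\eta$ bounds $\eta\circ\eta^\dagger$, which makes any selection of $\eta$ close to $\eta$ itself. Everything else is bookkeeping through Proposition \ref{proposition:close_on_totalrel}.
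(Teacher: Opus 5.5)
Your proposal is correct and follows the paper's proof essentially step for step: well-definedness and faithfulness come from the witness-relation condition of Proposition \ref{proposition:close_on_totalrel}, and fullness comes from choosing a selection $f\subset\eta$ (Proposition \ref{proposition:CtrMapCtrTotRel_surj}). The only cosmetic difference is that you check $f\circ\eta^\dagger\subset\eta\circ\eta^\dagger\in\mathcal{E}_Y$ directly, whereas the paper obtains $f\sim\eta$ from $f\prec\eta$ via Proposition \ref{proposition:close_on_totalrel}, whose proof contains exactly your computation.
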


We first establish the following proposition.

\begin{proposition}
Let $f_1,f_2:X\to Y$ be controlled maps in the sense of
Definition \ref{definition:ctrmap}.
Then the following conditions are equivalent:
\begin{enumerate}
\item
$f_1\sim f_2$ in $\CtrMap(X,Y)$ in the sense of
Definition \ref{definition:close_ctrmap}.

\item
$f_1\sim f_2$ in $\CtrTotRel(X,Y)$ in the sense of
Definition \ref{definition:close_ctrrel}.
\end{enumerate}
In particular, the map
\[
\CtrMap(X,Y)/{\sim}
\longrightarrow
\CtrTotRel(X,Y)/{\sim},
\qquad
[f]_{\CtrMap}
\longmapsto
[f]_{\CtrTotRel}
\]
is well-defined and injective.
\end{proposition}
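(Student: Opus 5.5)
The plan is to reduce both notions of closeness to the same concrete condition via Proposition \ref{proposition:close_on_totalrel}. First, a controlled map $f:X\to Y$, regarded as the relation $\{(f(x),x)\mid x\in X\}\in\Rel(X,Y)$, is total. By Proposition \ref{proposition:controlled_rel_map} it is also controlled, so $f_1,f_2\in\CtrTotRel(X,Y)$ and Proposition \ref{proposition:close_on_totalrel} applies to the pair $(f_1,f_2)$.

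Next, I will compute the witness relation in condition \eqref{item:close_on_totalrel:witness-relation} of Proposition \ref{proposition:close_on_totalrel} for $\eta_i=f_i$. A pair $(y_1,y_2)$ lies in it if and only if there is $x\in X$ with $(y_1,x)\in f_1$ and $(y_2,x)\in f_2$, that is, $y_1=f_1(x)$ and $y_2=f_2(x)$. So the witness relation equals $\{(f_1(x),f_2(x))\mid x\in X\}$, which is exactly the relation in Definition \ref{definition:close_ctrmap}. Equivalently, $f_1\circ f_2^\dagger$ is this set.

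Hence $f_1\sim f_2$ in the sense of Definition \ref{definition:close_ctrmap} holds if and only if condition \eqref{item:close_on_totalrel:witness-relation} holds. By Proposition \ref{proposition:close_on_totalrel}, that condition is equivalent to $f_1\sim f_2$ in $\CtrTotRel(X,Y)$. This proves the equivalence of (1) and (2).

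For the final assertion, the direction (1)$\Rightarrow$(2) shows that the map $[f]_{\CtrMap}\mapsto[f]_{\CtrTotRel}$ does not depend on the representative, so it is well defined. The direction (2)$\Rightarrow$(1) shows it is injective. There is no real obstacle here. The only point needing care is the identification of the witness relation with $\{(f_1(x),f_2(x))\}$, which uses that each $x$ has exactly one image under $f_i$; totality of maps is what allows Proposition \ref{proposition:close_on_totalrel} to be invoked.
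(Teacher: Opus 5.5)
Your proposal is correct and follows the paper's proof: both identify $\{(f_1(x),f_2(x))\mid x\in X\}$ with the witness relation in condition~\eqref{item:close_on_totalrel:witness-relation} of Proposition~\ref{proposition:close_on_totalrel} and then invoke \eqref{item:close_on_totalrel:equivalent}$\Leftrightarrow$\eqref{item:close_on_totalrel:witness-relation}. You also spell out two points the paper leaves implicit, namely that controlled maps lie in $\CtrTotRel(X,Y)$ and how the two implications give well-definedness and injectivity.
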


\begin{proof}
By Definition \ref{definition:close_ctrmap},
the condition $f_1\sim f_2$ means that
\[
\{(f_1(x),f_2(x))\mid x\in X\}
\]
belongs to $\mathcal{E}_Y$.
Since
\begin{align*}
\{(f_1(x),f_2(x))\mid x\in X\}
&= \\
\{
(y_1,y_2)\in Y\times Y
&\mid
\text{there exists }x\in X
\text{ such that }
f_1(x)=y_1,\,
f_2(x)=y_2
\},
\end{align*}
Proposition
\ref{proposition:close_on_totalrel}
(\ref{item:close_on_totalrel:equivalent})
$\Leftrightarrow$
(\ref{item:close_on_totalrel:witness-relation})
shows that this is equivalent to
$f_1\sim f_2$
in the sense of Definition
\ref{definition:close_ctrrel}.
\end{proof}

The above proposition gives a faithful functor
\[
\mathcal{F}
:
\Coarse_{\CtrMap}/{\close}
\longrightarrow
\Coarse_{\CtrTotRel}/{\close}
\]
which is the identity on objects and sends a morphism
$[f]_{\CtrMap}$
to
$[f]_{\CtrTotRel}$.

To prove that $\mathcal{F}$ is an isomorphism of categories, it suffices to
establish the following proposition.

\begin{proposition}[cf.~{\cite[Lemma 3.31]{MartinezVigolo2026roe}}]
\label{proposition:CtrMapCtrTotRel_surj}
Let $X$ and $Y$ be coarse spaces.
Then for every
$\eta\in\CtrTotRel(X,Y)$,
there exists a controlled map
$f\in\CtrMap(X,Y)$
such that
$f\subset\eta$
and
$[f]_{\CtrTotRel}
=
[\eta]_{\CtrTotRel}$.
In particular, the map
\[
\CtrMap(X,Y)/{\sim}
\longrightarrow
\CtrTotRel(X,Y)/{\sim},
\qquad
[f]_{\CtrMap}
\longmapsto
[f]_{\CtrTotRel}
\]
is surjective, and hence bijective.
\end{proposition}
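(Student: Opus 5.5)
Since $\eta$ is total, for every $x\in X$ the fibre $\{y\in Y\mid (y,x)\in\eta\}$ is non-empty. By the axiom of choice we pick one element of each fibre and obtain a map $f:X\to Y$ with $(f(x),x)\in\eta$ for all $x$; in other words $f\subset\eta$ as relations. The statement then reduces to two checks: that $f$ is controlled, and that $f\sim\eta$ in $\CtrTotRel(X,Y)$. The final assertions (surjectivity, hence bijectivity) follow at once, because the map $[f]_{\CtrMap}\mapsto[f]_{\CtrTotRel}$ was already shown to be well-defined and injective in the preceding proposition.

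\emph{Controlledness.} We have $f\subset\eta$, and $\eta$ is controlled. Monotonicity of $\Ad$ in the relation gives $\Ad_f(E)\subset\Ad_\eta(E)\in\mathcal{E}_Y$ for every $E\in\mathcal{E}_X$. Since $\mathcal{E}_Y$ is lower, $\Ad_f(E)\in\mathcal{E}_Y$. Alternatively, this is Proposition \ref{proposition:CtrLowerprec}, since $f\subset\eta$ trivially gives $f\prec\eta$ (take $E=1_X^{\Rel}$ and $F=1_Y^{\Rel}$). By Proposition \ref{proposition:controlled_rel_map}, $f$ is then a controlled map, i.e.\ $f\in\CtrMap(X,Y)$.

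\emph{Closeness.} Both $f$ and $\eta$ lie in $\CtrTotRel(X,Y)$; a map is total. By Proposition \ref{proposition:close_on_totalrel}, the single relation $f\prec\eta$ already implies $f\sim\eta$, and $f\prec\eta$ was just observed. Should one prefer a direct check, condition \eqref{item:close_on_totalrel:composite12} asks that $\eta\circ f^\dagger\in\mathcal{E}_Y$. We have
\[
\eta\circ f^\dagger\subset\eta\circ\eta^\dagger=\Ad_\eta(1_X^{\Rel})\in\mathcal{E}_Y,
\]
and this also settles it.

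There is no real obstacle. The only substantive input is the equivalence in Proposition \ref{proposition:close_on_totalrel}, which upgrades the one-sided $f\prec\eta$ to closeness, together with the use of choice to produce the selection $f$. We stay within $\mathcal{U}$-sets, so invoking choice is harmless.
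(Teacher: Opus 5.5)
Your proposal is correct and takes the same route as the paper. The paper also chooses a selection $f\subset\eta$ by the axiom of choice, obtains controlledness from $f\prec\eta$ via Propositions~\ref{proposition:controlled_rel_map} and~\ref{proposition:CtrLowerprec}, and upgrades $f\prec\eta$ to $f\sim\eta$ via Proposition~\ref{proposition:close_on_totalrel}. Your alternative direct check $\eta\circ f^\dagger\subset\eta\circ\eta^\dagger=\Ad_\eta(1_X^{\Rel})\in\mathcal{E}_Y$ is also valid.
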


\begin{proof}
Since $\eta$ is total, the Axiom of Choice allows us to choose, for each
$x\in X$, an element $y_x\in Y$ such that
$(y_x,x)\in\eta$.
Define a map
$f:X\to Y$
by
$f(x):=y_x$.

By construction, $f\subset\eta$, and hence
$f\prec\eta$.
Since $\eta$ is controlled,
Propositions
\ref{proposition:controlled_rel_map}
and
\ref{proposition:CtrLowerprec}
imply that $f$ is also controlled.
Furthermore,
Proposition \ref{proposition:close_on_totalrel} yields
$f\sim\eta$.
Therefore
$[f]_{\CtrTotRel}
=
[\eta]_{\CtrTotRel}$.
\end{proof}

\subsection{Controlled total relations which give epi- or mono-morphisms}

Throughout this subsection, we fix coarse spaces
$X=(X,\mathcal{E}_X)$ and $Y=(Y,\mathcal{E}_Y)$,
together with a controlled total relation
$\eta \in \CtrTotRel(X,Y)$.

We now give a characterization of when the close class $[\eta]$ is an epi-morphism in the category of coarse spaces.

\begin{theorem}\label{thm:epi-characterization}
Under the above setting, the following conditions on $\eta$ are equivalent:
\begin{enumerate}
    \item \label{item:epi-characterization-epi}
    The close class $[\eta]$ is an epi-morphism in
    $\Coarse_{\CtrTotRel}/\close$.
    
    \item \label{item:epi-characterization-dense}
    There exists $F \in \mathcal{E}_Y$ such that
    $F(\eta(X))=Y$.
    
    \item \label{item:epi-characterization-total-adjoint}
    There exists $\xi\in[\eta]$ such that the relation
    $\xi^\dagger \in \Rel(Y,X)$ is total.
\end{enumerate}
\end{theorem}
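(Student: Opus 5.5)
The plan is to prove \eqref{item:epi-characterization-epi}$\Leftrightarrow$\eqref{item:epi-characterization-dense} by reduction to Theorem~\ref{theorem:CtrMap_epimono}, and \eqref{item:epi-characterization-dense}$\Leftrightarrow$\eqref{item:epi-characterization-total-adjoint} directly.

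\textbf{Step 1: (1)$\Leftrightarrow$(2).} By Proposition~\ref{proposition:CtrMapCtrTotRel_surj}, choose a controlled map $f\subset\eta$ with $[f]=[\eta]$. Theorem~\ref{theorem:CtrMapClo_isom_CtrTotRelClo} identifies the two categories, so $[\eta]$ is epi iff $[f]_{\CtrMap}$ is epi. By Theorem~\ref{theorem:CtrMap_epimono}, this holds iff $E(f(X))=Y$ for some $E\in\mathcal{E}_Y$.

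It remains to see that ``$E(f(X))=Y$ for some $E$'' is equivalent to ``$F(\eta(X))=Y$ for some $F$''. Since $f(X)\subset\eta(X)$, the forward direction is immediate with $F=E$. Conversely, $f\sim\eta$ gives $\eta\subset E_Y\circ f$ for some $E_Y\in\mathcal{E}_Y$ by Proposition~\ref{proposition:close_on_totalrel}. Hence $\eta(X)\subset E_Y(f(X))$, so
\[
Y=F(\eta(X))\subset (F\circ E_Y)(f(X)),
\]
and $F\circ E_Y\in\mathcal{E}_Y$.

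\textbf{Step 2: (3)$\Rightarrow$(2).} Let $\xi\sim\eta$ with $\xi^\dagger$ total. Totality of $\xi^\dagger$ means exactly $\xi(X)=Y$. By Proposition~\ref{proposition:close_on_totalrel}, $\xi\subset E_Y\circ\eta$, so $Y=\xi(X)\subset E_Y(\eta(X))$, and we take $F=E_Y$.

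\textbf{Step 3: (2)$\Rightarrow$(3).} This is the main construction. Given $F$ with $F(\eta(X))=Y$, replace $F$ by $F\cup 1_Y^{\Rel}\in\mathcal{E}_Y$ (still a coarse structure element) and set $\xi:=F\circ\eta$. Then:
\begin{itemize}
\item $\xi$ is total, since $\eta\subset\xi$ and totality is upper (Proposition~\ref{proposition:total_basic}).
\item $\xi$ is controlled, because $\Ad_\xi=\Ad_F\circ\Ad_\eta$ and $F$ is controlled by Proposition~\ref{proposition:ent_cont}.
\item $\xi\prec\eta$ by definition (taking $E=1_X^{\Rel}$), so $\xi\sim\eta$ by Proposition~\ref{proposition:close_on_totalrel}; that is, $\xi\in[\eta]$.
\item $\xi(X)=F(\eta(X))=Y$, so $\xi^\dagger$ is total.
\end{itemize}

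The only real work lies in the equivalence of the two density conditions in Step~1. Everything else is a direct check from Proposition~\ref{proposition:close_on_totalrel} and closure properties of $\mathcal{E}_Y$. The one point needing care is that the element $\xi$ in (3) must lie in $\CtrTotRel$, which is why controlledness and totality of $F\circ\eta$ are verified explicitly.
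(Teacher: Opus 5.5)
Your proposal is correct and follows essentially the same route as the paper. For (1)$\Leftrightarrow$(2) you pass to a controlled map $f\subset\eta$ with $\eta\subset E_Y\circ f$ and apply the Dikranjan--Zava epimorphism criterion, and for (2)$\Leftrightarrow$(3) you use $\xi:=(F\cup 1_Y^{\Rel})\circ\eta$ in one direction and $\xi\subset E_Y\circ\eta$ in the other; your explicit check that $F\circ\eta$ lies in $\CtrTotRel(X,Y)$ just spells out what the paper asserts without detail.
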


\begin{proof}
We first prove that
\eqref{item:epi-characterization-epi}
and
\eqref{item:epi-characterization-dense}
are equivalent. 
By Propositions \ref{proposition:close_on_totalrel} and
\ref{proposition:CtrMapCtrTotRel_surj},
one can choose a controlled map
$f \in \CtrMap(X,Y)$
with $f \subset \eta$ and
$F_0 \in \mathcal{E}_Y$
with $\eta \subset F_0 \circ f$.
By Theorems \ref{theorem:CtrMap_epimono} and \ref{theorem:CtrMapClo_isom_CtrTotRelClo},
condition
\eqref{item:epi-characterization-epi}
is equivalent to the existence of
$F \in \mathcal{E}_Y$
such that
$F(f(X))=Y$.
In particular, since
$f(X)\subset \eta(X)$,
it follows that
$F(\eta(X))=Y$.
Hence
\eqref{item:epi-characterization-dense}
holds.
Conversely, assume
\eqref{item:epi-characterization-dense},
and choose
$F \in \mathcal{E}_Y$
such that
$F(\eta(X))=Y$.
Then
$(F\circ F_0)(f(X))=Y$.
Since
$F\circ F_0\in\mathcal{E}_Y$,
the above argument shows that
condition
\eqref{item:epi-characterization-epi}
holds.

Next we prove that
\eqref{item:epi-characterization-dense}
and
\eqref{item:epi-characterization-total-adjoint}
are equivalent.
Assume
\eqref{item:epi-characterization-dense},
and fix
$F\in\mathcal{E}_Y$
such that
$F(\eta(X))=Y$.
Put
$\widetilde{F}:=F\cup 1_Y^{\Rel}$.
Then
$F\subset\widetilde{F}$,
$\widetilde{F}\in\mathcal{E}_Y$,
and
$\widetilde{F}$ is total.
Define
$\xi:=\widetilde{F}\circ\eta$.
Then
$\xi\in\CtrTotRel(X,Y)$
and
$\xi(X)=Y$.
In particular,
$\xi^\dagger$
is total.
Moreover,
Proposition \ref{proposition:close_on_totalrel}
implies that
$[\xi]=[\eta]$.
This proves
\eqref{item:epi-characterization-total-adjoint}.
Conversely, assume
\eqref{item:epi-characterization-total-adjoint},
and choose
$\xi\in[\eta]$
such that
$\xi^\dagger$
is total.
Note that
$\xi(X)=Y$.
By Proposition \ref{proposition:close_on_totalrel},
there exists
$F\in\mathcal{E}_Y$
such that 
$\xi\subset F\circ\eta$.
Hence
$F(\eta(X))\supset \xi(X)=Y$.
Therefore
$F(\eta(X))=Y$.
This proves
\eqref{item:epi-characterization-dense}.
\end{proof}

We next give a characterization of when the close class $[\eta]$ is a mono-morphism in the category of coarse spaces.

\begin{theorem}\label{thm:mono-characterization}
Under the above setting, the following conditions on $\eta$ are equivalent:
\begin{enumerate}
    \item\label{item:mono-characterization-mono}
    The close class $[\eta]$ is a mono-morphism in
    $\Coarse_{\CtrTotRel}/\close$.
    \item \label{item:mono-characterization-controlled-adjoint}
    The relation $\eta^\dagger \in \Rel(Y,X)$ is controlled.
\end{enumerate}
\end{theorem}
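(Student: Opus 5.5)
The natural route is to mirror the proof of Theorem \ref{thm:epi-characterization}: transfer the question to a controlled map and apply the Dikranjan--Zava criterion. By Proposition \ref{proposition:CtrMapCtrTotRel_surj}, choose a controlled map $f\in\CtrMap(X,Y)$ with $f\subset\eta$ and $[f]_{\CtrTotRel}=[\eta]_{\CtrTotRel}$. By Proposition \ref{proposition:close_on_totalrel}, there is $F_0\in\mathcal{E}_Y$ with $\eta\subset F_0\circ f$. By Theorem \ref{theorem:CtrMapClo_isom_CtrTotRelClo}, $[\eta]$ is a mono-morphism if and only if $[f]_{\CtrMap}$ is one. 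Theorem \ref{theorem:CtrMap_epimono} (2) then says this holds if and only if $\Ad_{f^\dagger}(\mathcal{E}_Y)\subset\mathcal{E}_X$, that is, if and only if $f^\dagger$ is controlled in the sense of Definition \ref{definition:controlledRel}. So the theorem reduces to the claim that $f^\dagger$ is controlled if and only if $\eta^\dagger$ is controlled.

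For the first direction, note that $f\subset\eta$ gives $f^\dagger\subset\eta^\dagger$, hence $\Ad_{f^\dagger}(F)\subset\Ad_{\eta^\dagger}(F)$ for every $F\in\mathcal{E}_Y$. Since $\mathcal{E}_X$ is lower, controlledness of $\eta^\dagger$ therefore implies controlledness of $f^\dagger$.

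For the converse, use $\eta\subset F_0\circ f$, which gives $\eta^\dagger\subset f^\dagger\circ F_0^\dagger$. For $F\in\mathcal{E}_Y$ this yields
\[
\Ad_{\eta^\dagger}(F)\subset f^\dagger\circ F_0^\dagger\circ F\circ F_0\circ f=\Ad_{f^\dagger}(F_0^\dagger\circ F\circ F_0).
\]
The middle relation $F_0^\dagger\circ F\circ F_0$ lies in $\mathcal{E}_Y$ because coarse structures are closed under dagger and composition. Hence the right-hand side lies in $\mathcal{E}_X$ when $f^\dagger$ is controlled, and lowerness of $\mathcal{E}_X$ finishes the argument. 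Equivalently, one may observe that $\eta^\dagger\prec f^\dagger$ in $\Rel(Y,X)$ and invoke Proposition \ref{proposition:CtrLowerprec}.

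The only delicate point is making sure the translation through the isomorphism of Theorem \ref{theorem:CtrMapClo_isom_CtrTotRelClo} legitimately transfers the mono property. This is immediate, since an isomorphism of categories preserves and reflects mono-morphisms. Beyond that, the argument consists of the two routine inclusions above, so I expect no serious obstacle.
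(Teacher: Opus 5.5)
Your proposal is correct and follows essentially the paper's own proof: pick a controlled map $f\subset\eta$ with $[f]=[\eta]$, translate monicity through Theorem~\ref{theorem:CtrMapClo_isom_CtrTotRelClo} and the Dikranjan--Zava criterion into controlledness of $f^\dagger$, and then use $f^\dagger\subset\eta^\dagger$ and $\eta^\dagger\prec f^\dagger$. The paper handles both of these last comparisons by invoking Proposition~\ref{proposition:CtrLowerprec}, whereas you verify the second one by the explicit inclusion $\Ad_{\eta^\dagger}(F)\subset\Ad_{f^\dagger}(F_0^\dagger\circ F\circ F_0)$.
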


\begin{proof}
By Proposition \ref{proposition:CtrMapCtrTotRel_surj}, 
one can choose a controlled map $f \in \CtrMap(X,Y)$ with $f \subset \eta$ and $[f] = [\eta]$.
Suppose 
\eqref{item:mono-characterization-controlled-adjoint}.
Since $\eta^\dagger$ is controlled, 
by Proposition \ref{proposition:CtrLowerprec}, 
$f^\dagger \subset \eta^\dagger$ is also controlled.
Thus \eqref{item:mono-characterization-mono} holds by 
Theorems \ref{theorem:CtrMap_epimono} and \ref{theorem:CtrMapClo_isom_CtrTotRelClo}.
Conversely, assume \eqref{item:mono-characterization-mono}.
Then by Theorems \ref{theorem:CtrMap_epimono} and \ref{theorem:CtrMapClo_isom_CtrTotRelClo}, $f^\dagger$ is controlled, 
and hence by Proposition \ref{proposition:CtrLowerprec}, $\eta^\dagger \prec f^\dagger$ is also controlled.
\end{proof}

As a reformulation of Theorem \ref{theorem:coarse_balanced},
we obtain the following characterization of isomorphisms in
$\Coarse_{\CtrTotRel}/\close$.

\begin{theorem}\label{theorem:CtrTotRelClo_isom}
Under the above setting, the following conditions on $\eta$ are equivalent:
\begin{enumerate}
    \item $\eta$ is a coarse equivalence, that is, 
    $[\eta]$ is an isomorphism.    
    \item $\eta^\dagger$ is controlled and there exists
    $F \in \mathcal{E}_Y$ such that
    $F(\eta(X)) = Y$.
    
    \item \label{item:isom:dagger:CtrTotRelClo}
    There exists $\xi \in [\eta]$ such that
    $\xi^\dagger \in \CtrTotRel(Y,X)$.
\end{enumerate}

For such a situation, if $\xi \in [\eta]$ satisfies
Condition \eqref{item:isom:dagger:CtrTotRelClo}, then
$[\xi^\dagger]$ is the inverse of the morphism
$[\eta]=[\xi]$ in the category
$\Coarse_{\CtrTotRel}/\close$.
\end{theorem}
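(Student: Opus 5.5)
The plan is to derive the equivalences from Theorem \ref{theorem:coarse_balanced}, transported along the isomorphism of Theorem \ref{theorem:CtrMapClo_isom_CtrTotRelClo}, together with the two characterizations Theorems \ref{thm:epi-characterization} and \ref{thm:mono-characterization}. Since an isomorphism of categories preserves epi-morphisms, mono-morphisms and isomorphisms, the category $\Coarse_{\CtrTotRel}/\close$ is balanced. Hence $[\eta]$ is an isomorphism if and only if it is both epi and mono. By Theorem \ref{thm:mono-characterization}, mono means that $\eta^\dagger$ is controlled. By Theorem \ref{thm:epi-characterization}, epi means that $F(\eta(X))=Y$ for some $F\in\mathcal{E}_Y$. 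This gives (1)$\Leftrightarrow$(2) directly.

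For (2)$\Rightarrow$(3), I would reuse the construction from the proof of Theorem \ref{thm:epi-characterization}. Put $\widetilde F:=F\cup 1_Y^{\Rel}\in\mathcal{E}_Y$ and $\xi:=\widetilde F\circ\eta$. Then $\xi$ is controlled and total, $\xi\sim\eta$ by Proposition \ref{proposition:close_on_totalrel}, and $\xi(X)=Y$, so $\xi^\dagger$ is total. It remains to check that $\xi^\dagger=\eta^\dagger\circ\widetilde F^\dagger$ is controlled. The relation $\widetilde F^\dagger$ lies in $\mathcal{E}_Y$ and is therefore controlled by Proposition \ref{proposition:ent_cont}, and $\eta^\dagger$ is controlled by (2). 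Since controlled relations are closed under composition, $\xi^\dagger\in\CtrTotRel(Y,X)$.

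For (3)$\Rightarrow$(1), and at the same time the final assertion, take $\xi\in[\eta]$ with $\xi^\dagger\in\CtrTotRel(Y,X)$. I would show that $[\xi^\dagger]\circ[\xi]=[1_X^\Rel]$ and $[\xi]\circ[\xi^\dagger]=[1_Y^\Rel]$. By Proposition \ref{proposition:close_on_totalrel}, it suffices to show $1_X^\Rel\prec\xi^\dagger\circ\xi$, i.e.\ to find some $E\in\mathcal{E}_X$ with $1_X^\Rel\subset E\circ(\xi^\dagger\circ\xi)$. Totality of $\xi$ gives $1_X^\Rel\subset\xi^\dagger\circ\xi$ (Proposition \ref{proposition:total_condition}), so $E=1_X^\Rel$ works. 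Symmetrically, totality of $\xi^\dagger$ gives $1_Y^\Rel\subset\xi\circ\xi^\dagger$, hence $1_Y^\Rel\prec\xi\circ\xi^\dagger$. Both composites are controlled total relations, so the criterion applies. Thus $[\xi]=[\eta]$ is an isomorphism with inverse $[\xi^\dagger]$.

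The only point needing care is that Proposition \ref{proposition:close_on_totalrel} requires both relations to be controlled and total. This holds here because $\CtrTotRel$ is closed under composition and identities are controlled and total. I see no serious obstacle; the main step is the (2)$\Rightarrow$(3) construction, which is already in the epi characterization.
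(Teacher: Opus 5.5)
Your proposal is correct and follows essentially the paper's route. The equivalence (1)$\Leftrightarrow$(2) comes from balancedness transported along Theorem~\ref{theorem:CtrMapClo_isom_CtrTotRelClo} together with the epi/mono characterizations, and your (3)$\Rightarrow$(1) argument via $1_X^{\Rel}\subset\xi^\dagger\circ\xi$, $1_Y^{\Rel}\subset\xi\circ\xi^\dagger$ and Proposition~\ref{proposition:close_on_totalrel} is exactly how the paper proves the final assertion. Your explicit check of (2)$\Rightarrow$(3), via $\xi=(F\cup 1_Y^{\Rel})\circ\eta$ and $\xi^\dagger=\eta^\dagger\circ(F\cup 1_Y^{\Rel})^\dagger$, fills in a step the paper leaves inside ``follows immediately''.
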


\begin{proof}
The equivalence of the three conditions follows immediately from
Theorems \ref{theorem:coarse_balanced},
\ref{theorem:CtrMapClo_isom_CtrTotRelClo},
\ref{thm:epi-characterization},
and \ref{thm:mono-characterization}.

It remains to prove the final assertion.
Suppose that
$\eta \in \CtrTotRel(X,Y)$
and
$\eta^\dagger \in \CtrTotRel(Y,X)$.
We shall show that
$[\eta]$
and
$[\eta^\dagger]$
are inverse to each other.
Since both $\eta$ and $\eta^\dagger$ are total, 
we have $1_X^{\Rel} \subset \eta^\dagger\circ \eta$ and $1_Y^{\Rel} \subset \eta\circ \eta^\dagger$.
In particular, $1_X^{\Rel} \prec \eta^\dagger\circ \eta$ and $1_Y^{\Rel} \prec \eta\circ \eta^\dagger$ hold.
Thus, by Proposition
\ref{proposition:close_on_totalrel},
\[
1_X^{\Rel}\sim\eta^\dagger \circ \eta,
\qquad
1_Y^{\Rel}\sim \eta \circ \eta^\dagger.
\]

Hence
\[
[\eta^\dagger]\circ[\eta]
=
[1_X^{\Rel}],
\qquad
[\eta]\circ[\eta^\dagger]
=
[1_Y^{\Rel}],
\]
and therefore
$[\eta]$
and
$[\eta^\dagger]$
are inverse morphisms.
\end{proof}

\section{Functoriality of coarse subspaces}\label{section:Fct_coarsesubset}

In Section \ref{subsection:coarsesubset}, for each coarse space
$X=(X,\mathcal E_X)$,
we defined the pre-ordered set
$(\mathcal{P}(X),\prec)$
and its anti-symmetrization
$(\mathcal{P}_c(X),\prec)$.
In this section, we show that the assignments
$X\mapsto(\mathcal{P}(X),\prec)$
and
$X\mapsto(\mathcal{P}_c(X),\prec)$
are functorial.

We begin with the following proposition.

\begin{proposition}\label{proposition:PcXfiniteJoin}
Let
$\mathcal{S}=\{S_\lambda\}_{\lambda\in\Lambda}$
be a finite family of subsets of $X$.
Then the equivalence class
$[\bigcup\mathcal{S}]$
coincides with the least upper bound
$\bigvee\mathcal{S}$ 
in the pre-ordered set $(\mathcal{P}(X),\prec)$.
In particular,
$(\mathcal{P}(X),\prec)$
is a finite-join pre-ordered set, and
$(\mathcal{P}_c(X),\prec)$
is a finite-join partially ordered set.
\end{proposition}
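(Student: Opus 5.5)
To prove Proposition \ref{proposition:PcXfiniteJoin}, the plan is to check directly that $U := \bigcup\mathcal{S}$ is a least upper bound of $\mathcal{S}$ in $(\mathcal{P}(X),\prec)$. Once this holds, the general remark in Section \ref{subsection:pre-order_category} identifies $\bigvee\mathcal{S}$ with the single $\sim$-class $[U]$, and both ``in particular'' assertions follow.

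\emph{Step 1: $U \in \Upper(\mathcal{S})$.} For each $\lambda\in\Lambda$, take $E = 1_X^{\Rel}\in\mathcal{E}_X$. Then $S_\lambda \subset U = 1_X^{\Rel}(U)$, so $S_\lambda \prec U$.

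\emph{Step 2: minimality.} Let $B\in\Upper(\mathcal{S})$. For each $\lambda$ choose $E_\lambda\in\mathcal{E}_X$ with $S_\lambda\subset E_\lambda(B)$. Since $\Lambda$ is finite, $E := \bigcup_{\lambda} E_\lambda$ lies in $\mathcal{E}_X$ by closure under finite unions. By Proposition \ref{proposition:eta^P_functorial}, $E(B)=\bigcup_\lambda E_\lambda(B)\supset U$, so $U\prec B$. Hence $U\in\bigvee\mathcal{S}$.

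\emph{Step 3: identifying the class.} Any two elements of $\bigvee\mathcal{S}$ are $\sim$-equivalent, and conversely any $U'\sim U$ is again a least upper bound by transitivity. So $\bigvee\mathcal{S}$ is exactly the class $[U]$. When $\Lambda=\emptyset$ we have $U=\emptyset$; it is below every set, and Steps 1--2 hold vacuously with $E=\emptyset$ (or $1_X^{\Rel}$), so this case is not exceptional. Thus $(\mathcal{P}(X),\prec)$ is finite join.

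For $\mathcal{P}_c(X)=\AS(\mathcal{P}(X))$, the order on classes is defined through representatives, so $[U]$ is the join of $\{[S_\lambda]\}$. Because any family of classes can be represented by a family of subsets, $\mathcal{P}_c(X)$ is a finite-join partially ordered set. No step is a real obstacle; the only point needing care is using finiteness to form the single entourage $E$ in Step 2.
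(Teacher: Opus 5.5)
Your proposal is correct and follows essentially the same argument as the paper. Both show that $\bigcup\mathcal{S}$ is an upper bound by inclusion, and that it lies below any upper bound $A$ by taking the finite union $E=\bigcup_{\lambda}E_\lambda\in\mathcal{E}_X$ of the witnessing entourages; your extra remarks on identifying $\bigvee\mathcal{S}$ with the class $[\bigcup\mathcal{S}]$ and on the empty family are valid details that the paper leaves implicit.
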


\begin{proof}
It suffices to show that
$\bigcup\mathcal{S}$
represents the least upper bound of
$\mathcal{S}$.
Since
$S_\lambda\subset\bigcup\mathcal{S}$
for every
$\lambda\in\Lambda$,
we have
$S_\lambda\prec\bigcup\mathcal{S}$.
Hence
$\bigcup\mathcal{S}\in\Upper(\mathcal{S})$.

Now let
$A\in\Upper(\mathcal{S})$.
It remains to prove that
$\bigcup\mathcal{S}\prec A$.
By the definition of
$\prec$,
for each
$\lambda\in\Lambda$
there exists
$E_\lambda\in\mathcal E_X$
such that
$S_\lambda\subset E_\lambda(A)$.
Since
$\Lambda$
is finite,
\[
E:=\bigcup_{\lambda\in\Lambda}E_\lambda
\in\mathcal E_X.
\]
By construction,
$\bigcup\mathcal{S}\subset E(A)$.
Hence
$\bigcup\mathcal{S}\prec A$.
\end{proof}

Next we prove the following proposition.

\begin{proposition}\label{proposition:Pc_functorial}
Let
$X$ and $Y$ be coarse spaces, and let
$\eta\in\CtrTotRel(X,Y)$.

\begin{enumerate}
\item
The map
\[
\eta^{\mathcal{P}}:
\mathcal{P}(X)\to\mathcal{P}(Y),
\qquad
S\mapsto\eta(S),
\]
preserves the pre-order
$\prec$
and finite joins, and is open (see Definition \ref{definition:open} for the definition of open maps between pre-ordered sets).
\item
If
$\xi\in\CtrTotRel(X,Y)$
satisfies
$\eta\sim\xi$,
then
$\eta^{\mathcal{P}}\sim\xi^{\mathcal{P}}$ in the sense of Definition \ref{definition:simonpre}.
\end{enumerate}
\end{proposition}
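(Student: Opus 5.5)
We verify the three properties of $\eta^{\mathcal{P}}$ in part (1) directly from the definitions, using the calculus of relations from Section \ref{section:categoryofrel} and Section \ref{section:realizationCtrTotRel}. Part (2) then follows by a similar computation.

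\emph{Order preservation.} Suppose $A \prec B$ in $\mathcal{P}(X)$, so $A \subset E(B)$ for some $E \in \mathcal{E}_X$. Then
\[
\eta(A) \subset (\eta\circ E)(B) \subset ((\Ad_\eta E)\circ\eta)(B) = (\Ad_\eta E)(\eta(B)).
\]
The second inclusion is Proposition \ref{proposition:total_adjoint}, which applies because $\eta$ is total. Since $\eta$ is controlled, $\Ad_\eta E \in \mathcal{E}_Y$, and hence $\eta(A) \prec \eta(B)$.

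\emph{Finite joins.} By Proposition \ref{proposition:PcXfiniteJoin}, joins of finite families in $(\mathcal{P}(X),\prec)$ are represented by unions. Moreover $\eta^{\mathcal{P}}$ preserves unions, by Proposition \ref{proposition:eta^P_functorial}. Hence $\eta(\bigcup\mathcal{S}) = \bigcup \eta(\mathcal{S})$, which lies in $\bigvee \eta(\mathcal{S})$. Since $\bigvee\mathcal{S}$ is a single $\sim$-class and $\eta^{\mathcal{P}}$ preserves $\prec$, every element of $\bigvee\mathcal{S}$ is mapped into $\bigvee\eta(\mathcal{S})$, because that set is closed under $\sim$.

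\emph{Openness.} This is the step I expect to be the main obstacle. Let $L \subset \mathcal{P}(X)$ be lower, let $S \in L$, and let $T \prec \eta(S)$, say $T \subset F(\eta(S))$ with $F \in \mathcal{E}_Y$. We must find $S' \in L$ with $\eta(S') = T$. The natural candidate is $S' := \eta^\dagger(T) \cap S$, as suggested by Proposition \ref{proposition:U1U2R}. It is contained in $S$, so $S' \prec S$ and therefore $S' \in L$. However, $\eta(S')$ need not equal $T$: only $\eta(S')\sim T$ might hold, and points of $T$ outside $\eta(X)$ cause trouble. So exact surjectivity onto $T$ seems to require more care.

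My first attempt is to take the candidate $\eta^\dagger(F^\dagger(T))\cap S$ and compare its image with $T$ via Proposition \ref{proposition:U1U2R}. If exact equality cannot be achieved, the likely intended meaning is openness up to $\sim$: the image $\eta^{\mathcal{P}}(L)$ is then lower modulo $\sim$, i.e.\ $\AS$-open. In that case I would show
\[
T \sim \eta\bigl(\eta^\dagger(F^\dagger(T))\cap S\bigr),
\]
using $T \subset F(\eta(S))$ for one direction and the relation $\eta \circ \eta^\dagger \circ F^\dagger \subset$ (a controlled entourage) for the other. This argument uses $\Ad_\eta(1_X^{\Rel}) = \eta\circ\eta^\dagger \in \mathcal{E}_Y$, which holds by controlledness.

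\emph{Part (2).} By Proposition \ref{proposition:close_on_totalrel}, $\eta \subset E_Y \circ \xi$ for some $E_Y \in \mathcal{E}_Y$. Hence $\eta(S) \subset E_Y(\xi(S))$, so $\eta(S) \prec \xi(S)$. By symmetry $\xi(S) \prec \eta(S)$, and therefore $\eta(S) \sim \xi(S)$ for every $S$, as required.
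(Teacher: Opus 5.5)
Your proof is correct and follows the paper's argument. Order preservation, finite joins and part (2) are the same computations; the paper writes out Proposition~\ref{proposition:total_adjoint} inline and uses $E=\xi\circ\eta^\dagger$ directly. For openness, your candidate $\eta^\dagger(F^\dagger(T))\cap S$ equals the paper's $S_0=\eta^\dagger\bigl(F^\dagger(A)\cap\eta(S)\bigr)\cap S$. The direction you left as a sketch, $T\subset F(\eta(S'))$, follows from two applications of Proposition~\ref{proposition:U1U2R}, as in the paper.

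Your doubt about exact equality is justified. Literal openness fails whenever some $T\prec\eta(X)$ is not contained in $\eta(X)$, for example $\mathbb{Z}\hookrightarrow\mathbb{R}$ with $T=\{1/2\}$. The paper itself only proves $\eta(S_0)\sim A$, i.e.\ openness up to $\sim$. That is all that is used later: for the openness of $\AS([\eta^{\mathcal{P}}])$, and for essential surjectivity in Theorem~\ref{theorem:etaPcspimono}.
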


\begin{proof}
We first show that
$\eta^{\mathcal{P}}$
preserves
$\prec$.
Fix
$S_1,S_2\in\mathcal{P}(X)$
with
$S_1\prec S_2$.
We prove that
$\eta(S_1)\prec\eta(S_2)$.
Choose
$E\in\mathcal E_X$
such that
$S_1\subset E(S_2)$.
Since
$\eta$
is controlled,
$F:=\Ad_\eta(E)\in\mathcal E_Y$.
Moreover,
$\eta$
is total, and hence
$\eta^\dagger\circ\eta\supset1_X^{\Rel}$
by Proposition \ref{proposition:total_condition}.
Therefore
\begin{align*}
F(\eta(S_2))
    &= (\eta\circ E\circ\eta^\dagger\circ\eta)(S_2) \\
    &\supset \eta(E(S_2)) \\
    &\supset \eta(S_1).
\end{align*}
Hence
$\eta(S_1)\prec\eta(S_2)$.

Next we show that
$\eta^{\mathcal{P}}$
preserves finite joins.
Let
$\mathcal{S}=\{S_\lambda\}_{\lambda\in\Lambda}$
be a finite family of subsets of $X$.
Since
$\bigvee\mathcal{S}=[\bigcup\mathcal{S}]$
by Proposition \ref{proposition:PcXfiniteJoin},
it suffices to show that
\[
\eta\Bigl(\bigcup_{\lambda}S_\lambda\Bigr)
\sim
\bigcup_{\lambda}\eta(S_\lambda).
\]
In fact,
$\eta^{\mathcal{P}}$
preserves arbitrary unions, and therefore
\[
\eta\Bigl(\bigcup_{\lambda}S_\lambda\Bigr)
=
\bigcup_{\lambda}\eta(S_\lambda).
\]
Thus
$\eta^{\mathcal{P}}$
preserves finite joins.

We next prove that $\eta^\mathcal{P}$ is open. Let $\mathcal{L}$ be a lower set of $(\mathcal{P}(X),\prec)$. It suffices to show that $\eta^{\mathcal{P}}(\mathcal{L})$ is a lower set of $(\mathcal{P}(Y),\prec)$. Let $S \in \mathcal{L}$ and $A \subset Y$ satisfy $A \prec \eta(S)$. We seek an $S_0 \in \mathcal{L}$ such that $\eta(S_0) \sim A$. Since $A \prec \eta(S)$, there exists an $F \in \mathcal{E}_Y$ such that $A \subset F(\eta(S))$. Put $A' := F^\dagger A\cap \eta(S)$. Then $A \sim A'$. Indeed, $A' \subset F^\dagger A$, and, by Proposition \ref{proposition:U1U2R},
\[
A = A \cap F(\eta(S)) \subset F(F^\dagger A \cap \eta(S)) = F A'.
\]
Put $S_0 :=\eta^\dagger A'\cap S$. Since $S_0 \prec S$ and $\mathcal{L}$ is lower, we have $S_0 \in \mathcal{L}$. Furthermore, by Proposition \ref{proposition:U1U2R} again,
\begin{align*}
A' 
    &= A'\cap \eta(S) \quad (\text{since } A' \subset \eta(S)) \\
    &\subset \eta(\eta^\dagger A'\cap  S) \\
    &= \eta(S_0)
\end{align*}
and
\begin{align*}
\eta(S_0) = \eta(S \cap \eta^\dagger A') \subset (\eta \circ \eta^\dagger) A' = (\Ad_{\eta}(1^\Rel_{X})) A'.
\end{align*}
These inclusions prove that $\eta(S_0) \sim A' \sim A$.

Finally, fix
$\xi\in\CtrTotRel(X,Y)$
with
$\eta\sim\xi$.
We prove that
$\eta^{\mathcal{P}}\sim\xi^{\mathcal{P}}$.
Let
$S\in\mathcal{P}(X)$.
It suffices to show that
$\eta(S)\sim\xi(S)$,
that is,
$\eta(S)\prec\xi(S)$
and
$\xi(S)\prec\eta(S)$.
By Proposition \ref{proposition:close_on_totalrel},
$E:=\xi\circ\eta^\dagger
\in
\mathcal E_Y$.
Since
$\eta$
and
$\xi$
are total,
Proposition \ref{proposition:total_condition}
implies that
\[
\eta^\dagger\circ\eta\supset1_X^{\Rel},
\qquad
\xi^\dagger\circ\xi\supset1_X^{\Rel}.
\]
Therefore
\begin{align*}
E(\eta(S))
    &= (\xi\circ\eta^\dagger\circ\eta)(S) \\
    &\supset \xi(S), \\
E^\dagger(\xi(S))
    &= (\eta\circ\xi^\dagger\circ\xi)(S) \\
    &\supset \eta(S).
\end{align*}
Hence
$\eta(S)\prec\xi(S)$
and
$\xi(S)\prec\eta(S)$.
Therefore
$\eta(S)\sim\xi(S)$.
\end{proof}

The above proposition, together with the functoriality of the assignment
$\eta\mapsto\eta^{\mathcal{P}}$
established in Proposition \ref{proposition:eta^P_functorial},
immediately yields the following theorem.

\begin{theorem}\label{theorem:etaPc}
The assignment
\[
X\mapsto(\mathcal{P}(X),\prec),
\qquad
[\eta]\mapsto[\eta^{\mathcal{P}}],
\]
defines a functor
\[
\Coarse_{\CtrTotRel}/\close
\longrightarrow
\hFinJoinPreOrd.
\]
Moreover, the assignment
\[
X\mapsto(\mathcal{P}_c(X),\prec),
\qquad
[\eta]\mapsto
[\eta]^{\mathcal{P}_c}
:=
\AS([\eta^{\mathcal{P}}]),
\]
defines a functor
\[
\mathcal{P}_c : 
\Coarse_{\CtrTotRel}/\close
\longrightarrow
\FinJoinParOrd_{\open}.
\]

Here
\[
[\eta]^{\mathcal{P}_c}
=
\AS([\eta^{\mathcal{P}}]):
\mathcal{P}_c(X)
\longrightarrow
\mathcal{P}_c(Y),
\qquad
[S]\longmapsto[\eta(S)].
\]
\end{theorem}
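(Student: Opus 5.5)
The plan is to assemble Theorem \ref{theorem:etaPc} from Proposition \ref{proposition:Pc_functorial} together with the functoriality in Proposition \ref{proposition:eta^P_functorial}. The proof splits into two parts: first the functor into $\hFinJoinPreOrd$, then its composite with $\AS$.

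For the first assertion, take a morphism $[\eta]\in(\Coarse_{\CtrTotRel}/\close)(X,Y)$ and choose a representative $\eta\in\CtrTotRel(X,Y)$.
\begin{enumerate}
\item By Proposition \ref{proposition:Pc_functorial}(1), $\eta^{\mathcal{P}}$ is a morphism of $\FinJoinPreOrd$ from $(\mathcal{P}(X),\prec)$ to $(\mathcal{P}(Y),\prec)$. By Proposition \ref{proposition:PcXfiniteJoin}, these are objects of $\FinJoinPreOrd$.
\item By Proposition \ref{proposition:Pc_functorial}(2), the class $[\eta^{\mathcal{P}}]\in\hFinJoinPreOrd$ does not depend on the representative.
\item Functoriality descends from $\Set_{\Rel}$. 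Proposition \ref{proposition:eta^P_functorial} gives $(\xi\circ\eta)^{\mathcal{P}}=\xi^{\mathcal{P}}\circ\eta^{\mathcal{P}}$ and $(1_X^{\Rel})^{\mathcal{P}}=\id$ on the nose. These identities remain true after passing to classes.
\end{enumerate}
The only point needing care is well-definedness, and Proposition \ref{proposition:Pc_functorial}(2) supplies it directly.

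For the second assertion, apply the functor $\AS:\hPreOrd\to\ParOrd$. It restricts to $\hFinJoinPreOrd\to\FinJoinParOrd$, as recorded in Section \ref{subsection:pre-order_category}. Composing gives a functor $X\mapsto\AS(\mathcal{P}(X),\prec)=\mathcal{P}_c(X)$, and the formula $[S]\mapsto[\eta(S)]$ is just the definition of $\AS([f])$.

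It remains to land in the wide subcategory $\FinJoinParOrd_{\open}$, i.e.\ to check that each $\AS([\eta^{\mathcal{P}}])$ is open. Openness is not invariant under $\sim$ of maps, so the argument has to use a representative. We take any representative $\eta$: $\eta^{\mathcal{P}}$ is open by Proposition \ref{proposition:Pc_functorial}(1), and the earlier proposition (openness of $f$ implies openness of $\AS([f])$) then shows $\AS([\eta^{\mathcal{P}}])$ is open.

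This map depends only on $[\eta^{\mathcal{P}}]$, so the conclusion holds for the morphism itself. Identities are open, and composites of open maps are open, so $\FinJoinParOrd_{\open}$ is indeed a subcategory and the functor factors through it. I expect this subtlety — openness not being invariant under $\sim$ — to be the only real obstacle, and the choice of representative above resolves it.
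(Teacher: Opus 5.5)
Your route is the paper's. There the theorem is obtained in one line from Proposition~\ref{proposition:Pc_functorial} and Proposition~\ref{proposition:eta^P_functorial}, and your checks unpack exactly that line: objects via Proposition~\ref{proposition:PcXfiniteJoin}, independence of the representative, strict functoriality inherited from $\Set_{\Rel}$, then post-composition with $\AS$. The one step that fails as written is openness, and the obstacle there is not the one you named.

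The map $\eta^{\mathcal{P}}$ is in general \emph{not} open in the strict sense of Definition~\ref{definition:open}, whatever representative you choose. Take $X=\{\ast\}$ and $Y=(\R,\mathcal{E}^d_{\R})$. For every $\eta\in\CtrTotRel(X,Y)$, the image of the lower set $\mathcal{P}(X)$ is $\{\emptyset,\eta(\{\ast\})\}$. This is not lower in $(\mathcal{P}(Y),\prec)$: lower sets are $\sim$-saturated, $\eta(\{\ast\})$ is nonempty and bounded, and all nonempty bounded subsets of $\R$ are mutually $\sim$-equivalent. Concretely, for $\eta=\{(0,\ast)\}$ one has $\{0,1\}\prec\{0\}$, yet $\{0,1\}$ is not in the image. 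So the chain ``$\eta^{\mathcal{P}}$ is open, hence $\AS([\eta^{\mathcal{P}}])$ is open'' cannot be used. The word ``open'' in Proposition~\ref{proposition:Pc_functorial}(1) is too strong, and the paper's one-line derivation relies on it in the same way. This is a defect you inherited rather than introduced.

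What the proof of Proposition~\ref{proposition:Pc_functorial} actually establishes is openness up to $\sim$. If $\mathcal{L}\subset\mathcal{P}(X)$ is lower, $S\in\mathcal{L}$ and $A\prec\eta(S)$, then $A\sim\eta(S_0)$ for some $S_0\in\mathcal{L}$. Apply this directly on the anti-symmetrizations:
\begin{enumerate}
\item Let $\mathcal{L}'\subset\mathcal{P}_c(X)$ be lower; then its preimage $\mathcal{L}\subset\mathcal{P}(X)$ is lower.
\item Whenever $[A]\prec[\eta(S)]$ with $[S]\in\mathcal{L}'$, you get $[A]=[\eta(S_0)]$ with $[S_0]\in\mathcal{L}'$.
\item Hence $[\eta]^{\mathcal{P}_c}(\mathcal{L}')$ is lower.
\end{enumerate}
This weaker property depends on $\eta^{\mathcal{P}}$ only up to $\sim$, so no choice of representative is involved. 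Replace your openness paragraph with this, and the rest of your argument goes through unchanged.
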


In particular, we obtain the following corollary.

\begin{corollary}\label{corollary:CE_Pcbij}
Let $\eta\in\CtrTotRel(X,Y)$ be a coarse equivalence. Then the map
\[
[\eta]^{\mathcal{P}_c}:
\mathcal{P}_c(X)
\longrightarrow
\mathcal{P}_c(Y),
\qquad
[S]\longmapsto[\eta(S)]
\]
is a finite-join-preserving bijection (and hence an open map).
\end{corollary}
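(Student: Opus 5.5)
The plan is to obtain the corollary directly from the functoriality established in Theorem \ref{theorem:etaPc}. Since $[\eta]$ is an isomorphism in $\Coarse_{\CtrTotRel}/\close$, choose an inverse $[\zeta]$. By Theorem \ref{theorem:CtrTotRelClo_isom} we may take $\zeta = \xi^\dagger$ for some $\xi \in [\eta]$ with $\xi^\dagger \in \CtrTotRel(Y,X)$. Any controlled total representative of the inverse class would also work.

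Because $\mathcal{P}_c$ is a functor $\Coarse_{\CtrTotRel}/\close \to \FinJoinParOrd_{\open}$, we get
\[
[\zeta]^{\mathcal{P}_c}\circ[\eta]^{\mathcal{P}_c}=[1_X^{\Rel}]^{\mathcal{P}_c}=\id_{\mathcal{P}_c(X)},
\qquad
[\eta]^{\mathcal{P}_c}\circ[\zeta]^{\mathcal{P}_c}=\id_{\mathcal{P}_c(Y)}.
\]
Here we use that $1_X^{\Rel}(S)=S$, so the identity class is sent to the identity map. Hence $[\eta]^{\mathcal{P}_c}$ is a bijection with two-sided inverse $[\zeta]^{\mathcal{P}_c}$. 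As a morphism of $\FinJoinParOrd_{\open}$ it already preserves finite joins and is open, so the parenthetical claim follows at once. The well-definedness of $[S]\mapsto[\eta(S)]$ is part of Theorem \ref{theorem:etaPc}, which relies on Proposition \ref{proposition:Pc_functorial}.

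The openness could also be derived independently of Proposition \ref{proposition:Pc_functorial}. The inverse $[\zeta]^{\mathcal{P}_c}$ is order-preserving, so $[\eta]^{\mathcal{P}_c}$ is an order isomorphism of posets, and an order isomorphism maps lower sets onto lower sets. Likewise, Proposition \ref{proposition:hpreisom_FJ} gives finite-join preservation from the fact that it is an isomorphism in $\hPreOrd$.

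The only point needing care is that the functor identity on compositions holds for the chosen representatives $\zeta\circ\eta$. This is guaranteed because Theorem \ref{theorem:CtrTot_cong} makes composition of classes well defined, and Proposition \ref{proposition:Pc_functorial}(2) makes $[\cdot]^{\mathcal{P}_c}$ independent of the representative. I expect no real obstacle beyond this bookkeeping.
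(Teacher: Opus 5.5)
Your proposal is correct and is essentially the paper's argument: the paper states this corollary as an immediate consequence of Theorem \ref{theorem:etaPc}. Since $\mathcal{P}_c$ is a functor into $\FinJoinParOrd_{\open}$, it sends the isomorphism $[\eta]$ to an isomorphism there, namely a finite-join-preserving open bijection. Your bookkeeping about representatives and your independent derivation of openness are valid but not needed.
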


The following refinement of this corollary also holds:

\begin{theorem}\label{theorem:etaPcspimono}
Let $\eta\in\CtrTotRel(X,Y)$. Suppose that $[\eta]$ is a mono-morphism [resp.~an epi-morphism] in $\Coarse_{\CtrTotRel}/{\close}$. Then, regarded as a functor between thin categories, $\eta^{\mathcal{P}} : (\mathcal{P}(X),\prec) \rightarrow (\mathcal{P}(Y),\prec)$ is fully faithful [resp.~essentially surjective]. In particular, the map $[\eta]^{\mathcal{P}_c} : \mathcal{P}_c(X) \rightarrow \mathcal{P}_c(Y)$ is injective [resp.~surjective].
\end{theorem}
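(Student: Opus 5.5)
We treat the two cases separately. Since $\eta^{\mathcal{P}}$ is already order-preserving by Proposition \ref{proposition:Pc_functorial}, \emph{fully faithful} means: $\eta(S_1)\prec\eta(S_2)$ implies $S_1\prec S_2$. \emph{Essentially surjective} means: every $B\subset Y$ satisfies $B\sim\eta(S)$ for some $S\subset X$. Once these are established, the statements about $[\eta]^{\mathcal{P}_c}$ follow formally from Proposition \ref{proposition:preord_hisom}-style reasoning after anti-symmetrization. Indeed, if $[\eta(S_1)]=[\eta(S_2)]$, then full faithfulness gives $S_1\sim S_2$, which is injectivity. Essential surjectivity gives surjectivity directly.

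\textbf{Mono case.} By Theorem \ref{thm:mono-characterization}, $\eta^\dagger$ is controlled. Suppose $\eta(S_1)\subset F(\eta(S_2))$ with $F\in\mathcal{E}_Y$. Since $\eta$ is total, Proposition \ref{proposition:SDometatotal} gives
\[
S_1\subset(\eta^\dagger\circ\eta)(S_1)\subset(\eta^\dagger\circ F\circ\eta)(S_2)=\Ad_{\eta^\dagger}(F)(S_2).
\]
Because $\eta^\dagger$ is controlled, $\Ad_{\eta^\dagger}(F)\in\mathcal{E}_X$, and hence $S_1\prec S_2$. This case is direct.

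\textbf{Epi case.} By Theorem \ref{thm:epi-characterization}, there is $F\in\mathcal{E}_Y$ with $F(\eta(X))=Y$. Replacing $F$ by $F\cup1_Y^{\Rel}$ if needed, we may use $F^\dagger$ as well. Given $B\subset Y$, the natural candidate is $S:=\eta^\dagger(F^\dagger(B))$, the points of $X$ whose $\eta$-images are $F$-close to $B$. We must check two inclusions.
\begin{itemize}
\item[(i)] $\eta(S)\subset(\eta\circ\eta^\dagger\circ F^\dagger)(B)=(\Ad_\eta(1_X^{\Rel})\circ F^\dagger)(B)$. Since $\Ad_\eta(1_X^{\Rel})\in\mathcal{E}_Y$ by controlledness, this gives $\eta(S)\prec B$.
\item[(ii)] For $b\in B$, write $b\in F(\eta(X))$, so $(b,y)\in F$ for some $y=\eta$-image of some $x$. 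Then $y\in F^\dagger(B)$, so $x\in S$ and $y\in\eta(S)$. Hence $B\subset F(\eta(S))$, which gives $B\prec\eta(S)$.
\end{itemize}
Together these yield $B\sim\eta(S)$.

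The only mildly delicate point is the bookkeeping in (ii), namely choosing $S$ via $F^\dagger$ rather than $F$ so that the witness $x$ lands in $S$. The rest is routine.
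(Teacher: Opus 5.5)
Your proof is correct and matches the paper's: the mono case is the same computation via $\Ad_{\eta^\dagger}(F)$. In the epi case the paper cites openness of $\eta^{\mathcal{P}}$ (Proposition \ref{proposition:Pc_functorial}) applied to the lower set $\mathcal{P}(X)$ together with $B\prec\eta(X)$, and that openness proof with $S=X$ produces exactly your set $\eta^\dagger(F^\dagger(B))$, so you have simply inlined that step (the replacement of $F$ by $F\cup 1_Y^{\Rel}$ is unnecessary, since $F^\dagger\in\mathcal{E}_Y$ holds automatically).
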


\begin{proof}
Assume first that $[\eta]$ is a mono-morphism. We show that $\eta^\mathcal{P}$ is fully faithful. Since $(\mathcal{P}(X),\prec)$ is regarded as a thin category, faithfulness is automatic. To prove fullness, fix $S_1,S_2 \subset X$ and suppose that $\eta(S_1) \prec \eta(S_2)$. It suffices to show that $S_1 \prec S_2$. Choose $F \in \mathcal{E}_Y$ such that $\eta(S_1) \subset F(\eta(S_2))$. Since $\eta$ is total, Proposition \ref{proposition:total_condition} yields
\begin{align*}
S_1 
    &\subset (\eta^\dagger \eta)(S_1) \\
    &\subset (\eta^\dagger \circ F \circ \eta)(S_2) \\
    &= (\Ad_{\eta^\dagger}(F))(S_2).
\end{align*}
Since $[\eta]$ is a mono-morphism, $\eta^\dagger$ is controlled by Theorem \ref{thm:mono-characterization}. Hence $\Ad_{\eta^\dagger}(F) \in \mathcal{E}_X$, and therefore $S_1 \prec S_2$. Thus $\eta^{\mathcal{P}}$ is fully faithful. In particular, $\eta^{\mathcal{P}}$ preserves non-isomorphisms, and its anti-symmetrization $[\eta]^{\mathcal{P}_c}$ is injective.

Next, assume that $[\eta]$ is an epi-morphism. We show that $\eta^{\mathcal{P}}$ is essentially surjective. Let $S \subset Y$ be arbitrary. It suffices to find a subset $S_0 \subset X$ such that $\eta(S_0) \sim S$. Since $[\eta]$ is an epi-morphism, Theorem \ref{thm:epi-characterization} implies that $\eta(X) \sim Y$. In particular, $S \prec \eta(X)$. Since $\eta^{\mathcal{P}}$ is open by Proposition \ref{proposition:Pc_functorial}, applying this to the lower set $\mathcal{P}(X)$ yields a subset $S_0 \subset X$ such that $\eta(S_0) \sim S$. Thus, $\eta^{\mathcal{P}}$ is essentially surjective. In particular, its anti-symmetrization $[\eta]^{\mathcal{P}_c}$ is surjective.\end{proof}

\section{Coarse subspaces and subobjects}\label{section:subspace_subobject}

Throughout this section, 
for a morphism in
$\Coarse_{\CtrTotRel}/\close$,
we write
$[\eta] :
X \rightarrow_{\mathrm{coarse}} Y$
for
$\eta\in\CtrTotRel(X,Y)$.
Furthermore, the notation 
$[\eta] :
X \rightarrow_{\mathcal{E}} Y$ 
or
$[\eta] :
X \rightarrow_{\mathcal{M}} Y$
means that
$[\eta]\in\mathcal{E}$
or
$[\eta]\in\mathcal{M}$,
respectively.

Let $\mathcal{E}$ [resp.~$\mathcal{M}$] be the class of epi-morphisms
[resp.~mono-morphisms]  in the category of coarse spaces
$\Coarse_{\CtrTotRel}/\close$.
Since $(\mathcal{E},\mathcal{M})$ forms an orthogonal factorization system (by Theorem \ref{theorem:EM_coarse}) on the category of coarse spaces $\Coarse_{\CtrTotRel}/{\close} \cong \Coarse_{\CtrMap}/{\close}$ (see also Theorem \ref{theorem:CtrMapClo_isom_CtrTotRelClo}), Theorems \ref{theorem:fMFCC} and  \ref{theorem:Coarsecocomp} (and Proposition \ref{proposition:CoarseVsmall} below) yield a functor
\begin{align*}
\Sub :
\Coarse_{\CtrTotRel}/{\close}
    &\longrightarrow
\FinJoinParOrd, \\
X
    &\longmapsto
\AS(\mathcal{M}/X) =:\Sub(X), \\
([\eta] : X \rightarrow_{\mathrm{coarse}} Y)
    &\longmapsto 
\AS[[\eta]^{\mathcal{M}}_{\sharp}].
\end{align*}

The purpose of this section is to prove the following theorem:

\begin{theorem}\label{theorem:PcSub}
For each coarse space $X$, the finite-join partially ordered sets $\mathcal{P}_c(X)$ (see Section \ref{subsection:coarsesubset} for the notation) and $\Sub(X)$ are isomorphic. Moreover, the two functors $\mathcal{P}_c$ and $\Sub$ from $\Coarse_{\CtrTotRel}/{\close}$ to $\FinJoinParOrd$ are naturally isomorphic.
\end{theorem}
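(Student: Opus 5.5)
We construct, for each coarse space $X$, an explicit order isomorphism $\Phi_X:\Sub(X)\to\mathcal{P}_c(X)$ by taking images, $[\,[\eta]:Z\rightarrow_{\mathcal{M}}X\,]\mapsto[\eta(Z)]$, and an inverse $\Psi_X:\mathcal{P}_c(X)\to\Sub(X)$ by $[S]\mapsto[\,[\iota_S]:(S,\mathcal{E}_X|_S)\to X\,]$. Here $\mathcal{E}_X|_S:=\Ad_{\iota_S^\dagger}(\mathcal{E}_X)$ is the induced coarse structure, and $\iota_S$ is the inclusion. For $S=\emptyset$ the empty space gives the initial object.

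\textbf{Well-definedness.} $\Phi_X$ is $\mathcal{P}_c(X)$ applied to $[\eta]$ at the top element $[Z]$. By Theorem \ref{theorem:etaPc}, $[\eta(Z)]$ depends only on $[\eta]$. It is also invariant under isomorphism in $\mathcal{M}/X$: if $[\eta_2]\circ[\phi]=[\eta_1]$, then $\eta_1(Z_1)\sim\eta_2(\phi(Z_1))\prec\eta_2(Z_2)$ by the same theorem. This argument also shows that $\Phi_X$ is order preserving.

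For $\Psi_X$, note that $\iota_S$ is controlled, and $\iota_S^\dagger$ is controlled by the definition of the induced structure, so $[\iota_S]\in\mathcal{M}$ by Theorem \ref{thm:mono-characterization}. Now let $S_1\prec S_2$ with $S_1\subset E(S_2)$, and we may assume $1_X^{\Rel}\subset E$. Put $\xi:=\iota_{S_2}^\dagger\circ E^\dagger\circ\iota_{S_1}\in\Rel(S_1,S_2)$. This relation is total, and it is controlled because $E^\dagger$ is controlled (Proposition \ref{proposition:ent_cont}). Moreover $\iota_{S_2}\circ\xi\subset E^\dagger\circ\iota_{S_1}$, hence $\iota_{S_2}\circ\xi\sim\iota_{S_1}$ by Proposition \ref{proposition:close_on_totalrel}. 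So $[\iota_{S_1}]\le[\iota_{S_2}]$ in $\mathcal{M}/X$, and $\Psi_X$ is well defined and order preserving.

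\textbf{Mutual inverses.} Clearly $\Phi_X\Psi_X[S]=[S]$. The main step is $\Psi_X\Phi_X=\mathrm{id}$: for a mono $[\eta]:Z\to X$ with $T:=\eta(Z)$, we must show $[\eta]\cong[\iota_T]$ in $\mathcal{M}/X$. By Proposition \ref{proposition:iotaxi_Omega}, $\eta=\iota_T\circ\eta_T$ with $\eta_T$ total. We check:
\begin{enumerate}
\item $\eta_T$ is controlled, since $\Ad_{\iota_T^\dagger}\Ad_{\iota_T}\Ad_{\eta_T}(E)=\Ad_{\iota_T^\dagger}\Ad_{\eta}(E)$ lies in the induced structure, and $\Ad_{\iota_T^\dagger}\Ad_{\iota_T}$ is the identity on $\Rel(T)$.
\item $\eta_T^\dagger$ is total, since $\eta_T(Z)=T$.
\item $\eta_T^\dagger$ is controlled: for $F\in\mathcal{E}_X|_T$ we have $\Ad_{\eta_T^\dagger}(F)\subset\Ad_{\eta^\dagger}(\Ad_{\iota_T}F)\in\mathcal{E}_Z$, using that $\eta^\dagger$ is controlled (Theorem \ref{thm:mono-characterization}).
\end{enumerate}
By Theorem \ref{theorem:CtrTotRelClo_isom}, $[\eta_T]$ is then an isomorphism in the slice. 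This verification of the induced coarse structure is the main technical point; everything else is bookkeeping. Since $\Phi_X$ and $\Psi_X$ are mutually inverse and both order preserving, each is an order isomorphism between partial orders, and it preserves finite joins by Proposition \ref{proposition:hpreisom_FJ}.

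\textbf{Naturality.} Let $[\eta]:X\to Y$ and $[h]:Z\rightarrow_{\mathcal{M}}X$. By Theorem \ref{theorem:fMFCC}, $\Sub([\eta])[h]=[m_{\eta\circ h}]$ for an $(\mathcal{E},\mathcal{M})$-factorization $[\eta\circ h]=[m]\circ[e]$ with $e:Z\to W$. Applying $\Phi_Y$ gives $[m(W)]$. Since $[e]$ is epi, Theorem \ref{thm:epi-characterization} gives $e(Z)\sim W$, so by Theorem \ref{theorem:etaPc}
\[
m(W)\sim m(e(Z))\sim(\eta\circ h)(Z)=\eta(h(Z)).
\]
The right-hand side is $\mathcal{P}_c([\eta])(\Phi_X[h])$, which proves naturality.
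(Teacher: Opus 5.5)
Your proof is correct. Its core constructions are the paper's: the comparison relation from $S_1$ to $S_2$ built from $\iota_{S_2}^\dagger$, an entourage and $\iota_{S_1}$, and the factorization $\eta=\iota_T\circ\eta_T$ with $[\eta_T]$ an isomorphism. The paper uses the latter as the essential-surjectivity step for $S\mapsto[\iota_S]$.

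The differences are in how the argument is packaged.

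\emph{Order reflection.} The paper works only with $S\mapsto[\iota_S]$. It proves order reflection directly, deducing $S_1\subset E(S_2)$ from $\iota_{S_1}\subset E\circ\iota_{S_2}\circ\eta$, and then invokes Propositions \ref{proposition:preord_hisom} and \ref{proposition:hpreisom_FJ}. You instead write down the explicit inverse $[\eta]\mapsto[\eta(Z)]$. You then get order reflection from the functoriality of $\mathcal{P}_c$ (Theorem \ref{theorem:etaPc}).

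\emph{Naturality.} The paper constructs a specific $(\mathcal{E},\mathcal{M})$-factorization $[\eta]\circ[\iota_S]=[\iota_L]\circ[\eta_L]$, with $L=\eta(S)$ and $\eta_L=\iota_L^\dagger\circ\eta\circ\iota_S$. It checks that $\eta_L$ is controlled, and that $[\eta_L]$ is epi because $\eta_L(S)=L$. You take an arbitrary factorization and use the opposite direction of Theorem \ref{thm:epi-characterization}, namely $e(Z)\sim W$, together with Theorem \ref{theorem:etaPc}. This avoids any construction and makes the image map visibly natural. The paper's route, in exchange, identifies the mono part of the factorization concretely as the inclusion of the image.

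Two small remarks. First, you do not check that $[\xi]$ lies in $\mathcal{M}$, which is needed for it to be a morphism of $\mathcal{M}/X$. This is automatic, since $[\iota_{S_2}]\circ[\xi]=[\iota_{S_1}]$ is mono and so $[\xi]$ is mono. Second, using $E^\dagger$ in $\xi$ is exactly what totality requires from $S_1\subset E(S_2)$. The paper's $(S_2\times S_1)\cap E$ is total only when $S_1\subset E^\dagger(S_2)$, so there $E$ should be replaced by $E^\dagger$ or by $E\cup E^\dagger$.
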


Furthermore, Theorems \ref{theorem:etaPc} and \ref{theorem:etaPcspimono} yield the following:

\begin{corollary}
\begin{enumerate}
    \item The functor $\Sub : \Coarse_{\CtrTotRel}/{\close} \rightarrow \FinJoinParOrd$ may be regarded as a functor to $\FinJoinParOrd_{\open}$; that is, for every $[\eta] : X \rightarrow_{\mathrm{coarse}} Y$, the map $\AS([[\eta]^\mathcal{M}_\sharp]) : \Sub(X) \rightarrow \Sub(Y)$ is open.
    \item If $[\eta] : X \rightarrow_{\mathrm{coarse}} Y$ is a mono-morphism [respectively, an epi-morphism], then $\AS([[\eta]^\mathcal{M}_\sharp]) : \Sub(X) \rightarrow \Sub(Y)$ is injective [respectively, surjective].
\end{enumerate}
\end{corollary}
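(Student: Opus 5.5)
The corollary follows by transport of structure along the natural isomorphism of Theorem~\ref{theorem:PcSub}. Write $\Theta : \mathcal{P}_c \Rightarrow \Sub$ for that natural isomorphism, with components $\Theta_X : \mathcal{P}_c(X) \to \Sub(X)$ order isomorphisms of finite-join partially ordered sets. Naturality says that for every $[\eta] : X \rightarrow_{\mathrm{coarse}} Y$,
\[
\AS([[\eta]^{\mathcal{M}}_\sharp]) = \Theta_Y \circ [\eta]^{\mathcal{P}_c} \circ \Theta_X^{-1}.
\]
Both assertions of the corollary then reduce to the corresponding properties of $[\eta]^{\mathcal{P}_c}$, which are already established.

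For (1), I first note that an order isomorphism between partially ordered sets is open. Indeed, it and its inverse are order-preserving bijections, so the image of a lower set is lower: if $b \le \Theta(a)$ with $a \in L$, then $\Theta^{-1}(b) \le a$, so $\Theta^{-1}(b) \in L$ and $b \in \Theta(L)$. Next, a composite of open maps is open, directly from Definition~\ref{definition:open}. By Theorem~\ref{theorem:etaPc}, $[\eta]^{\mathcal{P}_c}$ is a morphism of $\FinJoinParOrd_{\open}$, hence open. Therefore the composite $\Theta_Y \circ [\eta]^{\mathcal{P}_c} \circ \Theta_X^{-1}$ is open, and it is finite-join preserving because $\Sub$ already lands in $\FinJoinParOrd$. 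So $\Sub$ factors through the wide subcategory $\FinJoinParOrd_{\open}$.

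For (2), suppose $[\eta]$ is a mono-morphism [resp.~epi-morphism]. Theorem~\ref{theorem:etaPcspimono} gives that $[\eta]^{\mathcal{P}_c}$ is injective [resp.~surjective]. Composing with the bijections $\Theta_X^{-1}$ and $\Theta_Y$ preserves injectivity and surjectivity, so $\AS([[\eta]^{\mathcal{M}}_\sharp])$ is injective [resp.~surjective].

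The only point needing care is the direction and meaning of naturality. The natural isomorphism in Theorem~\ref{theorem:PcSub} is between functors into $\FinJoinParOrd$, where morphisms are honest maps; $\AS$ has already removed the $\sim$ ambiguity. So the displayed equation is an equality of maps, not merely an equality up to $\sim$. This matters because openness is not invariant under $\sim$, but no such issue arises here. Everything else is a formal diagram chase.
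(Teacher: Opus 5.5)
Your proposal is correct and follows essentially the same route as the paper, which deduces this corollary from Theorems~\ref{theorem:etaPc} and~\ref{theorem:etaPcspimono} by transport along the natural isomorphism $\Phi : \mathcal{P}_c \Rightarrow \Sub$ of Theorem~\ref{theorem:PcSub}. You just make explicit the routine steps the paper leaves implicit: order isomorphisms are open, composites of open maps are open, and conjugating by bijections preserves injectivity and surjectivity.
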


Let us start to prove Theorem \ref{theorem:PcSub}.

We first establish the following proposition.

\begin{proposition}\label{proposition:CoarseVsmall}
As declared at the beginning of Sections \ref{section:prelim_coarsespaces} and
\ref{section:realizationCtrTotRel}, 
every set $X$ is assumed to satisfy $X\in\mathcal{U}$ (see Setting \ref{setting:UVC}). 
Under this convention,
the category $\Coarse_{\CtrTotRel}/{\close}$ is $\mathcal{V}$-small.
In particular, the results of Section \ref{subsection:Mslice} can be applied to
$\mathcal{C}=\Coarse_{\CtrTotRel}/{\close}$.
\end{proposition}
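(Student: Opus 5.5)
We need to show $\Ob$ and $\Mor$ of $\Coarse_{\CtrTotRel}/\close$ belong to $\mathcal{V}$. The plan is to bound everything inside a fixed set built from $\mathcal{U}$, using that $\mathcal{U}\in\mathcal{V}$ and that $\mathcal{V}$ is closed under power sets, products, unions and subsets.

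For objects: a coarse space is a pair $(X,\mathcal{E})$ with $X\in\mathcal{U}$ and $\mathcal{E}\subset\mathcal{P}(X\times X)$. Since $\mathcal{U}$ is a universe, $\mathcal{E}\in\mathcal{P}(\mathcal{P}(X\times X))\in\mathcal{U}$, and hence $(X,\mathcal{E})\in\mathcal{U}$. Thus $\Ob(\Coarse_{\CtrTotRel}/\close)\subset\mathcal{U}$. Because $\mathcal{U}\in\mathcal{V}$ and $\mathcal{V}$ is transitive and closed under subsets (via the power set), we get $\Ob\in\mathcal{P}(\mathcal{U})\in\mathcal{V}$, so $\Ob\in\mathcal{V}$.

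For morphisms: a morphism is a triple consisting of source $X$, target $Y$, and a class $[\eta]\subset\Rel(X,Y)=\mathcal{P}(Y\times X)$. Then $[\eta]\in\mathcal{P}(\mathcal{P}(Y\times X))\in\mathcal{U}$, and the triple $(X,Y,[\eta])$ lies in $\mathcal{U}$. This gives $\Mor\subset\mathcal{U}$, so $\Mor\in\mathcal{V}$ by the same argument. The same computation also shows that each hom-set is a subset of $\mathcal{P}(\mathcal{P}(Y\times X))$ and thus lies in $\mathcal{U}$.

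The only delicate point is this bookkeeping about the encoding of pairs and triples: one has to check that Kuratowski tuples of elements of $\mathcal{U}$ remain in $\mathcal{U}$, which holds by the universe axioms. Given that, the final sentence of the statement follows. Section~\ref{subsection:Mslice} only requires $\mathcal{C}$ to be $\mathcal{V}$-small, while finite cocompleteness and the orthogonal factorization system are supplied by Theorems~\ref{theorem:Coarsecocomp} and~\ref{theorem:EM_coarse}, transported along Theorem~\ref{theorem:CtrMapClo_isom_CtrTotRelClo}.
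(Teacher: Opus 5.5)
Your proposal is correct and is essentially the paper's proof: both show $\Ob(\mathcal{C})\subset\mathcal{U}$ and $\Mor(\mathcal{C})\subset\mathcal{U}$ by placing $\mathcal{E}$ and $[\eta]$ inside power sets built from $X\times X$ and $Y\times X$, using that tuples of elements of $\mathcal{U}$ stay in $\mathcal{U}$, and then concluding with $\mathcal{P}(\mathcal{U})\in\mathcal{V}$. Your remark on the ``in particular'' clause also matches how the paper uses the proposition at the start of Section~\ref{section:subspace_subobject}: finite cocompleteness and the orthogonal factorization system come from Theorems~\ref{theorem:Coarsecocomp} and~\ref{theorem:EM_coarse}, transported via Theorem~\ref{theorem:CtrMapClo_isom_CtrTotRelClo}.
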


\begin{proof}
Let $\mathcal{C}:=\Coarse_{\CtrTotRel}/{\close}$.
By Setting \ref{setting:UVC}, we have $\mathcal{U}\in\mathcal{V}$, and hence
$\mathcal{P}(\mathcal{U})\in\mathcal{V}$.
We shall show that $\Ob(\mathcal{C})\subset\mathcal{U}$ and
$\Mor(\mathcal{C})\subset\mathcal{U}$.
It then follows that
$\Ob(\mathcal{C}),\Mor(\mathcal{C})\in\mathcal{P}(\mathcal{U})\in\mathcal{V}$,
and therefore $\mathcal{C}$ is $\mathcal{V}$-small.

Let $(X,\mathcal{E})$ be a coarse space.
Then $X\in\mathcal{U}$.
Since $\mathcal{U}$ is a universe, we have
$X\times X\in\mathcal{U}$ and
$\mathcal{P}(X\times X)\in\mathcal{U}$.
Since $\mathcal{E}\subset\mathcal{P}(X\times X)$, it follows that
$\mathcal{E}\in\mathcal{U}$.
Therefore $(X,\mathcal{E})\in\mathcal{U}$.
This proves that $\Ob(\mathcal{C})\subset\mathcal{U}$.

Next let
$X=(X,\mathcal{E}_X)$ and
$Y=(Y,\mathcal{E}_Y)$
be coarse spaces.
Then $X,Y\in\mathcal{U}$, and hence
$Y\times X\in\mathcal{U}$ and
$\mathcal{P}(Y\times X)\in\mathcal{U}$.
Since
$\CtrTotRel(X,Y)\subset\mathcal{P}(Y\times X)$,
we obtain
$\CtrTotRel(X,Y)\in\mathcal{U}$.
Let $\eta\in\CtrTotRel(X,Y)$.
Since $[\eta]\subset\CtrTotRel(X,Y)$, it follows that
$[\eta]\in\mathcal{U}$.
Since $X,Y,[\eta]\in\mathcal{U}$ and $\mathcal{U}$ is closed under finite tuples,
the morphism represented by $[\eta]$, regarded as the ordered triple
$(X,Y,[\eta])$, belongs to $\mathcal{U}$.
This proves that $\Mor(\mathcal{C})\subset\mathcal{U}$.
\end{proof}

In what follows, for a subset $S$ of a coarse space
$X=(X,\mathcal{E}_X)$,
let
$\iota_S:S\rightarrow X$
denote the inclusion map.
We always equip $S$ with the induced coarse structure
\begin{align*}
\mathcal{E}_S
    &:=
\Ad_{\iota_S}^{-1}(\mathcal{E}_X) \\
    &=
\{\,E\in\mathcal{E}_X \mid E\subset S\times S\,\} \\
    &=
\{\,E\cap(S\times S)\in\Rel(S)
\mid
E\in\mathcal{E}_X\subset\Rel(X)
=
\mathcal{P}(X\times X)\,\},
\end{align*}
(cf.~Proposition \ref{proposition:invCstr}).

\begin{proposition}\label{proposition:iota_mono}
Under the above setting,
$\iota_S$ is controlled, and
$[\iota_S]$
is a mono-morphism in the category of coarse spaces.
\end{proposition}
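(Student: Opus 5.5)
The statement has two parts: that $\iota_S$ is controlled, and that $[\iota_S]$ is a mono-morphism. I plan to derive both from results already established, namely Proposition \ref{proposition:invCstr}, Proposition \ref{proposition:controlled_rel_map} and Theorem \ref{thm:mono-characterization}, without redoing the Dikranjan--Zava argument.

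\textbf{Controlledness.} The inclusion $\iota_S$ is a map, so it is total, and $\Ad_{\iota_S}(1_S^{\Rel}) = \{(s,s)\mid s\in S\}\subset 1_X^{\Rel}$, which lies in $\mathcal{E}_X$. Hence Proposition \ref{proposition:invCstr} applies with $\eta=\iota_S$. It shows that $\mathcal{E}_S=\Ad_{\iota_S}^{-1}(\mathcal{E}_X)$ is a coarse structure, and that $\iota_S$ is controlled from $(S,\mathcal{E}_S)$; indeed $\mathcal{E}_S$ is the largest such structure. By Proposition \ref{proposition:controlled_rel_map}, this is the same as saying $\iota_S$ is a controlled map in the sense of Definition \ref{definition:ctrmap}.

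\textbf{Monomorphism.} By Theorem \ref{thm:mono-characterization}, applied in $\Coarse_{\CtrTotRel}/\close$ and transported via Theorem \ref{theorem:CtrMapClo_isom_CtrTotRelClo}, it suffices to show that the relation $\iota_S^\dagger\in\Rel(X,S)$ is controlled. That is, for every $E\in\mathcal{E}_X$ we need
\[
\Ad_{\iota_S^\dagger}(E)=\iota_S^\dagger\circ E\circ\iota_S\in\mathcal{E}_S .
\]
From the explicit formula for $\Ad_{f^\dagger}$, this relation equals $\{(s_1,s_2)\in S\times S \mid (s_1,s_2)\in E\}=E\cap(S\times S)$, which is in $\mathcal{E}_S$ by the third description of $\mathcal{E}_S$. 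Alternatively, one can check directly that $\Ad_{\iota_S}(E\cap(S\times S))\subset E\in\mathcal{E}_X$.

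The only point needing care is matching the relation conventions: elements of $\Rel(X,Y)$ are subsets of $Y\times X$, and $\iota_S^\dagger$ is a relation that is not a map. This bookkeeping is routine.
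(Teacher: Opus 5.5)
Your proposal is correct and follows the paper's proof: both reduce the mono-morphism claim, via Theorem \ref{thm:mono-characterization}, to showing that $\iota_S^\dagger$ is controlled. Your ``alternative'' check $\Ad_{\iota_S}(\Ad_{\iota_S^\dagger}(E))\subset E$, which follows from $\iota_S\circ\iota_S^\dagger\subset 1_X^{\Rel}$, is exactly the paper's argument, and your main computation $\Ad_{\iota_S^\dagger}(E)=E\cap(S\times S)\in\mathcal{E}_S$ is an equally valid direct version; for controlledness of $\iota_S$ the paper simply calls it immediate from $\mathcal{E}_S=\Ad_{\iota_S}^{-1}(\mathcal{E}_X)$, which your appeal to Proposition \ref{proposition:invCstr} also yields.
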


\begin{proof}
It is clear from the definition that
$\iota_S$
is total and controlled.
By Theorem \ref{thm:mono-characterization},
it suffices to show that
$\iota_S^\dagger$
is controlled.
Since
\[
\iota_S\circ\iota_S^\dagger
=
\{(s,s)\in X\times X\mid s\in S\}
\subset
1_X^{\Rel},
\]
for every
$E\in\mathcal{E}_X$
we have
\begin{align*}
\Ad_{\iota_S}(\Ad_{\iota_S^\dagger}(E))
    &=
\Ad_{\iota_S\circ\iota_S^\dagger}(E) \\
    &\subset
\Ad_{1_X^{\Rel}}(E) \\
    &=
E
\in
\mathcal{E}_X.
\end{align*}
Hence
$\Ad_{\iota_S^\dagger}(E)
\in
\Ad_{\iota_S}^{-1}(\mathcal{E}_X)
=
\mathcal{E}_S$.
Therefore
$\iota_S^\dagger$
is controlled.
\end{proof}

We prove the following proposition.

\begin{proposition}\label{proposition:Phi_tilde_isom}
We regard the finitely cocomplete thin category $\mathcal{M}/X = \Mono/X$ as a finite-join pre-ordered set in the sense of Section \ref{subsection:pre-order_category}.
Define a map from $(\mathcal{P}(X),\prec)$ to $\mathcal{M}/X$ by
\[
\widetilde{\Phi_X}:\mathcal{P}(X)\rightarrow\mathcal{M}/X,\quad
S\mapsto[\iota_S].
\]
Then $\widetilde{\Phi_X}$ preserves the pre-order and finite joins.
In particular, $\widetilde{\Phi_X}$ is an equivalence between finite join pre-ordered sets $\mathcal{P}(X)$ and $\mathcal{M}/X = \Mono/X$.
\end{proposition}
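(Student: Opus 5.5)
The plan is to apply Proposition~\ref{proposition:preord_hisom}: show that $\widetilde{\Phi_X}$ is order-preserving, reflects the order, and is essentially surjective. Preservation of finite joins then follows from a variant of Proposition~\ref{proposition:hpreisom_FJ}: an equivalence of pre-ordered sets preserves all existing joins, because joins are defined purely in terms of $\prec$ and are preserved by anything that preserves and reflects $\prec$ and is essentially surjective.

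\emph{Order preservation and reflection.} Take $S_1,S_2\subset X$. By definition, $[\iota_{S_1}]\prec[\iota_{S_2}]$ in $\mathcal{M}/X$ means there is a morphism $[\phi]:S_1\to S_2$ with $[\iota_{S_2}\circ\phi]=[\iota_{S_1}]$.

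Suppose first that $S_1\prec S_2$, and choose $E\in\mathcal{E}_X$ with $S_1\subset E(S_2)$. Put
\[
\phi:=\iota_{S_2}^\dagger\circ E^\dagger\circ\iota_{S_1}\in\Rel(S_1,S_2).
\]
I expect $\phi$ to be total precisely because $S_1\subset E(S_2)$. It is controlled, since $\Ad_{\iota_{S_2}}\Ad_\phi(F)\subset E^\dagger\circ F\circ E\in\mathcal{E}_X$ for $F\in\mathcal{E}_{S_1}$. Moreover $\iota_{S_2}\circ\phi\subset E^\dagger\circ\iota_{S_1}$, so $\iota_{S_2}\circ\phi\prec\iota_{S_1}$. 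Proposition~\ref{proposition:close_on_totalrel} then gives $[\iota_{S_2}\circ\phi]=[\iota_{S_1}]$.

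Conversely, suppose $[\iota_{S_2}\circ\phi]=[\iota_{S_1}]$ for some controlled total $\phi$. Proposition~\ref{proposition:close_on_totalrel} yields $F\in\mathcal{E}_X$ with $\iota_{S_1}\subset F\circ\iota_{S_2}\circ\phi$. Evaluating on $S_1$ gives $S_1\subset F(\phi(S_1))\subset F(S_2)$, hence $S_1\prec S_2$.

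\emph{Essential surjectivity.} This is the main step. Let $[\eta]:Z\to_{\mathcal{M}}X$ be a mono-morphism, and put $S:=\eta(Z)$. I will show $[\eta]\sim[\iota_S]$ in $\mathcal{M}/X$.
\begin{itemize}
\item By Proposition~\ref{proposition:iotaxi_Omega}, $\eta=\iota_S\circ\eta_S$ with $\eta_S$ total. It is controlled because $\mathcal{E}_S$ is the induced structure. This gives a morphism $\eta\to\iota_S$ in the slice.
\item For the reverse direction, take $\psi:=\eta_S^\dagger\in\Rel(S,Z)$. It is total because $S=\eta(Z)$.
\item Controlledness of $\psi$: for $F\in\mathcal{E}_S$ we have $\Ad_{\eta_S^\dagger}(F)\subset\Ad_{\eta^\dagger}(F)$, viewing $F\in\mathcal{E}_X$, and this lies in $\mathcal{E}_Z$ since $\eta^\dagger$ is controlled by Theorem~\ref{thm:mono-characterization}.
\item Finally, $\eta\circ\psi=\iota_S\circ\eta_S\circ\eta_S^\dagger\supset\iota_S$ by totality of $\eta_S^\dagger$. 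Proposition~\ref{proposition:close_on_totalrel} then gives $[\eta\circ\psi]=[\iota_S]$.
\end{itemize}
Hence $\widetilde{\Phi_X}(S)\sim[\eta]$.

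The main care needed is bookkeeping: the morphisms of $\mathcal{M}/X$ are taken in $\mathcal{M}$, but any morphism between monos over $X$ is automatically mono, so working in $\mathcal{C}/X$ is harmless. Combining the three properties, Proposition~\ref{proposition:preord_hisom} gives the equivalence, and join preservation follows as explained above.
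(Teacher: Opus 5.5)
Your proof is correct and follows the paper's route: you check order preservation, order reflection and essential surjectivity to invoke Proposition~\ref{proposition:preord_hisom}, obtain join preservation as in Proposition~\ref{proposition:hpreisom_FJ}, and use the same witnesses $S:=\eta(Z)$ and the factorization $\eta=\iota_S\circ\eta_S$. The differences are minor. Your $\phi=\iota_{S_2}^\dagger\circ E^\dagger\circ\iota_{S_1}$ is exactly what totality requires, whereas the paper's $(S_2\times S_1)\cap E$ is total only when $S_1\subset E^\dagger(S_2)$, e.g.\ for symmetric $E$. You also exhibit the reverse morphism $\eta_S^\dagger$ explicitly where the paper cites Theorem~\ref{theorem:CtrTotRelClo_isom}.
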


\begin{proof}
By Proposition \ref{proposition:hpreisom_FJ}, 
it is enough to show that $\widetilde{\Phi_X}$ preserves the pre-order and that $[\widetilde{\Phi_X}]$ is an isomorphism in $\hPreOrd$.

We first prove that $\widetilde{\Phi_X}$ preserves the pre-order. Fix $S_1,S_2\in\mathcal{P}(X)$ with $S_1\prec S_2$. We show that $[\iota_{S_1}]\prec[\iota_{S_2}]$. By the condition $S_1\prec S_2$, there exists $E\in\mathcal{E}_X$ such that $S_1\subset E(S_2)$. Define a relation $\eta\in\Rel(S_1,S_2)$ by
\[
\eta:=\iota_{S_2}^{\dagger}\circ E\circ\iota_{S_1}
=
(S_2\times S_1)\cap E.
\]
By the choice of $E$, the relation $\eta$ is total. Moreover, by Theorem \ref{thm:mono-characterization}, the relations $\iota_{S_i}$ and $\iota_{S_i}^{\dagger}$ are controlled for $i=1,2$, and since $E,E^{\dagger}\in\mathcal{E}_X\subset\CtrRel(X)$, both $\eta$ and $\eta^{\dagger}$ are controlled. Thus $\eta\in\CtrTotRel(S_1,S_2)$ and $[\eta]\in\mathcal{M}$. Furthermore,
\begin{align*}
\iota_{S_2}\circ\eta
    &= (\iota_{S_2}\circ\iota_{S_2}^{\dagger}\circ E)\circ\iota_{S_1} \\
    &= (\Ad_{\iota_{S_2}}(1_{S_2}^{\Rel})\circ E)\circ\iota_{S_1}.
\end{align*}
Since $\Ad_{\iota_{S_2}}(1_{S_2}^{\Rel})\circ E\in\mathcal{E}_X$, Proposition \ref{proposition:close_on_totalrel} implies that $[\iota_{S_2}]\circ[\eta]=[\iota_{S_1}]$. Hence $[\eta]$ gives a morphism from $[\iota_{S_1}]$ to $[\iota_{S_2}]$ in $\mathcal{M}/X$, and therefore $[\iota_{S_1}]\prec[\iota_{S_2}]$.

Next we prove that $[\widetilde{\Phi_X}]$ is an isomorphism in $\hPreOrd$. By Proposition \ref{proposition:preord_hisom}, it is enough to verify the following two conditions:
\begin{enumerate}
    \item If $S_1,S_2\in\mathcal{P}(X)$ satisfy $[\iota_{S_1}]\prec[\iota_{S_2}]$, then $S_1\prec S_2$.
    \item For every $[\xi]\in\mathcal{M}/X$, there exists $S\in\mathcal{P}(X)$ such that $[\iota_S]\sim[\xi]$.
\end{enumerate}

We first verify the first condition. Fix $S_1,S_2\in\mathcal{P}(X)$ with $[\iota_{S_1}]\prec[\iota_{S_2}]$. By the definition of the pre-order on $\mathcal{M}/X$, there exists $[\eta]:S_1\rightarrow_{\mathcal{M}}S_2$ such that $[\iota_{S_2}]\circ[\eta]=[\iota_{S_1}]$. Hence, by Proposition \ref{proposition:close_on_totalrel}, there exists $E\in\mathcal{E}_X$ such that
\[
\iota_{S_1}\subset E\circ\iota_{S_2}\circ\eta.
\]
For this $E$, we have $S_1\subset E(S_2)$. Indeed, since $\eta(S_1)\subset S_2$,
\begin{align*}
S_1
    &= \iota_{S_1}(S_1) \\
    &\subset (E\circ\iota_{S_2}\circ\eta)(S_1) \\
    &\subset (E\circ\iota_{S_2})(S_2) \\
    &= E(S_2).
\end{align*}
Thus $S_1\prec S_2$.

Finally, we verify the second condition. Fix $[\xi]:Z\rightarrow_{\mathcal{M}}X$, and put $S:=\xi(Z)$. We show that $[\iota_S]\sim[\xi]$. Define $\xi_S:=\iota_S^{\dagger}\circ\xi\in\Rel(Z,S)$. By Proposition \ref{proposition:iotaxi_Omega}, the relation $\xi_S$ is total, satisfies $\xi_S(Z)=\xi(Z)=S$, and satisfies $\iota_S\circ\xi_S=\xi$. Moreover, $\iota_S$, $\iota_S^{\dagger}$, $\xi$, and $\xi^{\dagger}$ are controlled by Theorem \ref{thm:mono-characterization} and Proposition \ref{proposition:iota_mono}. Hence $\xi_S=\iota_S^{\dagger}\circ\xi$ and $\xi_S^{\dagger}=\xi^{\dagger}\circ\iota_S$ are controlled. Thus $\xi_S\in\CtrTotRel(Z,S)$ and $[\xi_S]\in\mathcal{M}$. Since $\xi_S(Z)=S$, Theorem \ref{theorem:CtrTotRelClo_isom} implies that $[\xi_S]$ is an isomorphism from $Z$ to $S$ in the category of coarse spaces. Since $\iota_S\circ\xi_S=\xi$, we have $[\iota_S]\circ[\xi_S]=[\xi]$. This means that $[\iota_S]\sim[\xi]$.
\end{proof}

Recall that the maps
\[
\mathcal{P}(X)\rightarrow\mathcal{P}_c(X),\quad S\mapsto [S],
\]
and
\[
\mathcal{M}/X\rightarrow\AS(\mathcal{M}/X) = \Sub(X),\quad [\eta]\mapsto \AS([\eta]),
\]
are both isomorphisms in $\hFinJoinPreOrd$, and that $\FinJoinParOrd$ is a full subcategory of $\hFinJoinPreOrd$. Combining this with the proposition above, we immediately obtain the following corollary.

\begin{corollary}\label{corollary:PhiXisisom}
The map
\[
\Phi_X:\mathcal{P}_c(X)\rightarrow \Sub(X),\quad [S]\mapsto\AS([\iota_S])
\]
is an isomorphism in $\FinJoinParOrd$.
\end{corollary}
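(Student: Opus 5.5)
The corollary is a formal consequence of Proposition \ref{proposition:Phi_tilde_isom}, so the plan is to write $\Phi_X$ as a composite of isomorphisms in $\hFinJoinPreOrd$. Let $\pi:\mathcal{P}(X)\to\mathcal{P}_c(X)$, $S\mapsto[S]$, and $\rho:\mathcal{M}/X\to\Sub(X)$ be the anti-symmetrization quotient maps. I will first check that $\Phi_X$ is well defined. If $S_1\sim S_2$ in $\mathcal{P}(X)$, then $\widetilde{\Phi_X}$ preserves the pre-order, so $[\iota_{S_1}]\sim[\iota_{S_2}]$ in $\mathcal{M}/X$. Hence $\AS([\iota_{S_1}])=\AS([\iota_{S_2}])$. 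By construction this gives $\Phi_X\circ\pi=\rho\circ\widetilde{\Phi_X}$. In other words, $\Phi_X=\AS([\widetilde{\Phi_X}])$ under the functor $\AS$ of Section \ref{subsection:pre-order_category}.

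Next I will show that $[\Phi_X]$ is an isomorphism in $\hPreOrd$. By Proposition \ref{proposition:Phi_tilde_isom}, $[\widetilde{\Phi_X}]$ is an isomorphism in $\hPreOrd$. The classes $[\pi]$ and $[\rho]$ are isomorphisms in $\hPreOrd$, as recalled in Section \ref{subsection:pre-order_category}. From the identity $[\Phi_X]\circ[\pi]=[\rho]\circ[\widetilde{\Phi_X}]$ we get
\[
[\Phi_X]=[\rho]\circ[\widetilde{\Phi_X}]\circ[\pi]^{-1},
\]
which is an isomorphism in $\hPreOrd$. Alternatively, $\AS$ is a functor and so sends isomorphisms to isomorphisms.

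To conclude, I will note that $\mathcal{P}_c(X)$ is a finite-join partially ordered set by Proposition \ref{proposition:PcXfiniteJoin}. Likewise, $\Sub(X)$ is a finite-join partially ordered set by Theorem \ref{theorem:fMFCC} together with Proposition \ref{proposition:CoarseVsmall}. Since both are partially ordered, the relation $\sim$ on maps between them is trivial, so $\Phi_X$ is a genuine bijection whose inverse is order-preserving. Proposition \ref{proposition:hpreisom_FJ} then shows that $\Phi_X$ and its inverse preserve finite joins. Therefore $\Phi_X$ is an isomorphism in $\FinJoinParOrd$.

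The only delicate point is well-definedness: $\Phi_X$ must not depend on the chosen representative $S$, and $\AS([\iota_S])$ must not depend on the chosen representative $\iota_S$ of $[\iota_S]$. Both follow immediately from the order-preservation of $\widetilde{\Phi_X}$ and from Proposition \ref{proposition:close_on_totalrel}. I therefore expect no real obstacle beyond this bookkeeping.
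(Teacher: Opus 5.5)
Your proposal is correct and is essentially the paper's argument. You factor $\Phi_X$ through the anti-symmetrization quotient maps and the equivalence $\widetilde{\Phi_X}$ of Proposition \ref{proposition:Phi_tilde_isom}, then upgrade the resulting isomorphism to one in $\FinJoinParOrd$; the paper does this upgrade via fullness of $\FinJoinParOrd$ in $\hFinJoinPreOrd$, you via Proposition \ref{proposition:hpreisom_FJ}, which amounts to the same thing.
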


The first assertion of Theorem \ref{theorem:PcSub} follows from Corollary \ref{corollary:PhiXisisom}. The second assertion follows from the following proposition.

\begin{proposition}\label{proposition:PhigivesNatIsom}
The assignment $X\mapsto\Phi_X$ defines a natural isomorphism from the functor $\mathcal{P}_c$ to the functor $\Sub$.
\end{proposition}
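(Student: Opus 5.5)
Each $\Phi_X$ is already an isomorphism in $\FinJoinParOrd$ by Corollary \ref{corollary:PhiXisisom}. It therefore suffices to check naturality: for every morphism $[\eta] : X \rightarrow_{\mathrm{coarse}} Y$ with $\eta\in\CtrTotRel(X,Y)$,
\[
\Phi_Y\circ[\eta]^{\mathcal{P}_c}=\AS([[\eta]^{\mathcal{M}}_\sharp])\circ\Phi_X .
\]
Fix $S\subset X$. The left side sends $[S]$ to $\AS([\iota_{\eta(S)}])$. By Theorem \ref{theorem:fMFCC}\eqref{item:fMFCC:Mono}, the right side sends $[S]$ to $[m]$, where $(e,m)$ is an $(\mathcal{E},\mathcal{M})$-factorization of $[\eta]\circ[\iota_S]=[\eta\circ\iota_S]$. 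By the uniqueness of factorizations (Proposition \ref{proposition:uniqueness_of_factrization}), it is enough to exhibit one explicit factorization of $[\eta\circ\iota_S]$ whose $\mathcal{M}$-part is $[\iota_{\eta(S)}]$. Any two such $\mathcal{M}$-parts then differ by an isomorphism over $Y$, so they define the same element of $\Sub(Y)$.

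For the factorization, put $T:=\eta(S)$ and $\xi:=\eta\circ\iota_S\in\CtrTotRel(S,Y)$, so that $\xi(S)=T$. Using Proposition \ref{proposition:iotaxi_Omega} with $\Omega=T$, set $\xi_T:=\iota_T^\dagger\circ\xi\in\Rel(S,T)$. That proposition gives that $\xi_T$ is total, that $\xi_T(S)=T$, and that $\iota_T\circ\xi_T=\xi$. Moreover, $\xi_T$ is controlled, because $\iota_T^\dagger$ is controlled by Proposition \ref{proposition:iota_mono} together with Theorem \ref{thm:mono-characterization}, and $\xi$ is controlled. Hence $[\xi]=[\iota_T]\circ[\xi_T]$ in $\Coarse_{\CtrTotRel}/\close$. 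Here $[\iota_T]\in\mathcal{M}$ by Proposition \ref{proposition:iota_mono}. Also $[\xi_T]\in\mathcal{E}$ by Theorem \ref{thm:epi-characterization}, taking $F=1_T^{\Rel}$, since $\xi_T(S)=T$. This is the required factorization.

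Consequently $\AS([[\eta]^{\mathcal{M}}_\sharp])(\Phi_X([S]))=\AS([\iota_{\eta(S)}])=\Phi_Y([\eta(S)])$, which is naturality. Well-definedness on representatives is not an issue: both functors are already known to be well defined on closeness classes and on $\sim$-classes of subsets (Theorem \ref{theorem:etaPc} and Theorem \ref{theorem:fMFCC}).

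The only delicate point is identifying $\AS([[\eta]^{\mathcal{M}}_\sharp])$ with "take the $\mathcal{M}$-part of the factorization". Theorem \ref{theorem:fMFCC} supplies exactly this once $\Coarse_{\CtrTotRel}/\close$ is known to be $\mathcal{V}$-small, which is Proposition \ref{proposition:CoarseVsmall}, and finitely cocomplete, which is Theorem \ref{theorem:Coarsecocomp}. So the main work is the routine controlledness check for $\xi_T$.
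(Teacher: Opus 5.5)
Your proposal is correct and matches the paper's proof: you reduce to naturality via Corollary \ref{corollary:PhiXisisom}, invoke Theorem \ref{theorem:fMFCC}, and exhibit the factorization $[\eta]\circ[\iota_S]=[\iota_{\eta(S)}]\circ[\iota_{\eta(S)}^\dagger\circ\eta\circ\iota_S]$, verified through Proposition \ref{proposition:iotaxi_Omega}, Proposition \ref{proposition:iota_mono}, and Theorem \ref{thm:epi-characterization} with $F=1^{\Rel}_{\eta(S)}$. Your appeal to Proposition \ref{proposition:uniqueness_of_factrization}, to justify that any $(\mathcal{E},\mathcal{M})$-factorization may be used, is a harmless explicit addition; the paper leaves this implicit in the statement of Theorem \ref{theorem:fMFCC}.
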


\begin{proof}[Proof of Proposition \ref{proposition:PhigivesNatIsom}]
By Corollary \ref{corollary:PhiXisisom}, it suffices to show that the assignment $X\mapsto\Phi_X$ defines a natural transformation from $\mathcal{P}_c$ to $\Sub$.

Let $X$ and $Y$ be coarse spaces, and fix $\eta\in\CtrTotRel(X,Y)$. It suffices to prove the equality
\[
\Phi_Y\circ[\eta]^{\mathcal{P}_c}
=
\AS[[\eta]^{\mathcal{M}}_{\sharp}] \circ\Phi_X
\]
in $\FinJoinParOrd$.

Fix a subset $S$ of $X$. It is enough to show that
\[
(\Phi_Y\circ[\eta]^{\mathcal{P}_c})([S])
=
(\AS[[\eta]^{\mathcal{M}}_{\sharp}]\circ\Phi_X)([S]).
\]

The left-hand side is
\begin{align*}
(\Phi_Y\circ[\eta]^{\mathcal{P}_c})([S])
    &= \Phi_Y([\eta(S)]) \\
    &= \AS([\iota_{\eta(S)}]).
\end{align*}

The right-hand side is
\[
(\AS[[\eta]^{\mathcal{M}}_{\sharp}] \circ\Phi_X)([S])
=
\AS[[\eta]^{\mathcal{M}}_{\sharp}](\AS([\iota_S])).
\]
Put $L:=\eta(S)$. 
Then by Theorem \ref{theorem:fMFCC}, 
it suffices to show that there exists a morphism
\[
e:S\rightarrow_{\mathcal{E}}L = \eta(S)
\]
such that $(e,[\iota_L])$ gives an $(\mathcal{E},\mathcal{M})$-factorization of
$[\eta]\circ[\iota_S]$ in $\Coarse_{\CtrTotRel}/{\close}$.

Define
\[
\eta_L
:=
\iota_L^\dagger\circ\eta\circ\iota_S
\in
\Rel(S,L).
\]
We shall show that
$\eta_L\in\CtrTotRel(S,L)$,
that
$[\eta_L]$ is an epi-morphism,
and that
\[
[\iota_L]\circ[\eta_L]
=
[\eta]\circ[\iota_S].
\]
Once these facts are established, taking
$e:=[\eta_L]$
gives the desired factorization.

Since
\[
(\eta\circ\iota_S)(S)
=
\eta(S)
=
L,
\]
by applying Proposition \ref{proposition:iotaxi_Omega}
with $(\xi,Z,\Omega,\xi_{\Omega}) := (\eta \circ \iota_S,S,L,\eta_L)$, 
we have that $\eta_L(S)=L$, 
\[
\iota_L\circ\eta_L = \eta\circ\iota_S, 
\]
and 
$\eta_L$ is total.
Moreover,
$\iota_S$ and $\eta$ are both controlled, and
$\iota_L^\dagger$ is also controlled by
Theorem \ref{thm:mono-characterization}
and Proposition \ref{proposition:iota_mono}.
Hence
\[
\eta_L
=
\iota_L^\dagger\circ\eta\circ\iota_S
\]
is controlled.
Since
$\eta_L(S)=L$,
Condition~\eqref{item:epi-characterization-dense}
of Theorem \ref{thm:epi-characterization} is satisfied with
$F=1_L^{\Rel}$.
Therefore
$[\eta_L]$
is an epi-morphism.
This completes the proof.
\end{proof}

\section{Asymptotically disjoint pairs of coarse subspaces}\label{section:asymptoticallydisjoint}

In this section, we introduce the binary relation of being asymptotically disjoint between coarse subspaces of a coarse space and show that it is preserved under coarse equivalences. The results of this section provide a refinement of \cite[Section 4]{NagayaOgawaOkuda2025}.

\subsection{Bornological relations between coarse spaces}

We begin by introducing two notions of boundedness for subsets of coarse spaces:

\begin{definition}
Let $X$ be a coarse space. Throughout this paper, we define the subset $\mathcal{B}(X)$ of $\mathcal{P}(X)$ by
\[
\mathcal{B}(X) = \mathcal{B}(\mathcal{E}_X) := \{ E(B_0) \mid B_0 \text{ is a finite subset of } X, \text{ and } E \in \mathcal{E} \}.
\]
We also put
\[
\mathcal{B}^{\mathrm{conn}}(X) = \mathcal{B}^{\mathrm{conn}}(\mathcal{E}_X) := \{ E(\{ x \}) \mid x \in X, E \in \mathcal{E} \}.
\]
\end{definition}

Note that if $X$ is coarsely-connected (i.e.~for each $x_1,x_2 \in X$, there exists $E \in \mathcal{E}_X$ with $(x_1,x_2) \in E$: cf.~\cite[Definition 2.11]{Roe2003LectureCoarse}), 
then $\mathcal{B}(X) = \mathcal{B}^{\mathrm{conn}}(X)$.

The following proposition is immediate from the definition:

\begin{proposition}\label{propostion:stdborstr}
$\mathcal{B}^{\mathrm{conn}}(X)$ is lower in $(\mathcal{P}(X),\prec)$, covers $X$, and contains the empty set; that is,
\begin{itemize}
    \item For each $B \in \mathcal{B}^{\mathrm{conn}}(X)$ and $B_0 \prec B$, $B_0 \in \mathcal{B}^{\mathrm{conn}}(X)$. 
    \item $\bigcup \mathcal{B}^{\mathrm{conn}}(X) = X$.
    \item $\emptyset \in \mathcal{B}^{\mathrm{conn}}(X)$.
\end{itemize}
Furthermore, $\mathcal{B}(X)$ is lower in $(\mathcal{P}(X),\prec)$, covers $X$, and is closed under finite unions.
\end{proposition}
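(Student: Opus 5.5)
We verify each listed property directly from the definitions, using only the axioms of a coarse structure and the definition of the pre-order $\prec$ on $\mathcal{P}(X)$ from Definition \ref{definition:similar}. The plan is to treat $\mathcal{B}^{\mathrm{conn}}(X)$ first and then deduce the corresponding statements for $\mathcal{B}(X)$, with the general form $E(B_0)$ replacing $E(\{x\})$.

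\textbf{Lower.} Let $B = E(\{x\}) \in \mathcal{B}^{\mathrm{conn}}(X)$ with $E \in \mathcal{E}_X$, and let $B_0 \prec B$. Choose $F \in \mathcal{E}_X$ with $B_0 \subset F(B)$. Then
\[
B_0 \subset F(E(\{x\})) = (F\circ E)(\{x\}).
\]
Put $G := (F \circ E) \cap (B_0 \times \{x\})$. Since $\mathcal{E}_X$ is closed under composition and is lower, $G \in \mathcal{E}_X$, and $G(\{x\}) = B_0$. Hence $B_0 \in \mathcal{B}^{\mathrm{conn}}(X)$. The same argument applies to $\mathcal{B}(X)$ with $\{x\}$ replaced by a finite set $B_0'$, using $G := (F\circ E)\cap (B_0 \times B_0')$.

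\textbf{Covers and contains the empty set.} Since $1_X^{\Rel} \in \mathcal{E}_X$, we have $\{x\} = 1_X^{\Rel}(\{x\}) \in \mathcal{B}^{\mathrm{conn}}(X)$ for every $x\in X$, so these sets cover $X$. Since $\emptyset \in \mathcal{E}_X$ by lowerness, $\emptyset = \emptyset(\{x\})$ also lies in $\mathcal{B}^{\mathrm{conn}}(X)$. This requires $X \neq \emptyset$. If $X = \emptyset$, then $\mathcal{B}^{\mathrm{conn}}(X)$ is empty while $\bigcup \emptyset = X$ still holds, so the empty-set clause fails. We will flag this edge case and assume $X\neq\emptyset$ for that item. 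For $\mathcal{B}(X)$ the issue disappears, since $B_0 = \emptyset$ is finite and gives $\emptyset = E(\emptyset) \in \mathcal{B}(X)$.

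\textbf{Finite unions for $\mathcal{B}(X)$.} Given $E_1(B_1)$ and $E_2(B_2)$, set $E := E_1 \cup E_2 \in \mathcal{E}_X$ and $B := B_1 \cup B_2$, which is finite. Then $E(B) \supset E_1(B_1) \cup E_2(B_2)$, so $E_1(B_1)\cup E_2(B_2) \prec E(B)$, and lowerness (already established) gives membership. The empty union is handled by $\emptyset\in\mathcal{B}(X)$.

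None of these steps is a genuine obstacle. The only point needing care is the lowerness argument: one must pass from $B_0 \subset (F\circ E)(\{x\})$ to an exact equality by shrinking the entourage, and this uses that $\mathcal{E}_X$ is closed under subrelations.
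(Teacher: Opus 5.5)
Your argument is correct and is the same direct verification from the definitions that the paper intends (the paper calls the proposition immediate and gives no proof). Shrinking to $G=(F\circ E)\cap(B_0\times\{x\})$ (resp.\ $B_0\times B_0'$) is exactly what turns the inclusion $B_0\subset (F\circ E)(\{x\})$ into an equality, and covering for $\mathcal{B}(X)$ follows from $\mathcal{B}^{\mathrm{conn}}(X)\subset\mathcal{B}(X)$. Your caveat is also valid: for $X=\emptyset$ one has $\mathcal{B}^{\mathrm{conn}}(X)=\emptyset$, so the clause $\emptyset\in\mathcal{B}^{\mathrm{conn}}(X)$ fails as literally stated, whereas all the assertions about $\mathcal{B}(X)$ hold without exception.
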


We also have the following:

\begin{proposition}[Roe~{\cite[Proposition 2.16]{Roe2003LectureCoarse}}]\label{proposition:Roebounded}
For a subset $B$ of $X$, $B \in \mathcal{B}^{\mathrm{conn}}(X)$ if and only if the relation $B \times B \in \Rel(X)$ belongs to $\mathcal{E}_X$.
\end{proposition}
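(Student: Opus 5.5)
The plan is to prove the two implications directly from the axioms of a coarse structure (closure under composition, dagger, and passing to subsets), with one small case distinction for $B=\emptyset$.

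For the forward direction, suppose $B\in\mathcal{B}^{\mathrm{conn}}(X)$, so that $B=E(\{x\})$ for some $x\in X$ and $E\in\mathcal{E}_X$. I will show the inclusion
\[
B\times B\subset E\circ E^\dagger .
\]
Take $(b_1,b_2)\in B\times B$. By definition of $E(\{x\})$ we have $(b_1,x)\in E$ and $(b_2,x)\in E$. The second gives $(x,b_2)\in E^\dagger$, and so $(b_1,b_2)\in E\circ E^\dagger$. Since $\mathcal{E}_X$ is closed under the dagger and under composition, $E\circ E^\dagger\in\mathcal{E}_X$. Because $\mathcal{E}_X$ is lower in $\Rel(X)$, it follows that $B\times B\in\mathcal{E}_X$. (Equivalently, $B\times B=\Ad_E(\{(x,x)\})\subset\Ad_E(1_X^{\Rel})$.)

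For the backward direction, suppose $B\times B\in\mathcal{E}_X$. If $B$ is nonempty, I pick any $x\in B$ and set $E:=B\times B$. Then
\[
E(\{x\})=\{y\mid (y,x)\in B\times B\}=B ,
\]
so $B\in\mathcal{B}^{\mathrm{conn}}(X)$. If $B=\emptyset$, the empty relation lies in $\mathcal{E}_X$ because $\mathcal{E}_X$ is lower. For any $x\in X$ we then have $\emptyset=\emptyset(\{x\})$, which lies in $\mathcal{B}^{\mathrm{conn}}(X)$, in agreement with Proposition~\ref{propostion:stdborstr}.

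There is no real obstacle in this argument. The only point needing care is the degenerate case $X=\emptyset$, where $\mathcal{B}^{\mathrm{conn}}(X)$ as literally defined is empty. In that case the empty set is to be counted in by the convention implicit in Proposition~\ref{propostion:stdborstr}.
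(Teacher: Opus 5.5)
Your proof is correct and self-contained. The paper gives no argument of its own and simply cites Roe. Your direct check is complete: $B\times B=\Ad_E(\{(x,x)\})\subset E\circ E^\dagger$ gives the forward direction, and $B=(B\times B)(\{x\})$ for $x\in B$, with $\emptyset=\emptyset(\{x\})$ when $B$ is empty, gives the converse.

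Your caveat about the empty space is also accurate. For $X=\emptyset$ the family $\mathcal{B}^{\mathrm{conn}}(X)$ is literally empty, while $\emptyset\times\emptyset\in\mathcal{E}_X$. So the ``if'' direction, like the item $\emptyset\in\mathcal{B}^{\mathrm{conn}}(X)$ of Proposition~\ref{propostion:stdborstr}, holds only for $X\neq\emptyset$ or under the convention you invoke.
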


\begin{remark}
A subset $B \in \mathcal{B}^{\mathrm{conn}}(X)$ is called \emph{bounded} by Roe~\cite[Proposition 2.16]{Roe2003LectureCoarse}. The subset $\mathcal{B}(X)$ of $\mathcal{P}(X)$ defined above coincides with what Bunke--Engel~\cite[Example 2.16]{BunkeEngel2020} call the minimal bornological structure compatible with the coarse structure $\mathcal{E}_X$.
\end{remark}

We introduce the notion of a bornological relation between coarse spaces in the following two senses.

\begin{definition}\label{definition:bornological}
Throughout this paper, a relation $R \in \Rel(X,Y)$ is said to be:
\begin{enumerate}
    \item $(\mathcal{B}^{\mathrm{conn}}(X),\mathcal{B}^{\mathrm{conn}}(Y))$-bornological if $R^{\mathcal{P}}(\mathcal{B}^{\mathrm{conn}}(X)) \subset \mathcal{B}^{\mathrm{conn}}(Y)$, where
    \[
    R^\mathcal{P} : \mathcal{P}(X) \rightarrow \mathcal{P}(Y), ~ S \mapsto R(S)
    \]
    denotes the map introduced in Section \ref{section:categoryofrel}.
    \item $(\mathcal{B}(X),\mathcal{B}(Y))$-bornological if $R^{\mathcal{P}}(\mathcal{B}(X)) \subset \mathcal{B}(Y)$.
\end{enumerate}
\end{definition}

\begin{remark}
For a map $f : X \rightarrow Y$, 
$f$ is a $(\mathcal{B}(X),\mathcal{B}(Y))$-bornological relation if and only if $f$ is a bornological map between bornological spaces $(X,\mathcal{B}(X))$ and $(Y,\mathcal{B}(Y))$ in the sense of \cite[Definition 2.8]{BunkeEngel2020}.
\end{remark}

\begin{proposition}\label{proposition:bornological_lower}
The two types of bornological properties defined in Definition \ref{definition:bornological} are both lower conditions in $(\Rel(X,Y),\prec)$.
\end{proposition}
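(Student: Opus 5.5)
We must show: if $R_0 \prec R$ in $(\Rel(X,Y),\prec)$ and $R$ is bornological in either sense, then $R_0$ is bornological in the same sense. The plan is to combine three facts: the definition of $\prec$ on $\Rel(X,Y)$ (Definition \ref{definition:close_ctrrel}), monotonicity of images under relations, and the lower property of $\mathcal{B}^{\mathrm{conn}}(X)$, $\mathcal{B}(X)$ in $(\mathcal{P}(X),\prec)$ (Proposition \ref{propostion:stdborstr}), applied in both $X$ and $Y$.

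Fix $R_0 \prec R$. Choose $E\in\mathcal{E}_X$ and $F\in\mathcal{E}_Y$ with $R_0\subset F\circ R\circ E$. Then for every $S\subset X$,
\[
R_0(S)\subset F\bigl(R(E(S))\bigr).
\]
Now take $B\in\mathcal{B}^{\mathrm{conn}}(X)$ [resp.~$\mathcal{B}(X)$]. Since $E(B)\subset E(B)$ gives $E(B)\prec B$ only after a dagger, we argue as follows. Because $\mathcal{E}_X$ is closed under the dagger, $B\subset E^\dagger(E(B))$ need not hold, so instead we observe directly that $E(B)\prec B$ fails in general. We therefore use the explicit form of bounded sets. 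If $B=E'(B_0)$ with $B_0$ finite (or a singleton) and $E'\in\mathcal{E}_X$, then $E(B)=(E\circ E')(B_0)$ with $E\circ E'\in\mathcal{E}_X$, so $E(B)$ again lies in $\mathcal{B}(X)$ [resp.~$\mathcal{B}^{\mathrm{conn}}(X)$]. Applying the hypothesis on $R$ gives $R(E(B))\in\mathcal{B}(Y)$ [resp.~$\mathcal{B}^{\mathrm{conn}}(Y)$]. Write $R(E(B))=G(C_0)$ with $G\in\mathcal{E}_Y$ and $C_0\subset Y$ finite (or a singleton). Then $F(R(E(B)))=(F\circ G)(C_0)$ is of the same form, since $F\circ G\in\mathcal{E}_Y$.

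Finally, $R_0(B)\subset (F\circ G)(C_0)$, which means $R_0(B)\prec C_0$, and in particular $R_0(B)\prec (F\circ G)(C_0)$. By the lower property in Proposition \ref{propostion:stdborstr} for $Y$, we get $R_0(B)\in\mathcal{B}(Y)$ [resp.~$\mathcal{B}^{\mathrm{conn}}(Y)$]. This yields $R_0^{\mathcal{P}}(\mathcal{B}(X))\subset\mathcal{B}(Y)$ and likewise in the connected case.

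There is no real obstacle in this argument. The only point needing care is that a bounded set enlarged by an entourage, $E(B)$, stays bounded. This follows by absorbing $E$ into the defining entourage, as above, rather than from the lower property alone, since $E(B)\prec B$ does not in general follow from $E\in\mathcal{E}_X$ without symmetry.
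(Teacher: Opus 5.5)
Your argument is correct and is essentially the paper's proof. You choose $E\in\mathcal{E}_X$, $F\in\mathcal{E}_Y$ with $R_0\subset F\circ R\circ E$, show $E(B)$ is bounded, apply the hypothesis on $R$, and finish by downward closure in $Y$. The only difference is how you show that $E(B)$ and $F(R(E(B)))$ are bounded: you absorb the entourage into the defining form $E'(B_0)$, while the paper gets both at once from the lower property in Proposition~\ref{propostion:stdborstr}.

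Your stated reason for that detour is false. The inclusion $E(B)\subset E(B)$ with $E\in\mathcal{E}_X$ is, by Definition~\ref{definition:similar}, exactly the statement $E(B)\prec B$; no dagger or symmetry is needed. What can fail is the reverse relation $B\prec E(B)$ (e.g.\ for $E=\emptyset$), and that relation is never used. So the lower property applies directly, which is what the paper does, and you in fact rely on the same property in $Y$ at the end.

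Similarly, the step ``$R_0(B)\prec C_0$, and in particular $R_0(B)\prec(F\circ G)(C_0)$'' is not a valid inference in general. Both relations do, however, follow directly from the inclusion $R_0(B)\subset(F\circ G)(C_0)$, with witnesses $F\circ G$ and $1_Y^{\Rel}$ respectively. Either one suffices, so your conclusion stands.
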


\begin{proof}
Put $\mathcal{L} := \mathcal{B}^{\mathrm{conn}}$ or $\mathcal{L} := \mathcal{B}$.
Take any $(\mathcal{L}(X),\mathcal{L}(Y))$-bornological relation $\eta \in \Rel(X,Y)$, and any relation $R \in \Rel(X,Y)$ with $R \prec \eta$.
We shall prove that $R$ is also $(\mathcal{L}(X),\mathcal{L}(Y))$-bornological.
Fix any $B \in \mathcal{L}(X)$. Our goal is to show that $R(B) \in \mathcal{L}(Y)$.
Since $R \prec \eta$, 
there exists $E \in \mathcal{E}_X$, $F \in \mathcal{E}_Y$ such that $R \subset F \circ \eta \circ E$.
Since 
$\mathcal{L}(X)$ is lower in $(\mathcal{P}(X),\prec)$, 
$\eta^\mathcal{P}(\mathcal{L}(X)) \subset \mathcal{L}(Y)$, 
and 
$\mathcal{L}(Y)$ is lower in $(\mathcal{P}(Y),\prec)$, 
we have 
\begin{align*}
R(B) \subset (F \circ \eta \circ E)(B) \in \mathcal{L}(Y).
\end{align*}
Since  
$\mathcal{L}(Y)$ is lower in $(\mathcal{P}(Y),\prec)$, and hence lower in $(\mathcal{P}(Y),\subset)$, 
we have $R(B) \in \mathcal{L}(Y)$.
\end{proof}

The following implications relate controlled relations to the two notions of bornologicality introduced above:

\begin{proposition}\label{proposition:cont_prev_bornology}
The following implications hold:
\begin{enumerate}
    \item Each controlled relation from $X$ to $Y$ is $(\mathcal{B}^{\mathrm{conn}}(X),\mathcal{B}^{\mathrm{conn}}(Y))$-bornological.
    \item Each $(\mathcal{B}^{\mathrm{conn}}(X),\mathcal{B}^{\mathrm{conn}}(Y))$-bornological relation is $(\mathcal{B}(X),\mathcal{B}(Y))$-bornological.
\end{enumerate}
\end{proposition}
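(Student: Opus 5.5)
For part (1), I take a controlled relation $\eta \in \CtrRel(X,Y)$ and a set $B \in \mathcal{B}^{\mathrm{conn}}(X)$, and aim to show $\eta(B) \in \mathcal{B}^{\mathrm{conn}}(Y)$. I will use Roe's characterization, Proposition \ref{proposition:Roebounded}. It gives $B \times B \in \mathcal{E}_X$, and it suffices to show $\eta(B)\times\eta(B) \in \mathcal{E}_Y$. The key computation is the identity
\[
\eta(B)\times\eta(B) = \eta\circ(B\times B)\circ\eta^\dagger = \Ad_\eta(B\times B).
\]
To verify it, note that $(y_1,y_2)$ lies in the middle relation if and only if there are $b_1,b_2\in B$ with $(y_1,b_1)\in\eta$ and $(y_2,b_2)\in\eta$. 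This is exactly the condition $y_1,y_2\in\eta(B)$. Controlledness of $\eta$ then gives $\Ad_\eta(B\times B)\in\mathcal{E}_Y$. Applying Proposition \ref{proposition:Roebounded} once more, now in $Y$, yields $\eta(B)\in\mathcal{B}^{\mathrm{conn}}(Y)$. Totality is not needed here. The empty case $B=\emptyset$ is covered automatically, since $\emptyset\times\emptyset=\emptyset\in\mathcal{E}_Y$.

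For part (2), I take $R$ to be $(\mathcal{B}^{\mathrm{conn}}(X),\mathcal{B}^{\mathrm{conn}}(Y))$-bornological and a set $B=E(B_0)\in\mathcal{B}(X)$ with $B_0$ finite and $E\in\mathcal{E}_X$. I decompose
\[
B=\bigcup_{x\in B_0}E(\{x\}).
\]
Each piece $E(\{x\})$ lies in $\mathcal{B}^{\mathrm{conn}}(X)$. Since $R^{\mathcal{P}}$ preserves unions (Proposition \ref{proposition:eta^P_functorial}),
\[
R(B)=\bigcup_{x\in B_0}R(E(\{x\})),
\]
and each $R(E(\{x\}))$ is in $\mathcal{B}^{\mathrm{conn}}(Y)$. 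Every element of $\mathcal{B}^{\mathrm{conn}}(Y)$ lies in $\mathcal{B}(Y)$, because $\{y\}$ is a finite set. Finally, $\mathcal{B}(Y)$ is closed under finite unions by Proposition \ref{propostion:stdborstr}, so $R(B)\in\mathcal{B}(Y)$. The case $B_0=\emptyset$ gives $R(\emptyset)=\emptyset=E(\emptyset)\in\mathcal{B}(Y)$.

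The only point needing care is the finite-union closure of $\mathcal{B}(Y)$. If one wants it explicitly rather than citing Proposition \ref{propostion:stdborstr}, it follows from $\bigcup_i F_i(C_i)\subset(\bigcup_i F_i)(\bigcup_i C_i)$, using lowerness of $\mathcal{B}(Y)$ with respect to inclusion. Apart from this, both parts are direct computations, and I expect no real obstacle.
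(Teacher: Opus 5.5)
Your proof is correct. Part (2) is the paper's argument: write $E(B_0)$ as the union of the sets $E(\{x\})$, then use that $R^{\mathcal{P}}$ preserves unions and that $\mathcal{B}(Y)$ is closed under finite unions.

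Part (1) takes a different route from the paper. You use the exact identity $\Ad_\eta(B\times B)=\eta(B)\times\eta(B)$ and apply Roe's criterion (Proposition~\ref{proposition:Roebounded}) in both directions, once in $X$ and once in $Y$. This needs only the single entourage $B\times B$, no base point and no case distinction.

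The paper instead argues from a base point:
\begin{enumerate}
\item If $B\cap\Dom(\eta)=\emptyset$, then $\eta(B)=\emptyset$.
\item Otherwise it picks $x\in B\cap\Dom(\eta)$ and $y\in\eta(\{x\})$, and shows that $\eta(\{x\})\subset\Ad_\eta(1_X^{\Rel})(\{y\})$ is bounded.
\item It writes $B=E_B(\{x\})$ with $E_B=B\times B$, and uses the inequality of Proposition~\ref{proposition:total_adjoint} to get $\eta(B)\subset\Ad_\eta(E_B)(\eta(\{x\}))$.
\item It concludes by lowerness of $\mathcal{B}^{\mathrm{conn}}(Y)$ in $(\mathcal{P}(Y),\prec)$.
\end{enumerate}

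Your version is shorter and shows that only the one condition $\Ad_\eta(B\times B)\in\mathcal{E}_Y$ matters. The paper's version uses Roe's criterion only in the direction $B\in\mathcal{B}^{\mathrm{conn}}(X)\Rightarrow B\times B\in\mathcal{E}_X$, and places $\eta(B)$ explicitly under a set of the form $F(\{y\})$. It also reuses the same mechanism $\eta\circ E\subset\Ad_\eta(E)\circ\eta$ that drives other arguments in the paper, such as the compatibility of closeness with composition.
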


\begin{proof}
We first prove the first assertion. Let $\eta \in \CtrRel(X,Y)$ and $B \in \mathcal{B}^{\mathrm{conn}}(X)$. We show that $\eta(B) \in \mathcal{B}^{\mathrm{conn}}(Y)$. If $B \cap \Dom(\eta) = \emptyset$, then $\eta(B) = \emptyset \in \mathcal{B}^{\mathrm{conn}}(Y)$. Assume that $B \cap \Dom(\eta) \neq \emptyset$. Choose $x \in B \cap \Dom(\eta)$ and $y \in \eta(\{x\})$. Since $x \in \eta^\dagger(\{y\})$, we have
\[
\eta(\{x\}) \subset \eta(\eta^\dagger(\{y\})) = \Ad_\eta(1^\Rel_X)(\{y\}).
\]
Since $\eta$ is controlled, $\Ad_\eta(1^\Rel_X) \in \mathcal{E}_Y$, and hence $\eta(\{x\}) \in \mathcal{B}^{\mathrm{conn}}(Y)$. Moreover, Proposition \ref{proposition:Roebounded} yields $E_B := B \times B \in \mathcal{E}_X$. Since $B = E_B(\{x\})$ and $\Ad_\eta(E_B) \in \mathcal{E}_Y$, Proposition \ref{proposition:total_adjoint} gives
\begin{align*}
\eta(B)
&= \eta(E_B(\{x\})) \\
&\subset \Ad_\eta(E_B)(\eta(\{x\})).
\end{align*}
The right-hand side belongs to $\mathcal{B}^{\mathrm{conn}}(Y)$, and therefore so does $\eta(B)$.

For the second assertion, each element of $\mathcal{B}(X)$ is a finite union of elements of $\mathcal{B}^{\mathrm{conn}}(X)$, the map $\eta^{\mathcal{P}}$ preserves unions, and $\mathcal{B}(Y)$ is closed under finite unions. Hence every $(\mathcal{B}^{\mathrm{conn}}(X),\mathcal{B}^{\mathrm{conn}}(Y))$-bornological relation is $(\mathcal{B}(X),\mathcal{B}(Y))$-bornological.
\end{proof}

\subsection{Proper relations between coarse spaces}

We introduce the notion of a proper relation between coarse spaces in the following two senses.

\begin{definition}\label{definition:proper}
Throughout this paper, a relation $\eta \in \Rel(X,Y)$ is said to be $(\mathcal{B}^{\mathrm{conn}}(X),\mathcal{B}^{\mathrm{conn}}(Y))$-proper [resp.~$(\mathcal{B}(X),\mathcal{B}(Y))$-proper] if $\eta^\dagger$ is $(\mathcal{B}^{\mathrm{conn}}(Y),\mathcal{B}^{\mathrm{conn}}(X))$-bornological [resp.~$(\mathcal{B}(Y),\mathcal{B}(X))$-bornological].
\end{definition}

\begin{remark}
A map $f : X \rightarrow Y$ is $(\mathcal{B}^{\mathrm{conn}}(X),\mathcal{B}^{\mathrm{conn}}(Y))$-proper in the above sense if and only if it is proper in the sense of Roe~\cite[Definition 2.21]{Roe2003LectureCoarse}. Furthermore, $f$ is $(\mathcal{B}(X),\mathcal{B}(Y))$-proper if and only if it is proper as a map between the bornological spaces $(X,\mathcal{B}(X))$ and $(Y,\mathcal{B}(Y))$ in the sense of \cite[Definition 2.8]{BunkeEngel2020}.
\end{remark}

By Proposition \ref{proposition:bornological_lower}, we obtain the following:

\begin{proposition}\label{proposition:properlower}
The two notions of properness introduced in Definition \ref{definition:proper} are both lower conditions in $(\Rel(X,Y),\prec)$.
\end{proposition}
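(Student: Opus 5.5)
This follows directly from Proposition \ref{proposition:bornological_lower} once the dagger is shown to respect the coarse pre-order. Fix $\mathcal{L}\in\{\mathcal{B}^{\mathrm{conn}},\mathcal{B}\}$. Let $\eta\in\Rel(X,Y)$ be $(\mathcal{L}(X),\mathcal{L}(Y))$-proper, and let $R\in\Rel(X,Y)$ satisfy $R\prec\eta$. We must show that $R$ is also proper, i.e.\ that $R^\dagger$ is $(\mathcal{L}(Y),\mathcal{L}(X))$-bornological.

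\textbf{The dagger is order-preserving.} The only new ingredient is that the map
\[
\Rel(X,Y)\to\Rel(Y,X),\qquad R\mapsto R^\dagger,
\]
preserves the coarse pre-order $\prec$. Choose $E\in\mathcal{E}_X$ and $F\in\mathcal{E}_Y$ with $R\subset F\circ\eta\circ E$. Taking daggers preserves inclusion, and $(S\circ T)^\dagger=T^\dagger\circ S^\dagger$, so
\[
R^\dagger\subset E^\dagger\circ\eta^\dagger\circ F^\dagger .
\]
Coarse structures are closed under the dagger, so $E^\dagger\in\mathcal{E}_X$ and $F^\dagger\in\mathcal{E}_Y$. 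This gives $R^\dagger\prec\eta^\dagger$ in $(\Rel(Y,X),\prec)$. Note that here $\mathcal{E}_Y$ acts on the source side and $\mathcal{E}_X$ on the target side, which is exactly the form of the coarse pre-order on $\Rel(Y,X)$.

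\textbf{Conclusion.} By assumption $\eta^\dagger$ is $(\mathcal{L}(Y),\mathcal{L}(X))$-bornological. Proposition \ref{proposition:bornological_lower}, applied with the roles of $X$ and $Y$ interchanged, says that being bornological is a lower condition in $(\Rel(Y,X),\prec)$. Since $R^\dagger\prec\eta^\dagger$, it follows that $R^\dagger$ is $(\mathcal{L}(Y),\mathcal{L}(X))$-bornological. Hence $R$ is $(\mathcal{L}(X),\mathcal{L}(Y))$-proper, as required.

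The only point needing care is matching sides in the reversed relation: after taking daggers, $E^\dagger$ acts on the $X$ (target) side and $F^\dagger$ on the $Y$ (source) side, so the inclusion has the correct shape for the pre-order on $\Rel(Y,X)$.
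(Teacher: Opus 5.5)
Your proof is correct and follows the paper's route: the paper simply states that this is immediate from Proposition~\ref{proposition:bornological_lower}. You make explicit the one implicit step, namely $R\prec\eta \Rightarrow R^\dagger\prec\eta^\dagger$ in $(\Rel(Y,X),\prec)$, and your bookkeeping of which coarse structure acts on which side is right.
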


The controlledness of $\eta^\dagger$ and the two notions of properness introduced above are related by the following implications:

\begin{proposition}\label{proposition:CE_is_proper}
Let $\eta \in \Rel(X,Y)$.
\begin{enumerate}
    \item If $\eta^\dagger$ is controlled, then $\eta$ is $(\mathcal{B}^{\mathrm{conn}}(X),\mathcal{B}^{\mathrm{conn}}(Y))$-proper.
    \item If $\eta$ is $(\mathcal{B}^{\mathrm{conn}}(X),\mathcal{B}^{\mathrm{conn}}(Y))$-proper, then it is $(\mathcal{B}(X),\mathcal{B}(Y))$-proper.
\end{enumerate}
\end{proposition}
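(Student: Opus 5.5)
Both assertions reduce directly to Proposition \ref{proposition:cont_prev_bornology}, applied to the dagger $\eta^\dagger \in \Rel(Y,X)$ rather than to $\eta$. By Definition \ref{definition:proper}, $(\mathcal{B}^{\mathrm{conn}}(X),\mathcal{B}^{\mathrm{conn}}(Y))$-properness of $\eta$ means that $\eta^\dagger$ is $(\mathcal{B}^{\mathrm{conn}}(Y),\mathcal{B}^{\mathrm{conn}}(X))$-bornological. Likewise, $(\mathcal{B}(X),\mathcal{B}(Y))$-properness means that $\eta^\dagger$ is $(\mathcal{B}(Y),\mathcal{B}(X))$-bornological. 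So each claim is a statement about the relation $\eta^\dagger$ from the coarse space $Y$ to the coarse space $X$.

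For the first assertion, assume $\eta^\dagger$ is controlled, i.e.\ $\eta^\dagger \in \CtrRel(Y,X)$. Apply the first item of Proposition \ref{proposition:cont_prev_bornology} with the roles of $X$ and $Y$ interchanged. This gives that $\eta^\dagger$ is $(\mathcal{B}^{\mathrm{conn}}(Y),\mathcal{B}^{\mathrm{conn}}(X))$-bornological, which is exactly the required properness of $\eta$.

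That first item does not require totality. Its proof handles the case $B \cap \Dom(\eta) = \emptyset$ separately and otherwise uses Proposition \ref{proposition:total_adjoint} only on the subset $B \cap \Dom$. So I expect no obstacle here beyond checking this point. Strictly, in that proof the set $B = E_B(\{x\})$ should be replaced by $B \cap \Dom(\eta^\dagger)$ when invoking Proposition \ref{proposition:total_adjoint}. Since $\eta^\dagger(B) = \eta^\dagger(B \cap \Dom(\eta^\dagger))$, the same estimate goes through. I would add this remark in one line.

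For the second assertion, assume $\eta^\dagger$ is $(\mathcal{B}^{\mathrm{conn}}(Y),\mathcal{B}^{\mathrm{conn}}(X))$-bornological. The second item of Proposition \ref{proposition:cont_prev_bornology}, again with $X$ and $Y$ swapped, shows that $\eta^\dagger$ is $(\mathcal{B}(Y),\mathcal{B}(X))$-bornological. The argument is as before: every element of $\mathcal{B}(Y)$ is a finite union of elements of $\mathcal{B}^{\mathrm{conn}}(Y)$, $(\eta^\dagger)^{\mathcal{P}}$ preserves unions, and $\mathcal{B}(X)$ is closed under finite unions. Hence $\eta$ is $(\mathcal{B}(X),\mathcal{B}(Y))$-proper.

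The only point needing care is the bookkeeping of the swap of $X$ and $Y$. Beyond that the proof is essentially a restatement.
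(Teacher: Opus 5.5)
Your proposal is correct and matches the paper's proof: both assertions follow by applying Proposition~\ref{proposition:cont_prev_bornology} to $\eta^\dagger \in \Rel(Y,X)$ with the roles of $X$ and $Y$ interchanged. Your side remark about replacing $B$ by $B \cap \Dom(\eta^\dagger)$ is unnecessary. In that proof, Proposition~\ref{proposition:total_adjoint} is only invoked with $S=\{x\}$ for a point $x$ already chosen in $B \cap \Dom$, so the cited result holds as stated for non-total relations.
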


\begin{proof}
The first assertion follows by applying Proposition \ref{proposition:cont_prev_bornology} to $\eta^\dagger$. The second assertion follows immediately by the same proposition.
\end{proof}

The following proposition will be used in a later section:

\begin{proposition}\label{proposition:proper_prev_unbounded}
Let $\eta$ be a $(\mathcal{B}(X),\mathcal{B}(Y))$-proper relation from $X$ to $Y$. 
Then
% $(\eta^\mathcal{P})^{-1}(\mathcal{B}(Y)) \subset \mathcal{B}(X)$, that is, 
for each $U \subset \Dom(\eta)$,
\[
\eta(U) \in \mathcal{B}(Y) \Rightarrow 
U \in \mathcal{B}(X).
\]
In particular, if $\eta$ is total, 
\[
(\eta^\mathcal{P})^{-1}(\mathcal{B}(Y)) \subset \mathcal{B}(X)
\]
holds.
\end{proposition}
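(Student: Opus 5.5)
The proof will use Proposition \ref{proposition:SDometatotal} together with the fact that $\mathcal{B}(X)$ is lower with respect to inclusion. Take $U \subset \Dom(\eta)$ and assume $\eta(U) \in \mathcal{B}(Y)$.

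By Proposition \ref{proposition:SDometatotal}, we have
\[
U \subset (\eta^\dagger \circ \eta)(U) = \eta^\dagger(\eta(U)).
\]
Since $\eta$ is $(\mathcal{B}(X),\mathcal{B}(Y))$-proper, Definition \ref{definition:proper} says that $\eta^\dagger$ is $(\mathcal{B}(Y),\mathcal{B}(X))$-bornological. Applying this to $\eta(U) \in \mathcal{B}(Y)$ gives $\eta^\dagger(\eta(U)) \in \mathcal{B}(X)$. By Proposition \ref{propostion:stdborstr}, $\mathcal{B}(X)$ is lower in $(\mathcal{P}(X),\prec)$. Because $U \subset V$ implies $U \prec V$ (take $E = 1_X^{\Rel}$), it is in particular lower with respect to inclusion. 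Hence $U \in \mathcal{B}(X)$.

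For the final assertion, suppose $\eta$ is total. Then $\Dom(\eta) = X$, so every $U \subset X$ satisfies the hypothesis $U \subset \Dom(\eta)$. If $U \in (\eta^{\mathcal{P}})^{-1}(\mathcal{B}(Y))$, i.e.\ $\eta(U) \in \mathcal{B}(Y)$, the first part gives $U \in \mathcal{B}(X)$. This is exactly the inclusion $(\eta^{\mathcal{P}})^{-1}(\mathcal{B}(Y)) \subset \mathcal{B}(X)$.

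The argument is short, and the only step needing care is the first one. The inclusion $U \subset \eta^\dagger(\eta(U))$ fails without the hypothesis $U \subset \Dom(\eta)$, and that hypothesis is precisely what Proposition \ref{proposition:SDometatotal} requires.
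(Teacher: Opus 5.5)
Your proposal is correct and follows essentially the same argument as the paper. You obtain $U \subset \eta^\dagger(\eta(U))$ from Proposition~\ref{proposition:SDometatotal}, use properness to get $\eta^\dagger(\eta(U)) \in \mathcal{B}(X)$, and conclude because $\mathcal{B}(X)$ is lower under inclusion; you also spell out the ``in particular'' clause for total $\eta$.
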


\begin{proof}
Fix any $U \subset \Dom(\eta)$ with $\eta(U) \in \mathcal{B}(Y)$. We shall prove that $U \in \mathcal{B}(X)$. Since $\eta$ is $(\mathcal{B}(X),\mathcal{B}(Y))$-proper, we have $\eta^\dagger(\eta(U)) \in \mathcal{B}(X)$. Since $U \subset \Dom(\eta)$, Proposition \ref{proposition:SDometatotal} gives
\[
U \subset (\eta^\dagger \circ \eta)(U) = \eta^\dagger(\eta(U)) \in \mathcal{B}(X).
\]
Since $\mathcal{B}(X)$ is lower in $(\mathcal{P}(X),\subset)$, we obtain $U \in \mathcal{B}(X)$.
\end{proof}

\subsection{Asymptotically disjoint pairs}

Throughout this subsection, let $X = (X,\mathcal{E})$ be a coarse space. Following Banakh--Protasov~\cite{BanakhProtasov2018}, we introduce the notion of asymptotic disjointness as a binary relation on $\mathcal{P}(X)$ and $\mathcal{P}_c(X)$:

\begin{definition}\label{definition:AsD}
Let $X = (X,\mathcal{E}_X)$ be a coarse space. A pair of subsets $(S_1,S_2)$ of $X$ is said to be \emph{asymptotically disjoint} in $X$ if $E_1(S_1) \cap E_2(S_2) \in \mathcal{B}(X)$ for all $E_1,E_2 \in \mathcal{E}_X$. In this paper, we write $S_1 \adperp S_2$ in $X$ if $(S_1,S_2)$ is asymptotically disjoint in $X$.
\end{definition}

The following proposition gives necessary and sufficient conditions for $S_1$ and $S_2$ to be asymptotically disjoint:

\begin{proposition}[cf.~{\cite[Corollary~4.2]{GrzegrzolkaSiegertnormal2019}}]
\label{proposition:ADequivcond}
Let $S_1,S_2 \subset X$. Then the following four conditions on $(S_1,S_2)$ are equivalent:
\begin{enumerate}
    \item\label{item:ADequivcond:AD} $S_1 \adperp S_2$ in $X$.
    \item\label{item:ADequivcond:ES1capS2} $E(S_1) \cap S_2 \in \mathcal{B}(X)$ holds for each $E \in \mathcal{E}_X$.
    \item\label{item:ADequivcond:S1capES2} $S_1 \cap E(S_2) \in \mathcal{B}(X)$ holds for each $E \in \mathcal{E}_X$.
    \item\label{item:ADequivcond:commonlowerbound} $\Low(S_1,S_2) \subset \mathcal{B}(X)$,
    where we write 
    \[
    \Low(S_1,S_2) := \{ B \subset X \mid B \prec S_1 \text{ and } B \prec S_2 \}.
    \]
\end{enumerate}
\end{proposition}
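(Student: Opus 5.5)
We prove the cycle of implications (1) $\Rightarrow$ (2) $\Rightarrow$ (4) $\Rightarrow$ (1), and treat (3) by symmetry. Throughout we use two facts: $\mathcal{B}(X)$ is lower in $(\mathcal{P}(X),\subset)$, and $\mathcal{B}(X)$ is lower in $(\mathcal{P}(X),\prec)$ (Proposition \ref{propostion:stdborstr}).

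For (1) $\Rightarrow$ (2), take $E_1:=E$ and $E_2:=1_X^{\Rel}$ in Definition \ref{definition:AsD}.

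For (2) $\Rightarrow$ (4), let $B\in\Low(S_1,S_2)$. Choose $E_1,E_2\in\mathcal{E}_X$ with $B\subset E_1(S_1)$ and $B\subset E_2(S_2)$. Then
\[
B\subset E_1(S_1)\cap E_2(S_2).
\]
We reduce this to the shape of (2) using Proposition \ref{proposition:U1U2R} with $U_1:=E_1(S_1)$, $U_2:=S_2$ and $R:=E_2$:
\[
E_1(S_1)\cap E_2(S_2)\subset E_2\bigl(E_2^\dagger(E_1(S_1))\cap S_2\bigr)=E_2\bigl((E_2^\dagger\circ E_1)(S_1)\cap S_2\bigr).
\]
Since $E_2^\dagger\circ E_1\in\mathcal{E}_X$, condition (2) gives $(E_2^\dagger\circ E_1)(S_1)\cap S_2\in\mathcal{B}(X)$. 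Its image under $E_2$ is $\prec$ it, so it also lies in $\mathcal{B}(X)$ by lowerness. Hence $B\in\mathcal{B}(X)$.

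For (4) $\Rightarrow$ (1), fix $E_1,E_2\in\mathcal{E}_X$ and set $B:=E_1(S_1)\cap E_2(S_2)$. Then $B\subset E_1(S_1)$ gives $B\prec S_1$, and likewise $B\prec S_2$. So $B\in\Low(S_1,S_2)\subset\mathcal{B}(X)$.

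For (3), note that (1) and (4) are symmetric in $S_1$ and $S_2$. Indeed, (1) is symmetric upon swapping $E_1$ and $E_2$, and (4) is symmetric by definition. Also, (3) for the pair $(S_1,S_2)$ is exactly (2) for the pair $(S_2,S_1)$. Applying the established cycle to the swapped pair therefore yields (1) $\Leftrightarrow$ (3).

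The only non-trivial step is (2) $\Rightarrow$ (4), where two enlargements $E_1(S_1)\cap E_2(S_2)$ must be reduced to a single one-sided enlargement. Proposition \ref{proposition:U1U2R} handles this directly, so I expect no real obstacle.
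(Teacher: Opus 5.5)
Your proof is correct and is essentially the paper's argument. The key step is the same application of Proposition~\ref{proposition:U1U2R} with $E_2^\dagger\circ E_1$, followed by lowerness of $\mathcal{B}(X)$ in $(\mathcal{P}(X),\prec)$; you only reorganize it, routing (2) through (4) to (1) and getting (3) from the symmetry $S_1\leftrightarrow S_2$, whereas the paper proves (2)$\Rightarrow$(1) and (3)$\Rightarrow$(1) directly and reads off (1)$\Leftrightarrow$(4) from the definition of $\prec$.
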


\begin{proof}
The implications \eqref{item:ADequivcond:AD} $\Rightarrow$ \eqref{item:ADequivcond:ES1capS2} and \eqref{item:ADequivcond:AD} $\Rightarrow$ \eqref{item:ADequivcond:S1capES2} are immediate. We prove \eqref{item:ADequivcond:ES1capS2} $\Rightarrow$ \eqref{item:ADequivcond:AD}. Assume that \eqref{item:ADequivcond:ES1capS2} holds. Fix $E_1,E_2 \in \mathcal{E}_X$. By Proposition \ref{proposition:U1U2R},
\[
E_1(S_1) \cap E_2(S_2) \subset E_2((E_2^\dagger \circ E_1)(S_1) \cap S_2),
\]
and hence
\[
E_1(S_1) \cap E_2(S_2) \prec (E_2^\dagger \circ E_1)(S_1) \cap S_2.
\]
Since \eqref{item:ADequivcond:ES1capS2} holds, $(E_2^\dagger \circ E_1)(S_1) \cap S_2 \in \mathcal{B}(X)$. Since $\mathcal{B}(X)$ is lower in $(\mathcal{P}(X),\prec)$ by Proposition \ref{propostion:stdborstr}, it follows that $E_1(S_1) \cap E_2(S_2) \in \mathcal{B}(X)$. Thus \eqref{item:ADequivcond:AD} holds. The implication \eqref{item:ADequivcond:S1capES2} $\Rightarrow$ \eqref{item:ADequivcond:AD} is proved similarly.

Finally, the equivalence between \eqref{item:ADequivcond:AD} and \eqref{item:ADequivcond:commonlowerbound} follows from the definition of the pre-order $\prec$ (see Definition \ref{definition:similar}) and the fact that $\mathcal{B}(X)$ is lower in $(\mathcal{P}(X),\subset)$.
\end{proof}

The following is an immediate consequence of Proposition \ref{proposition:ADequivcond}:

\begin{proposition}[cf.~{\cite[Lemma~4.10]{KalantariHonari2016}}]
The binary relation $\adperp$ on $\mathcal{P}(X)$ is invariant under the equivalence relation $\sim$ defined in Definition~\ref{definition:similar}. Namely, if $S_1 \sim S'_1$ and $S_2 \sim S'_2$, then $S_1 \adperp S_2$ if and only if $S'_1 \adperp S'_2$. In particular, the binary relation $\adperp$ on $\mathcal{P}_c(X)$ defined by 
\[
[S_1] \adperp [S_2] \text{ in } X \defarrow S_1 \adperp S_2 \text{ in } X
\]
is well-defined.
\end{proposition}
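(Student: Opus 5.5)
The plan is to use condition \eqref{item:ADequivcond:commonlowerbound} of Proposition~\ref{proposition:ADequivcond}, which describes $S_1 \adperp S_2$ purely in terms of the pre-order $\prec$ on $\mathcal{P}(X)$ and the set $\mathcal{B}(X)$. Suppose $S_1 \sim S'_1$ and $S_2 \sim S'_2$.

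The first step is to show that $\Low(S_1,S_2) = \Low(S'_1,S'_2)$. Take $B \in \Low(S_1,S_2)$, so $B \prec S_1$ and $B \prec S_2$. Since $S_1 \sim S'_1$ gives $S_1 \prec S'_1$, transitivity of the pre-order yields $B \prec S'_1$. In the same way $B \prec S'_2$, so $B \in \Low(S'_1,S'_2)$. The reverse inclusion follows by symmetry, using $S'_i \prec S_i$.

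The second step applies the equivalence \eqref{item:ADequivcond:AD} $\Leftrightarrow$ \eqref{item:ADequivcond:commonlowerbound} of Proposition~\ref{proposition:ADequivcond}. It gives
\[
S_1 \adperp S_2 \iff \Low(S_1,S_2) \subset \mathcal{B}(X) \iff \Low(S'_1,S'_2) \subset \mathcal{B}(X) \iff S'_1 \adperp S'_2 .
\]

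The well-definedness of $\adperp$ on $\mathcal{P}_c(X)$ is then exactly the statement that this relation depends only on the $\sim$-classes of $S_1$ and $S_2$, which is what we have just shown. No step here is a real obstacle: once condition \eqref{item:ADequivcond:commonlowerbound} is available, the argument reduces to transitivity of $\prec$.
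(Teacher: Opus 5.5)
Your proof is correct and takes essentially the same approach as the paper, which states this result as an immediate consequence of Proposition~\ref{proposition:ADequivcond}. Characterizing asymptotic disjointness by $\Low(S_1,S_2)\subset\mathcal{B}(X)$ and then noting that $\Low(S_1,S_2)$ depends only on the $\sim$-classes (by transitivity of $\prec$) is exactly what makes the consequence immediate.
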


\begin{example}\label{example:Kobayashipitchfork}
Let $G$ be a locally compact Hausdorff group equipped with the coarse structure $\mathcal{E}^{\mathrm{LR}}_G$ defined in Example \ref{example:groupaction_coarse}. Then
\begin{align*}
\mathcal{B}(G) = \mathcal{B}^{\mathrm{conn}}(G) 
= \{ B \subset G \mid B \text{ is relatively compact in } G \}.
\end{align*}
Furthermore, $S_1 \adperp S_2$ in $(G,\mathcal{E}^{\mathrm{LR}}_G)$ if and only if $S_1 \pitchfork S_2$ in $G$ in the sense of Kobayashi~\cite{Kobayashi96}. In particular, for closed subgroups $H$ and $L$ of $G$, the following conditions are equivalent (cf.~Kobayashi~\cite{Kobayashi89,Kobayashi92Fuji,Kobayashi96}; see also \cite[Remark 3.1.3]{KobayashiYoshino05}):
\begin{enumerate}
    \item $H \adperp L$ in $(G,\mathcal{E}^{\mathrm{LR}}_G)$.
    \item The $H$-action on the homogeneous space $G/L$ is proper.
    \item The $L$-action on the homogeneous space $G/H$ is proper.
    \item The diagonal $G$-action on $G/L \times G/H$ is proper.
\end{enumerate}
\end{example}

\subsection{Asymptotic Disjointness and Proper Controlled Total Relations}

For each coarse space $X$, put
\[
\mathcal{B}_c(X) := \{ [B] \mid B \in \mathcal{B}(X) \} \subset \mathcal{P}_c(X).
\]
Then $\mathcal{B}_c(X)$ is lower in $(\mathcal{P}_c(X),\prec)$.

Let $X$ and $Y$ be coarse spaces. Fix $[\eta] : X \rightarrow_{\mathrm{coarse}} Y$, and consider the map $[\eta]^{\mathcal{P}_c} : \mathcal{P}_c(X) \rightarrow \mathcal{P}_c(Y)$ defined in Theorem \ref{theorem:etaPc}. Then Proposition \ref{proposition:cont_prev_bornology} yields
\[
[\eta]^{\mathcal{P}_c}(\mathcal{B}_c(X)) \subset \mathcal{B}_c(Y).
\]

We say that $[\eta]$ is $(\mathcal{B}_c(X),\mathcal{B}_c(Y))$-proper if a representative $\eta \in \CtrTotRel(X,Y)$ is $(\mathcal{B}(X),\mathcal{B}(Y))$-proper. By Proposition \ref{proposition:bornological_lower}, this condition is independent of the choice of representative.

By Proposition \ref{proposition:proper_prev_unbounded}, if $[\eta]$ is $(\mathcal{B}_c(X),\mathcal{B}_c(Y))$-proper, then
\[
([\eta]^{\mathcal{P}_c})^{-1}(\mathcal{B}_c(Y)) \subset \mathcal{B}_c(X).
\]

Moreover, as follows immediately from Theorem \ref{thm:mono-characterization} and Proposition \ref{proposition:CE_is_proper}, if $[\eta]$ is a mono-morphism in $\Coarse_{\CtrTotRel}/{\close}$, then it is $(\mathcal{B}_c(X),\mathcal{B}_c(Y))$-proper.

As we shall see below, non-asymptotic disjointness is preserved by $(\mathcal{B}_c(X),\mathcal{B}_c(Y))$-proper morphisms, whereas asymptotic disjointness is preserved by mono-morphisms:

\begin{theorem}\label{theorem:properAD}
The following statements hold for $\eta \in \CtrTotRel(X,Y)$:
\begin{enumerate}
    \item If $[\eta]$ is $(\mathcal{B}_c(X),\mathcal{B}_c(Y))$-proper, then $[\eta]^{\mathcal{P}_c}$ preserves non-asymptotic disjointness. That is, for $S_1,S_2 \subset X$,
    \[
    \eta(S_1) \adperp \eta(S_2) \text{ in } Y \Rightarrow S_1 \adperp S_2 \text{ in } X.
    \]
    \item If $[\eta]$ is a mono-morphism, then $[\eta]^{\mathcal{P}_c}$ preserves both asymptotic disjointness and non-asymptotic disjointness. That is, for $S_1,S_2 \subset X$,
    \[
    \eta(S_1) \adperp \eta(S_2) \text{ in } Y \Leftrightarrow S_1 \adperp S_2 \text{ in } X.
    \]
\end{enumerate}
\end{theorem}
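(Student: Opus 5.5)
For (1), I will use the characterization of Proposition~\ref{proposition:ADequivcond}\eqref{item:ADequivcond:ES1capS2}. Assume $\eta(S_1)\adperp\eta(S_2)$ in $Y$, and fix $E\in\mathcal{E}_X$. The goal is to show $E(S_1)\cap S_2\in\mathcal{B}(X)$.

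Put $U:=E(S_1)\cap S_2$. Since $\eta$ is total, $U\subset\Dom(\eta)$, so by Proposition~\ref{proposition:proper_prev_unbounded} it suffices to show $\eta(U)\in\mathcal{B}(Y)$. We have
\[
\eta(U)\subset\eta(E(S_1))\cap\eta(S_2).
\]
By Proposition~\ref{proposition:total_adjoint}, applied with $S_1\subset\Dom(\eta)=X$, we get
\[
\eta(E(S_1))\subset(\Ad_\eta(E))(\eta(S_1)),
\]
and $F:=\Ad_\eta(E)\in\mathcal{E}_Y$ because $\eta$ is controlled. Hence $\eta(U)\subset F(\eta(S_1))\cap 1_Y^{\Rel}(\eta(S_2))$. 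This set lies in $\mathcal{B}(Y)$ by the definition of asymptotic disjointness, and $\mathcal{B}(Y)$ is lower under $\subset$. So $U\in\mathcal{B}(X)$ and $S_1\adperp S_2$.

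For (2), the implication ``$\Leftarrow$'' (non-disjointness preserved) follows from (1), since mono-morphisms are $(\mathcal{B}_c(X),\mathcal{B}_c(Y))$-proper (Theorem~\ref{thm:mono-characterization} together with Proposition~\ref{proposition:CE_is_proper}). It remains to show $S_1\adperp S_2\Rightarrow\eta(S_1)\adperp\eta(S_2)$.

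Fix $F\in\mathcal{E}_Y$, and verify condition \eqref{item:ADequivcond:ES1capS2} of Proposition~\ref{proposition:ADequivcond} for the pair $(\eta(S_1),\eta(S_2))$. Apply Proposition~\ref{proposition:U1U2R} with $R=\eta$ to get
\[
F(\eta(S_1))\cap\eta(S_2)\subset\eta\bigl(\eta^\dagger(F(\eta(S_1)))\cap S_2\bigr)=\eta\bigl((\Ad_{\eta^\dagger}(F))(S_1)\cap S_2\bigr).
\]
Here $\Ad_{\eta^\dagger}(F)=\eta^\dagger\circ F\circ\eta\in\mathcal{E}_X$, since $\eta^\dagger$ is controlled by Theorem~\ref{thm:mono-characterization}. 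Then $S_1\adperp S_2$ gives $(\Ad_{\eta^\dagger}(F))(S_1)\cap S_2\in\mathcal{B}(X)$. Since $\eta$ is controlled, Proposition~\ref{proposition:cont_prev_bornology} shows its image under $\eta$ lies in $\mathcal{B}(Y)$. Lowerness of $\mathcal{B}(Y)$ then finishes the argument.

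The main point needing care is that Proposition~\ref{proposition:U1U2R} is stated only for $R\in\Rel(X)$, whereas here $R=\eta\in\Rel(X,Y)$ is between different sets. I expect its one-line proof to carry over verbatim: given $y\in U_1\cap\eta(U_2)$, pick $u\in U_2$ with $(y,u)\in\eta$; then $u\in\eta^\dagger(U_1)$. So I would simply note that it holds in this generality.
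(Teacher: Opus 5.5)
Your proof is correct, but it takes a different route from the paper's.

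The paper argues entirely inside the posets $\mathcal{P}_c$. It rewrites $S_1\adperp S_2$ as $\Low([S_1],[S_2])\subset\mathcal{B}_c(X)$, using criterion \eqref{item:ADequivcond:commonlowerbound} of Proposition~\ref{proposition:ADequivcond}. It obtains (1) from $[\eta]^{\mathcal{P}_c}(\Low([S_1],[S_2]))\subset\Low([\eta(S_1)],[\eta(S_2)])$ together with $([\eta]^{\mathcal{P}_c})^{-1}(\mathcal{B}_c(Y))\subset\mathcal{B}_c(X)$. It obtains the converse in (2) from the equality $[\eta]^{\mathcal{P}_c}(\Low([S_1],[S_2]))=\Low([\eta(S_1)],[\eta(S_2)])$. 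That equality comes from combining injectivity of $[\eta]^{\mathcal{P}_c}$ (Theorem~\ref{theorem:etaPcspimono}), openness (Theorem~\ref{theorem:etaPc}) and Proposition~\ref{proposition:LowOmega}.

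You instead work with criterion \eqref{item:ADequivcond:ES1capS2} and explicit entourages. Your set $E(S_1)\cap S_2$ is a concrete common lower bound of $S_1$ and $S_2$. Your use of Proposition~\ref{proposition:U1U2R} with $R=\eta$, together with $\Ad_{\eta^\dagger}(F)\in\mathcal{E}_X$, is an inlined form of the openness argument in Proposition~\ref{proposition:Pc_functorial} and of the full-faithfulness argument in Theorem~\ref{theorem:etaPcspimono}.

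In effect, after the reduction in Proposition~\ref{proposition:ADequivcond}, you show that every element of $\Low(\eta(S_1),\eta(S_2))$ lies below $\eta(V)$ for some $V\in\Low(S_1,S_2)$. This is weaker than the paper's equality, but it suffices because $\mathcal{B}(Y)$ is lower. That is why you never need injectivity or Proposition~\ref{proposition:LowOmega}.

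Your version is shorter and self-contained; the extension of Proposition~\ref{proposition:U1U2R} to $R\in\Rel(X,Y)$ is indeed immediate, as you check. The paper's version instead shows the theorem is a formal consequence of order-theoretic properties of $[\eta]^{\mathcal{P}_c}$: order-preserving, open, injective, and compatible with $\mathcal{B}_c$. This is what makes $\adperp$ visibly intrinsic to $\mathcal{P}_c(X)\cong\Sub(X)$.

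One harmless slip: in (2), the implication supplied by (1) is the ``$\Rightarrow$'' of the displayed equivalence, not ``$\Leftarrow$''. The implication you then prove is stated correctly.
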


\begin{proof}
Fix $S_1,S_2 \subset X$. By Proposition \ref{proposition:ADequivcond}, the condition that $S_1 \adperp S_2$ in $X$ is equivalent to
\[
\Low([S_1],[S_2]) \subset \mathcal{B}_c(X).
\]
Likewise, the condition that $\eta(S_1) \adperp \eta(S_2)$ in $Y$ is equivalent to
\[
\Low([\eta(S_1)],[\eta(S_2)]) \subset \mathcal{B}_c(Y).
\]
Here we put
\begin{align*}
\Low([S_1],[S_2]) &:= \{ [B] \in \mathcal{P}_c(X) \mid [B] \prec [S_1] \text{ and } [B] \prec [S_2] \} \subset \mathcal{P}_c(X), \\
\Low([\eta(S_1)],[\eta(S_2)]) &:= \{ [B] \in \mathcal{P}_c(Y) \mid [B] \prec [\eta(S_1)] \text{ and } [B] \prec [\eta(S_2)] \} \subset \mathcal{P}_c(Y).
\end{align*}
Since $[\eta]^{\mathcal{P}_c}$ preserves the partial orders, we also note that
\[
[\eta]^{\mathcal{P}_c}(\Low([S_1],[S_2])) \subset \Low([\eta(S_1)],[\eta(S_2)])
\]
holds.

Assume that $[\eta]$ is $(\mathcal{B}_c(X),\mathcal{B}_c(Y))$-proper and that $\eta(S_1) \adperp \eta(S_2)$, or equivalently,
\[
\Low([\eta(S_1)],[\eta(S_2)]) \subset \mathcal{B}_c(Y).
\]
Then
\[
[\eta]^{\mathcal{P}_c}(\Low([S_1],[S_2])) \subset \Low([\eta(S_1)],[\eta(S_2)]) \subset \mathcal{B}_c(Y).
\]
Since $[\eta]$ is $(\mathcal{B}_c(X),\mathcal{B}_c(Y))$-proper, we have
\[
([\eta]^{\mathcal{P}_c})^{-1}(\mathcal{B}_c(Y)) \subset \mathcal{B}_c(X).
\]
Therefore,
\[
\Low([S_1],[S_2]) \subset \mathcal{B}_c(X).
\]
Hence $S_1 \adperp S_2$ in $X$.

Assume now that $[\eta]$ is a mono-morphism. In particular, $[\eta]$ is $(\mathcal{B}_c(X),\mathcal{B}_c(Y))$-proper, and the preceding argument has shown that
\[
\eta(S_1) \adperp \eta(S_2) \text{ in } Y \Rightarrow S_1 \adperp S_2 \text{ in } X.
\]
We prove the converse implication. Assume that $S_1 \adperp S_2$, that is,
\[
\Low([S_1],[S_2]) \subset \mathcal{B}_c(X).
\]
Since $[\eta]$ is a mono-morphism, Theorem \ref{theorem:etaPcspimono} implies that $[\eta]^{\mathcal{P}_c}$ is injective. In particular, 
since $[\eta]^{\mathcal{P}_c}$ is open by Theorem \ref{theorem:etaPc}, 
Proposition \ref{proposition:LowOmega} yields
\[
[\eta]^{\mathcal{P}_c}(\Low([S_1],[S_2])) = \Low([\eta(S_1)],[\eta(S_2)]).
\]
It follows that
\[
\Low([\eta(S_1)],[\eta(S_2)]) = [\eta]^{\mathcal{P}_c}(\Low([S_1],[S_2])) \subset [\eta]^{\mathcal{P}_c}(\mathcal{B}_c(X)) \subset \mathcal{B}_c(Y).
\]
Thus, $\eta(S_1) \adperp \eta(S_2)$ in $Y$.
\end{proof}

By combining Theorem \ref{theorem:properAD} with Corollary \ref{corollary:CE_Pcbij}, we obtain the following:

\begin{corollary}\label{corollary:CE_adperp}
Suppose that $\eta$ is a coarse equivalence. Then the isomorphism $[\eta]^{\mathcal{P}_c}$ between the finite-join partially ordered sets $\mathcal{P}_c(X)$ and $\mathcal{P}_c(Y)$ preserves the binary relation $\adperp$.
\end{corollary}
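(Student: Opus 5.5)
This corollary follows by assembling results already established for mono-morphisms and for the induced maps on coarse subspaces.

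First, since $\eta$ is a coarse equivalence, $[\eta]$ is an isomorphism in $\Coarse_{\CtrTotRel}/\close$, and in particular a mono-morphism. So Theorem \ref{theorem:properAD}(2) applies directly: for all $S_1,S_2\subset X$,
\[
\eta(S_1)\adperp\eta(S_2)\text{ in }Y \iff S_1\adperp S_2\text{ in }X.
\]
By the well-definedness of $\adperp$ on $\mathcal{P}_c$ (invariance under $\sim$), this reads
\[
[\eta]^{\mathcal{P}_c}([S_1])\adperp[\eta]^{\mathcal{P}_c}([S_2]) \iff [S_1]\adperp[S_2],
\]
because $[\eta]^{\mathcal{P}_c}([S])=[\eta(S)]$ by Theorem \ref{theorem:etaPc}.

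Second, Corollary \ref{corollary:CE_Pcbij} shows that $[\eta]^{\mathcal{P}_c}$ is a finite-join-preserving bijection. Combined with the first step, the bijection and its inverse both preserve $\adperp$. For the inverse, given $T_1,T_2\subset Y$, surjectivity lets us write $[T_i]=[\eta(S_i)]$, and the displayed equivalence then transfers $\adperp$ back to $X$. Equivalently, one can apply the same argument to an inverse $[\xi^\dagger]$ obtained from Theorem \ref{theorem:CtrTotRelClo_isom}, since it is again a mono-morphism.

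No step is a genuine obstacle. The only point needing care is that ``preserves the binary relation'' should be read as an equivalence (so it holds for the inverse as well), and this is exactly what the biconditional in Theorem \ref{theorem:properAD}(2) provides.
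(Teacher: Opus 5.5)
Your proposal is correct and is essentially the paper's own argument. The paper likewise combines Theorem \ref{theorem:properAD}(2), applicable because a coarse equivalence is in particular a mono-morphism, with the bijectivity of $[\eta]^{\mathcal{P}_c}$ from Corollary \ref{corollary:CE_Pcbij}, so that $\adperp$ is preserved in both directions.
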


\begin{example}
Let $G$ be a linear reductive Lie group equipped with the coarse structure $\mathcal{E}^{\mathrm{LR}}_G$ defined in Example \ref{example:groupaction_coarse}. By combining Corollary \ref{corollary:CE_adperp} with the arguments in Example \ref{example:Cartan}, one sees that the following conditions on a pair $(S_1,S_2)$ of subsets of $G$ are equivalent:
\begin{itemize}
    \item $S_1 \adperp S_2$ in $(G,\mathcal{E}^{\mathrm{LR}}_G)$,
    \item $\mu(S_1) \adperp \mu(S_2)$ in $(\Omega = K \backslash G/K,\mathcal{E}^{d_{\Omega}})$.
\end{itemize}

In particular, by the arguments in Example \ref{example:Kobayashipitchfork}, if $(S_1,S_2)=(H,L)$ is a pair of closed subgroups of $G$, then the following conditions are equivalent:
\begin{enumerate}
    \item $H \adperp L$ in $(G,\mathcal{E}^{\mathrm{LR}}_G)$.
    \item The $H$-action on the homogeneous space $G/L$ is proper.
    \item The $L$-action on the homogeneous space $G/H$ is proper.
    \item The diagonal $G$-action on $G/L \times G/H$ is proper.
    \item $\mu(S_1) \adperp \mu(S_2)$ in $(\Omega = K \backslash G/K,\mathcal{E}^{d_{\Omega}})$.
\end{enumerate}

Here $\Omega := K \backslash G/K \simeq W(G,K)\backslash \mathfrak{a}$ (see \cite[Section 6]{OgawaOkuda2025Kob60} for details). The equivalence above essentially coincides with the theorem established by Kobayashi \cite{Kobayashi89} when $H$ and $L$ are reductive subgroups, and independently by Kobayashi \cite{Kobayashi96} and Benoist \cite{Benoist96} in the general case. This theorem, commonly known as the ``properness criterion,'' is a fundamental tool in the study of discontinuous groups acting on reductive homogeneous spaces with noncompact isotropy subgroups.

The proof strategy described above is primarily a coarse-geometric reinterpretation of the argument in Kobayashi \cite{Kobayashi96}. A proof of the coarse equivalence property of $\mu$ introduced in Example \ref{example:Cartan} will be given elsewhere.
\end{example}

\section*{Acknowledgements}

This paper is based on a talk given by the fourth author at the 10th China--Japan Geometry Conference. The authors would like to express their sincere gratitude to the organizers of the conference and to the editors of the proceedings volume for their efforts in organizing the conference and preparing this publication.

The authors' interest in coarse geometry was motivated by the observation that several arguments appearing in the study of discontinuous groups on homogeneous spaces, particularly those concerning proper group actions in \cite{Kobayashi96}, admit natural interpretations from the viewpoint of coarse geometry. The authors are especially grateful to Toshiyuki Kobayashi, whose work inspired this line of research. The fourth author was a student of Toshiyuki Kobayashi and benefited greatly from his guidance, not only in the theory of discontinuous groups but also in many other aspects of mathematics.

The authors have benefited greatly from discussions on coarse geometry with many colleagues. In particular, they would like to thank Tomohiro Fukaya, Yutaro Kamigaki, Kazuki Kannaka, Atsushi Matsuo, Takumi Matsuka, Masato Mimura, Ayato Mitsuishi, Shunsuke Miyauchi, Yosuke Morita, Shin-ichi Oguni, Masashi Omoto, Takashi Shioya, Hiroshi Tamaru, Koichi Tojo, Kotaro Mine, Takamitsu Yamauchi, and Ibuki Yonezawa for valuable discussions and insightful comments.

\appendix

\section{Proofs of Some Propositions in Section \ref{subsection:Mslice}}\label{section:app:proofMslice}

In this appendix, we provide proofs of several propositions concerning the slice categories and the orthogonal factorization system introduced in Section \ref{subsection:Mslice}.

\subsection{Cocompleteness of Slice Categories}

Let $I$ be a $\mathcal{V}$-small category. We shall write the endofunctor $\Cat(I,-)$ of $\Cat$ (see \cite[Chapter~II, Section~5]{MacLane1971Working}), given by forming the functor category from $I$, simply in the form
\begin{align*}
    \mathcal{C} &\mapsto \mathcal{C}^I \\
    (F : \mathcal{C} \rightarrow_\Cat \mathcal{D}) &\mapsto (F^I : \mathcal{C}^I \rightarrow_\Cat \mathcal{D}^I).
\end{align*}
This endofunctor naturally induces an endofunctor of $\hCat$.

Henceforth, fix $\mathcal{C} \in \Ob(\Cat)$. We write the diagonal functor $\Delta : \mathcal{C} \rightarrow \mathcal{C}^I$ (cf.~\cite[Definition 3.1.1]{Riehl2016category}) using notation such as
\begin{align*}
    X &\mapsto \Delta_X \\
    (f : X \rightarrow_\mathcal{C} Y) &\mapsto (\Delta_f : \Delta_X \rightarrow_{\mathcal{C}^I} \Delta_Y).
\end{align*}
We say that $\mathcal{C}$ is $I$-cocomplete if the diagonal functor $\Delta$ admits a left adjoint $\lambda^\mathcal{C}$ (cf.~\cite[Proposition 4.6.1]{Riehl2016category}). 
For $I$-cocomplete categories $\mathcal{C}$ and $\mathcal{D}$, 
we say that a functor $F : \mathcal{C} \rightarrow_\Cat \mathcal{D}$ preserves $I$-colimits if the following equality holds in $\hCat$:
\[
[F] \circ [\lambda^\mathcal{C}] = [\lambda^\mathcal{D}] \circ [F^I].
\]

For each object $X$ of $\mathcal{C}$, let $U_X$ denote the forgetful functor from the slice category $\mathcal{C}/X$ to $\mathcal{C}$. Thus, for $h \in \Ob(\mathcal{C}/X)$, we set $U_X(h) = s(h)$, and for $\phi : h_1 \rightarrow_{\mathcal{C}/X} h_2$, we set
\[
U_X(\phi) = (\phi : s(h_1) \rightarrow_{\mathcal{C}} s(h_2)).
\]
Here, for a morphism $h$ in $\mathcal{C}$, $s(h)$ denotes the source of $h$.

The following proposition identifies $(\mathcal{C}/X)^I$ with $\mathcal{C}^I/\Delta_X$:

\begin{proposition}\label{proposition:CXI}
Fix an object $X$ of $\mathcal{C}$.
\begin{enumerate}
    \item Let $D_X : I \rightarrow_{\Cat} (\mathcal{C}/X)$. Put $\widetilde{D_X} := U_X \circ D_X \in \Ob(\mathcal{C}^I)$. Then the assignment
    \[
    \Xi(D_X) : \Ob(I) \rightarrow \Mor(\mathcal{C}), \quad i \mapsto D_X(i)
    \]
    defines a natural transformation from $\widetilde{D_X}$ to $\Delta_X$. In particular, $\Xi(D_X) \in \Ob(\mathcal{C}^I/{\Delta_X})$.
    \item Fix a natural transformation $\Phi : D_X \rightarrow_{(\mathcal{C}/X)^I} D'_X$. Then the horizontal composition
    \[
    \Xi(\Phi) := \id_{U_X} * \Phi : \widetilde{D_X} = U_X \circ D_X \rightarrow_{\mathcal{C}^I} \widetilde{D'_X} = U_X \circ D'_X
    \]
    satisfies
    \[
    \Xi(D_X) = \Xi(D'_X) \circ \Xi(\Phi)
    \]
    in $\mathcal{C}^I$. In particular, $\Xi(\Phi) : \Xi(D_X) \rightarrow_{\mathcal{C}^I/\Delta_X} \Xi(D'_X)$.
    \item The assignments $D_X \mapsto \Xi(D_X)$ and $\Phi \mapsto \Xi(\Phi)$ define an isomorphism of categories $\Xi_X$ from $(\mathcal{C}/X)^I$ to $\mathcal{C}^I/\Delta_X$.
    \item Via the isomorphism $\Xi_X$, the diagonal functor $\Delta^{X} : \mathcal{C}/X \rightarrow (\mathcal{C}/X)^I \simeq \mathcal{C}^I/\Delta_X$ is described as follows:
    \begin{align*}
    \Ob(\mathcal{C}/X) &\rightarrow \Ob(\mathcal{C}^I/\Delta_X), \quad h \mapsto \Delta^{X}_h : \Ob(I) \rightarrow \Mor(\mathcal{C}), \quad i \mapsto h, \\
    \Mor(\mathcal{C}/X) &\rightarrow \Mor(\mathcal{C}^I/\Delta_X), \quad \phi \mapsto \Delta^{X}_\phi : \Ob(I) \rightarrow \Mor(\mathcal{C}), \quad i \mapsto \phi.
    \end{align*}
\end{enumerate}
\end{proposition}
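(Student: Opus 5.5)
We verify the four assertions in order; each is a direct unwinding of the definitions of functor categories and slice categories, with no external input.

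For (1), let $D_X : I \to \mathcal{C}/X$. For each $i$, $D_X(i)$ is a morphism $U_X(D_X(i)) = \widetilde{D_X}(i) \to X$ in $\mathcal{C}$. We must check that the square is natural with respect to $u : i \to j$ in $I$, that is, $D_X(j)\circ \widetilde{D_X}(u) = D_X(i) = \Delta_X(u)\circ D_X(i)$, using $\Delta_X(u)=\id_X$. The first equality is exactly the statement that $D_X(u)$ is a morphism $D_X(i)\to D_X(j)$ in $\mathcal{C}/X$, and $\widetilde{D_X}(u)=U_X(D_X(u))$ is the same underlying arrow. So $\Xi(D_X)\in\Ob(\mathcal{C}^I/\Delta_X)$.

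For (2), let $\Phi: D_X\to D'_X$. The components of $\id_{U_X}*\Phi$ are $U_X(\Phi_i)$, i.e.\ the underlying $\mathcal{C}$-arrows of $\Phi_i$. These are natural because $U_X$ is a functor. Componentwise, $D'_X(i)\circ\Phi_i = D_X(i)$, since $\Phi_i$ is a slice morphism, and this gives $\Xi(D'_X)\circ\Xi(\Phi)=\Xi(D_X)$.

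For (3), functoriality of $\Xi_X$ follows because horizontal composition with $\id_{U_X}$ preserves identities and vertical composites. To get an isomorphism we construct the inverse explicitly. Given an object $(G,\alpha)$ of $\mathcal{C}^I/\Delta_X$, with $G:I\to\mathcal{C}$ and $\alpha:G\Rightarrow\Delta_X$, define $D(i):=\alpha_i$ and $D(u):=G(u)$. Naturality of $\alpha$, namely $\alpha_j\circ G(u)=\alpha_i$, says precisely that $G(u)$ is a slice morphism. Given a morphism $\beta:(G,\alpha)\to(G',\alpha')$, define the natural transformation with components $\beta_i$, regarded as slice morphisms $\alpha_i\to\alpha'_i$; this uses $\alpha'\circ\beta=\alpha$. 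Both composites with $\Xi_X$ are identities on the nose, because $U_X$ is injective on hom-sets and an object of $\mathcal{C}/X$ is determined by the morphism itself.

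For (4), evaluate $\Xi_X\circ\Delta^X$. For $h\in\Ob(\mathcal{C}/X)$, the constant diagram at $h$ is sent to the transformation with every component equal to $h$. For a slice morphism $\phi$, the constant transformation is sent to the transformation with every component equal to $\phi$.

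The only step requiring care is (3), specifically checking that the inverse construction lands in the right categories and that both composites are strict identities rather than merely isomorphisms. This reduces to the observation that an object of $\mathcal{C}/X$ is literally a morphism and that $U_X$ is faithful.
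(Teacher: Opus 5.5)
Your proposal is correct and takes the same approach as the paper, which states only that this proposition follows directly from the definitions. Your componentwise verification, including the explicit strict inverse in (3) given by $D(i):=\alpha_i$ and $D(u):=G(u)$ on pairs $(G,\alpha)$, is that unwinding written out in full.
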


\begin{proposition}\label{proposition:fsharpI}
Let $f : X \rightarrow_{\mathcal{C}} Y$. Under the isomorphisms $\Xi_X : (\mathcal{C}/X)^I \simeq \mathcal{C}^I/\Delta_X$ and $\Xi_Y : (\mathcal{C}/Y)^I \simeq \mathcal{C}^I/\Delta_Y$, the functor $(f_\sharp)^I : (\mathcal{C}/X)^I \rightarrow (\mathcal{C}/Y)^I$ corresponds to $(\Delta_f)_\sharp : \mathcal{C}^I/{\Delta_X} \rightarrow \mathcal{C}^I/{\Delta_Y}$; that is,
\[
\Xi_Y \circ (f_\sharp)^I = (\Delta_f)_\sharp \circ \Xi_X
\]
holds in $\Cat$.
\end{proposition}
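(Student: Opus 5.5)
The statement to prove is Proposition \ref{proposition:fsharpI}. Both sides of
\[
\Xi_Y \circ (f_\sharp)^I = (\Delta_f)_\sharp \circ \Xi_X
\]
are functors $(\mathcal{C}/X)^I \rightarrow \mathcal{C}^I/\Delta_Y$. Equality in $\Cat$ is strict equality of functors, so the plan is to check directly, using the explicit descriptions from Proposition \ref{proposition:CXI}, that they agree on objects and on morphisms. No adjunction or colimit argument is needed.

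\textbf{Objects.} Take a diagram $D_X : I \rightarrow \mathcal{C}/X$.
\begin{itemize}
\item Left-hand side. First $(f_\sharp)^I(D_X) = f_\sharp \circ D_X$, which sends $i$ to $f \circ D_X(i)$. Applying $\Xi_Y$ gives the natural transformation $U_Y \circ f_\sharp \circ D_X \Rightarrow \Delta_Y$ whose component at $i$ is $f \circ D_X(i)$.
\item Right-hand side. $\Xi_X(D_X)$ is the transformation $U_X \circ D_X \Rightarrow \Delta_X$ with components $D_X(i)$. The functor $(\Delta_f)_\sharp$ postcomposes with $\Delta_f$, and vertical composition is computed componentwise, so the components become $(\Delta_f)_i \circ D_X(i) = f \circ D_X(i)$.
\item Sources. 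The domains must also agree as objects of $\mathcal{C}^I$. This holds because $U_Y \circ f_\sharp = U_X$ on the nose: $f_\sharp$ keeps both the source of each object and the underlying $\mathcal{C}$-morphism of each arrow. Hence $U_Y \circ f_\sharp \circ D_X = U_X \circ D_X = \widetilde{D_X}$.
\end{itemize}

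\textbf{Morphisms.} Take $\Phi : D_X \rightarrow D'_X$.
\begin{itemize}
\item Left-hand side. $(f_\sharp)^I(\Phi) = \id_{f_\sharp} * \Phi$ has components $f_\sharp(\Phi_i) = \Phi_i$, now viewed as arrows of $\mathcal{C}/Y$. Then $\Xi_Y$ applies $U_Y$ componentwise, giving components $\Phi_i$ in $\mathcal{C}$.
\item Right-hand side. $\Xi_X(\Phi) = \id_{U_X} * \Phi$ has components $U_X(\Phi_i) = \Phi_i$. The functor $(\Delta_f)_\sharp$ leaves the underlying morphism of $\mathcal{C}^I$ unchanged.
\end{itemize}
Both sides therefore give the same natural transformation $\widetilde{D_X} \rightarrow \widetilde{D'_X}$. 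They are also morphisms between the same source and target objects of $\mathcal{C}^I/\Delta_Y$ by the object computation.

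\textbf{Main obstacle.} There is no real difficulty here; this is bookkeeping. The one point needing care is that the strict identity $U_Y \circ f_\sharp = U_X$ holds exactly, not merely up to isomorphism. The equality is claimed in $\Cat$ rather than in $\hCat$, so sources and targets must match literally. I would state that identity explicitly first, and then run the two componentwise comparisons above.
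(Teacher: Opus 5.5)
Your proof is correct and follows the paper's approach. The paper only states that this proposition follows directly from the definitions, and your componentwise comparison on objects and morphisms is that verification written out. Your remark that $U_Y \circ f_\sharp = U_X$ holds strictly, which makes the sources agree on the nose in $\mathcal{C}^I/\Delta_Y$, is the one point that needs stating.
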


Propositions \ref{proposition:CXI} and \ref{proposition:fsharpI} follow directly from the definitions.

Proposition \ref{proposition:CXFCC} follows from the following theorem:

\begin{theorem}\label{theorem:Icocompletenessofslice}
Suppose that $\mathcal{C}$ is $I$-cocomplete. Fix a left adjoint $\lambda$ to the diagonal functor $\Delta : \mathcal{C} \rightarrow \mathcal{C}^I$, together with the natural isomorphism associated with this adjunction
\[
\Theta : \mathcal{C}^I(-,\Delta -) \rightarrow \mathcal{C}(\lambda -,-).
\]
\begin{enumerate}
    \item Fix an object $X$ of $\mathcal{C}$. We define a functor $\lambda^X : \mathcal{C}^I/\Delta_X \rightarrow \mathcal{C}/X$ as follows. For each object $\alpha : D \rightarrow_{\mathcal{C}^I} \Delta_X$ of the slice category $\mathcal{C}^I/{\Delta_X}$, put $\lambda^X(\alpha) := \Theta_{D,X}(\alpha) : \lambda(D) \rightarrow_{\mathcal{C}} X$. For each morphism $\Phi : \alpha_1 \rightarrow_{\mathcal{C}^I/{\Delta_X}} \alpha_2$, put $D_i := s(\alpha_i)$ in $\mathcal{C}^I$. Since $\Phi$ is a morphism in $\mathcal{C}^I$ from $D_1$ to $D_2$, we have $\lambda(\Phi) : \lambda(D_1) \rightarrow_{\mathcal{C}} \lambda(D_2)$. Note that $\lambda^X(\alpha_2) \circ \lambda(\Phi) = \lambda^X(\alpha_1)$. We put
    \[
    \lambda^X(\Phi) := \lambda(\Phi) : \lambda^X(\alpha_1) \rightarrow_{\mathcal{C}/X} \lambda^X(\alpha_2).
    \]
    Then $\lambda^X$ is a left adjoint to $\Delta^X$.
    \item Let $f : X \rightarrow_{\mathcal{C}} Y$. Then the following equality holds in $\Cat$:
    \[
    f_\sharp \circ \lambda^X = \lambda^Y \circ (\Delta_f)_\sharp.
    \]
    In particular, $f_\sharp : \mathcal{C}/X \rightarrow \mathcal{C}/Y$ preserves $I$-colimits.
\end{enumerate}
\end{theorem}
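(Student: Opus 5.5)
The plan is to verify the two claims of Theorem \ref{theorem:Icocompletenessofslice} directly from the adjunction $\lambda\dashv\Delta$, working through the identifications $\Xi_X$ and $\Xi_Y$ of Propositions \ref{proposition:CXI} and \ref{proposition:fsharpI}.

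For (1), I first check that $\lambda^X$ is a functor. The key identity is $\lambda^X(\alpha_2)\circ\lambda(\Phi)=\lambda^X(\alpha_1)$. This is naturality of $\Theta$ in the first variable: $\Theta_{D_1,X}(\alpha_2\circ\Phi)=\Theta_{D_2,X}(\alpha_2)\circ\lambda(\Phi)$, together with $\alpha_2\circ\Phi=\alpha_1$. Preservation of identities and composition is then inherited from $\lambda$.

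Next I exhibit the hom-set bijection. Fix $\alpha:D\to\Delta_X$ in $\mathcal{C}^I/\Delta_X$ and $h:Z\to X$ in $\mathcal{C}/X$. By Proposition \ref{proposition:CXI}(4), a morphism $\alpha\to\Delta^X_h$ in $\mathcal{C}^I/\Delta_X$ is a natural transformation $\beta:D\to\Delta_Z$ with $\Delta_h\circ\beta=\alpha$. A morphism $\lambda^X(\alpha)\to h$ in $\mathcal{C}/X$ is a map $g:\lambda D\to Z$ with $h\circ g=\Theta_{D,X}(\alpha)$. The candidate bijection is $\beta\mapsto\Theta_{D,Z}(\beta)$. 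Naturality of $\Theta$ in the second variable gives $\Theta_{D,X}(\Delta_h\circ\beta)=h\circ\Theta_{D,Z}(\beta)$. Since $\Theta_{D,X}$ is injective, the condition $\Delta_h\circ\beta=\alpha$ is equivalent to $h\circ\Theta_{D,Z}(\beta)=\Theta_{D,X}(\alpha)$. So $\Theta_{D,Z}$ restricts to a bijection between the two slice hom-sets. Naturality of this restricted bijection in $\alpha$ and in $h$ follows by restricting the naturality of $\Theta$. This gives $\lambda^X\dashv\Delta^X$.

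For (2), the equality $f_\sharp\circ\lambda^X=\lambda^Y\circ(\Delta_f)_\sharp$ is checked on objects and on morphisms.

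On objects, $f_\sharp\lambda^X(\alpha)=f\circ\Theta_{D,X}(\alpha)$. On the other side, $\lambda^Y((\Delta_f)_\sharp\alpha)=\Theta_{D,Y}(\Delta_f\circ\alpha)$. These agree by naturality of $\Theta$ in the second variable.

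On morphisms, both composites send $\Phi$ to $\lambda(\Phi)$, so they agree.

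To deduce preservation of $I$-colimits, I use that $\lambda^{\mathcal{C}/X}$ is, up to natural isomorphism, $\lambda^X\circ\Xi_X$. Then Proposition \ref{proposition:fsharpI} gives
\[
[f_\sharp][\lambda^{\mathcal{C}/X}]=[\lambda^Y][(\Delta_f)_\sharp][\Xi_X]=[\lambda^Y][\Xi_Y][(f_\sharp)^I]=[\lambda^{\mathcal{C}/Y}][(f_\sharp)^I]
\]
in $\hCat$.

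I expect the main obstacle to be bookkeeping rather than any genuine difficulty. In particular, one must confirm that the left adjoint is only determined up to isomorphism, which is why the statement is made in $\hCat$, and that $\Xi$ correctly transports the diagonal functor.
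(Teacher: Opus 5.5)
Your proposal is correct and takes essentially the paper's approach. The paper only asserts that the theorem follows directly from the definitions, and you carry out exactly that unwinding: naturality of $\Theta$ in each variable gives functoriality of $\lambda^X$, the restricted hom-set bijection $\lambda^X\dashv\Delta^X$, and the strict equality $f_\sharp\circ\lambda^X=\lambda^Y\circ(\Delta_f)_\sharp$; transporting along $\Xi_X,\Xi_Y$ via Proposition~\ref{proposition:fsharpI}, with $\lambda^X\circ\Xi_X$ a left adjoint of the genuine diagonal of $\mathcal{C}/X$, then yields preservation of $I$-colimits in $\hCat$.
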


Theorem \ref{theorem:Icocompletenessofslice} also follows directly from the definitions.

\subsection{The Functor Associated with the $\mathcal{M}$-Slice Categories}

Fix a $\mathcal{V}$-small category $\mathcal{C}$ and an orthogonal factorization system $(\mathcal{E},\mathcal{M})$. Put
\[
\mathcal{M} \times_{s,t} \mathcal{E} := \{ (m,e) \in \mathcal{M} \times \mathcal{E} \mid s(m) = t(e) \},
\]
where $s(m)$ and $t(e)$ denote the source of the morphism $m$ and the target of the morphism $e$, respectively. By the definition of an orthogonal factorization system, the map
\[
\mathcal{M} \times_{s,t} \mathcal{E} \rightarrow \Mor(\mathcal{C}), \quad (m,e) \mapsto m \circ e
\]
is surjective.

\begin{setting}\label{setting:section_EMfact}
Fix a section
\[
\Theta : \Mor(\mathcal{C}) \rightarrow \mathcal{M} \times_{s,t} \mathcal{E}, \quad h \mapsto \Theta(h) = (m^{\Theta}(h),e^{\Theta}(h)).
\]
\end{setting}

Assume Setting \ref{setting:section_EMfact}. For each object $X$ of $\mathcal{C}$, we construct a functor $\pi^{\Theta}_X : \mathcal{C}/X \rightarrow \mathcal{M}/X$ as follows. For each object $h$ of $\mathcal{C}/X$, put $\pi^\Theta_X(h) = m^{\Theta}(h)$. For each morphism $\phi : h_1 \rightarrow_{\mathcal{C}/X} h_2$, since $e^{\Theta}(h_1) \perp m^{\Theta}(h_2)$, there exists a unique morphism $\pi^\Theta_X(\phi)$ in $\mathcal{C}$ from $s(m^{\Theta}(h_1))$ to $s(m^{\Theta}(h_2))$ such that the following diagram commutes:
\[
\xymatrix@C=5.5em{
s(h_1) \ar[r]^{e^{\Theta}(h_1)} \ar[d]^{\phi} & s(m^{\Theta}(h_1)) \ar[r]^{m^{\Theta}(h_1)} \ar@{-->}[d]^{\pi^\Theta_X(\phi)} & X \ar[d]^{\id_X} \\
s(h_2) \ar[r]^{e^{\Theta}(h_2)} & s(m^{\Theta}(h_2)) \ar[r]^{m^{\Theta}(h_2)} & X
}
\]

Proposition \ref{proposition:MXCXreflective} follows from the following proposition together with the uniqueness of left adjoints:

\begin{proposition}
For each object $X$ of $\mathcal{C}$, the correspondences
\[
h \mapsto m^{\Theta}(h),
\qquad
(\phi : h_1 \rightarrow_{\mathcal{C}/X} h_2) \mapsto \bigl(\pi^\Theta_X(\phi) : m^{\Theta}(h_1) \rightarrow_{\mathcal{M}/X} m^{\Theta}(h_2)\bigr)
\]
define a functor $\pi^\Theta_X : \mathcal{C}/X \rightarrow \mathcal{M}/X$, which is left adjoint to the inclusion functor $\iota_X : \mathcal{M}/X \rightarrow \mathcal{C}/X$.
\end{proposition}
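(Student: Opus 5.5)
We have to check three things: that $\pi^\Theta_X$ is well defined on morphisms, that it is functorial, and that it is left adjoint to $\iota_X$. The adjunction will be built by exhibiting a unit and checking its universal property. In each step the engine is the unique-lifting property $e\perp m$ for $e\in\mathcal{E}$ and $m\in\mathcal{M}$.

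First, well-definedness. For $\phi : h_1\rightarrow_{\mathcal{C}/X} h_2$ we have $h_2\circ\phi=h_1$. The outer square has left edge $e^\Theta(h_1)$, top edge $e^\Theta(h_2)\circ\phi$, right edge $m^\Theta(h_2)$ and bottom edge $m^\Theta(h_1)$. It commutes, because
\[
m^\Theta(h_2)\circ e^\Theta(h_2)\circ\phi = h_2\circ\phi = h_1 = m^\Theta(h_1)\circ e^\Theta(h_1).
\]
Since $e^\Theta(h_1)\in\mathcal{E}={}^\perp\mathcal{M}$, a unique diagonal $\pi^\Theta_X(\phi)$ exists with $m^\Theta(h_2)\circ\pi^\Theta_X(\phi)=m^\Theta(h_1)$. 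By the latter equation it is a morphism of $\mathcal{C}/X$ between objects of $\mathcal{M}/X$. To see it lies in $\mathcal{M}/X$, note that $\mathcal{M}=\mathcal{E}^\perp$ is closed under left cancellation: if $m'\circ g\in\mathcal{M}$ with $m'\in\mathcal{M}$, then $g\in\mathcal{M}$. This is a standard consequence of unique lifting, which I would verify directly. Granting it, $\pi^\Theta_X(\phi)\in\mathcal{M}$. Functoriality then follows from uniqueness of lifts. Both $\pi^\Theta_X(\psi\circ\phi)$ and $\pi^\Theta_X(\psi)\circ\pi^\Theta_X(\phi)$ fill the same square for $\psi\circ\phi$, so they coincide. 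Likewise $\id$ fills the square for $\id_{h}$, so $\pi^\Theta_X(\id_h)=\id$.

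For the adjunction, I take as unit $\eta_h := e^\Theta(h) : h\rightarrow_{\mathcal{C}/X}\iota_X\pi^\Theta_X(h)$, which is a valid morphism since $m^\Theta(h)\circ e^\Theta(h)=h$. The claim is the universal property. For each object $m : Z\to X$ of $\mathcal{M}/X$ and each $\phi : h\rightarrow_{\mathcal{C}/X} m$, there should be a unique $\psi : m^\Theta(h)\rightarrow_{\mathcal{M}/X} m$ with $\psi\circ e^\Theta(h)=\phi$.

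To get $\psi$, I apply the lifting property to the square with left edge $e^\Theta(h)$, top edge $\phi$, right edge $m$ and bottom edge $m^\Theta(h)$. It commutes because $m\circ\phi=h=m^\Theta(h)\circ e^\Theta(h)$. This gives a unique $\psi$ with $\psi\circ e^\Theta(h)=\phi$ and $m\circ\psi=m^\Theta(h)$, and $\psi\in\mathcal{M}$ by left cancellation again. Uniqueness in $\mathcal{M}/X$ holds because any other candidate satisfies both triangle identities, so it is a lift of the same square.

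Once the universal property of the unit is established, the standard characterization of adjunctions via universal arrows yields $\pi^\Theta_X\dashv\iota_X$. One should check that the functor so produced agrees with $\pi^\Theta_X$ on morphisms; this holds again by uniqueness of lifts. The only real obstacle is the left-cancellation property of $\mathcal{M}$, needed so that the lifts lie in $\mathcal{M}/X$ rather than merely in $\mathcal{C}/X$.

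To prove it, let $m'\circ g\in\mathcal{M}$ with $m'\in\mathcal{M}$, and take $e\in\mathcal{E}$ with a square $g\circ u = v\circ e$. Lift the square with right edge $m'\circ g$, top edge $u$ and bottom edge $m'\circ v$ to get a diagonal $d$. This $d$ satisfies the upper triangle $d\circ e=u$. For the lower triangle, $g\circ d$ and $v$ both lift the square with right edge $m'$, top edge $g\circ u$ and bottom edge $m'\circ v$, so uniqueness gives $g\circ d=v$. Uniqueness of $d$ follows from uniqueness for $m'\circ g$. Hence $g\in\mathcal{E}^\perp=\mathcal{M}$.
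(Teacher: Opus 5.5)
Your proof is correct and follows essentially the paper's argument: the paper establishes the bijection $\mathcal{M}/X(m^{\Theta}(h),m)\to\mathcal{C}/X(h,m)$, $d\mapsto d\circ e^{\Theta}(h)$, by unique lifting against $e^{\Theta}(h)\perp m$, which is exactly your claim that $e^{\Theta}(h)$ is a universal arrow, and it also gets functoriality from uniqueness of lifts. Your left-cancellation lemma for $\mathcal{M}=\mathcal{E}^{\perp}$ ensures that the lifts are morphisms of $\mathcal{M}/X$ rather than merely of $\mathcal{C}/X$; the paper's proof leaves this implicit, so including it adds rigor.
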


\begin{proof}
The fact that the above assignments define a functor follows from the uniqueness of lifts. We show that $\pi^\Theta_X$ is left adjoint to the inclusion functor. Let $h$ be an object of $\mathcal{C}/X$, and let $m$ be an object of $\mathcal{M}/X$. For each $d : m^{\Theta}(h) \rightarrow_{\mathcal{M}/X} m$, the composite $d \circ e^{\Theta}(h)$ is a morphism $h \rightarrow_{\mathcal{C}/X} m$. Since $e^{\Theta}(h) \perp m$, the existence and uniqueness of lifts imply that $d \mapsto d \circ e^{\Theta}(h)$ defines a bijection from $\mathcal{M}/X(m^{\Theta}(h),m)$ to $\mathcal{C}/X(h,m)$. The naturality of this bijection follows from the definition of $\pi^\Theta_X$.
\end{proof}

The following proposition is a key ingredient in the proof of Proposition \ref{proposition:functfM}:

\begin{proposition}\label{proposition:fM}
Fix a morphism $f : X \rightarrow_{\mathcal{C}} Y$. Put $f^\mathcal{M}_\sharp := \pi_Y^\Theta \circ f_\sharp \circ \iota_X$. Then the following equality holds in $\hCat$:
\[
[f^\mathcal{M}_\sharp] \circ [\pi^\Theta_X] = [\pi^\Theta_Y] \circ [f_\sharp].
\]
\end{proposition}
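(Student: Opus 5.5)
To prove $[f^\mathcal{M}_\sharp] \circ [\pi^\Theta_X] = [\pi^\Theta_Y] \circ [f_\sharp]$ in $\hCat$, I will construct an explicit natural isomorphism between the two functors $\pi^\Theta_Y\circ f_\sharp\circ\iota_X\circ\pi^\Theta_X$ and $\pi^\Theta_Y\circ f_\sharp$ from $\mathcal{C}/X$ to $\mathcal{M}/Y$. The left-hand side is represented by the composite $\pi^\Theta_Y\circ f_\sharp\circ\iota_X\circ\pi^\Theta_X$, which sends an object $h$ of $\mathcal{C}/X$ to $m^\Theta(f\circ m^\Theta(h))$. The right-hand side sends $h$ to $m^\Theta(f\circ h)$.

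\textbf{Componentwise isomorphism.} Write $f\circ h = f\circ m^\Theta(h)\circ e^\Theta(h)$. Factor $f\circ m^\Theta(h) = m^\Theta(f\circ m^\Theta(h))\circ e^\Theta(f\circ m^\Theta(h))$. Then
\[
f\circ h = m^\Theta(f\circ m^\Theta(h))\circ\bigl(e^\Theta(f\circ m^\Theta(h))\circ e^\Theta(h)\bigr).
\]
Since $\mathcal{E}={}^\perp\mathcal{M}$ is closed under composition (Proposition \ref{proposition:perpsubcategory}), this is an $(\mathcal{E},\mathcal{M})$-factorization of $f\circ h$. By Proposition \ref{proposition:uniqueness_of_factrization}, it is therefore related to $(e^\Theta(f\circ h),m^\Theta(f\circ h))$ by a unique isomorphism
\[
\theta_h : s(m^\Theta(f\circ m^\Theta(h)))\to s(m^\Theta(f\circ h))
\]
satisfying $m^\Theta(f\circ h)\circ\theta_h = m^\Theta(f\circ m^\Theta(h))$. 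Hence $\theta_h$ is an isomorphism in $\mathcal{M}/Y$. Its inverse also lies over $Y$, and it lies in $\mathcal{M}$ because isomorphisms belong to every class of the form $\mathcal{S}^\perp$.

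\textbf{Naturality (main obstacle).} Let $\phi: h_1\to h_2$ in $\mathcal{C}/X$. I must show
\[
\theta_{h_2}\circ\pi^\Theta_Y(\pi^\Theta_X(\phi)) = \pi^\Theta_Y(\phi)\circ\theta_{h_1}.
\]
Both sides are morphisms $s(m^\Theta(f\circ m^\Theta(h_1)))\to s(m^\Theta(f\circ h_2))$. I will use the uniqueness clause of the lifting property, applied to the $\mathcal{E}$-morphism
\[
e_1:=e^\Theta(f\circ m^\Theta(h_1))\circ e^\Theta(h_1)\in\mathcal{E}
\]
against the $\mathcal{M}$-morphism $m^\Theta(f\circ h_2)$. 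The plan is to check that both composites, after precomposition with $e_1$, equal $e^\Theta(f\circ h_2)\circ\phi$, and after postcomposition with $m^\Theta(f\circ h_2)$, equal $m^\Theta(f\circ m^\Theta(h_1))$. Uniqueness of the diagonal filler then forces the two composites to agree.

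Verifying these identities is routine diagram chasing. It uses the defining commutative squares of $\pi^\Theta_X(\phi)$, of $\pi^\Theta_Y$ applied to $\pi^\Theta_X(\phi)$, of $\pi^\Theta_Y(\phi)$, and the defining properties of $\theta_{h_1}$ and $\theta_{h_2}$. The bookkeeping with these five squares is where care is needed, but no new idea is required.

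Thus $\theta$ is a natural isomorphism, and the two functors agree in $\hCat$.
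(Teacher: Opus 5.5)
Your proposal is correct and follows essentially the paper's route: the componentwise isomorphisms come from uniqueness of $(\mathcal{E},\mathcal{M})$-factorizations applied to $f\circ h=m^\Theta(f\circ m^\Theta(h))\circ\bigl(e^\Theta(f\circ m^\Theta(h))\circ e^\Theta(h)\bigr)$, and naturality comes from uniqueness of diagonal fillers. The differences are cosmetic. Your $\theta_h$ is the inverse of the paper's $d_h$, and you test both composites, with both triangles checked, against the filler for the composite $e^\Theta(f\circ m^\Theta(h_1))\circ e^\Theta(h_1)$; the paper instead conjugates by $d_{h_1},d_{h_2}$ and uses the defining square of $\pi^\Theta_Y(f_\sharp(\phi))$.
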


\begin{proof}[Proof of Proposition \ref{proposition:fM}]
We construct a natural isomorphism from $\pi^\Theta_Y \circ f_\sharp$ to $f^\mathcal{M}_\sharp \circ \pi^\Theta_X$. First, for each object $h$ of $\mathcal{C}/X$, we have $(\pi^\Theta_Y \circ f_\sharp)(h) = m^\Theta(f \circ h)$ and $(f^\mathcal{M}_\sharp \circ \pi^\Theta_X)(h)= m^\Theta(f \circ m^\Theta(h))$. For simplicity, write $(e,m) := (e^\Theta(h), m^\Theta(h))$, $(e',m') := (e^\Theta(f \circ h), m^\Theta(f \circ h))$, and $(e'',m'') = (e^\Theta(f \circ m),m^\Theta(f \circ m))$. Then
\[
m' \circ e' = f \circ h = m'' \circ (e'' \circ e).
\]
Therefore, by Proposition \ref{proposition:uniqueness_of_factrization}, there exists a unique isomorphism $d_h$ such that $d_h \circ e' = e'' \circ e$ and $m'' \circ d_h = m'$.

It remains to show that the assignment $\Phi : h \mapsto d_h$ defines a natural transformation from $\pi^\Theta_Y \circ f_\sharp$ to $f^\mathcal{M}_\sharp \circ \pi^\Theta_X$. Let $\phi : h_1 \rightarrow_{\mathcal{C}/X} h_2$ be a morphism. We prove that
\[
d_{h_2} \circ ((\pi_Y^\Theta \circ f_\sharp)(\phi)) = ((f^\mathcal{M}_\sharp \circ \pi^\Theta_X)(\phi)) \circ d_{h_1}.
\]
For simplicity, write $(e_i,m_i) := (e^\Theta(h_i), m^\Theta(h_i))$, $(e'_i,m'_i) := (e^\Theta(f \circ h_i), m^\Theta(f \circ h_i))$, and $(e''_i,m''_i) = (e^\Theta(f \circ m_i),m^\Theta(f \circ m_i))$ for $i=1,2$. Recall that $(\pi_Y^\Theta \circ f_\sharp)(\phi)$ is the unique $\mathcal{C}$-morphism satisfying
\[
(\pi_Y^\Theta \circ f_\sharp)(\phi) \circ e'_1 = e'_2 \circ \phi.
\]
It is therefore enough to prove that
\[
(d_{h_2}^{-1} \circ ((f^\mathcal{M}_\sharp \circ \pi^\Theta_X)(\phi)) \circ d_{h_1}) \circ e'_1 = e'_2 \circ \phi.
\]
This follows from a diagram chase. Indeed,
\begin{align*}
(d_{h_2}^{-1} &\circ ((f^\mathcal{M}_\sharp \circ \pi^\Theta_X)(\phi)) \circ d_{h_1}) \circ e'_1 \\
    &= d_{h_2}^{-1} \circ ((f^\mathcal{M}_\sharp \circ \pi^\Theta_X)(\phi)) \circ (d_{h_1} \circ e'_1) \\
    &= d_{h_2}^{-1} \circ ((\pi_Y^\Theta \circ f_\sharp \circ \iota_X)(\pi^\Theta_X(\phi))) \circ e''_1 \circ e_1 \quad (\text{by the definition of } d_{h_1}) \\
    &= d_{h_2}^{-1} \circ ((\pi_Y^\Theta \circ f_\sharp)(\pi^\Theta_X(\phi))) \circ e''_1 \circ e_1 \quad (\text{by the definition of } \iota_X) \\
    &= d_{h_2}^{-1} \circ (\pi_Y^\Theta(\pi^\Theta_X(\phi)) \circ e''_1) \circ e_1 \quad (\text{by the definition of } f_\sharp) \\
    &= d_{h_2}^{-1} \circ e''_2 \circ (\pi^\Theta_X(\phi) \circ e_1) \quad (\text{by the definition of } \pi^\Theta_Y) \\
    &= (d_{h_2}^{-1} \circ (e''_2 \circ e_2)) \circ \phi \quad (\text{by the definition of } \pi^\Theta_X) \\
    &= e'_2 \circ \phi \quad (\text{by the definition of } d_{h_2}).
\end{align*}
\end{proof}

Proposition \ref{proposition:functfM} will now be proved:

\begin{proof}[Proof of Proposition \ref{proposition:functfM}]
It suffices to show that the assignment in Proposition \ref{proposition:functfM} preserves identity morphisms and compositions. In Setting \ref{setting:section_EMfact}, choose the section so that, for every $h \in \mathcal{M}$, one has $(e^\Theta(h),m^\Theta(h)) = (\id_{s(h)},h)$.

First, consider the case where $X = Y$ and $f = \mathrm{id}_X$. Put $f^{\mathcal{M}}_\sharp = \pi_Y^\Theta \circ f_\sharp \circ \iota_X$. We show that $f^\mathcal{M}_\sharp$ is the identity functor on $\mathcal{M}/X$. For each $h \in \Ob(\mathcal{M}/X)$, the equalities $f \circ h = h$ and $m^{\Theta}(f \circ h) = m^{\Theta}(h) = h$ imply that $\pi^\Theta_Y(f \circ h) = h$. In particular, $f^{\mathcal{M}}_\sharp(h) = h$. Next, for each $\phi : h_1 \rightarrow_{\mathcal{M}/X} h_2$, we have
\[
(f_\sharp \circ \iota_X)(\phi) = \phi : h_1 \rightarrow_{\mathcal{C}/X} h_2.
\]
By the definition of $\pi^\Theta_Y$, it follows that $\pi^\Theta_Y(\phi) = \phi$. Hence $f^\mathcal{M}_\sharp(\phi) = \phi$. Thus, when $f = \id_X$, the functor $f^{\mathcal{M}}_\sharp$ is the identity functor. This shows that the assignment in Proposition \ref{proposition:functfM} preserves identity morphisms.

Next, fix morphisms $f : X \rightarrow_{\mathcal{C}} Y$ and $g : Y \rightarrow_{\mathcal{C}} Z$. Put $g^\mathcal{M}_\sharp = \pi^\Theta_Z \circ g_\sharp \circ \iota_Y$, $f^\mathcal{M}_\sharp = \pi^\Theta_Y \circ f_\sharp \circ \iota_X$, and $(g \circ f)^{\mathcal{M}}_\sharp = \pi^\Theta_Z \circ (g \circ f)_\sharp \circ \iota_X$. We show that
\[
[(g \circ f)^{\mathcal{M}}_\sharp] = [g^\mathcal{M}_\sharp] \circ [f^\mathcal{M}_\sharp].
\]
By Proposition \ref{proposition:fM}, we have
\[
[g^{\mathcal{M}}_\sharp] \circ [\pi^\Theta_Y] = [\pi^\Theta_Z] \circ [g_\sharp].
\]
Therefore,
\begin{align*}
[g^{\mathcal{M}}_\sharp] \circ [f^{\mathcal{M}}_\sharp] 
    &= [g^{\mathcal{M}}_\sharp] \circ [\pi^\Theta_Y] \circ [f_\sharp] \circ [\iota_X] \\
    &= [\pi^\Theta_Z] \circ [g_\sharp] \circ [f_\sharp] \circ [\iota_X] \\
    &= [\pi^\Theta_Z] \circ [(g \circ f)_\sharp] \circ [\iota_X] \\
    &= [(g \circ f)^\mathcal{M}_\sharp].
\end{align*}
This proves that the assignment in Proposition \ref{proposition:functfM} also preserves compositions.
\end{proof}

Finally, we prove Theorem \ref{theorem:fMFCC}. We first record the following proposition, which follows, for example, from \cite[Proposition 4.5.15 (ii)]{Riehl2016category}:

\begin{proposition}\label{proposition:FCC_reflect}
Let $\iota : \mathcal{D}\hookrightarrow\mathcal{C}$ be a reflective subcategory, and let $\pi : \mathcal{C} \rightarrow \mathcal{D}$ be a left adjoint to the inclusion functor $\iota$. Let $\mathcal{I}$ be a category, and suppose that $\mathcal{C}$ is $\mathcal{I}$-cocomplete; that is, suppose that the diagonal functor $\Delta^\mathcal{C} : \mathcal{C} \rightarrow \mathcal{C}^\mathcal{I}$ admits a left adjoint $\lambda^\mathcal{C} : \mathcal{C}^\mathcal{I} \rightarrow \mathcal{C}$. Then $\mathcal{D}$ is $\mathcal{I}$-cocomplete, and $\pi \circ \lambda^{\mathcal{C}} \circ \iota^\mathcal{I}$ is a left adjoint to the diagonal functor $\Delta^{\mathcal{D}}$.
\end{proposition}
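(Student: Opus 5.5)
This is the standard fact that reflective subcategories inherit colimits: they are computed in $\mathcal{C}$ and then reflected. The plan is to construct the adjunction directly as a composite of natural bijections of hom-sets. Write $\lambda:=\lambda^{\mathcal{C}}$. For a diagram $D\in\Ob(\mathcal{D}^{\mathcal{I}})$ and an object $d\in\Ob(\mathcal{D})$, we seek a bijection
\[
\mathcal{D}\bigl(\pi(\lambda(\iota^{\mathcal{I}}D)),d\bigr)\cong\mathcal{D}^{\mathcal{I}}(D,\Delta^{\mathcal{D}}d),
\]
natural in $D$ and $d$. This exhibits $\pi\circ\lambda\circ\iota^{\mathcal{I}}$ as a left adjoint to $\Delta^{\mathcal{D}}$, and $\mathcal{I}$-cocompleteness of $\mathcal{D}$ follows by definition.

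The bijection is obtained as the chain
\begin{align*}
\mathcal{D}(\pi\lambda\iota^{\mathcal{I}}D,d)
&\cong \mathcal{C}(\lambda\iota^{\mathcal{I}}D,\iota d)
\\
&\cong \mathcal{C}^{\mathcal{I}}(\iota^{\mathcal{I}}D,\Delta^{\mathcal{C}}\iota d)
\\
&= \mathcal{C}^{\mathcal{I}}(\iota^{\mathcal{I}}D,\iota^{\mathcal{I}}\Delta^{\mathcal{D}}d)
\\
&\cong \mathcal{D}^{\mathcal{I}}(D,\Delta^{\mathcal{D}}d).
\end{align*}
The justifications for the four steps are as follows.
\begin{enumerate}
\item The first bijection is the adjunction $\pi\dashv\iota$.
\item The second is the adjunction $\lambda\dashv\Delta^{\mathcal{C}}$.
\item The equality holds because $\Delta^{\mathcal{C}}\circ\iota=\iota^{\mathcal{I}}\circ\Delta^{\mathcal{D}}$ strictly: both send $d$ to the constant diagram at $\iota d$.
\item The last bijection holds because $\iota$ is fully faithful. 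A reflective subcategory here is full, as the inclusion of $\mathcal{M}/X$ is used in this way, and then $\iota^{\mathcal{I}}$ is fully faithful on functor categories, since natural transformations are families of components subject to naturality squares, and both the components and the squares are reflected by a fully faithful functor.
\end{enumerate}

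Naturality of the chain in $D$ and $d$ follows because each step is natural. The first two steps are natural adjunction isomorphisms, precomposed or postcomposed with the functors $\lambda\iota^{\mathcal{I}}$ and $\iota$. The third step is an identity of functors. The fourth step is the action of the functor $\iota^{\mathcal{I}}$ on hom-sets, which is natural by functoriality.

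The main obstacle is the fullness of the reflective inclusion, which is needed for the last bijection. If the paper's convention only requires a reflective subcategory to be a not-necessarily-full subcategory, I would instead argue that in the application, Proposition \ref{proposition:MXCXreflective}, the inclusion $\mathcal{M}/X\hookrightarrow\mathcal{C}/X$ is full. Every $\mathcal{C}/X$-morphism $\phi$ between objects $m_1,m_2\in\mathcal{M}$ satisfies $m_2\circ\phi=m_1$. The left-cancellation property of the right class of an orthogonal factorization system then gives $\phi\in\mathcal{M}$. This cancellation follows from $\mathcal{M}=\mathcal{E}^{\perp}$ by factoring $\phi=m\circ e$ and lifting $e$ against $m_2\circ m$. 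With fullness established, the chain above goes through verbatim.
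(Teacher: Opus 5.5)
Your proof is correct and is essentially the argument behind the paper's proof, which simply cites \cite[Proposition 4.5.15 (ii)]{Riehl2016category}: there too the colimit in $\mathcal{D}$ comes from the chain of natural bijections formed by the reflection $\pi\dashv\iota$, the colimit adjunction in $\mathcal{C}$, and full faithfulness of the inclusion. Your worry about fullness is settled by convention, since Riehl's definition of a reflective subcategory (the one the paper refers to) includes fullness, and your left-cancellation argument correctly checks fullness of $\mathcal{M}/X\hookrightarrow\mathcal{C}/X$ for the application.
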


\begin{proof}[Proof of Theorem \ref{theorem:fMFCC}]
We first prove the assertion \eqref{item:fMFCC:fMsharpFCCpreserve}. The finite cocompleteness of $\mathcal{M}/X$ follows from Propositions \ref{proposition:CXFCC}, \ref{proposition:MXCXreflective}, and \ref{proposition:FCC_reflect}.

We show that $[f^\mathcal{M}_\sharp]$ preserves finite colimits. Let $\mathcal{I}$ be a finite category, and, for $A=X,Y$, let $\Delta^{\mathcal{M}}_A : \mathcal{M}/{A} \rightarrow (\mathcal{M}/{A})^\mathcal{I}$ denote the diagonal functor, with a fixed left adjoint $\lambda^\mathcal{M}_A$. It suffices to prove the following equality in $\hCat$:
\[
[\lambda^\mathcal{M}_Y] \circ [(f^\mathcal{M}_\sharp)^\mathcal{I}] = [f^\mathcal{M}_\sharp] \circ [\lambda^\mathcal{M}_X].
\]
Since $\pi_Y$ preserves colimits, being a left adjoint, since $f_\sharp$ preserves finite colimits by Proposition \ref{proposition:CXFCC}, and since
\[
[\lambda_X^\mathcal{M}] = [\pi_X] \circ [\lambda_X^\mathcal{C}] \circ [\iota_X^I]
\]
and
\[
[f^\mathcal{M}_\sharp] \circ [\pi_X] = [\pi_Y] \circ [f_\sharp]
\]
hold by Propositions \ref{proposition:fM} and \ref{proposition:FCC_reflect}, we obtain
\begin{align*}
[f^\mathcal{M}_\sharp] \circ [\lambda^\mathcal{M}_X]
    &= [f^\mathcal{M}_\sharp] \circ [\pi_X] \circ [\lambda^\mathcal{C}_X] \circ [\iota^\mathcal{I}_X] \\
    &= [\pi_Y] \circ [f_\sharp] \circ [\lambda^\mathcal{C}_X] \circ [\iota^\mathcal{I}_X] \\
    &= [\pi_Y] \circ [\lambda^\mathcal{C}_Y] \circ [f_\sharp^\mathcal{I}] \circ [\iota^\mathcal{I}_X] \\
    &= [\lambda^\mathcal{M}_Y] \circ [\pi_Y^{\mathcal{I}}] \circ [f_\sharp^\mathcal{I}] \circ [\iota^\mathcal{I}_X] \\
    &= [\lambda^\mathcal{M}_Y] \circ [(f^\mathcal{M}_\sharp)^\mathcal{I}].
\end{align*}
Thus, $f^\mathcal{M}_\sharp$ preserves finite colimits.

We next prove \eqref{item:fMFCC:Mono}. Since every morphism in $\mathcal{M}$ is a mono-morphism, its slice categories are thin. The remaining assertion follows from Proposition \ref{proposition:MXCXreflective} and the definition of $[f^\mathcal{M}_\sharp]$.
\end{proof}

\section{Epi--mono factorizations in $\Coarse_{\CtrTotRel}/{\close}$}\label{section:EpiMono_factorization}

As explained in Theorem \ref{theorem:EM_coarse}, if $\mathcal{E}$ and
$\mathcal{M}$ denote the classes of all epi-morphisms and all mono-morphisms
in the category of coarse spaces
$\Coarse_{\CtrMap}/{\close}$, then
$(\mathcal{E},\mathcal{M})$
forms an orthogonal factorization system.

Since
$\Coarse_{\CtrMap}/{\close}$
and
$\Coarse_{\CtrTotRel}/{\close}$
are isomorphic categories by
Theorem \ref{theorem:CtrMapClo_isom_CtrTotRelClo},
it follows that the pair consisting of all epi-morphisms
$\mathcal{E}$
and all mono-morphisms
$\mathcal{M}$
also forms an orthogonal factorization system on
$\Coarse_{\CtrTotRel}/{\close}$.

In particular, every morphism in
$\Coarse_{\CtrTotRel}/{\close}$
admits an epi--mono factorization, and every pair consisting of an
epi-morphism and a mono-morphism satisfies the lifting property.
In this subsection, we describe explicit constructions of epi--mono
factorizations and diagonal fillers in
$\Coarse_{\CtrTotRel}/{\close}$.

The following proposition gives an explicit epi--mono factorization of an
arbitrary morphism in
$\Coarse_{\CtrTotRel}/{\close}$.

\begin{proposition}\label{proposition:epimonofucttame}
Let $(X,\mathcal{E}_X)$ and $(Y,\mathcal{E}_Y)$ be coarse spaces, and fix
$\eta \in \CtrTotRel(X,Y)$.

Define a coarse structure on $X$ by 
$\mathcal{E}_X^\eta
:=
\langle
\Ad_{\eta^\dagger}(\mathcal{E}_Y)
\rangle$,
and put 
$X^\eta
:=
(X,\mathcal{E}_X^\eta)$.

Then the following hold:
\begin{enumerate}
\item
The identity map
$\id_X = 1_X^{\Rel}$
on the underlying set $X$ defines a surjective controlled map
from
$X=(X,\mathcal{E}_X)$
to
$X^\eta=(X,\mathcal{E}_X^\eta)$.

\item
The relations
$\eta \in \Rel(X,Y)$
and
$\eta^\dagger \in \Rel(Y,X)$
are both controlled with respect to
$(\mathcal{E}_X^\eta,\mathcal{E}_Y)$
and
$(\mathcal{E}_Y,\mathcal{E}_X^\eta)$,
respectively.
\end{enumerate}

In particular,
$([\id_X]_{X,X^\eta},[\eta]_{X^\eta,Y})$
gives an epi--mono factorization of the morphism
$[\eta]_{X,Y}$
in
$\Coarse_{\CtrTotRel}/{\close}$.
\end{proposition}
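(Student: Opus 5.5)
The plan is to verify the two numbered assertions directly from the definitions, and then read off the epi--mono factorization from Theorems \ref{thm:epi-characterization} and \ref{thm:mono-characterization}.

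\textbf{Step 1: $\mathcal{E}_X\subset\mathcal{E}_X^\eta$.} For (1) it suffices to show this inclusion, since then $\Ad_{\id_X}(\mathcal{E}_X)=\mathcal{E}_X\subset\mathcal{E}_X^\eta$. Take $E\in\mathcal{E}_X$. Since $\eta$ is total, Proposition \ref{proposition:total_condition} gives $1_X^{\Rel}\subset\eta^\dagger\circ\eta$. Hence
\[
E\subset \eta^\dagger\circ\eta\circ E\circ\eta^\dagger\circ\eta=\Ad_{\eta^\dagger}(\Ad_\eta(E)).
\]
Because $\eta$ is controlled, $\Ad_\eta(E)\in\mathcal{E}_Y$, so the right-hand side lies in $\Ad_{\eta^\dagger}(\mathcal{E}_Y)\subset\mathcal{E}_X^\eta$. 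Lowerness of $\mathcal{E}_X^\eta$ then yields $E\in\mathcal{E}_X^\eta$. Surjectivity of $\id_X$ is trivial. In particular $\id_X(X)=X$, so condition \eqref{item:epi-characterization-dense} of Theorem \ref{thm:epi-characterization} holds with $F=1_X^{\Rel}$, and $[\id_X]_{X,X^\eta}$ is an epi-morphism.

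\textbf{Step 2: $\eta^\dagger$ is controlled.} This part of (2) is immediate, since $\Ad_{\eta^\dagger}(\mathcal{E}_Y)\subset\langle\Ad_{\eta^\dagger}(\mathcal{E}_Y)\rangle=\mathcal{E}_X^\eta$.

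\textbf{Step 3: $\eta$ is controlled from $X^\eta$ to $Y$.} This is the main point. I will use Proposition \ref{proposition:generator_controlled} with $\mathcal{E}_0:=\Ad_{\eta^\dagger}(\mathcal{E}_Y)$. Its hypothesis $\Ad_\eta(1_X^{\Rel})\in\mathcal{E}_Y$ holds because $\eta$ is controlled with respect to $\mathcal{E}_X\ni 1_X^{\Rel}$. It therefore remains to check $\Ad_\eta(\Ad_{\eta^\dagger}(F))\in\mathcal{E}_Y$ for every $F\in\mathcal{E}_Y$. We compute
\[
\Ad_\eta(\Ad_{\eta^\dagger}(F))=(\eta\circ\eta^\dagger)\circ F\circ(\eta\circ\eta^\dagger).
\]
Here $\eta\circ\eta^\dagger=\Ad_\eta(1_X^{\Rel})\in\mathcal{E}_Y$, so the whole expression is a composite of elements of $\mathcal{E}_Y$ and lies in $\mathcal{E}_Y$. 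I expect this step to be the only nontrivial one, and it is handled by the generator criterion, which itself rests on the totality of $\eta$ through Proposition \ref{proposition:invCstr}.

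\textbf{Step 4: assembling the factorization.} Since $\eta$ is total, Steps 2 and 3 together with Theorem \ref{thm:mono-characterization} show that $[\eta]_{X^\eta,Y}$ is a mono-morphism. The relational composite satisfies $\eta\circ\id_X=\eta$, so
\[
[\eta]_{X^\eta,Y}\circ[\id_X]_{X,X^\eta}=[\eta]_{X,Y}.
\]
Combined with Step 1, this is the required epi--mono factorization.
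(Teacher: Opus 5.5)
Your proposal is correct and matches the paper's proof essentially step for step. Both arguments use the same three ingredients. First, the inclusion $\mathcal{E}_X\subset\mathcal{E}_X^\eta$ comes from $E\subset\Ad_{\eta^\dagger}(\Ad_\eta(E))$ by totality. Second, controlledness of $\eta$ from $X^\eta$ to $Y$ comes from Proposition \ref{proposition:generator_controlled} together with $\Ad_\eta(\Ad_{\eta^\dagger}F)=(\eta\circ\eta^\dagger)\circ F\circ(\eta\circ\eta^\dagger)$. Third, $\eta^\dagger$ is controlled by definition of $\mathcal{E}_X^\eta$, and the factorization is read off from Theorems \ref{thm:epi-characterization} and \ref{thm:mono-characterization}. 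Your explicit epi check with $F=1_X^{\Rel}$ only spells out what the paper leaves implicit.
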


\begin{proof}
The final assertion follows from
Theorems \ref{thm:epi-characterization}
and
\ref{thm:mono-characterization}.
It therefore suffices to show that
\[
\id_X \in \CtrRel(X,X^\eta),
\qquad
\eta \in \CtrRel(X^\eta,Y),
\qquad
\eta^\dagger \in \CtrRel(Y,X^\eta).
\]

We first prove that
$\id_X \in \CtrRel(X,X^\eta)$,
or equivalently,
$\mathcal{E}_X \subset \mathcal{E}_X^\eta$.
Take any
$E\in\mathcal{E}_X$.
Since
$\eta\in\CtrRel(X,Y)$,
we have
$\Ad_\eta(E)\in\mathcal{E}_Y$.
By the definition of
$\mathcal{E}_X^\eta$,
$\Ad_{\eta^\dagger}(\Ad_\eta E)
=
\Ad_{\eta^\dagger\circ\eta}E
\in
\mathcal{E}_X^\eta$.
Thus it suffices to prove that
$E\subset \Ad_{\eta^\dagger\circ\eta}E$.
Since
$\eta$
is total,
Proposition \ref{proposition:total_condition}
implies that
$\eta^\dagger\circ\eta
\supset
1_X^{\Rel}$.
Hence
\[
\Ad_{\eta^\dagger\circ\eta}E
=
(\eta^\dagger\circ\eta)
\circ
E
\circ
(\eta^\dagger\circ\eta)
\supset
E.
\]
This proves that
$\mathcal{E}_X\subset\mathcal{E}_X^\eta$.

Next we show that
$\eta\in\CtrRel(X^\eta,Y)$.
By Proposition \ref{proposition:generator_controlled},
it suffices to prove that
$\Ad_\eta(1_X^{\Rel})\in\mathcal{E}_Y$
and
$\Ad_\eta(\Ad_{\eta^\dagger}(\mathcal{E}_Y))
\subset
\mathcal{E}_Y$.
Since
$\eta\in\CtrRel(X,Y)$
and
$1_X^{\Rel}\in\mathcal{E}_X$,
we have
$\Ad_\eta(1_X^{\Rel})
=
\eta\circ\eta^\dagger
\in
\mathcal{E}_Y$.
Take any
$F\in\mathcal{E}_Y$.
Then
\[
\Ad_\eta(\Ad_{\eta^\dagger}F)
=
\Ad_{\eta\circ\eta^\dagger}F
=
(\eta\circ\eta^\dagger)
\circ
F
\circ
(\eta\circ\eta^\dagger).
\]
Since
$\eta\circ\eta^\dagger
=
\Ad_\eta(1_X^{\Rel})
\in
\mathcal{E}_Y$,
the right-hand side belongs to
$\mathcal{E}_Y$.
Therefore
$\Ad_\eta(\Ad_{\eta^\dagger}F)\in\mathcal{E}_Y$.

Finally, we show that
$\eta^\dagger\in\CtrRel(Y,X^\eta)$.
By the definition of
$\mathcal{E}_X^\eta$,
$\Ad_{\eta^\dagger}(\mathcal{E}_Y)
\subset
\mathcal{E}_X^\eta$,
which is precisely the statement that
$\eta^\dagger\in\CtrRel(Y,X^\eta)$.
\end{proof}

We now describe an explicit construction of a lift for a pair consisting of an
epi-morphism and a mono-morphism.
Let
$A=(A,\mathcal{E}_A)$,
$B=(B,\mathcal{E}_B)$,
$X=(X,\mathcal{E}_X)$,
and
$Y=(Y,\mathcal{E}_Y)$
be coarse spaces.
Fix
$\pi\in\CtrTotRel(A,B)$,
$\iota\in\CtrTotRel(X,Y)$,
$\xi\in\CtrTotRel(A,X)$,
and
$\eta\in\CtrTotRel(B,Y)$
such that
$[\pi]$ is an epi-morphism,
$[\iota]$ is a mono-morphism, and
$[\iota]\circ[\xi]=[\eta]\circ[\pi]$.

By Theorem \ref{thm:epi-characterization}, one can choose
$F_0\in\mathcal{E}_B$ such that $F_0^\dagger = F_0$ and 
$F_0(\pi(A))=B$.
Put $F:=F_0\cup 1_B^{\Rel}$. 
Then $F^\dagger = F$, 
$F\circ\pi\in\CtrTotRel(A,B)$,
$[F\circ\pi]=[\pi]$,
and
$\pi^\dagger\circ F$ is total.
By Proposition \ref{proposition:close_on_totalrel}, there exist
$E_1,E_2\in\mathcal{E}_Y$ such that
\[
E_1\circ\eta\circ F\circ\pi \supset \iota\circ\xi,
\qquad
\eta\circ\pi \subset E_2\circ\iota\circ\xi.
\]
Put $E:= E_1 \cup E_2^\dagger \in\mathcal{E}_Y$.

\begin{theorem}\label{theorem:explicit_lift_epi_mono}
Under the above setting, define
\[
d:=\iota^\dagger\circ E \circ\eta\circ F \in \Rel(B,X).
\]
Then $d\in\CtrTotRel(B,X)$, and $[d]$ gives a diagonal filler of the
following commutative square:
\[
\xymatrix{
A
\ar[r]^{[\xi]}
\ar[d]_{[\pi]}
&
X
\ar[d]^{[\iota]}
\\
B
\ar[r]_{[\eta]}
\ar@{-->}[ur]^{[d]}
&
Y,
}
\]
that is,
$[d]\circ[\pi]=[\xi]$ and $[\iota]\circ[d]=[\eta]$.
\end{theorem}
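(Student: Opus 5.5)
The plan is to verify two things directly from the explicit formula $d=\iota^\dagger\circ E\circ\eta\circ F$: that $d\in\CtrTotRel(B,X)$, and that the two required equalities of close classes hold. For the equalities, I will use Proposition \ref{proposition:close_on_totalrel}. For controlled total relations a single one-sided inequality $\prec$ already gives $\sim$, so it suffices to exhibit containments of the form $\xi\subset d\circ\pi$ and $\iota\circ d\subset F'\circ\eta\circ F$ with $F'\in\mathcal{E}_Y$ and $F\in\mathcal{E}_B$.

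\emph{Controlledness.} Since $[\iota]$ is a mono-morphism, $\iota^\dagger$ is controlled by Theorem \ref{thm:mono-characterization}. The relations $E\in\mathcal{E}_Y$ and $F\in\mathcal{E}_B$ are controlled by Proposition \ref{proposition:ent_cont}, and $\eta$ is controlled by hypothesis. Since $\Ad$ is functorial, the composite $d$ is controlled.

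\emph{Totality.} This is the step needing care. Fix $b\in B$.
\begin{enumerate}
\item Since $F(\pi(A))=B$ and $F=F^\dagger$, I pick $a\in A$ and $b'=\pi(a)$ with $(b',b)\in F$.
\item Since $\eta$ is total, I pick $y$ with $(y,b')\in\eta$. Then $(y,a)\in\eta\circ\pi\subset E_2\circ\iota\circ\xi$.
\item From this I get $x\in X$ and $y''\in Y$ with $(x,a)\in\xi$, $(y'',x)\in\iota$, and $(y,y'')\in E_2$.
\item Hence $(y'',y)\in E_2^\dagger\subset E$. Together with $(x,y'')\in\iota^\dagger$, this gives $(x,b)\in d$.
\end{enumerate}
So $d$ is total.

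\emph{The square $[d]\circ[\pi]=[\xi]$.} Using $E\supset E_1$ and then the choice of $E_1$,
\[
d\circ\pi=\iota^\dagger\circ E\circ\eta\circ F\circ\pi\supset\iota^\dagger\circ E_1\circ\eta\circ F\circ\pi\supset\iota^\dagger\circ\iota\circ\xi\supset\xi,
\]
where the last inclusion holds because $\iota$ is total, so $1_X^{\Rel}\subset\iota^\dagger\circ\iota$ by Proposition \ref{proposition:total_condition}. Thus $\xi\prec d\circ\pi$. Both sides are controlled total relations, so $\xi\sim d\circ\pi$.

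\emph{The square $[\iota]\circ[d]=[\eta]$.} Here
\[
\iota\circ d=(\iota\circ\iota^\dagger\circ E)\circ\eta\circ F .
\]
The factor $\iota\circ\iota^\dagger=\Ad_\iota(1_X^{\Rel})$ lies in $\mathcal{E}_Y$ because $\iota$ is controlled. Hence $\iota\circ\iota^\dagger\circ E\in\mathcal{E}_Y$, and with $F\in\mathcal{E}_B$ this gives $\iota\circ d\prec\eta$ by the definition of the coarse pre-order. Proposition \ref{proposition:close_on_totalrel} then upgrades this to $\iota\circ d\sim\eta$.

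The main obstacle is the totality step. It is the only place where the choices $F\supset F_0$ with $F_0(\pi(A))=B$ and $E\supset E_2^\dagger$ are really used, and the witnesses must be chased through the composite carefully. Everything else is a formal consequence of Proposition \ref{proposition:close_on_totalrel} and the functoriality of $\Ad$.
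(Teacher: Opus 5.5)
Your proof is correct. The controlledness, totality and first-triangle steps are the paper's own. Your element chase for totality is the pointwise unfolding of the paper's inclusion $\xi^\dagger\circ d\supset(\eta\circ\pi)^\dagger\circ\eta\circ F\supset\pi^\dagger\circ F$, combined with the totality of $\pi^\dagger\circ F$.

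The genuine difference is the second triangle. The paper never computes $\iota\circ d$. Instead it cancels the epimorphism: $([\iota]\circ[d])\circ[\pi]=[\iota]\circ[\xi]=[\eta]\circ[\pi]$, using the already-proved identity $[d]\circ[\pi]=[\xi]$. Your direct identity $\iota\circ d=(\iota\circ\iota^\dagger\circ E)\circ\eta\circ F$, with $\iota\circ\iota^\dagger=\Ad_\iota(1_X^{\Rel})\in\mathcal{E}_Y$, is also valid. It is slightly more informative: $\iota\circ d\prec\eta$ holds for every relation of this shape, so epimorphicity of $[\pi]$ enters only through the totality of $d$ (via $F(\pi(A))=B$), not through a cancellation. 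The paper's route is shorter and purely categorical once the first triangle is known.

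A minor notational point: write $(b',a)\in\pi$ rather than $b'=\pi(a)$, since $\pi$ is only a relation.
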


\begin{proof}
We first show that $d$ is total.
By Proposition \ref{proposition:total_basic}, it suffices to prove that
$\xi^\dagger\circ d$ is total.
Since $\eta$ is total, Proposition \ref{proposition:total_condition} implies
that $1_B^{\Rel}\subset \eta^\dagger\circ\eta$.
Therefore
\begin{align*}
\xi^\dagger \circ d
    &= \xi^\dagger \circ \iota^\dagger \circ E \circ \eta \circ F \\
    &\supset (E_2 \circ \iota \circ \xi)^\dagger \circ \eta \circ F \\
    &\supset (\eta \circ \pi)^\dagger \circ \eta \circ F \\
    &= \pi^\dagger \circ (\eta^\dagger \circ \eta) \circ F \\
    &\supset \pi^\dagger \circ F.
\end{align*}
Since $\pi^\dagger\circ F$ is total,
$\xi^\dagger\circ d$ is also total by Proposition
\ref{proposition:total_basic}.

Next we show that $d$ is controlled.
Indeed,
$\iota^\dagger\in\CtrRel(Y,X)$ by Theorem
\ref{thm:mono-characterization},
$E \in\mathcal{E}_Y\subset\CtrRel(Y)$,
$\eta\in\CtrRel(B,Y)$, and
$F \in\mathcal{E}_B\subset\CtrRel(B)$.
Hence their composite
$d=\iota^\dagger\circ E \circ\eta\circ F$
belongs to $\CtrRel(B,X)$.
Thus $d\in\CtrTotRel(B,X)$.

We next prove that $[d]\circ[\pi]=[\xi]$.
By Proposition \ref{proposition:close_on_totalrel}, it is enough to show
that $d\circ\pi\supset\xi$.
Since $\iota$ is total, Proposition \ref{proposition:total_condition} gives
$1_X^{\Rel}\subset\iota^\dagger\circ\iota$.
Thus
\begin{align*}
d\circ\pi
    &= \iota^\dagger\circ E \circ\eta\circ F \circ\pi \\
    &\supset \iota^\dagger\circ E_1 \circ\eta\circ F\circ\pi \\
    &\supset \iota^\dagger\circ\iota\circ\xi \\
    &\supset \xi .
\end{align*}
Hence $[d]\circ[\pi]=[\xi]$.

Finally, we prove that $[\iota]\circ[d]=[\eta]$.
Since $[\pi]$ is an epi-morphism, it suffices to show that
$([\iota]\circ[d])\circ[\pi]=[\eta]\circ[\pi]$.
Using the equality $[d]\circ[\pi]=[\xi]$ proved above and the assumed
commutativity $[\iota]\circ[\xi]=[\eta]\circ[\pi]$, we obtain
\[
([\iota]\circ[d])\circ[\pi]
=
[\iota]\circ[\xi]
=
[\eta]\circ[\pi].
\]
Therefore $[\iota]\circ[d]=[\eta]$.
\end{proof}

\providecommand{\bysame}{\leavevmode\hbox to3em{\hrulefill}\thinspace}
\providecommand{\MR}{\relax\ifhmode\unskip\space\fi MR }
% \MRhref is called by the amsart/book/proc definition of \MR.
\providecommand{\MRhref}[2]{%
  \href{http://www.ams.org/mathscinet-getitem?mr=#1}{#2}
}
\providecommand{\href}[2]{#2}

\end{document}